\newtheorem{theorem}{Theorem}
\newtheorem{corollary}{Corollary}
\newtheorem{lemma}{Lemma}
\newtheorem{assumption}{Assumption}
\numberwithin{assumption}{section}
\theoremstyle{definition}
\newtheorem{remark_tmp}{Remark}
\newenvironment{remark}
	{ \begin{remark_tmp} 	}
	{ 
		
		\qed 
		\end{remark_tmp} 
	}
	\newcommand{\sumi}{\sum_{i=1}^n}
	\DeclareMathOperator*{\argmin}{arg\,min}
	\DeclareMathOperator*{\tr}{tr}
	\DeclareMathOperator*{\sign}{sgn}
	\DeclareMathOperator*{\supp}{supp}
	\newcommand{\X}{\mathcal{X}}
	\newcommand{\E}{\mathbb{E}}
	\newcommand{\V}{\mathbb{V}}
	\renewcommand{\P}{\mathbb{P}}
	\newcommand{\Eqref}[1]{Eqn.\ \eqref{#1}}
\newcommand{\shat}{\hat{\sigma}}
	\newcommand{\US}{\mathtt{us}}
	\newcommand{\BC}{\mathtt{bc}}
	\newcommand{\RBC}{\mathtt{rbc}}
	\newcommand{\MSE}{\mathtt{mse}}
	\newcommand{\MV}{\mathtt{mv}}
	\newcommand{\ROT}{\mathtt{rot}}   
	\newcommand{\PI}{\mathtt{dpi}}   
	\newcommand{\GJ}{\mathtt{GJ}}   
		\newcommand{\tus}{T_\US}
		\newcommand{\tbc}{T_\BC}
		\newcommand{\trbc}{T_\RBC}
		\newcommand{\eus}{\eta_\US}
		\newcommand{\ebc}{\eta_\BC}
		\newcommand{\et}{\tilde{\eta}}
		\newcommand{\etus}{\et_\US}
		\newcommand{\etbc}{\et_\BC}
		\newcommand{\ethbc}{\hat{\tilde{\eta}}_\BC}
		\newcommand{\sus}{\sigma_\US}
		\newcommand{\srbc}{\sigma_\RBC}
		\newcommand{\shatus}{\hat{\sigma}_\US}
		\newcommand{\shatrbc}{\hat{\sigma}_\RBC}	
		\newcommand{\ius}{I_\US}
		\newcommand{\ibc}{I_\BC}
		\newcommand{\irbc}{I_\RBC}
	\newcommand{\h}{h}
	\renewcommand{\b}{b}
\DeclareMathOperator*{\diag}{diag}
\DeclareMathOperator*{\vech}{vech}
	\newcommand{\Ttilde}{\tilde{T}}
	\newcommand{\st}{\tilde{\sigma}}
	\newcommand{\stus}{\st_\US}
	\newcommand{\strbc}{\st_\RBC}	
		\newcommand{\smvt}{\bar{\sigma}}
		\newcommand{\bb}{\boldsymbol{b}}
	\newcommand{\bbeta}{\boldsymbol{\beta}}
	\newcommand{\bhat}{\boldsymbol{\hat{\beta}}}
	\newcommand{\be}{\mathbf{e}}
	\newcommand{\br}{\mathbf{r}}
	\newcommand{\bR}{\mathbf{R}}
	\newcommand{\bW}{\mathbf{W}}
	\newcommand{\bY}{\mathbf{Y}}
	\newcommand{\G}{\boldsymbol{\Gamma}}
	\newcommand{\Gp}{\G_p}
	\newcommand{\Gq}{\G_q}
	\newcommand{\Gt}{\tilde{\G}}
	\newcommand{\Gpt}{\Gt_p}
	\newcommand{\Gqt}{\Gt_q}
	\renewcommand{\L}{\boldsymbol{\Lambda}}
	\newcommand{\Lp}{\L_p}
	\newcommand{\Lt}{\tilde{\L}}
	\newcommand{\Psit}{\boldsymbol{\tilde{\Psi}}}
	\newcommand{\Psic}{\boldsymbol{\check{\Psi}}}
	\newcommand{\Psihat}{\boldsymbol{\hat{\Psi}}}
	\newcommand{\bSig}{\boldsymbol{\Sigma}}
	\newcommand{\Shat}{\boldsymbol{\hat{\Sigma}}}
	\newcommand{\bQ}{\mathbf{Q}}
	\newcommand{\bXi}{\boldsymbol{\Xi}}
	\renewcommand{\l}{\ell}
	\newcommand{\e}{\varepsilon} 
	\newcommand{\N}{\mathscr{N}}
	\renewcommand{\k}{\mathcal{k}}
\begin{document}
\setcounter{page}{0}

\title{\vspace{-.5in}On the Effect of Bias Estimation on Coverage Accuracy in Nonparametric Inference\footnotetext{
Sebastian Calonico is Assistant Professor of Economics, Department of Economics, University of Miami,  Coral Gables, FL 33124 (email: scalonico@bus.miami.edu).
Matias D. Cattaneo is Professor of Economics and Statistics, Department of Economics and Department of Statistics, University of Michigan, Ann Arbor, MI 48109 (email: cattaneo@umich.edu).
Max H. Farrell is Assistant Professor of Econometrics and Statistics, Booth School of Business, University of Chicago, Chicago, IL 60637 (email: max.farrell@chicagobooth.edu). 
The second author gratefully acknowledges financial support from the National Science Foundation (SES 1357561 and SES 1459931).
We thank Ivan Canay, Xu Cheng, Joachim Freyberger, Bruce Hansen, Joel Horowitz, Michael Jansson, Francesca Molinari, Ulrich M\"uller, and Andres Santos for thoughtful comments and suggestions, as well as seminar participants at Cornell, Cowles Foundation, CREST Statistics, London School of Economics, Northwestern, Ohio State University, Princeton, Toulouse School of Economics, University of Bristol, and University College London. The Associate Editor and three reviewers also provided very insightful comments that improved this manuscript.}
}
\author{Sebastian Calonico \\ Department of Economics \\ University of Miami \\ Coral Gables, FL 33124 \and Matias D. Cattaneo \\ Department of Economics \\ Department of Statistics \\ University of Michigan \\ Ann Arbor, MI 48109 \and Max H. Farrell \\ Booth School of Business \\ University of Chicago \\Chicago, IL 60637}
\date{December 16, 2017}
\maketitle

\thispagestyle{empty}

\newpage
\setcounter{page}{0}
\thispagestyle{empty}

\begin{abstract}
Nonparametric methods play a central role in modern empirical work. While they provide inference procedures that are more robust to parametric misspecification bias, they may be quite sensitive to tuning parameter choices. We study the effects of bias correction on confidence interval coverage in the context of kernel density and local polynomial regression estimation, and prove that bias correction can be preferred to undersmoothing for minimizing coverage error and increasing robustness to tuning parameter choice. This is achieved using a novel, yet simple, Studentization, which leads to a new way of constructing kernel-based bias-corrected confidence intervals. In addition, for practical cases, we derive coverage error optimal bandwidths and discuss easy-to-implement bandwidth selectors. For interior points, we show that the MSE-optimal bandwidth for the original point estimator (before bias correction) delivers the fastest coverage error decay rate after bias correction when second-order (equivalent) kernels are employed, but is otherwise suboptimal because it is too ``large''. Finally, for odd-degree local polynomial regression, we show that, as with point estimation, coverage error adapts to boundary points automatically when appropriate Studentization is used; however, the MSE-optimal bandwidth for the original point estimator is suboptimal. All the results are established using valid Edgeworth expansions and illustrated with simulated data. Our findings have important consequences for empirical work as they indicate that bias-corrected confidence intervals, coupled with appropriate standard errors, have smaller coverage error and are less sensitive to tuning parameter choices in practically relevant cases where additional smoothness is available.
\end{abstract}

\textbf{Keywords}: Edgeworth expansion, coverage error, kernel methods, local polynomial regression.

\newpage

\doublespacing

\section{Introduction}
	\label{sec:intro}

Nonparametric methods are widely employed in empirical work, as they provide point estimates and inference procedures that are robust to parametric misspecification bias. Kernel-based methods are commonly used to estimate densities, conditional expectations, and related functions nonparametrically in a wide variety of settings. However, these methods require specifying a bandwidth and their performance in applications crucially depends on how this tuning parameter is chosen. In particular, valid inference requires the delicate balancing act of selecting a bandwidth small enough to remove smoothing bias, yet large enough to ensure adequate precision. Tipping the scale in either direction can greatly skew results. This paper studies kernel density and local polynomial regression estimation and inference based on the popular Wald-type statistics and demonstrates (via higher-order expansions) that by coupling explicit bias correction with a novel, yet simple, Studentization, inference can be made substantially more robust to bandwidth choice, greatly easing implementability.

Perhaps the most common bandwidth selection approach is to minimize the asymptotic mean-square error (MSE) of the point estimator, and then use this bandwidth choice even when the goal is inference. So difficult is bandwidth selection perceived to be, that despite the fact that the MSE-optimal bandwidth leads to \emph{invalid} confidence intervals, even asymptotically, this method is still advocated, and is the default in most popular software. Indeed, \citet[p. 1446]{Hall-Kang2001_AoS} write: ``there is a growing belief that the most appropriate approach to constructing confidence regions is to estimate [the density] in a way that is optimal for pointwise accuracy\ldots. [I]t has been argued that such an approach has advantages of clarity, simplicity and easy interpretation.''

The underlying issue, as formalized below, is that bias must be removed for valid inference, and the MSE-optimal bandwidth (in particular) is ``too large'', leaving a bias that is still first order. Two main methods have been proposed to address this: undersmoothing and explicit bias correction. We seek to compare these two, and offer concrete ways to better implement the latter. Undersmoothing amounts to choosing a bandwidth smaller than would be optimal for point estimation, then arguing that the bias is smaller than the variability of the estimator asymptotically, leading to valid distributional approximations and confidence intervals. In practice this method often involves simply shrinking the MSE-optimal bandwidth by an ad-hoc amount. The second approach is to bias correct the estimator with the explicit goal of removing the bias that caused the invalidity of the inference procedure in the first place. 

It has long been believed that undersmoothing is preferable for two reasons. First, theoretical studies showed inferior asymptotic coverage properties of bias-corrected confidence intervals. The pivotal work was done by \citet{Hall1992_AoS_density}, and has been relied upon since. Second, implementation of bias correction is perceived as more complex because a second (usually different) bandwidth is required, deterring practitioners. However, we show theoretically that bias correction is always as good as undersmoothing, and better in many practically relevant cases, if the new standard errors that we derive are used. Further, our findings have important implications for empirical work because the resulting confidence intervals are more robust to bandwidth choice, including to the bandwidth used for bias estimation. Indeed, the two bandwidths may be set equal, a simple and automatic choice that performs well in practice and is optimal in certain objective senses. 

Our proposed robust bias correction method delivers valid confidence intervals (and related inference procedures) even when using the MSE-optimal bandwidth for the original point estimator, the most popular approach in practice. Moreover, we show that at interior points, when using second-order kernels or local linear regressions, the coverage error of such intervals vanishes at the best possible rate. (Throughout, the notion of ``optimal'' or ``best'' rate is defined as the fastest achievable coverage error decay for a \emph{fixed} kernel order or polynomial degree; and is also different from optimizing point estimation.) When higher-order kernels are used, or boundary points are considered, we find that the corresponding MSE-optimal bandwidth leads to asymptotically valid intervals, but with suboptimal coverage error decay rates, and must be shrunk (sometimes considerably) for better inference.

Heuristically, employing the MSE-optimal bandwidth for the original point estimator, prior to bias correction, is like undersmoothing the bias-corrected point estimator, though the latter estimator employs a possibly random, $n$-varying kernel, and requires a different Studentization scheme. It follows that the conventional MSE-optimal bandwidth commonly used in practice need not be optimal, even after robust bias correction, when the goal is inference. Thus, we present new coverage error optimal bandwidths and a fully data-driven direct plug-in implementation thereof, for use in applications. In addition, we study the important related issue of asymptotic length of the new confidence intervals.

Our comparisons of undersmoothing and bias correction are based on Edgeworth expansions for density estimation and local polynomial regression, allowing for different levels of smoothness of the unknown functions. We prove that explicit bias correction, coupled with our proposed standard errors, yields confidence intervals with coverage that is as accurate, or better, than undersmoothing (or, equivalently, yields dual hypothesis tests with lower error in rejection probability). Loosely speaking, this improvement is possible because explicit bias correction can remove more bias than undersmoothing, while our proposed standard errors capture not only the variability of the original estimator but also the additional variability from bias correction. To be more specific, our robust bias correction approach yields higher-order refinements whenever additional smoothness is available, and is asymptotically equivalent to the best undersmoothing procedure when no additional smoothness is available.

Our findings contrast with well established recommendations: \citet{Hall1992_AoS_density} used Edgeworth expansions to show that undersmoothing produces more accurate intervals than explicit bias correction in the density case and \citet{Neumann1997_Statistics} repeated this finding for kernel regression. The key distinction is that their expansions, while imposing the same levels of smoothness as we do, crucially relied on the assumption that the bias correction was first-order negligible, essentially forcing bias correction to remove less bias than undersmoothing. In contrast, we allow the bias estimator to potentially have a first order impact, an alternative asymptotic experiment designed to more closely mimic the finite-sample behavior of bias correction. Therefore, our results formally show that whenever additional smoothness is available to characterize leading bias terms, as is usually the case in practice where MSE-optimal bandwidth are employed, our robust bias correction approach yields higher-order improvements relative to standard undersmoothing.

Our standard error formulas are based on fixed-$n$ calculations, as opposed to asymptotics, which also turns out to be important. We show that using asymptotic variance formulas can introduce further errors in coverage probability, with particularly negative consequences at boundary points. This turns out to be at the heart of the ``quite unexpected'' conclusion found by \citet[Abstract]{Chen-Qin2002_SJS} that local polynomial based confidence intervals are not boundary-adaptive in coverage error: we prove that this is not the case with proper Studentization. Thus, as a by-product of our main theoretical work, we establish higher-order boundary carpentry of local polynomial based confidence intervals that use a fixed-$n$ standard error formula, a result that is of independent (but related) interest.

This paper is connected to the well-established literature on nonparametric smoothing, see \citet{Wand-Jones1995_book}, \citet{Fan-Gijbels1996_book}, \citet{Horowitz2009_book}, and \citet{Ruppert-Wand-Carroll2009_book} for reviews. For more recent work on bias and related issues in nonparametric inference, see \citet{Hall-Horowitz2013_AoS}, \citet{Calonico-Cattaneo-Titiunik2014_Ecma}, \citet{Armstrong-Kolesar2015_minimax}, \citet{Schennach2015_bias}, and references therein. We also contribute to the literature on Edgeworth expansions, which have been used both in parametric and, less frequently, nonparametric contexts; see, e.g., \citet{Bhattacharya-Rao1976_book} and \citet{Hall1992_book}. Fixed-$n$ versus asymptotic-based Studentization has also captured some recent interest in other contexts, e.g., \citet{Mykland-Zhang2015_WP}. Finally, see \citet{Calonico-Cattaneo-Farrell2016_EE-RD} for uniformly valid Edgeworth expansions and optimal inference.

The paper proceeds as follows. Section \ref{sec:density} studies density estimation at interior points and states the main results on error in coverage probability and its relationship to bias reduction and underlying smoothness, as well as discussing bandwidth choice and interval length. Section \ref{sec:locpoly} then studies local polynomial estimation at interior and boundary points. Practical guidance is explicitly discussed in Sections \ref{sec:practical density} and \ref{sec:practical locpoly}, respectively; all methods are available in {\sf R} and {\tt STATA} via the {\tt nprobust} package, see \citet{Calonico-Cattaneo-Farrell2017_nprobust}. Section \ref{sec:simuls} summarizes the results of a Monte Carlo study, and Section \ref{sec:conclusion} concludes. Some technical details, all proofs, and additional simulation evidence are collected in a lengthy online supplement.

\section{Density Estimation and Inference}
	\label{sec:density}

We first present our main ideas and conclusions for inference on the density at an interior point, as this requires relatively little notation. The data are assumed to obey the following.
\begin{assumption}[Data-generating process] 
	\label{dgp density} \,
	$\{X_1, \ldots, X_n\}$ is a random sample with an absolutely continuous distribution with Lebesgue density $f$. In a neighborhood of $x$, $f>0$, $f$ is $S$-times continuously differentiable with bounded derivatives $f^{(s)}$, $s=1,2,\cdots,S$, and $f^{(S)}$ is H\"older continuous with exponent $\varsigma$.
\end{assumption}
The parameter of interest is $f(x)$ for a fixed scalar point $x$ in the interior of the support. (In the supplemental appendix we discuss how our results extend naturally to multivariate $X_i$ and derivative estimation.) The classical kernel-based estimator of $f(x)$ is
\begin{equation}
	\label{eqn:f hat}
	\hat{f}(x) = \frac{1}{n \h} \sumi K\left( \frac{x - X_i}{\h} \right),
\end{equation}
for a kernel function $K$ that integrates to $1$ and positive bandwidth $\h \to 0$ as $n \to \infty$. The choice of $\h$ can be delicate, and our work is motivated in part by the standard empirical practice of employing the MSE-optimal bandwidth choice for $\hat{f}(x)$ when conducting inference.

In this vein, let us suppose for the moment that $K$ is a kernel of order $\k$, where $\k \leq S$ so that the MSE-optimal bandwidth can be characterized. The bias is then given by
\begin{equation}
	\label{eqn:bias density}
	\E[\hat{f}(x)] - f(x) = \h^\k f^{(\k)}(x) \mu_{K,\k} + o(\h^\k),
\end{equation}
where $f^{(\k)}(x):=\partial^\k f(x)/\partial x^\k$ and $\mu_{K,\k} =  \int u^\k K(u) du / \k!$. Computing the variance gives
\begin{equation}
	\label{eqn:variance density}
	(n\h) \V[\hat{f}(x)] = \frac{1}{\h} \left\{ \E \left[ K \left( \frac{x - X_i}{\h} \right)^2 \right] - \E \left[ K \left( \frac{x - X_i}{\h} \right) \right]^2   \right\},
\end{equation}
which is \emph{non-asymptotic}: $n$ and $\h$ are fixed in this calculation. Using other, first-order valid approximations, e.g.\ $(n\h) \V[\hat{f}(x)] \approx f(x) \int K(u)^2 du$, will have finite sample consequences that manifest as additional terms in the Edgeworth expansions. In fact, Section \ref{sec:locpoly} shows that using an asymptotic variance for local polynomial regression  removes automatic coverage-error boundary adaptivity.

Together, the prior two displays are used to characterize the MSE-optimal bandwidth, $\h^*_\MSE\propto n^{-1/(1 + 2 \k)}$. However, using this bandwidth leaves a bias that is too large, relative to the variance, to conduct valid inference for $f(x)$. To address this important practical problem, researchers must either undersmooth the point estimator (i.e., construct $\hat{f}(x)$ with a bandwidth smaller than  $\h^*_\MSE$) or bias-correct the point estimator (i.e., subtract an estimate of the leading bias). Thus, the question we seek to answer is this: if the bias is given by \eqref{eqn:bias density}, is one better off estimating the leading bias (explicit bias correction) or choosing $\h$ small enough to render the bias negligible (undersmoothing) when forming nonparametric confidence intervals?

To answer this question, and to motivate our new robust approach, we first detail the bias correction and variance estimators. Explicit bias correction estimates the leading term of \Eqref{eqn:bias density}, denoted by $B_f$, using a kernel estimator of $f^{(\k)}(x)$, defined as:
\begin{equation*}
	\label{eqn:f r hat}
	\hat{B}_f = \h^\k \hat{f}^{(\k)}(x) \mu_{K,\k},   		\qquad \quad\text{ where }\qquad \quad		   \hat{f}^{(\k)}(x) = \frac{1}{n \b^{1 + \k} } \sumi L^{(\k)}\left( \frac{x - X_i}{\b} \right),
\end{equation*}
for a kernel $L(\cdot)$ of order $\ell$ and a bandwidth $\b \to 0$ as $n \to \infty$. Importantly, $\hat{B}_f$ takes this form for any $\k$ and $S$, even if \eqref{eqn:bias density} fails; see Sections \ref{sec:corollaries} and \ref{sec:compare} for discussion. Conventional Studentized statistics based on undersmoothing and explicit bias correction are, respectively, 
\[\tus(x) = \frac{\sqrt{n\h}\bigl(\hat{f}(x) - f(x)\bigr)}{\shatus}  	\qquad \text{ and } \qquad 	 \tbc(x) =  \frac{\sqrt{n\h}\bigl(\hat{f}(x) - \hat{B}_f -  f(x)\bigr)}{\shatus},\]
where $\shatus^2 := \hat{\V}[ \hat{f}(x)]$ is the natural estimator of the variance of $\hat{f}(x)$ which only replaces the two expectations in \eqref{eqn:variance density} with sample averages, thus maintaining the nonasymptotic spirit. These are the two statistics compared in the influential paper of \cite{Hall1992_AoS_density}, under the same assumption imposed herein.

From the form of these statistics, two points are already clear. First, the numerator of $\tus$ relies on choosing $\h$ vanishing fast enough so that the bias is asymptotically negligible after scaling, whereas $\tbc$ allows for slower decay by virtue of the manual estimation of the leading bias. Second, $\tbc$ requires that the variance of $\h^\k \hat{f}^{(\k)}(x) \mu_{K,\k}$ be first-order asymptotically negligible: $\shatus$ in the denominator only accounts for the variance of the main estimate, but $\hat{f}^{(\k)}(x)$, being a kernel-based estimator, naturally has a variance controlled by its bandwidth. That is, even though $\shatus^2$ is based on a fixed-$n$ calculation, the variance of the numerator of $\tbc$ only coincides with the denominator asymptotically. Under this regime, \cite{Hall1992_AoS_density} showed that the bias reduction achieved in $\tbc$ is too expensive in terms of noise and that undersmoothing dominates explicit bias correction for coverage error.

We argue that there need not be such a ``mismatch'' between the numerator of the bias-corrected statistic and the Studentization, and thus consider a third option corresponding to the idea of capturing the finite sample variability of $\hat{f}^{(\k)}(x)$ directly. To do so, note that we may write, after setting $\rho=h/b$,
\begin{equation}
	\label{eqn:kernel M}
	\hat{f}(x) - \h^\k \hat{f}^{(\k)}(x) \mu_{K,\k} = \frac{1}{n \h} \sumi M\left( \frac{x - X_i}{\h} \right),		\quad		 M(u) = K(u) - \rho^{1 + \k} L^{(\k)}(\rho u) \mu_{K,\k}.
\end{equation}
We then define the collective variance of the density estimate and the bias correction as $\srbc^2 = (n\h) \V[\hat{f}(x) - \hat{B}_f]$, exactly as in \Eqref{eqn:variance density}, but with $M(\cdot)$ in place of $K(\cdot)$, and its estimator $\shatrbc^2$ exactly as $\shatus^2$. Therefore, our proposed robust bias corrected inference approach is based on 
\[\trbc =   \frac{\sqrt{n\h}\bigl(\hat{f}(x) - \h^\k \hat{f}^{(\k)}(x) \mu_{K,\k} -  f(x)\bigr)}{ \shatrbc  }.\]
That is, our proposed standard errors are based on a fixed-$n$ calculation that captures the variability of both $\hat{f}(x)$ and $\hat{f}^{(\k)}(x)$, and their covariance. As shown in Section \ref{sec:locpoly}, the case of local polynomial regression is analogous, but notationally more complicated.

The quantity $\rho=h/b$ is key. If $\rho \to 0$, then the second term of $M$ is dominated by the first, i.e.\ the bias correction is first-order negligible. In this case, $\sus^2$ and $\srbc^2$ (and their estimators) will be first-order, but not higher-order, equivalent. This is exactly the sense in which traditional bias correction relies on an asymptotic variance, instead of a fixed-$n$ one, and pays the price in coverage error. To more accurately capture finite sample behavior of bias correction we allow $\rho$ to converge to any (nonnegative) finite limit, allowing (but not requiring) the bias correction to be first-order important, unlike prior work. We show that doing so yields more accurate confidence intervals (i.e., higher-order corrections).

\subsection{Generic Higher Order Expansions of Coverage Error}
	\label{sec:coverage}

We first present generic Edgeworth expansions for all three procedures (undersmoothing, traditional bias correction, and robust bias correction), which are agnostic regarding the level of available smoothness (controlled by $S$ in Assumption \ref{dgp density}). To be specific, we give higher-order expansions of the error in coverage probability of the following $(1-\alpha)\%$ confidence intervals based on Normal approximations for the statistics $\tus$, $\tbc$, and $\trbc$:
\begin{equation}
	\begin{split}
		\label{eqn:intervals}
		\ius & = \left[\hat{f} - z_{1 - \frac{\alpha}{2}} \frac{\shatus}{\sqrt{n\h}},	\ \ 	\hat{f} - z_{\frac{\alpha}{2}} \frac{\shatus}{\sqrt{n\h}}\right],   		\\
		\ibc &  = \left[\hat{f} - \hat{B}_f - z_{1 - \frac{\alpha}{2}}\frac{\shatus}{\sqrt{n\h}},	\ \ 	\hat{f} - \hat{B}_f - z_{\frac{\alpha}{2}} \frac{\shatus}{\sqrt{n\h}}\right],  \qquad\qquad \text{ and }   		\\
		\irbc & = \left[\hat{f} - \hat{B}_f - z_{1 - \frac{\alpha}{2}} \frac{\shatrbc}{\sqrt{n\h}},	 \ \  	\hat{f} - \hat{B}_f - z_{\frac{\alpha}{2}} \frac{\shatrbc}{\sqrt{n\h}} \right],
	\end{split}
\end{equation}
where $z_{\alpha}$ is the upper $\alpha$-percentile of the Gaussian distribution. Here and in the sequel we omit the point of evaluation $x$ for simplicity. Equivalently, our results can characterize the error in rejection probability of the corresponding hypothesis tests. In subsequent sections, we give specific results under different smoothness assumptions and make direct comparisons of the methods. 

We require the following standard conditions on the kernels $K$ and $L$. 
\begin{assumption}[Kernels] 
	\label{kernel density} \, 
	The kernels $K$ and $L$ are bounded, even functions with support $[-1,1]$, and are of order $\k \geq 2$ and $\ell \geq 2$, respectively, where $\k$ and $\ell$ are even integers. That is, $\mu_{K,0} = 1$, $\mu_{K,k} = 0$ for $1 \leq k < \k$, and $\mu_{K,\k} \neq 0$ and bounded, and similarly for $\mu_{L,k}$ with $\ell$ in place of $\k$. Further, $L$ is $\k$-times continuously differentiable. For all integers $k$ and $l$ such that $k + l = \k-1$, $f^{(k)}(x_0) L^{(l)}((x_0 - x)/\b) = 0$ for $x_0$ in the boundary of the support.
\end{assumption}
The boundary conditions are needed for the derivative estimation inherent in bias correction, even if $x$ is an interior point, and are satisfied if the support of $f$ is the whole real line. Higher order results also require a standard $n$-varying Cram\'er's condition, given in the supplement to conserve space (see Section S.I.3). Altogether, our assumptions are identical to those of \citet{Hall1991_Statistics,Hall1992_AoS_density}.

To state the results some notation is required. First, let the (scaled) biases of the density estimator and the bias-corrected estimator be $\eus = \sqrt{n\h}(\E[\hat{f}] - f)$ and $\ebc = \sqrt{n\h}(\E[\hat{f} - \hat{B}_f] - f)$. Next, let $\phi(z)$ be the standard Normal density, and for any kernel $K$ define
\begin{align*}
    q_1(K) & = \vartheta_{K,2}^{-2} \vartheta_{K,4}(z_{\frac{\alpha}{2}}^3 - 3z_{\frac{\alpha}{2}})/6 - \vartheta_{K,2}^{-3} \vartheta_{K,3}^2 [2z^3/3 + (z_{\frac{\alpha}{2}}^5 - 10z_{\frac{\alpha}{2}}^3 + 15z_{\frac{\alpha}{2}})/9],\\
    q_2(K) & = - \vartheta_{K,2}^{-1}\, z_{\frac{\alpha}{2}},		\qquad \text{ and } \qquad 		q_3(K)  = \vartheta_{K,2}^{-2} \vartheta_{K,3}( 2 z_{\frac{\alpha}{2}}^3/3),
\end{align*}
where $\vartheta_{K,k} = \int K(u)^k du$. All that is conceptually important is that these functions are known, odd polynomials in $z$ with coefficients that depend only on the kernel, and not on the sample or data generating process. Our main theoretical result for density estimation is the following.
\begin{theorem}
	\onehalfspacing
	\label{thm:coverage density}
	Let Assumptions \ref{dgp density}, \ref{kernel density}, and Cram\'er's condition hold and $n \h/ \log(n\h) \to \infty$.
	\begin{enumerate}

		\item If $\eus \to 0$, then 
			\begin{align*}
				\P[f \in \ius] = 1 - \alpha    &    +   \left\{ \frac{1}{n \h} q_1(K)  +  \eus^2 q_2(K)   +   \frac{\eus}{\sqrt{n \h}} q_3(K) \right\}  \frac{\phi(z_{\frac{\alpha}{2}})}{f}  \; \{1+o(1)\}.
			\end{align*}

		\item If $\ebc \to 0$ and $\rho \to 0$, then 
			\begin{align*}
				\P[f \in \ibc]   = 1 - \alpha   &  +   \left\{\frac{1}{n \h} q_1(K)  +  \ebc^2 q_2(K)   +   \frac{\ebc}{\sqrt{n \h}} q_3(K) \right\}  \frac{\phi(z_{\frac{\alpha}{2}})}{f}  \; \{1+o(1)\}    			\\
				& + \rho^{1+\k} (\Omega_1 + \rho^{\k} \Omega_2) \phi(z_{\frac{\alpha}{2}}) z_{\frac{\alpha}{2}} \; \{1+o(1)\},
			\end{align*}
			for constants $\Omega_1$ and $\Omega_2$ given precisely in the supplement.

		\item If $\ebc \to 0$ and $\rho \to \bar{\rho}<\infty$, then 
			\begin{align*}
				\P[f \in \irbc]   = 1 - \alpha  & +    \left\{ \frac{1}{n \h} q_1(M)  +  \ebc^2 q_2(M)   +   \frac{\ebc}{\sqrt{n \h}} q_3(M)  \right\}  \frac{\phi(z_{\frac{\alpha}{2}})}{f}  \; \{1+o(1)\}.
			\end{align*}

	\end{enumerate}

\end{theorem}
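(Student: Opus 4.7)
The plan is to cast each of the three Studentized statistics as a smooth function of a low-dimensional vector of sample means and then invoke the Edgeworth expansion machinery for Studentized smooth-function models as developed by \citet{Bhattacharya-Rao1976_book} and \citet{Hall1992_book}. For $\tus$, take $Z_i=(K_h(X_i),\,K_h(X_i)^2)$ with $K_h(u):=\h^{-1}K((x-u)/\h)$; for $\trbc$, use the same construction with $K$ replaced by the (possibly $n$-varying) effective kernel $M$ of \eqref{eqn:kernel M}; for $\tbc$, use a four-dimensional vector that also tracks $L^{(\k)}((x-X_i)/\b)$ along with its square and its cross moment with $K_h$. Under Assumptions \ref{dgp density}--\ref{kernel density} and the $n$-varying Cram\'er condition, the formal Edgeworth expansion
\[
\P[T\le z] \;=\; \Phi(z) - \phi(z)\Bigl[(n\h)^{-1/2}\,p_1(z) + (n\h)^{-1}\,p_2(z)\Bigr] + o\bigl((n\h)^{-1}\bigr)
\]
holds with $p_1$ even and $p_2$ odd in $z$, modulo a shift $\sqrt{n\h}(\E\hat f - f)/\sigma$ of the Gaussian argument arising from the bias. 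Forming the two-sided coverage $\P[|T|\le z_{\alpha/2}]$ cancels the even-polynomial contributions and the linear-in-$\eta$ shift, so that only the $\phi\cdot p_2$ term, the quadratic bias shift, and the interaction of $\eta$ with $\phi\cdot p_1$ survive.

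\noindent\textbf{Parts (a) and (c) in parallel.} The key observation that reduces part (c) to the same argument as part (a) is that, by \eqref{eqn:kernel M}, $\hat f - \hat B_f$ is itself a kernel density estimator with kernel $M$, and $\shatrbc^2$ is precisely the fixed-$n$ sample-variance estimator attached to $M$ (exactly as $\shatus^2$ is attached to $K$). Hence $\trbc$ has the same algebraic form as $\tus$ under the substitution $K\mapsto M$ and $\hat f\mapsto \hat f-\hat B_f$. Since $\rho\to\bar\rho<\infty$, the kernel $M$ converges uniformly to a bounded $\bar M$ with $\vartheta_{\bar M,2}>0$, so the moment and Cram\'er conditions transfer. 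The three terms in the coverage expansion are then standard to identify from the sketch above: $q_1(K)/(n\h)$ comes from the surviving $p_2$ polynomial and depends on $\vartheta_{K,2},\vartheta_{K,3}^2,\vartheta_{K,4}$; $\eus^2\,q_2(K)$ is the leading quadratic survivor of $\Phi(z-\eta/\sigma)-\Phi(-z-\eta/\sigma)$; and $\eus\,q_3(K)/\sqrt{n\h}$ captures the cross term between the bias shift and the $p_1$ polynomial. The $\phi(z_{\alpha/2})/f$ prefactor reflects $\sus^2\to f(x)\vartheta_{K,2}$ and the absorption of $\vartheta_{K,2}$'s into the $q_j$'s. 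Replacing $K$ by $M$ and $\eus$ by $\ebc$ throughout delivers part (c).

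\noindent\textbf{Part (b), the mismatched case.} When $\rho\to 0$, the numerator of $\tbc$ is the bias-corrected quantity but the denominator is $\shatus$, not $\shatrbc$. Write
\[
\tbc \;=\; \trbc\,\cdot\,\frac{\shatrbc}{\shatus}
\]
and Taylor-expand the ratio via $\shatrbc/\shatus = 1 + (\shatrbc^2-\shatus^2)/(2\shatus^2) + \cdots$. The population difference $\srbc^2-\sus^2$ is exactly the variance of $\h^\k\hat f^{(\k)}\mu_{K,\k}$ plus its covariance with $\hat f$; a direct calculation under $\rho\to 0$ shows the covariance piece is $O(\rho^{1+\k})$ and the pure-variance piece is $O(\rho^{1+2\k})$. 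Feeding these back into the expansion of part (c) and matching orders produces the extra $\rho^{1+\k}(\Omega_1+\rho^{\k}\Omega_2)\phi(z_{\alpha/2})z_{\alpha/2}$ correction, with $\Omega_1$ built from the cross moment $\int K(u)L^{(\k)}(u)du$ (times $\mu_{K,\k}/\vartheta_{K,2}$) and $\Omega_2$ built from $\vartheta_{L^{(\k)},2}$ (times $\mu_{K,\k}^2/\vartheta_{K,2}$).

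\noindent\textbf{Main obstacle.} The technical heart is the validity of the Edgeworth expansion for a triangular array in which, in part (c), the kernel $M$ itself depends on $n$ through $\rho$ and, in part (b), an additional bandwidth $\b$ enters a higher-dimensional model. One must verify Cram\'er's condition uniformly in $n$ for the rescaled sample-mean vector, bound its characteristic function away from the origin uniformly as $\h,\b\to 0$, and show that the formal remainder is genuinely $o(1/(n\h))$ rather than merely pointwise. These steps are precisely the content of the $n$-varying Cram\'er condition together with the boundedness, support, and smoothness conditions in Assumption \ref{kernel density}; once the uniformity statements are in hand, matching the surviving Edgeworth polynomials to the $q_j$'s is a mechanical cumulant computation.
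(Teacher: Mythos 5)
Your proposal is correct and follows essentially the same route as the paper's proof: the paper likewise unifies all three statistics as smooth functions of the sample means of $N_v(X_{\h,i})$ and $N_w(X_{\h,i})^2$ with $N_1=K$, $N_2=M$ (so that $\trbc$ is literally $\tus$ with $K\mapsto M$), disposes of the Taylor remainder of $1/\shat_w$ by a Bernstein-inequality delta-method lemma, invokes the Hall-type Edgeworth expansion under the $n$-varying Cram\'er condition, and obtains the $\rho^{1+\k}\Omega_1+\rho^{1+2\k}\Omega_2$ correction in part (b) from the ratio $\srbc^2/\sus^2$ exactly as your factorization $\tbc=\trbc\cdot(\shatrbc/\shatus)$ does. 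The only nit is that the cross moment driving $\Omega_1$ is $\int K(u)L^{(\k)}(\rho u)\,du$ (tending to $L^{(\k)}(0)$ as $\rho\to 0$) rather than $\int K(u)L^{(\k)}(u)\,du$, a constant-level detail the theorem defers to the supplement anyway.
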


This result leaves the scaled biases $\eus$ and $\ebc$ generic, which is useful when considering different levels of smoothness $S$, the choices of $\k$ and $\ell$, and in comparing to local polynomial results. In the next subsection, we make these quantities more precise and compare them, paying particular attention to the role of the underlying smoothness assumed.

At present, the most visually obvious feature of this result is that all the error terms are of the same form, except for the notable presence of $\rho^{1+\k}(\Omega_1 + \rho^\k \Omega_2)$ in part (b). These are the leading terms of $\srbc^2 / \sus^2 - 1$, consisting of the covariance of $\hat{f}$ and $\hat{B}_f$ (denoted by $\Omega_1$) and the variance of $\hat{B}_f$ (denoted by $\Omega_2$), and are entirely due to the ``mismatch'' in the Studentization of $\tbc$. \citet{Hall1992_AoS_density} showed how these terms prevent bias correction from performing as well as undersmoothing in terms of coverage. In essence, the potential for improved bias properties do not translate into improved inference because the variance is not well-controlled: in any finite sample, $\hat{B}_f$ would inject variability (i.e., $\rho=\h/\b>0$ for each $n$) and thus $\rho \to 0$ may not be a good approximation. Our new Studentization does not simply remove these leading $\rho$ terms; the entire sequence is absent. As explained below, allowing for $\bar{\rho}=\infty$ can not reduce bias, but will inflate variance; hence restricting to $\bar{\rho}<\infty$ capitalizes fully on the improvements from bias correction.

\subsection{Coverage Error and the Role of Smoothness}
	\label{sec:corollaries}

Theorem \ref{thm:coverage density} makes no explicit assumption about smoothness beyond the requirement that the scaled biases vanish asymptotically. The fact that the error terms in parts (a) and (c) of Theorem \ref{thm:coverage density} take the same form implies that comparing coverage error amounts to comparing bias, for which the smoothness $S$ and the kernel orders $\k$ and $\ell$ are crucial. We now make the biases $\eus$ and $\ebc$ concrete and show how coverage is affected.

For $\ius$, two cases emerge: (a) enough derivatives exist to allow characterization of the MSE-optimal bandwidth ($\k \leq S$); and (b) no such smoothness is available ($\k > S$), in which case the leading term of \Eqref{eqn:bias density} is exactly zero and the bias depends on the unknown H\"older constant. These two cases lead to the following results.
\begin{corollary}
	\onehalfspacing
	\label{thm:US coverage}
	Let Assumptions \ref{dgp density}, \ref{kernel density}, and Cram\'er's condition hold and $n \h/ \log(n\h) \to \infty$.
	\begin{enumerate}
	
		\item If $\k \leq S$ and $\sqrt{n \h} \h^\k \to 0$,
			\[\P[f \in \ius] = 1 - \alpha  + \biggl\{\frac{1}{n \h} q_1(K)
	           + n\h^{1+2\k}(f^{(\k)})^2 \mu_{K,\k}^2 q_2(K) + \h^\k f^{(\k)} \mu_{K,\k} q_3(K) \biggr\} \frac{\phi(z_{\frac{\alpha}{2}})}{f} \; \{1+o(1)\}.    \]

		\item If $\k > S$ and $\sqrt{n \h} \h^{S + \varsigma} \to 0$,
			\[\P[f \in \ius] = 1 - \alpha  + \frac{1}{n \h} \frac{\phi(z_{\frac{\alpha}{2}})}{f} q_1(K)    \; \{1+o(1)\}    +   O\left(n \h^{1 + 2(S+\varsigma)}   +    \h^{S+\varsigma}   \right).   \]
	\end{enumerate}
\end{corollary}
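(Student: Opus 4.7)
The strategy for both parts is to apply Theorem~\ref{thm:coverage density}(a) directly, substituting an explicit expression (or bound) for the scaled bias $\eus = \sqrt{n\h}(\E[\hat f]-f)$ that is valid under the available smoothness. The core of the work is therefore the bias calculation; once $\eus$ is known, the corollary follows by simplifying $\eus^2 q_2(K)$ and $\eus(n\h)^{-1/2} q_3(K)$ and absorbing the resulting $o(1)$ factors into the global remainder.

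For part~(a), I would start from $\E[\hat f]-f = \int K(u)[f(x-\h u)-f(x)]\,du$ (change of variables $u=(x-y)/\h$) and Taylor expand $f(x-\h u)$ around $x$ to order $\k$. Since $\k \le S$, $f^{(\k)}$ is continuous near $x$, and since $\mu_{K,k}=0$ for $1\le k<\k$ while $\mu_{K,\k}\ne 0$, only the degree-$\k$ term survives at leading order. This is exactly the standard bias formula \eqref{eqn:bias density}, so $\eus = \sqrt{n\h}\,\h^\k f^{(\k)}\mu_{K,\k}\{1+o(1)\}$, which tends to $0$ by the assumption $\sqrt{n\h}\h^\k \to 0$. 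Substituting $\eus^2 = n\h^{1+2\k}(f^{(\k)})^2\mu_{K,\k}^2\{1+o(1)\}$ and $\eus/\sqrt{n\h} = \h^\k f^{(\k)}\mu_{K,\k}\{1+o(1)\}$ into Theorem~\ref{thm:coverage density}(a) and collecting $\{1+o(1)\}$ factors yields the displayed expansion.

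For part~(b), since $\k>S$ the $\k$-th derivative need not exist, so I would instead Taylor expand to order $S$ with integral remainder,
\[
f(x-\h u) = \sum_{k=0}^{S} \frac{(-\h u)^{k}}{k!} f^{(k)}(x) + \int_0^1 \frac{(1-t)^{S-1}}{(S-1)!}(-\h u)^{S}\bigl[f^{(S)}(x-t\h u) - f^{(S)}(x)\bigr]\,dt,
\]
and then use H\"older continuity of $f^{(S)}$ with exponent $\varsigma$ to bound the bracketed difference by a constant multiple of $|t\h u|^{\varsigma}$. Because $\mu_{K,k}=0$ for every $1\le k\le S<\k$, all polynomial Taylor terms integrate to zero against $K(u)$. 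Integrating the remainder against $K$ on its compact support $[-1,1]$ then gives $|\E[\hat f]-f| = O(\h^{S+\varsigma})$, so $|\eus| = O(\sqrt{n\h}\,\h^{S+\varsigma}) = o(1)$ under the stated bandwidth condition. Plugging into Theorem~\ref{thm:coverage density}(a) and using boundedness of $q_2(K)$ and $q_3(K)$, the two bias contributions become $O(n\h^{1+2(S+\varsigma)})$ and $O(\h^{S+\varsigma})$, which combine into the displayed $O(\cdot)$ term, while the variance term $(n\h)^{-1}q_1(K)\phi(z_{\frac{\alpha}{2}})/f$ is preserved with its multiplicative $\{1+o(1)\}$ factor.

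The main obstacle is essentially absent: Theorem~\ref{thm:coverage density}(a) does all the distributional work, and what remains is a routine bias analysis under two smoothness regimes. The one point requiring care is the remainder in part~(b)—verifying that the H\"older bound survives integration against $K$ (immediate because $K$ is bounded and supported on $[-1,1]$), and keeping track of the fact that only the variance term $(n\h)^{-1}q_1(K)\phi/f$ is retained explicitly while both bias terms are absorbed into the additive $O(\cdot)$ remainder.
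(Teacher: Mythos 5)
Your proposal is correct and follows essentially the same route as the paper: the supplement proves the corollary by plugging the bias expansion of Lemma S.1 (obtained via the change of variables $u=(x-y)/\h$, a Taylor expansion to order $S$, the vanishing kernel moments, and the H\"older bound on the remainder) into the generic coverage expansion of Theorem \ref{thm:coverage density}(a). The only cosmetic difference is that the paper writes the Taylor remainder in mean-value form $(-\h)^S\int u^S K(u)(f^{(S)}(\bar x)-f^{(S)}(x))\,du$ rather than your integral form, which changes nothing.
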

The first result is most directly comparable to \citet[\S 3.4]{Hall1992_AoS_density}, and many other past papers, which typically take as a starting point that the MSE-optimal bandwidth can be characterized. This shows that $\tus$ must be undersmoothed, in the sense that the MSE-optimal bandwidth is ``too large'' for valid inference. In fact, we know that $\ius(\h^*_\MSE)$ will asymptotically \emph{under}cover because $\tus(\h^*_\MSE) \to_d \N((2\k)^{-1/2},1)$ (see the supplement). Instead, the optimal $\h$ for coverage error, which can be characterized and estimated, is equivalent in rates to balancing variance against bias, not squared bias as in MSE. Part (b) shows that a faster rate of coverage error decay can be obtained by taking a sufficiently high order kernel, relative to the level of smoothness $S$, at the expense of feasible bandwidth selection.

Turning to robust bias correction, characterization of $\ebc$ is more complex as it has two pieces: the second-order bias of the original point estimator, and the bias of the bias estimator itself. The former is the $o(\h^\k)$ term of \Eqref{eqn:bias density} and is not the target of explicit bias correction; it depends either on higher derivatives, if they are available, or on the H\"older condition otherwise. To be precise, if $\k \leq S-2$, this term is $[\h^{\k + 2} + o(1)] f^{(\k + 2)} \mu_{K_\BC, \k + 2}$, while otherwise is known only to be $O(\h^{S + \varsigma})$. Importantly, the bandwidth $b$ and order $\ell$ do not matter here, and bias reduction beyond $O(\min\{\h^{\k + 2},\h^{S + \varsigma}\})$ is not possible; there is thus little or no loss in fixing $\ell=2$, which we assume from now on to simplify notation.

The bias of the bias estimator also depends on the smoothness available: if enough smoothness is available the corresponding bias term can be characterized, otherwise only its order will be known. To be specific, when smoothness is not binding ($\k \leq S-2$), arguably the most practically-relevant case, the leading term of $\E[\hat{B}_f] - B_f$ will be $\h^\k \b^2 f^{(\k + 2)} \mu_{K,\k} \mu_{L,2}$. Smoothness can be exhausted in two ways, either by the point estimate itself ($\k > S$) or by the bias estimation ($S-1 \leq \k \leq S$), and these two cases yield $O(\h^\k \b^{S - \k})$ and $O(\h^\k \b^{S + \varsigma - \k})$, respectively, which are slightly different in how they depend on the total H\"older smoothness assumed. (Complete details are in the supplement.) Note that regardless of the value of $\k$, we set $\hat{B}_f = \h^\k \hat{f}^{(\k)} \mu_{K,\k}$, even if $\k > S$ and $B_f \equiv 0$.

With these calculations for $\ebc$, we have the following result.
\begin{corollary}
	\onehalfspacing
	\label{thm:RBC coverage}
	Let Assumptions \ref{dgp density}, \ref{kernel density}, and Cram\'er's condition hold, $n \h/ \log(n\h) \to \infty$, $\rho \to \bar{\rho} < \infty$, and $\ell=2$.
	
	\begin{enumerate}
	
		\item If $\k \leq S - 2$ and $\sqrt{n \h} \h^\k \b^2 \to 0$,
			\begin{multline*}
				\hspace{-0.5in} \P[f \in \irbc]  =  1 - \alpha   + \biggl\{  \frac{1}{n \h}  q_1(M_{\bar{\rho}})   		
									  +   n\h^{1+2(\k+2)}  (f^{(\k+2)})^2 \left(\mu_{K,\k+2} - \bar{\rho}^{-2}\mu_{K,\k}\mu_{L,2}\right)^2  q_2(M_{\bar{\rho}})		 \\
					                                       +   \h^{\k+2}  f^{(\k+2)} \left( \mu_{K,\k+2} - \bar{\rho}^{-2}  \mu_{K,\k} \mu_{L,2} \right) q_3(M_{\bar{\rho}})   \biggr\}  \frac{\phi(z_{\frac{\alpha}{2}})}{f} \;\{1+o(1)\}.
			\end{multline*}

		\item If $S-1 \leq \k \leq S$ and $\sqrt{n \h} \rho^\k \b^{S + \varsigma} \to 0$,
			\begin{multline*}
				\P[f \in \irbc]  =  1 - \alpha   + \frac{1}{n \h} \frac{\phi(z_{\frac{\alpha}{2}})}{f}  q_1(M_{\bar{\rho}})  \;\{1+o(1)\}      			        +   O\left(    n  \h \rho^{2 \k} \b^{2(S+\varsigma)}   +  \rho^\k  \b^{S+\varsigma}       \right).  
			\end{multline*}

		\item If $\k > S$ and $\sqrt{n \h} \bigl(\h^{S+\varsigma} \vee \rho^{\k} \b^{S}\bigr) \to 0$,
			\[ \hspace{-0.4in} \P[f \in \irbc]  =  1 - \alpha   + \frac{1}{n \h} \frac{\phi(z_{\frac{\alpha}{2}})}{f}  q_1(M_{\bar{\rho}}) \{1+o(1)\}      	      +  	  O\left(    n  \h  (\h^{S+\varsigma} \! \vee \! \rho^\k \b^S)^2   +  (\h^{S+\varsigma} \! \vee \! \rho^{k}\b^{S})       \right).  \]
	\end{enumerate}
\end{corollary}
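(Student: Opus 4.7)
The plan is to derive Corollary \ref{thm:RBC coverage} by specializing Theorem \ref{thm:coverage density}(c) in each smoothness regime. That theorem already expresses coverage error entirely in terms of the scaled bias $\ebc = \sqrt{n\h}(\E[\hat{f} - \hat{B}_f] - f)$ and the kernel-dependent polynomials $q_j(M_{\bar{\rho}})$, with $M_{\bar{\rho}}(u) = K(u) - \bar{\rho}^{1+\k} L^{(\k)}(\bar{\rho}u)\mu_{K,\k}$ the pointwise limit of the kernel $M$ in \eqref{eqn:kernel M}. The variance and covariance contributions of $\hat{f}^{(\k)}$ are already folded into $q_1(M_{\bar{\rho}})$ through $\shatrbc$, so the work reduces to expanding $\ebc$ to the appropriate order under each of the three smoothness scenarios.

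I would split the expected bias as $\E[\hat{f}] - f$ plus $-\E[\hat{B}_f]$. Taylor expansion of $f$ at $x$ gives the standard $\E[\hat{f}] - f = \h^\k f^{(\k)}\mu_{K,\k} + \h^{\k+2} f^{(\k+2)}\mu_{K,\k+2} + o(\h^{\k+2})$ when $\k \leq S-2$, and only a H\"older remainder of size $O(\h^{S+\varsigma})$ beyond the leading $\h^\k$ term otherwise. For $\E[\hat{B}_f] = \h^\k \mu_{K,\k}\E[\hat{f}^{(\k)}]$, I would compute $\E[\hat{f}^{(\k)}]$ by $\k$-fold integration by parts---the boundary condition in Assumption \ref{kernel density} annihilates the boundary terms---to transfer derivatives from $L^{(\k)}$ onto $f$, then Taylor-expand $f^{(\k)}(x - \b v)$ using $\ell = 2$ to obtain $\E[\hat{f}^{(\k)}] = f^{(\k)} + \b^2 \mu_{L,2} f^{(\k+2)} + R_\b$, where $R_\b = O(\b^{S+\varsigma-\k})$ when the Taylor expansion must be truncated using H\"older continuity of $f^{(S)}$.

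The crucial observation is that the leading $\h^\k f^{(\k)}\mu_{K,\k}$ term of $\E[\hat{f}]-f$ cancels exactly against the corresponding term of $\E[\hat{B}_f]$, which is the raison d'\^etre of bias correction. In case (a), factoring $\h^{\k+2}$ and recalling $\b/\h = 1/\rho \to 1/\bar{\rho}$ collapses the remainder into $\h^{\k+2} f^{(\k+2)}\bigl(\mu_{K,\k+2} + \bar{\rho}^{-2}\mu_{K,\k}\mu_{L,2}\bigr)\{1+o(1)\}$, and substitution into the $\ebc^2 q_2(M_{\bar{\rho}})$ and $\ebc q_3(M_{\bar{\rho}})/\sqrt{n\h}$ terms of Theorem \ref{thm:coverage density}(c) reproduces the displayed expression. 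In case (b), the bias of $\hat{f}$ reduces to $O(\h^{S+\varsigma})$ and the bias of $\hat{B}_f$ to $O(\h^\k \b^{S+\varsigma-\k}) = O(\rho^\k \b^{S+\varsigma})$; these dominate $\ebc$, and passing through $q_2$ (square) and $q_3$ (linear) yields the stated $O(\cdot)$ remainder. Case (c) is analogous except that $f^{(\k)}$ no longer exists, so the bias-of-bias coarsens to $O(\rho^\k \b^S)$ and the remainder involves the maximum $\h^{S+\varsigma} \vee \rho^\k \b^S$.

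The main obstacle is the bookkeeping of signs and constants in case (a): one must simultaneously track the $(-1)^\k$ from iterated integration by parts, the $(-\b)^j$ factors from Taylor expansion of $f^{(\k)}(x-\b v)$, and the factorials in the definitions $\mu_{K,k} = \int u^k K(u)du/k!$ and $\mu_{L,2} = \int u^2 L(u)du/2!$, so that the residual ratio $\b^2/\h^2$ simplifies cleanly to $\bar{\rho}^{-2}$ with the coefficient matching the statement. A secondary point worth noting is that no separate analysis of $\V[\hat{f}-\hat{B}_f]$ is needed here: the fixed-$n$ Studentization $\shatrbc$ absorbs the covariance and variance of $\hat{B}_f$ into $q_1(M_{\bar{\rho}})$, which is precisely why the $\rho$-dependent leading remainder $\rho^{1+\k}(\Omega_1 + \rho^\k \Omega_2)$ that appeared in Theorem \ref{thm:coverage density}(b) is absent throughout Corollary \ref{thm:RBC coverage}.
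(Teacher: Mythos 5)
Your proposal is correct and follows essentially the same route as the paper: Corollary \ref{thm:RBC coverage} is obtained by substituting into Theorem \ref{thm:coverage density}(c) the explicit characterization of $\ebc$ from the supplement's bias lemma, which itself rests on the decomposition $\E[\hat{f}-\hat{B}_f]-f = \E[\hat{f}-f-B_f]+\E[B_f-\hat{B}_f]$, $\k$-fold integration by parts (using the boundary conditions of Assumption \ref{kernel density}) to evaluate $\E[\hat{f}^{(\k)}]$, and a Taylor/H\"older expansion in each smoothness regime. Your observation that the leading $\h^\k f^{(\k)}\mu_{K,\k}$ terms cancel, that $\b^2/\h^2\to\bar{\rho}^{-2}$ yields the stated constant in case (a), and that the $\rho^{1+\k}(\Omega_1+\rho^\k\Omega_2)$ terms are absorbed into the $M_{\bar{\rho}}$-based Studentization all match the paper's argument.
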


Part (a) is the most empirically-relevant setting, which reflects the idea that researchers first select a kernel order, then conduct inference based on that choice, taking the unknown smoothness to be nonbinding. The most notable feature of this result, beyond the formalization of the coverage improvement, is that the coverage error terms share the same structure as those of Corollary \ref{thm:US coverage}, with $\k$ replaced by $\k+2$, and represent the same conceptual objects. By virtue of our new Studentization, the leading variance remains order $(n \h)^{-1}$ and the problematic correlation terms are absent. We explicitly discuss the advantages of robust bias correction relative to undersmoothing in the following section.

Part (a) also argues for a bounded, positive $\rho$. First, because bias reduction beyond $O(\h^{\k+2})$ is not possible, $\rho \to \infty$ will only inflate the variance. On the other hand, $\bar{\rho} = 0$ requires a delicate choice of $\b$ and $\ell > 2$, else the second bias term dominates $\ebc$, and the full power of the variance correction is not exploited; that is, more bias may be removed without inflating the variance rate. \citet[p.\ 682]{Hall1992_AoS_density} remarked that if $\E[\hat{f}] - f - B_f$ is (part of) the leading bias term, then ``explicit bias correction [\ldots] is even less attractive relative to undersmoothing.'' We show, on the contrary, that with our proposed Studentization, it is optimal that $\E[\hat{f}] - f - B_f$ is part of the dominant bias term.

Finally, in both Corollaries above the best possible coverage error decay rate (for a given $S$) is attained by exhausting all available smoothness. This would also yield point estimators attaining the bound of \citet{Stone1982_AoS}; robust bias correction can not evade such bounds, of course. In both Corollaries, coverage is improved relative to part (a), but the constants and optimal bandwidths can not be quantified. For robust bias correction, Corollary \ref{thm:RBC coverage} shows that to obtain the best rate in part (b) the unknown $f^{(\k)}$ must be consistently estimated and $\rho$ must be bounded and positive, while in part (c), bias estimation merely adds noise, but this noise is fully accounted for by our new Studentization, as long as $\rho \to 0$ ($\b \not\to 0$ is allowed).

\subsection{Comparing Undersmoothing and Robust Bias Correction}
	\label{sec:compare}

We now employ Corollaries \ref{thm:US coverage} and \ref{thm:RBC coverage} to directly compare nonparametric inference based on undersmoothing and robust bias correction. To simplify the discussion we focus on three concrete cases, which illustrate how the comparisons depend on the available smoothness and kernel order; the messages generalize to any $S$ and/or $\k$. For this discussion we let $\k_\US$ and $\k_\BC$ be the kernel orders used for point estimation in $\ius$ and $\irbc$, respectively, and restrict attention to sequences $\h\to0$ where both confidence intervals are first-order valid, even though robust bias correction allows for a broader bandwidth range. Finally, we set $\ell=2$ and $\bar{\rho} \in (0,\infty)$ based on the above discussion.

For the first case, assume that $f$ is twice continuously differentiable ($S=2$) and both methods use second order kernels ($\k_\US = \k_\BC = \ell = 2$). In this case, both methods target the \emph{same} bias. The coverage errors for $\ius$ and $\irbc$ then follow directly from Corollaries \ref{thm:US coverage}(a) and \ref{thm:RBC coverage}(b) upon plugging in these kernel orders, yielding
\[ \bigl|  \P[f \in \ius]   -   (1 - \alpha) \bigr| \asymp   \frac{1}{n \h}  +   n\h^5  +  \h^2   		\quad \text{and} \quad 	 		 \bigl|  \P[f \in \irbc]   -   (1 - \alpha) \bigr| \asymp   \frac{1}{n \h}  +       n  \h^{5+2\varsigma}   +  \h^{2 +\varsigma}       .\]
Because $\h \to 0$ and $\bar{\rho} \in (0,\infty)$, the coverage error of $\irbc$ vanishes more rapidly by virtue of the bias correction. A higher order kernel ($\k_\US > 2$) would yield this rate for $\ius$.

Second, suppose that the density is four-times continuously differentiable ($S=4$) but second order kernels are maintained. The relevant results are now Corollaries \ref{thm:US coverage}(a) and \ref{thm:RBC coverage}(a). Both methods continue to target the \emph{same} leading bias, but now the additional smoothness available allows precise characterization of the improvement shown above, and we have
\[ \bigl|  \P[f \in \ius]   -   (1 - \alpha) \bigr| \asymp   \frac{1}{n \h}  +   n\h^5  +  \h^2   		\quad \text{ and } \quad 	 		 \bigl|  \P[f \in \irbc]   -   (1 - \alpha) \bigr| \asymp   \frac{1}{n \h}  +       n  \h^9    +  \h^4     .\]
This case is perhaps the most empirically relevant one, where researchers first choose the order of the kernel (here, second order) and then conduct/optimize inference based on that choice. Indeed, for this case optimal bandwidth choices can be derived (Section \ref{sec:practical density}).

Finally, maintain $S=4$ but suppose that undersmoothing is based on a fourth-order kernel while bias correction continues to use two second-order kernels ($\k_\US = 4$, $\k_\BC = \ell = 2$). This is the exact example given by \citet[][p.\! 676]{Hall1992_AoS_density}. Now the two methods target \emph{different} biases, but utilize the \emph{same} amount of smoothness. In this case, the relevant results are again Corollaries \ref{thm:US coverage}(a) and \ref{thm:RBC coverage}(a), now with $\k=4$ and $\k=2$, respectively. The two methods have the same coverage error decay rate:
\[ \bigl|  \P[f \in \ius]   -   (1 - \alpha) \bigr| \asymp    \bigl|  \P[f \in \irbc]   -   (1 - \alpha) \bigr| \asymp   \frac{1}{n \h}  +       n  \h^9    +  \h^4     .\]
Indeed, more can be said: with the notation of \Eqref{eqn:kernel M}, the difference between $\tus$ and $\trbc$ is the change in ``kernel'' from $K$ to $M$, and since  $\k_\BC + \ell = \k_\US$, the two kernels are the same order. ($M$ acts as a $n$-varying, higher-order kernel for bias, but may not strictly fit the definition, as explored in the supplement.) This tight link between undersmoothing and robust bias correction does not carry over straightforwardly to local polynomial regression, as we discuss in more detail in Section \ref{sec:locpoly}.

In the context of this final example, it is worth revisiting traditional bias correction. The fact that undersmoothing targets a different, and asymptotically smaller, bias than does explicit bias correction, coupled with the requirement that $\rho \to 0$, implicitly constrains bias correction to remove \emph{less} bias than undersmoothing. This is necessary for traditional bias correction, but on the contrary, robust bias correction attains the \emph{same} coverage error decay rate as undersmoothing under the same assumptions.

In sum, these examples show that under identical assumptions, bias correction is not inferior to undersmoothing and if any additional smoothness is available, can yield improved coverage error. These results are confirmed in our simulations.

\subsection{Optimal Bandwidth and Data-Driven Choice}
	\label{sec:practical density}

The prior sections established that robust bias correction can equal, or outperform, undersmoothing for inference. We now show how the method can be implemented to deliver these results in applications. We mimic typical empirical practice where researchers first choose the order of the kernel, then conduct/optimize inference based on that choice. Therefore, we assume the smoothness is unknown but taken to be large and work within Corollary \ref{thm:RBC coverage}(a), that is, viewing $\k \leq S-2$ and $\ell=2$ as fixed and $\rho$ bounded and positive. This setup allows characterization of the coverage error optimal bandwidth for robust bias correction.
\begin{corollary}
	\label{thm:bandwidth density}
	Under the conditions of Corollary \ref{thm:RBC coverage}(a) with $\bar{\rho} \in (0,\infty)$, if $\h = \h^*_\RBC = H^*_\RBC (\rho) n^{-1/(1+(\k+2))}$, then $\P[f \in \irbc] = 1 - \alpha + O(n^{-(\k+2)/(1+(\k+2))})$, where 
	\vspace{-0.15in}
	\begin{align*}
		H^*_\RBC (\bar{\rho}) &= \argmin_{H>0} \bigl\vert  H^{-1} q_1(M_{\bar{\rho}}) + H^{1+2(\k+2)} (f^{(\k+2)})^2 \left( \mu_{K,\k+2} - \bar{\rho}^{-2}  \mu_{K,\k} \mu_{L,2} \right)^2  q_2(M_{\bar{\rho}})\\
                           & \qquad\qquad\qquad + H^{\k+2} f^{(\k+2)}\left(\mu_{K,\k+2} - \bar{\rho}^{-2}\mu_{K,\k}\mu_{L,2}\right) q_3(M_{\bar{\rho}}) \bigr\vert.
	\end{align*}
\end{corollary}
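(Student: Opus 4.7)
The strategy is direct substitution: insert the candidate bandwidth $\h = H n^{-1/(1+(\k+2))}$ into the Edgeworth expansion of Corollary \ref{thm:RBC coverage}(a) and read off the rate of decay of each term. First, I would verify that the hypotheses of that corollary hold at this rate. Since $\rho \to \bar{\rho} \in (0,\infty)$, we have $\b \asymp \h$, so the restriction $\sqrt{n\h}\h^\k \b^2 \to 0$ reduces to $\sqrt{n\h}\h^{\k+2} \asymp n^{-(\k+2)/(2(\k+3))} \to 0$; the condition $n\h/\log(n\h) \to \infty$ is immediate because $n\h \asymp n^{(\k+2)/(\k+3)}$.

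Second, I would evaluate each of the three leading terms inside the braces of Corollary \ref{thm:RBC coverage}(a) at $\h = H n^{-1/(\k+3)}$. Routine algebra gives
\begin{align*}
\frac{1}{n\h} &= H^{-1}\, n^{-(\k+2)/(\k+3)},\\
\h^{\k+2} &= H^{\k+2}\, n^{-(\k+2)/(\k+3)},\\
n\h^{1+2(\k+2)} &= H^{1+2(\k+2)}\, n^{-(\k+2)/(\k+3)},
\end{align*}
so all three leading contributions share the common scale $n^{-(\k+2)/(\k+3)} = n^{-(\k+2)/(1+(\k+2))}$. Factoring this rate and the multiplicative $\phi(z_{\alpha/2})/f$ out of the braces, the bracketed sum becomes $n^{-(\k+2)/(\k+3)}\phi(z_{\alpha/2})/f$ multiplied by precisely the expression whose absolute value defines $H^*_\RBC(\bar{\rho})$. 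Choosing $H = H^*_\RBC(\bar{\rho})$ and combining with the $(1+o(1))$ remainder from Corollary \ref{thm:RBC coverage}(a) yields $|\P[f \in \irbc] - (1-\alpha)| = O(n^{-(\k+2)/(1+(\k+2))})$, as claimed.

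Third, I would confirm that $H^*_\RBC(\bar{\rho})$ is a well-defined positive real number. The quantity inside the absolute value is continuous on $(0,\infty)$, diverges to $+\infty$ as $H \to 0^{+}$ (driven by the $H^{-1}q_1(M_{\bar{\rho}})$ term, with $q_1(M_{\bar{\rho}}) \neq 0$ for the kernel $M_{\bar{\rho}}$ under Assumption \ref{kernel density}), and diverges as $H \to \infty$ (driven by the $H^{1+2(\k+2)}$ term when $f^{(\k+2)} \neq 0$); hence the infimum of its absolute value is attained at some $H^*_\RBC(\bar{\rho}) \in (0,\infty)$. The main subtlety lies here: the three odd polynomials $q_1, q_2, q_3$ could in principle conspire to produce exact cancellation at a particular positive $H$, in which case the leading coverage-error term would vanish and the rate would strengthen. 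Since the conclusion is a big-$O$ bound, any such cancellation only sharpens the result, so the stated rate is valid at every $\h \asymp n^{-1/(\k+3)}$, and in particular at $\h^*_\RBC$.
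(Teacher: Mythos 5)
Your proposal is correct and is essentially the paper's own argument: the corollary follows by direct substitution of $\h = H n^{-1/(\k+3)}$ into the expansion of Corollary \ref{thm:RBC coverage}(a), noting that all three terms in braces then scale as $n^{-(\k+2)/(1+(\k+2))}$ with coefficient equal to the objective defining $H^*_\RBC(\bar{\rho})$. Your added checks (that the rate restrictions of Corollary \ref{thm:RBC coverage}(a) hold at this bandwidth, and that the argmin is attained on $(0,\infty)$) are sound and only make the argument more complete.
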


We can use this result to give concrete methodological recommendations. At the end of this section we discuss the important issue of interval length. Construction of the interval $\irbc$ from \Eqref{eqn:intervals} requires bandwidths $\h$ and $\b$ and kernels $K$ and $L$. Given these choices, the point estimate, bias correction, and variance estimators are then readily computable from data using the formulas above. For the kernels $K$ and $L$, we recommend either second order minimum variance (to minimize interval length) or MSE-optimal kernels \citep[see, e.g.,][and the supplemental appendix]{Gasser-Muller-Mammitzsch1985_JRSSB}.

The bandwidth selections are more important in applications. For the bandwidth $\h$, Corollary \ref{thm:RBC coverage}(a) shows that the MSE-optimal choice $\h^*_\MSE$ will deliver valid inference, but will be suboptimal in general (Corollary \ref{thm:bandwidth density}). From a practical point of view, the robust bias corrected interval $\irbc(\h)$ is attractive because it allows for the MSE-optimal bandwidth and kernel, and hence is based on the MSE-optimal point estimate, while using the same effective sample for both point estimation and inference. Interestingly, although $\irbc(\h^*_\MSE)$ is always valid, its coverage error decays as $n^{-\min\{4,\k+2\}/(1+2\k)}$ and is thus rate optimal only for second order kernels ($\k=2$), while otherwise being suboptimal, with a rate that is slower the larger is the order $\k$. 

Corollary \ref{thm:bandwidth density} gives the coverage error optimal bandwidth, $\h^*_\RBC$, which can be implemented using a simple direct plug-in (DPI) rule: $\hat{h}_{\PI} = \hat{H}_{\PI} \; n^{-1/(\k+3)}$, where $\hat{H}_{\PI}$ is a plug-in estimate of $H^*_\RBC$ formed by replacing the unknown $f^{(\k+2)}$ with a pilot estimate (e.g., a consistent nonparametric estimator based on the appropriate MSE-optimal bandwidth). In the supplement we give precise implementation details, as well as an alternative rule-of-thumb bandwidth selector based on rescaling already available data-driven MSE-optimal choices.

For the bandwidth $\b$, a simple choice is $\b = \h$, or, equivalently, $\rho = 1$. We show in the supplement that setting $\rho=1$ has good theoretical properties, minimizing interval length of $\irbc$ or the MSE of $\hat{f} - \hat{B}_f$, depending on the conditions imposed. In our numerical work, we found that $\rho=1$ performed well. As a result, from the practitioner's point of view, the choice of $\b$ (or $\rho$) is completely automatic, leaving only one bandwidth to select.

An extensive simulation study, reported in the supplement, illustrates our findings and explores the numerical performance of these choices. We find that coverage of $\irbc$ is robust to both $\h$ and $\rho$ and that our data-driven bandwidth selectors work well in practice, but we note that estimating bandwidths may have higher-order implications \citep[e.g.][]{Hall-Kang2001_AoS}.

Finally, an important issue in applications is whether the good coverage properties of $\irbc$ come at the expense of increased interval length. When coverage is asymptotically correct, Corollaries \ref{thm:US coverage} and \ref{thm:RBC coverage} show that $\irbc$ can accommodate (and will optimally employ) a larger bandwidth (i.e.\ $\h \to 0$ more slowly), and hence $\irbc$ will have shorter average length in large samples than $\ius$. Our simulation study (see below and the supplement) gives the same conclusion.

\subsection{Other Methods of Bias Correction}
	\label{sec:other}

We study a plug-in bias correction method, but there are alternatives. In particular, as pointed out by a reviewer, a leading alternative is the generalized jackknife method of \citet{Schucany-Sommers1977_JASA} (see \citet{Cattaneo-Crump-Jansson_2013_JASA} for an application to kernel-based semiparametric inference and for related references). We will briefly summarize this approach and show a tight connection to our results, restricting to second-order kernels and $S\geq 2$ only for simplicity.

The generalized jackknife estimator is $\hat{f}_{\GJ,R} := ( \hat{f}_1 - R \hat{f}_2 ) / (1 - R)$, where $\hat{f}_1$ and $\hat{f}_2$ are two initial kernel density estimators, with possibly different bandwidths ($\h_1,\h_2$) and second-order kernels ($K_1,K_2$). From \Eqref{eqn:bias density}, the bias of $\hat{f}_{\GJ,R}$ is $(1-R)^{-1}   f^{(2)}  \left( \h_1^2  \mu_{K_1,2}  -  R  \h_2^2  \mu_{K_2,2} \right) + o(\h_1^2 + \h_2^2)$, whence choosing $R = (\h_1^2  \mu_{K_1,2} )/( \h_2^2  \mu_{K_2,2})$ renders the leading bias term exactly zero. Further, if $S \geq 4$, $\hat{f}_{\GJ,R}$ has bias $O(\h_1^4 + \h_2^4)$; behaving as a point estimator with $\k=4$. To connect this approach to ours, observe that with this choice of $R$ and $\tilde{\rho} = \h_1 / \h_2$, 
\[\hat{f}_{\GJ,R} = \frac{1}{n \h_1} \sumi \tilde{M}\left(\frac{X_i - x}{\h_1} \right) ,  		\quad   		 	 \tilde{M}(u) = K_1(u) - \tilde{\rho}^{1+2}  \left\{    \frac{K_2(\tilde{\rho}u) - \tilde{\rho}^{-1} K_1(u)  }{\mu_{K_2,2}(1-R)} \right\} \mu_{K_1, 2},\]
exactly matching \Eqref{eqn:kernel M}; alternatively, write $\hat{f}_{\GJ,R} = \hat{f}_1  -  \h_1^2  \tilde{f}^{(2)} \mu_{K_1,2}$, where
\[\tilde{f}^{(2)} = \frac{1}{n\h_2^{1+2}}\sumi \tilde{L}\left( \frac{X_i - x}{\h_2} \right),  		\qquad  		\tilde{L}(u) = \frac{ K_2(u) - \tilde{\rho}^{-1} K_1( \tilde{\rho}^{-1} u)  }{\mu_{K_2,2}(1-R)},  \]
is a derivative estimator. Therefore, we can view $\hat{f}_{\GJ,R}$ as a specific kernel $M$ or a specific derivative estimator, and all our results directly apply to $\hat{f}_{\GJ,R}$; hence our paper offers a new way of conducting inference (new Studentization) for this case as well. Though we omit the details to conserve space, this is equally true for local polynomial regression (Section \ref{sec:locpoly}).


More generally, our main ideas and generic results apply to many other bias correction methods. For a second example, \citet{Singh1977_AoS} also proposed a plug-in bias estimator, but without using the derivative of a kernel. Our results cover this approach as well; see the supplement for further details and references. The key, common message in all cases is that to improve inference one must account for the additional variability introduced by any bias correction method (i.e., to avoid the mismatch present in $\tbc$).

\section{Local Polynomial Estimation and Inference}
	\label{sec:locpoly}

This section studies local polynomial regression \citep{Ruppert-Wand1994_AoS,Fan-Gijbels1996_book}, and has two principal aims. First, we show that the conclusions from the density case, and their implications for practice, carry over to odd-degree local polynomials. Second, we show that with proper fixed-$n$ Studentization, coverage error adapts to boundary points. We focus on what is novel relative to the density, chiefly variance estimation and boundary points. For interior points, the implications for coverage error, bandwidth selection, and interval length are all analogous to the density case, and we will not retread those conclusions.

To be specific, throughout this section we focus on the case where the smoothness is large relative to the local polynomial degree $p$, which is arguably the most relevant case in practice. The results and discussion in Sections \ref{sec:corollaries} and \ref{sec:compare} carry over, essentially upon changing $\k$ to $p+1$ and $\ell$ to $q-p$ (or $q-p+1$ for interior points with $q$ even). Similarly, but with increased notational burden, the conclusions of Section \ref{sec:other} also remain true. The present results also extend to multivariate data and derivative estimation.

To begin, we define the regression estimator, its bias, and the bias correction. Given a random sample $\{(Y_i,X_i):1\leq i\leq n\}$, the local polynomial estimator of $m(x)=\E[Y_i|X_i=x]$, temporarily making explicit the evaluation point, is
\[\hat{m}(x) = \be_0' \bhat_p, 		 \qquad\quad 		\bhat_p = \argmin_{\bb \in \mathbb{R}^{p+1}} \sumi ( Y_i - \br_p(X_i - x)'\bb)^2  K \left( \frac{X_i - x}{\h}\right),  \]
where, for an integer $p \geq 1$, $\be_0$ is the $(p+1)$-vector with a one in the first position and zeros in the rest, and $\br_p(u) = (1, u, u^2, \ldots, u^p)'$. We restrict attention to $p$ odd, as is standard, though the qualifier may be omitted. We define $\bY = (Y_1, \cdots, Y_n)'$, $\bR_p = [ \br_p( (X_1 - x) / \h), \cdots, \br_p( (X_n - x) / \h)]'$, $\bW_p = \diag(\h^{-1} K((X_i - x)/\h): i = 1, \ldots, n)$, and $\Gp = \bR_p' \bW_p \bR_p/n$ (here $\diag(a_i:i = 1, \ldots, n)$ denotes the $n\times n$ diagonal matrix constructed using $a_1, a_2, \cdots, a_n$). Then, reverting back to omitting the argument $x$, the local polynomial estimator is $\hat{m} = \be_0'\Gp^{-1} \bR_p' \bW_p \bY / n$.

Under regularity conditions below, the conditional bias satisfies
\begin{equation}
	\label{eqn:bias locpoly}
	\E[\hat{m} \vert X_1, \ldots, X_n] - m =  \h^{p+1} m^{(p+1)} \frac{1}{(p+1)!} \be_0' \Gp^{-1} \Lp + o_P(\h^{p+1}),
\end{equation}
where $\Lp = \bR_p' \bW_p [ ((X_1 - x)/\h)^{p+1}, \cdots, ((X_n - x)/\h)^{p+1}]'/n.$ Here, the quantity $\be_0' \Gp^{-1} \Lp / (p+1)!$ is random, unlike in the density case (c.f. \eqref{eqn:bias density}), but it is known and bounded in probability. Following \citet[][p. 116]{Fan-Gijbels1996_book}, we will estimate $m^{(p+1)}$ in (\ref{eqn:bias locpoly}) using a second local polynomial regression, of degree $q > p$ (even or odd), based on a kernel $L$ and bandwidth $\b$. Thus, $\br_q(u)$, $\bR_q$, $\bW_q$, and $\Gq$ are defined as above, but substituting $q$, $L$, and $\b$ in place of $p$, $K$, and $\h$, respectively. Denote by $\be_{p+1}$ the $(q+1)$-vector with one in the $p+2$ position, and zeros in the rest. Then we estimate the bias with
	\[\hat{B}_m = \h^{p+1} \hat{m}^{(p+1)} \frac{1}{(p+1)!} \be_0' \Gp^{-1} \Lp, 		\qquad 
	  \hat{m}^{(p+1)} = \b^{-p-1} (p+1)!\be_{p+1}'\Gq^{-1} \bR_q' \bW_q \bY / n.\]
Exactly as in the density case, $\hat{B}_m$ introduces variance that is controlled by $\rho$ and will be captured by robust bias correction.

\subsection{Variance Estimation}
	\label{sec:variance}

The Studentizations in the density case were based on fixed-$n$ expectations, and we will show that retaining this is crucial for local polynomials. The fixed-$n$ versus asymptotic distinction is separate from, and more fundamental than, whether we employ feasible versus infeasible quantities. The advantage of fixed-$n$ Studentization also goes beyond bias correction.

To begin, we condition on the covariates so that $\Gp^{-1}$ is fixed. Define $v(\cdot) = \V[Y \vert X = \cdot]$ and $\bSig = \diag( v(X_i): i = 1,\ldots, n)$. Straightforward calculation gives
\begin{equation}
	\label{eqn:US variance locpoly}
	\sus^2 = (n \h) \V[\hat{m} \vert X_1, \cdots, X_n] = \frac{\h}{n} \be_0' \Gp^{-1} \left( \bR_p' \bW_p \bSig \bW_p \bR_p \right) \Gp^{-1} \be_0.
\end{equation}
One can then show that $\sus^2 \to_P v(x) f(x)^{-1} \mathscr{V}(K,p)$, with $\mathscr{V}(K,p)$ a known, constant function of the kernel and polynomial degree. Importantly, both the nonasymptotic  form and the convergence hold in the interior or on the boundary, though $\mathscr{V}(K,p)$ changes.

To first order, one could use $\sus^2$ or the leading asymptotic term; all that remains is to make each feasible, requiring estimators of the variance function, and for the asymptotic form, also the density. These may be difficult to estimate when $x$ is a boundary point. Concerned by this, \citet[][p.\ 93]{Chen-Qin2002_SJS} consider feasible and infeasible versions but conclude that ``an increased coverage error near the boundary is still the case even when we know the values of $f(x)$ and $v(x)$.'' Our results show that this is not true in general: using fixed-$n$ Studentization, feasible or infeasible, leads to confidence intervals with the same coverage error decay rates at interior and boundary points, thereby retaining the celebrated boundary carpentry property.

For robust bias correction, $\srbc^2  =  \ (n \h) V[\hat{m} - \hat{B}_m \vert X_1, \ldots, X_n]$ captures the variances of $\hat{m}$ and $\hat{m}^{(p+1)}$ as well as their covariance. A fixed-$n$ calculation gives
\begin{equation}
	\label{eqn:RBC variance locpoly}
	\srbc^2  = \frac{\h}{n} \be_0' \Gp^{-1} \left( \bXi_{p,q} \bSig \ \bXi_{p,q}'  \right) \Gp^{-1} \be_0,   			   \ \qquad 			\bXi_{p,q} = \bR_p' \bW_p  -  \rho^{p+1} \Lp \be_{p+1}' \Gq^{-1} \bR_q' \bW_q
\end{equation}

To make the fixed-$n$ scalings feasible, $\shatus^2$ and $\shatrbc^2$ take the forms \eqref{eqn:US variance locpoly} and \eqref{eqn:RBC variance locpoly} and replace $\bSig$ with an appropriate estimator. First, we form $\hat{v}(X_i) = ( Y_i - \br_p(X_i - x)'\bhat_p )^2$ for $\shatus^2$ or $\hat{v}(X_i) = ( Y_i - \br_q(X_i - x)'\bhat_q )^2$ for $\shatrbc^2$. The latter is bias-reduced because $\br_p(X_i - x)'\bbeta_p$ is a $p$-term Taylor expansion of $m(X_i)$ around $x$, and $\bhat_p$ estimates $\bbeta_p$ (similarly with $q$ in place of $p$), and we have $q>p$. Next, motivated by the fact that least-squares residuals are on average too small, we appeal to the HC$k$ class of estimators (see \citet{MacKinnon2013_BookChap} for a review), which are defined as follows. First, $\shatus^2$-HC0 uses $\Shat_\US = \diag(\hat{v}(X_i): i = 1,\ldots, n)$. Then, $\shatus^2$-HC$k$, $k=1, 2, 3$, is obtained by dividing $\hat{v}(X_i)$ by, respectively, $(n-2\tr(\bQ_p)+\tr(\bQ_p'\bQ_p))/n$, $(1-\bQ_{p,ii})$, or $(1-\bQ_{p,ii})^2$, where $\bQ_p:=\bR_p '\Gp^{-1} \bR_p' \bW_p/n$ is the projection matrix and $\bQ_{p,ii}$ its $i$-th diagonal element. The corresponding estimators $\shatrbc^2$-HC$k$ are the same, but with $q$ in place of $p$. For theoretical results, we use HC0 for concreteness and simplicity, though inspection of the proof shows that simple modifications allow for the other HC$k$ estimators and rates do not change. These estimators may perform better for small sample sizes. Another option is to use a nearest-neighbor-based variance estimators with a fixed number of neighbors, following the ideas of \citet{Muller-Stadtmuller1987_AoS} and \citet{Abadie-Imbens2008_AdES}. Note that none of these estimators assume local or global homoskedasticity nor rely on new tuning parameters. Details and simulation results for all these estimators are given in the supplement, see \S S.II.2.3 and Table S.II.9.

\subsection{Higher Order Expansions of Coverage Error}
	\label{sec:coverage locpoly}

Recycling notation to emphasize the parallel, we study the following three statistics:
	\[\tus = \frac{\sqrt{n\h}( \hat{m} - m)}{\shatus}, 			\qquad		 \tbc =  \frac{\sqrt{n\h}( \hat{m} - \hat{B}_m -  m)}{\shatus},		\qquad		 \trbc = \frac{\sqrt{n\h}( \hat{m} - \hat{B}_m -  m)}{\shatrbc},\]
and their associated confidence intervals $\ius$, $\ibc$, and $\irbc$, exactly as in \Eqref{eqn:intervals}. Importantly, all present definitions and results are valid for an evaluation point in the interior and at the boundary of the support of $X_i$. The following standard conditions will suffice, augmented with the appropriate Cram\'er's condition given in the supplement to conserve space.

\begin{assumption}[Data-generating process]
	\label{dgp locpoly} 
	$\{(Y_1, X_1), \ldots, (Y_n, X_n)\}$ is a random sample, where $X_i$ has the absolutely continuous distribution with Lebesgue density $f$, $\E[Y^{8+\delta} \vert X] < \infty$ for some $\delta>0$, and in a neighborhood of $x$, $f$ and $v$ are continuous and bounded away from zero, $m$ is $S > q+2$ times continuously differentiable with bounded derivatives, and $m^{(S)}$ is H\"older continuous with exponent $\varsigma$.
\end{assumption}

\begin{assumption}[Kernels]
	\label{kernel locpoly}
	The kernels $K$ and $L$ are positive, bounded, even functions, and have compact support.
\end{assumption}

We now give our main, generic result for local polynomials, analogous to Theorem \ref{thm:coverage density}. For notation, the polynomials $q_1$, $q_2$, and $q_3$ and the biases $\eus$ and $\ebc$, are cumbersome and exact forms are deferred to the supplement. All that matters is that the polynomials are known, odd, bounded, and bounded away from zero and that the biases have the usual convergence rates, as detailed below.
\begin{theorem}
	\onehalfspacing
	\label{thm:coverage locpoly}
	Let Assumptions \ref{dgp locpoly}, \ref{kernel locpoly}, and Cram\'er's condition hold and $n \h/ \log(n\h) \to \infty$.
	\begin{enumerate}[label=(\alph*)]

		\item If $\eus \log(n\h) \to 0$, then
			\begin{align*}
				\P[m \in \ius]   = 1 - \alpha  &  + \left\{ \frac{1}{n \h} q_{1,\US} + \eus^2 q_{2,\US}
				                                  + \frac{\eus}{\sqrt{n \h}} q_{3,\US}\right\}   \phi(z_{\frac{\alpha}{2}}) \;\{1+o(1)\}.
			\end{align*}

		\item If $\ebc \log(n\h) \to 0$ and $\rho \to 0$, then
			\begin{align*}
				\P[m \in \ibc]   = 1 - \alpha  &  +   \left\{  \frac{1}{n \h} q_{1,\US} + \ebc^2 q_{2,\US}  +   \frac{\ebc}{\sqrt{n \h}} q_{3,\US} \right\}  \phi(z_{\frac{\alpha}{2}})\;\{1+o(1)\} 		\\
				& +  \rho^{p+2} (\Omega_{1,\BC} + \rho^{p+1} \Omega_{2,\BC}) \phi(z_{\frac{\alpha}{2}}) z_{\frac{\alpha}{2}}\;\{1+o(1)\}.
			\end{align*}

		\item If $\ebc \log(n\h) \to 0$ and $\rho \to \bar{\rho} < \infty$, then
			\begin{align*}
				\P[m \in \irbc]   = 1 - \alpha  &  +    \left\{  \frac{1}{n \h}q_{1,\RBC}  +  \ebc^2 q_{2,\RBC}   +   \frac{\ebc}{\sqrt{n \h}} q_{3,\RBC}  \right\}  \phi(z_{\frac{\alpha}{2}})\;\{1+o(1)\}.
			\end{align*}

	\end{enumerate}

\end{theorem}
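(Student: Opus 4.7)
The plan is to mirror the proof of Theorem \ref{thm:coverage density} in three conceptual steps: linearize each $t$-statistic by reducing the local polynomial fit to a smooth function of sample averages via an equivalent-kernel representation; apply a standard three-term Edgeworth expansion for a Studentized sum of triangular-array summands; and translate the resulting expansion into coverage error. The preparatory step is to condition on $X_1,\ldots,X_n$ and expand $\Gp^{-1}$ around $(\E[\Gp])^{-1}$ via a Neumann series. This yields
\[ \sqrt{n\h}\bigl(\hat m - \E[\hat m \mid X_1,\ldots,X_n]\bigr) = \frac{1}{\sqrt{n\h}}\sumi K_p^\star\!\left(\frac{X_i - x}{\h}\right)\bigl(Y_i - m(X_i)\bigr) + R_n, \]
where $K_p^\star$ is the (possibly one-sided) equivalent kernel, $R_n$ is a higher-order remainder, and $\sqrt{n\h}(\E[\hat m \mid X] - m) = \eus + o_P(\eus)$ plays the role of the deterministic bias in the density case.

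For the bias-corrected numerators in parts (b) and (c), the same joint linearization of $\hat m$ and $\hat m^{(p+1)}$ produces the analogous representation with $K_p^\star$ replaced by an $n$-varying composite equivalent kernel $M_{p,q,\bar\rho}^\star$, the local-polynomial analogue of $M$ in \eqref{eqn:kernel M}. The Studentizers $\shatus^2$ and $\shatrbc^2$ are themselves smooth functions of sample moments of $(X_i,Y_i)$, so a two-term Taylor expansion of $\hat\sigma^{-1}$ around its fixed-$n$ limit produces the cross-terms that, under the standard integration-by-parts identities against the Gaussian density, collapse into the $q_3$ coefficient (driven by the covariance between the numerator and the Studentizer) and supply refinements to $q_1$ (driven by higher moments of the kernel-weighted residuals). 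The fixed-$n$ form of $\hat\sigma^2$ is what makes the Studentization exact up to the usual linearization error, which both preserves boundary carpentry and forces the $q_{i,\RBC}$ polynomials in part (c) to be built from $M_{p,q,\bar\rho}^\star$ rather than from $K_p^\star$.

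With these representations in hand, part (a) follows from the standard Edgeworth expansion for a Studentized sum applied to the $K_p^\star$-weighted linearization, with the distributional mean shifted by $\eus$ and expanded to third order; this delivers the three terms $q_{1,\US}/(n\h)$, $\eus^2 q_{2,\US}$, and $\eus q_{3,\US}/\sqrt{n\h}$. Part (b) proceeds from the same calculation, except that the numerator now has variance $\srbc^2$ while the denominator tracks only $\sus^2$; expanding $\sus/\srbc = 1 - \tfrac12\rho^{p+2}(\Omega_{1,\BC} + \rho^{p+1}\Omega_{2,\BC}) + \cdots$ and integrating the extra factor against $\phi$ (after symmetrization over the two-sided interval) produces exactly the $\phi(z_{\alpha/2}) z_{\alpha/2}$ term, where $\Omega_{1,\BC}$ and $\Omega_{2,\BC}$ are fixed-$n$ probability limits of cross-moments between the degree-$p$ and degree-$q$ equivalent kernels. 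Part (c) is then immediate: $\shatrbc^2$ matches $\srbc^2$ at the fixed-$n$ level, eliminating the $\rho$-mismatch, while all appearances of $K_p^\star$ in the expansion are replaced by $M_{p,q,\bar\rho}^\star$, yielding the $\RBC$-subscripted polynomials.

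The hardest steps will be (i) verifying the $n$-varying Cram\'er condition for the kernel-weighted linearized sum under the random design of Assumption \ref{dgp locpoly}, which requires combining a characteristic-function bound with a conditioning argument, and (ii) controlling the Neumann-series remainder for $\Gp^{-1}$ uniformly in $n$ to the order needed for a three-term expansion, which is what dictates the requirement $\eus\log(n\h)\to 0$ and $\ebc\log(n\h)\to 0$ rather than the bare $\eta\to 0$ imposed in Theorem \ref{thm:coverage density}. Boundary adaptivity then piggybacks on the fixed-$n$ matching of numerator and denominator variances: because $K_p^\star$ and $M_{p,q,\bar\rho}^\star$ satisfy the same order of moment conditions whether $x$ is interior or at the boundary, the coefficients $q_{k,\US}$ and $q_{k,\RBC}$ remain bounded and bounded away from zero, and the coverage error rate is identical in the two cases.
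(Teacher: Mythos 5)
Your overall architecture coincides with the paper's: both routes reduce the Studentized statistic to a smooth function of kernel-weighted sample means plus a controlled remainder, invoke an Edgeworth expansion under an $n$-varying Cram\'er condition, and evaluate at $\pm z_{\frac{\alpha}{2}}$ to get coverage; your explanation of the $\rho^{p+2}(\Omega_{1,\BC}+\rho^{p+1}\Omega_{2,\BC})\phi(z)z$ term in part (b) as the leading terms of $\stus/\strbc-1$ is also exactly the paper's. The problem is in the execution of the first step.

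You write
\[
\sqrt{n\h}\bigl(\hat m - \E[\hat m\mid X_1,\ldots,X_n]\bigr)=\frac{1}{\sqrt{n\h}}\sumi K_p^\star\Bigl(\frac{X_i-x}{\h}\Bigr)\bigl(Y_i-m(X_i)\bigr)+R_n
\]
and treat $R_n$ as a higher-order remainder, so that a ``standard Edgeworth expansion for a Studentized sum'' applies to the single leading sum. This fails at the order of the theorem. Writing $\Gp^{-1}=\Gpt^{-1}+\Gpt^{-1}(\Gpt-\Gp)\Gp^{-1}$, the first correction $\be_0'\Gpt^{-1}(\Gpt-\Gp)\Gpt^{-1}\bR_p'\bW_p(\bY-\E[\bY\mid X_1,\ldots,X_n])/n$ is a degenerate second-order $U$-statistic whose interaction with the leading sum contributes to coverage error at exactly order $(n\h)^{-1}$, i.e.\ the same order as $q_{1,\US}$; the second correction enters at that order as well. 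The paper's $q_{1,\US}$ accordingly contains several terms built from the centered fluctuations of $\Gp$ about $\Gpt$ (and, for $\irbc$, of $\Gq$ and $\Lp$) that have no counterpart in the cumulants of a single equivalent-kernel sum. Discarding $R_n$ therefore yields the right rates but the wrong polynomials $q_{1,\US}$ and $q_{1,\RBC}$, so the stated identities fail at leading order. The fix is to keep the zeroth-, first- and second-order terms of the expansion of $\Gp^{-1}$ inside the smooth function and to impose Cram\'er's condition on the full vector of kernel-weighted moments --- including the entries of $\Gp$ and the quadratic terms feeding $\shatus^2$ --- not just on the linearized sum.

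Two further omissions matter. First, the conditional bias $\sqrt{n\h}(\E[\hat m\mid X_1,\ldots,X_n]-m)$ is random, and replacing it by the deterministic $\eus$ with an ``$o_P(\eus)$'' error is insufficient: the delta-method step of an Edgeworth expansion needs a tail bound of the form $r_*^{-1}\P[|U|>r_n]\to 0$ for some $r_n=o(r_*)$, with $r_*$ the slowest error rate, which the paper obtains from Bernstein-type inequalities after splitting the bias fluctuation into a piece retained in the smooth function and a genuinely negligible piece; this is precisely what forces the $\eus\log(n\h)\to0$ strengthening you flag but do not derive. Second, $\shatus^2$ and $\shatrbc^2$ use estimated residuals $Y_i-\br_p(X_i-x)'\bhat_p$ (respectively with $q$), and the induced terms include pure squared-bias contributions that cannot be centered and must be absorbed into the shifted quantile $\tilde z$ rather than into the smooth function; skipping this step misplaces part of the $\eus^2 q_{2,\US}$ and $\ebc^2 q_{2,\RBC}$ terms.
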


This theorem, which covers both interior and boundary points, establishes that the conclusions found in the density case carry over to odd-degree local polynomial regression. (Although we focus on $p$ odd, part (a) is valid in general and (b) and (c) are valid at the boundary for $p$ even.) In particular, this shows that robust bias correction is as good as, or better than, undersmoothing in terms of coverage error. Traditional bias correction is again inferior due to the variance and covariance terms $\rho^{p+2} (\Omega_{1,\BC} + \rho^{p+1} \Omega_{2,\BC})$. Coverage error optimal bandwidths can be derived as well, and similar conclusions are found. Best possible rates are defined for fixed $p$ here, the analogue of $\k$ above; see Section \ref{sec:corollaries} for further discussion on smoothness.

Before discussing bias correction, one aspect of the undersmoothing result is worth mentioning. The fact that Theorem \ref{thm:coverage locpoly} covers both interior and boundary points, without requiring additional assumptions, is in some sense, expected: one of the strengths of local polynomial estimation is its adaptability to boundary points. In particular, from \Eqref{eqn:bias locpoly} and $p$ odd it follows that $\eus \asymp \sqrt{nh} \h^{p+1}$ at the interior and the boundary. Therefore, part (a) shows that the decay rate in coverage error does not change at the boundary for the standard confidence interval (but the leading constants will change). This finding contrasts with the result of \citet{Chen-Qin2002_SJS} who studied the special case $p=1$ without bias correction (part (a) of Theorem \ref{thm:coverage locpoly}), and is due entirely to our fixed-$n$ Studentization.

Turning to robust bias correction, we will, in contrast, find rate differences between the interior and the boundary, no matter the parity of $q$. As before, $\ebc$ has two terms, representing the higher-order bias of the point estimator and the bias of the bias estimator. The former can be viewed as the bias if $m^{(p+1)}$ were zero, and since $p+1$ is even, we find that it is of order $\sqrt{nh} \h^{p+3}$ in the interior but $\sqrt{n\h} \h^{p+2}$ at the boundary. The bias of the bias correction depends on both bandwidths $\h$ and $\b$, as well as $p$ and $q$, in exact analogy to the density case. For $q$ odd, it is of order $\h^{p+1} \b^{q-p}$ at all points, whereas for $q$ even this rate is attained at the boundary, but in the interior the order increases to $\h^{p+1} \b^{q+1-p}$. Collecting these facts: in the interior, $\ebc \asymp \sqrt{n\h} \h^{p+3} ( 1 + \rho^{-2} \b^{q-p-2} ) $ for odd $q$ or with $\b^{q-p-1}$ for $q$ even; at the boundary, $\ebc \asymp \sqrt{n\h} \h^{p+2} ( 1 + \rho^{-1} \b^{q-p-1})$. Further details are in the supplement.

In light of these rates, the same logic of Section \ref{sec:corollaries} leads us to restrict attention to bounded, positive $\rho$ and $q=p+1$, and thus even. \citet[][Remark 7]{Calonico-Cattaneo-Titiunik2014_Ecma} point out that in the special case of $q=p+1$, $K=L$, and $\rho=1$, $\hat{m} - \hat{B}_m$ is identical to a local polynomial estimator of order $q$; this is the closest analogue to $M$ being a higher-order kernel. If the point of interest is in the interior, then $q = p+2$ yields the same rates.

For notational ease, let $\etbc^{\tt int}$ and $\etbc^{\tt bnd}$ be the leading constants for the interior and boundary, respectively, so that e.g. $\ebc = \sqrt{n \h}  \h^{p+3} [\etbc^{\tt int} + o(1)]$ in the interior (exact expressions are in the supplement). We then have the following, precise result; the analogue of Corollary \ref{thm:RBC coverage}(a).
\begin{corollary}
	\onehalfspacing
	\label{thm:RBC locpoly}
	Let the conditions of Theorem \ref{thm:coverage locpoly}(c) hold, with $\bar{\rho} \in (0,\infty)$ and $q=p+1$.
	\begin{enumerate}[label=(\alph*)]

		\item For an interior point, 
			\[   \P[m \in \irbc]   = 1 - \alpha    +    \biggl\{  \frac{1}{n \h}q_{1,\RBC}  +  n \h^{1 + 2(p+3)} (\etbc^{\tt int})^2 q_{2,\RBC}   +      \h^{p+3} (\etbc^{\tt int} ) q_{3,\RBC}  \biggr\}  \phi(z_{\frac{\alpha}{2}}) \;\{1+o(1)\}.    \]

		\item For a boundary point, 
			\[   \P[m \in \irbc]   = 1 - \alpha   +    \biggl\{  \frac{1}{n \h}q_{1,\RBC}  +  n \h^{1 + 2(p+2)} (\etbc^{\tt bnd})^2 q_{2,\RBC}    +      \h^{p+2} (\etbc^{\tt bnd} ) q_{3,\RBC}  \biggr\}  \phi(z_{\frac{\alpha}{2}}) \;\{1+o(1)\}.    \]

	\end{enumerate}
	
\end{corollary}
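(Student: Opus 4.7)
The plan is to invoke Theorem \ref{thm:coverage locpoly}(c) and specialize the generic scaled bias $\ebc$ to the regime $q = p+1$ and $\rho \to \bar{\rho} \in (0,\infty)$, separately at interior and boundary points. The Edgeworth machinery, the polynomials $q_{1,\RBC}, q_{2,\RBC}, q_{3,\RBC}$, and the $o(1)$ remainder control are all inherited from the theorem; only the leading-order characterization of $\ebc$ is new in this corollary.

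First I would decompose
\[ \E[\hat{m} - \hat{B}_m \mid X_1,\ldots,X_n] - m \;=\; \bigl\{\E[\hat{m}\mid\cdot] - m - B_m\bigr\} \;+\; \bigl\{B_m - \E[\hat{B}_m\mid\cdot]\bigr\}, \]
where $B_m$ is the leading bias term displayed in \eqref{eqn:bias locpoly}. The first bracket is the higher-order (post-correction) bias of $\hat{m}$: Taylor-expanding $m$ around $x$ beyond order $p+1$ and using the reproducing identity satisfied by local polynomials of degree $p$, the $\h^{p+2}$ coefficient involves $m^{(p+2)}$ multiplied by a $(p+2)$-th moment against $\be_0'\Gp^{-1}$-weighted kernel averages, which vanishes at interior points by symmetry ($p$ odd and $K$ even) but survives at the boundary. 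Thus the first bracket is of order $\h^{p+2}$ at the boundary and $\h^{p+3}$ in the interior. The second bracket equals $\h^{p+1}(p+1)!^{-1}\be_0'\Gp^{-1}\Lp$ times the conditional bias of the degree-$q$ local polynomial estimator $\hat{m}^{(p+1)}$ of $m^{(p+1)}$; an analogous parity argument on the derivative estimator, using $q = p+1$ and $L$ even, shows that this bias is of order $\b$ at the boundary and $\b^{2}$ in the interior. Exchanging $\b$ for $\h$ via $\rho \to \bar{\rho}$, the second bracket contributes $O_P(\h^{p+2})$ at the boundary and $O_P(\h^{p+3})$ in the interior, matching the first bracket in each case. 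Collecting constants yields $\ebc = \sqrt{n\h}\,\h^{p+3}[\etbc^{\tt int} + o(1)]$ in the interior and $\ebc = \sqrt{n\h}\,\h^{p+2}[\etbc^{\tt bnd} + o(1)]$ at the boundary, and substitution into Theorem \ref{thm:coverage locpoly}(c) delivers (a) and (b) verbatim.

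The main obstacle is the interior parity cancellation: the (random) Gram matrix $\Gp$ has $o_P(1)$ off-diagonal entries coupling odd and even moments, so the cancellation strictly holds only in the limit and the coupling terms must be shown to be of strictly lower order than $\sqrt{n\h}\,\h^{p+3}$. Given the convergence rates of $\Gp$ and $\Gq$ to their symmetric limits — already established in the proof of Theorem \ref{thm:coverage locpoly}(c) — together with the moment and smoothness hypotheses of Assumptions \ref{dgp locpoly} and \ref{kernel locpoly}, these coupling terms are indeed $o_P(\sqrt{n\h}\,\h^{p+3})$ in the interior and $o_P(\sqrt{n\h}\,\h^{p+2})$ at the boundary, which is exactly what is needed to absorb them into the final $\{1+o(1)\}$ factor.
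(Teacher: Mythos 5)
Your proposal is correct and follows essentially the same route as the paper: specialize Theorem \ref{thm:coverage locpoly}(c) by extracting the leading term of $\ebc$ from the two-part decomposition (higher-order bias of $\hat{m}$ beyond the targeted $B_m$, plus the bias of the bias estimator $\hat{B}_m$), with the interior/boundary rate gap driven by the parity cancellation available when $p$ is odd, $q=p+1$ is even, and the kernels are symmetric. The only minor discrepancy is in where you locate the difficulty: the paper defines $\ebc$ deterministically through $\Gpt=\E[\Gp]$, $\Gqt=\E[\Gq]$ and $\Lt_{p,k}=\E[\L_{p,k}]$, so the interior cancellation is applied to nonrandom matrices whose deviation from their symmetric limits is $O(\h)$ by smoothness of $f$ (which is precisely what leaves the $m^{(p+2)}$ contribution at order $\h^{p+3}$ rather than killing it), while the stochastic fluctuations $\Gp-\Gpt$ are absorbed into the Edgeworth terms of the theorem itself rather than into the bias constant — but this bookkeeping difference does not affect your conclusion.
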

There are differences in both the rates and constants between parts (a) and (b) of this result, though most of the changes to constants are ``hidden'' notationally by the definitions of $\etbc^{\tt bnd}$ and the polynomials $q_{k,\RBC}$. Part (a) most closely resembles Corollary \ref{thm:RBC coverage} due to the symmetry yielding the corresponding rate improvement (recall that $\k$ in the density case is replaced with $p+1$ here), and hence all the corresponding conclusions hold qualitatively for local polynomials.

\subsection{Practical Choices and Empirical Consequences}
	\label{sec:practical locpoly}
	
As we did for the density, we now derive bandwidth choices, and data-driven implementations, to optimize coverage error in applications. 
\begin{corollary}
	\onehalfspacing
	\label{thm:bandwidth locpoly}
	Let the conditions of Corollary \ref{thm:RBC locpoly} hold.
	\begin{enumerate}[label=(\alph*)]

		\item For an interior point, if $\h=\h^*_\RBC = H^*_\RBC n^{-1/(p + 4)}$, then $\P[m \in \irbc] = 1 - \alpha + O(n^{-(p+3)/(p+4)})$, where
			\[H^*_\RBC  (\bar{\rho}) = \argmin_{H>0} \bigl\vert  H^{-1} q_{1,\RBC}   +    H^{1+2(p+3)} (\etbc^{\tt int})^2 q_{2,\RBC}   +    H^{p+3} (\etbc^{\tt int} ) q_{3,\RBC}  \bigr\vert .\]

		\item For a boundary point, if $\h=\h^*_\RBC = H^*_\RBC (\rho) n^{-1/(p+3)}$, then $\P[m \in \irbc] = 1 - \alpha + O(n^{-(p+2)/(p+3)})$, where 
			\[ H^*_\RBC (\bar{\rho}) = \argmin_{H>0} \bigl\vert  H^{-1} q_{1,\RBC}   +    H^{1+2(p+2)} (\etbc^{\tt bnd})^2 q_{2,\RBC}   +    H^{p+2} (\etbc^{\tt bnd} ) q_{3,\RBC} \bigr\vert \]

	\end{enumerate}
	
\end{corollary}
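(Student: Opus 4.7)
\textbf{Proof proposal for Corollary \ref{thm:bandwidth locpoly}.}

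The plan is to reduce this corollary to a direct optimization of the coverage error expansions supplied by Corollary \ref{thm:RBC locpoly}. First I would fix an interior point and substitute the candidate scaling $\h = H n^{-1/(p+4)}$, for a constant $H>0$, into the three-term expansion
\[
\frac{1}{n\h} q_{1,\RBC} \;+\; n\h^{1+2(p+3)} (\etbc^{\tt int})^2 q_{2,\RBC} \;+\; \h^{p+3} \etbc^{\tt int}\, q_{3,\RBC}.
\]
A routine rate calculation shows that each of the three summands is of exact order $n^{-(p+3)/(p+4)}$ (the first because $1/(n\h) \asymp n^{-(p+3)/(p+4)}/H$, the third directly, and the middle one because $n \cdot n^{-(1+2(p+3))/(p+4)} = n^{-(p+3)/(p+4)}$). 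Consequently, under this scaling the remainder $o(1)$ term in Corollary \ref{thm:RBC locpoly}(a) becomes a genuine $o(n^{-(p+3)/(p+4)})$, and the coverage error satisfies $|\P[m\in\irbc]-(1-\alpha)| = n^{-(p+3)/(p+4)} |A(H)| \{1+o(1)\}$, where $A(H)$ is exactly the quantity whose absolute value is being minimized in the statement of the corollary.

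Next I would verify that $n^{-1/(p+4)}$ is the \emph{optimal} rate, not merely admissible. If one takes $\h$ vanishing faster than this rate, then the variance term $1/(n\h) \cdot q_{1,\RBC}$ alone dominates and strictly exceeds $n^{-(p+3)/(p+4)}$; if $\h$ vanishes more slowly, then the last, linear-in-bias term $\h^{p+3} \etbc^{\tt int} q_{3,\RBC}$ alone exceeds this rate. Since Corollary \ref{thm:RBC locpoly}(a) guarantees that these terms are first-order present (under the maintained assumption that $\etbc^{\tt int}$ and $q_{1,\RBC}$ are bounded away from zero), no scaling can simultaneously shrink all three contributions faster than $n^{-(p+3)/(p+4)}$. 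Thus the minimal coverage error over all $\h\to 0$ is attained at the rate $n^{-1/(p+4)}$, and within that rate class the best constant is the argmin $H^*_\RBC$.

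The boundary case in part (b) proceeds in exact parallel, starting from Corollary \ref{thm:RBC locpoly}(b) and substituting $\h = H n^{-1/(p+3)}$; each of the three summands equalizes to order $n^{-(p+2)/(p+3)}$, and the same rate-dominance argument rules out faster scaling. The dependence on $\bar\rho$ is inherited from $q_{k,\RBC}$ and $\etbc^{\tt bnd}$, justifying the notation $H^*_\RBC(\bar\rho)$.

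The main obstacle, and the only step requiring real care, is to ensure that the argmin defining $H^*_\RBC$ is attained in $(0,\infty)$ and is well-posed, rather than degenerate. Because $q_{1,\RBC}, q_{2,\RBC}, q_{3,\RBC}$ can have different signs, the absolute value $|A(H)|$ may vanish at isolated values of $H$, producing spurious minima or non-uniqueness. To handle this I would follow the approach used for the density case in Corollary \ref{thm:bandwidth density}: appeal to the maintained non-degeneracy of $\etbc^{\tt int}$ (resp.\ $\etbc^{\tt bnd}$) and to the fact that $A(H)\to +\infty$ as $H\downarrow 0$ (from the $1/H$ term) and as $H\to\infty$ (from the $H^{p+3}$ term dominating), guaranteeing existence of at least one interior minimizer; in the pathological case of multiple minimizers one may select, by convention, the smallest. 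In any event, the rate conclusion $O(n^{-(p+3)/(p+4)})$ (resp.\ $O(n^{-(p+2)/(p+3)})$) follows regardless of the particular optimizing $H^*$ chosen, so the corollary statement is secured.
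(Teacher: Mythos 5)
Your proposal is correct and follows essentially the same route as the paper: Corollary \ref{thm:bandwidth locpoly} is obtained directly from Corollary \ref{thm:RBC locpoly} by substituting $\h = H n^{-1/(p+4)}$ (interior) or $\h = H n^{-1/(p+3)}$ (boundary), checking that all three terms of the coverage-error expansion then equalize at order $n^{-(p+3)/(p+4)}$ (resp.\ $n^{-(p+2)/(p+3)}$), and minimizing the resulting constant in $H$, exactly as in the density analogue (Corollary \ref{thm:bandwidth density}). Your additional remarks on rate optimality and on well-posedness of the argmin are sound but not needed for the statement as written, which only asserts the coverage-error order at the prescribed bandwidth.
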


To implement these results, we first set $\rho=1$ and the kernels $K$ and $L$ equal to any desired second order kernel, typical choices being triangular, Epanechnikov, and uniform. The variance estimator $\shatrbc^2$ is defined in Section \ref{sec:variance}, and is fully implementable, and thus so is $\irbc$, once the bandwidth $\h$ is chosen.

For selecting $\h$ at an interior point, the same conclusions from density estimation apply: (i) coverage of $\irbc$ is quite robust with respect to $\h$ and $\rho$, (ii) feasible choices for $\h$ are easy to construct, and (iii) an MSE-optimal bandwidth only delivers the best coverage error for $p=1$ (that is, $\k=2$ in the density case). On the other hand, for a boundary point, an interesting consequence of Corollary \ref{thm:bandwidth locpoly} is that an MSE-optimal bandwidth \emph{never} delivers optimal coverage error decay rates, even for local linear regression: $\h^*_\MSE \propto n^{-1/(2p+3)} \gg \h^*_\RBC \propto n^{-1/(p + 3)}$.

Keeping this in mind, we give a fully data-driven direct plug-in (DPI) bandwidth selector for both interior and boundary points: $\hat{h}^{\tt int}_\PI = \hat{H}^{\tt int}_{\PI} \; n^{-1/(p+4)}$ and $\hat{h}^{\tt bnd}_\PI = \hat{H}^{\tt bnd}_{\PI} \; n^{-1/(p+3)}$, where $\hat{H}^{\tt int}_{\PI}$ and $\hat{H}^{\tt bnd}_{\PI}$ are estimates of (the appropriate) $H^*_\RBC$ of Corollary \ref{thm:bandwidth locpoly}, obtained by estimating unknowns by pilot estimators employing a readily-available pilot bandwidth. The complete steps to form $\hat{H}^{\tt int}_{\PI}$ and $\hat{H}^{\tt bnd}_{\PI}$ are in the supplement, as is a second data-driven bandwidth choice, based on rescaling already-available MSE-optimal bandwidths. All our methods are available in the {\tt nprobust} package: see \url{http://sites.google.com/site/nppackages/nprobust}.

\section{Simulation Results}
	\label{sec:simuls}

We now report a representative sample of results from a simulation study to illustrate our findings. We drew 5,000 replicated data sets, each being $n=500$ i.i.d.\ draws from the model $Y_i = m(X_i) + \e_i$, with $m(x) =  \sin(3\pi x/2) (1+18x^2[\sign(x)+1])^{-1}$, $X_i \sim \mathcal{U}[0,1]$, and $\e_i \sim \N(0,1)$. We consider inference at the five points $x \in \{-2/3, -1/3, 0, 1/3, 2/3\}$. The function $m(x)$ and the five evaluation points are plotted in Figure \ref{fig:dgp}; this function was previously used by \citet{Berry-Carroll-Ruppert2002_JASA} and \citet{Hall-Horowitz2013_AoS}. The supplement gives results for other models, bandwidth selectors and their simulation distributions, alternative variance estimators, and more detailed studies of coverage and length.

We compared robust bias correction to undersmoothing, traditional bias correction, the off-the-shelf {\sf R} package {\tt locfit} \citep{locfit}, and the procedure of \citet{Hall-Horowitz2013_AoS}. In all cases the point estimator is based on local linear regression with the data-driven bandwidth $\hat{h}^{\tt int}_\PI$, which shares the rate of $\hat{h}_\MSE$ in this case, and $\rho=1$. The {\tt locfit} package has a bandwidth selector, but it was ill-behaved and often gave zero empirical coverage. \citet{Hall-Horowitz2013_AoS} do not give an explicit optimal bandwidth, but do advocate a feasible $\hat{h}_\MSE$, following \citet{Ruppert-Sheather-Wand1995_JASA}. To implement their method, we used 500 bootstrap replications and we set $1-\xi=0.9$ over a sequence $\{x_1,...,x_N\}=\{-0.9,-0.8,\ldots,0,\ldots,0.8,0.9\}$ to obtain the final quantile $\hat{\alpha}_\xi(\alpha_0)$, and used their proposed standard errors $\hat{\sigma}_{\tt HH}^2=\kappa\hat{\sigma}^2 / \hat{f}_X$, where $\hat{\sigma}^2=\sum_{i=1}^n\hat{\varepsilon}_i^2/n$ for $\hat{\varepsilon}_i=\tilde{\varepsilon}_i-\bar{\varepsilon}$, with $\tilde{\varepsilon}_i=Y_i-\hat{m}(X_i)$ and $\bar{\varepsilon}=\sum_{i=1}^n \tilde{\varepsilon}_i / n$.

Table \ref{table:locpoly simuls} shows empirical coverage and average length at all five points for all five methods. Robust bias correction yields accurate coverage throughout the support; performance of the other methods varies. For $x=-2/3$, the regression function is nearly linear, leaving almost no bias, and the other methods work quite well. In contrast, at $x=-1/3$ and $x=0$, all methods except robust bias correction suffer from coverage distortions due to bias. Indeed, \citet[][p.\ 1893]{Hall-Horowitz2013_AoS} report that ``[t]he `exceptional' 100$\xi$\% of points that are not covered are typically close to the locations of peaks and troughs, [which] cause difficulties because of bias.'' Finally, bias is still present, though less of a problem, for $x=1/3$ and $x=2/3$, and coverage of the competing procedures improves somewhat. Motivated by the fact that the data-driven bandwidth selectors may be ``too large'' for proper undersmoothing, we studied the common practice of ad-hoc undersmoothing of the MSE-optimal bandwidth choice $\hat{h}_\MSE$: the results in Table S.II.8 of the supplement show this to be no panacea.


To illustrate our findings further, Figures \ref{fig:results}(a) and \ref{fig:results}(b) compare coverage and length of different inference methods over a range of bandwidths. Robust bias correction delivers accurate coverage for a wide range of bandwidths, including larger choices, and thus can yield shorter intervals. For undersmoothing, coverage accuracy requires a delicate choice of bandwidth, and for correct coverage, a longer interval. Figure \ref{fig:results}(c), in color online, reinforces this point by showing the ``average position'' of $\ius(\h)$ and $\irbc(\h)$ for a range of bandwidths: each bar is centered at the average bias and is of average length, and then color-coded by coverage (green indicates good coverage, fading to red as coverage deteriorates). These results show that when $\ius$ is short, bias is large and coverage is poor. In contrast, $\irbc$ has good coverage at larger bandwidths and thus shorter length.

\section{Conclusion}
	\label{sec:conclusion}

This paper has made three distinct, but related points regarding nonparametric inference. First, we showed that bias correction, when coupled with a new standard error formula, performs as well or better than undersmoothing for confidence interval coverage and length. Further, such intervals are more robust to bandwidth choice in applications. Second, we showed theoretically when the popular empirical practice of using MSE-optimal bandwidths is justified, and more importantly, when it is not, and we gave concrete implementation recommendations for applications. Third, we proved that confidence intervals based on local polynomials do have automatic boundary carpentry, provided proper Studentization is used. These results are tied together through the themes of higher order expansions and the importance of finite sample variance calculations and the key, common message that inference procedures must account for additional variability introduced by bias correction.

\singlespacing

	\section{References}
	\begingroup
	\renewcommand{\section}[2]{}	
	\bibliography{Edgeworth-Nonparametrics--Bibliography}{}

\begin{thebibliography}{40}
\newcommand{\enquote}[1]{``#1''}
\expandafter\ifx\csname natexlab\endcsname\relax\def\natexlab#1{#1}\fi

\bibitem[Abadie and Imbens(2008)]{Abadie-Imbens2008_AdES}
Abadie, A., and Imbens, G.~W. (2008), \enquote{Estimation of the Conditional
  Variance in Paired Experiments,} \emph{Annales d'Economie et de Statistique},
  175--187.
\bibitem[Andrews(2002)]{Andrews2002_Ecma}
Andrews, D. W.~K. (2002), \enquote{Higher-Order Improvements of a
  Computationally Attractive $k$-Step Bootstrap for Extremum Estimators,}
  \emph{Econometrica}, 70, 119--162.
\bibitem[Armstrong and Koles\'ar(2015)]{Armstrong-Kolesar2015_minimax}
Armstrong, T.~B., and Koles\'ar, M. (2015), \enquote{Optimal inference in a
  class of regression models,} \emph{Arxiv preprint arXiv:1511.06028}.
\bibitem[Berry {\normalfont et~al.}(2002)Berry, Carroll and
  Ruppert]{Berry-Carroll-Ruppert2002_JASA}
Berry, S.~M., Carroll, R.~J., and Ruppert, D. (2002), \enquote{Bayesian
  Smoothing and Regression Splines for Measurement Error Problems,}
  \emph{Journal of the American Statistical Association}, 97, 160--169.
\bibitem[Bhattacharya and Ghosh(1978)]{Bhattacharya-Ghosh1978_AoS}
Bhattacharya, R.~N., and Ghosh, J.~K. (1978), \enquote{On the Validity of the
  Formal Edgeworth Expansion,} \emph{The Annals of Statistics}, 6, 434--451.
\bibitem[Bhattacharya and Rao(1976)]{Bhattacharya-Rao1976_book}
Bhattacharya, R.~N., and Rao, R.~R. (1976), \emph{Normal Approximation and
  Asymptotic Expansions}, John Wiley and Sons.
\bibitem[Calonico {\normalfont et~al.}(2016)Calonico, Cattaneo and
  Farrell]{Calonico-Cattaneo-Farrell2016_EE-RD}
Calonico, S., Cattaneo, M.~D., and Farrell, M.~H. (2016), \enquote{Coverage
  Error Optimal Confidence Intervals for Regression Discontinuity Designs,}
  \emph{working paper}.
\bibitem[Calonico {\normalfont et~al.}(2017)Calonico, Cattaneo and
  Farrell]{Calonico-Cattaneo-Farrell2017_nprobust}
\leavevmode\vrule height .65ex depth -.6ex width 3em\  (2017), \enquote{{\tt
  nprobust}: Nonparametric Kernel-Based Estimation and Robust Bias-Corrected
  Inference,} \emph{working paper}.
\bibitem[Calonico {\normalfont et~al.}(2014)Calonico, Cattaneo and
  Titiunik]{Calonico-Cattaneo-Titiunik2014_Ecma}
Calonico, S., Cattaneo, M.~D., and Titiunik, R. (2014), \enquote{Robust
  Nonparametric Confidence Intervals for Regression-Discontinuity Designs,}
  \emph{Econometrica}, 82, 2295--2326.
\bibitem[Cattaneo {\normalfont et~al.}(2013)Cattaneo, Crump and
  Jansson]{Cattaneo-Crump-Jansson_2013_JASA}
Cattaneo, M.~D., Crump, R.~K., and Jansson, M. (2013), \enquote{Generalized
  Jackknife Estimators of Weighted Average Derivatives (with Discussion and
  Rejoinder),} \emph{Journal of the American Statistical Association}, 108,
  1243--1268.
\bibitem[Cattaneo and Farrell(2013)]{Cattaneo-Farrell2013_JoE}
Cattaneo, M.~D., and Farrell, M.~H. (2013), \enquote{Optimal Convergence Rates,
  Bahadur Representation, and Asymptotic Normality of Partitioning Estimators,}
  \emph{Journal of Econometrics}, 174, 127--143.
\bibitem[Chen and Qin(2002)]{Chen-Qin2002_SJS}
Chen, S.~X., and Qin, Y.~S. (2002), \enquote{Confidence Intervals Based on
  Local Linear Smoother,} \emph{Scandinavian Journal of Statistics}, 29,
  89--99.
\bibitem[Fan and Gijbels(1996)]{Fan-Gijbels1996_book}
Fan, J., and Gijbels, I. (1996), \emph{Local polynomial modelling and its
  applications}, London: Chapman and Hall.
\bibitem[Gasser {\normalfont et~al.}(1985)Gasser, Muller and
  Mammitzsch]{Gasser-Muller-Mammitzsch1985_JRSSB}
Gasser, T., Muller, H.-G., and Mammitzsch, V. (1985), \enquote{Kernels for
  Nonparametric Curve Estimation,} \emph{Journal of the Royal Statistical
  Society. Series B}, 47, 238--252.
\bibitem[Hall(1991)]{Hall1991_Statistics}
Hall, P. (1991), \enquote{Edgeworth Expansions for Nonparametric Density
  Estimators, with Applications,} \emph{Statistics}, 22, 215--232.
\bibitem[Hall(1992{\natexlab{a}})]{Hall1992_book}
\leavevmode\vrule height .65ex depth -.6ex width 3em\  (1992{\natexlab{a}}),
  \emph{The Bootstrap and Edgeworth Expansion}, New York: Springer-Verlag.
\bibitem[Hall(1992{\natexlab{b}})]{Hall1992_AoS_density}
\leavevmode\vrule height .65ex depth -.6ex width 3em\  (1992{\natexlab{b}}),
  \enquote{Effect of Bias Estimation on Coverage Accuracy of Bootstrap
  Confidence Intervals for a Probability Density,} \emph{The Annals of
  Statistics}, 20, 675--694.
\bibitem[Hall and Horowitz(2013)]{Hall-Horowitz2013_AoS}
Hall, P., and Horowitz, J.~L. (2013), \enquote{A Simple Bootstrap Method for
  Constructing Nonparametric Confidence Bands for Functions,} \emph{The Annals
  of Statistics}, 41, 1892--1921.
\bibitem[Hall and Kang(2001)]{Hall-Kang2001_AoS}
Hall, P., and Kang, K.-H. (2001), \enquote{Bootstrapping Nonparametric Density
  Estimators with Empirically Chosen Bandwidths,} \emph{The Annals of
  Statistics}, 29, 1443--1468.
\bibitem[Horowitz(2009)]{Horowitz2009_book}
Horowitz, J.~L. (2009), \emph{Semiparametric and Nonparametric Methods in
  Econometrics}, Springer.
\bibitem[James and Mayne(1962)]{James-Mayne1962_Sankhya}
James, G.~S., and Mayne, A.~J. (1962), \enquote{Cumulants of Functions of
  Random Variables,} \emph{Sankhy{\=a}}, 24, 47--54.
\bibitem[Jones(1994)]{Jones1994_CSTM}
Jones, M.~C. (1994), \enquote{On Kernel Density Derivative Estimation,}
  \emph{Communications in Statistics - Theory and Methods}, 23, 2133--2139.
\bibitem[Jones and Foster(1993)]{Jones-Foster1993_JNPS}
Jones, M.~C., and Foster, P.~J. (1993), \enquote{Generalized Jackknifing and
  Higher Order Kernels,} \emph{Journal of Nonparametric Statistics}, 3, 81--94.
\bibitem[Jones and Signorini(1997)]{Jones-Signorini1997_JASA}
Jones, M.~C., and Signorini, D.~F. (1997), \enquote{A Comparison of
  Higher-Order Bias Kernel Density Estimators,} \emph{Journal of the American
  Statistical Association}, 92, 1063--1073.
\bibitem[Kline and Santos(2012)]{Kline-Santos2012_JoE}
Kline, P., and Santos, A. (2012), \enquote{Higher order properties of the wild
  bootstrap under misspecification,} \emph{Journal of Econometrics}, 171,
  54--70.
\bibitem[Loader(2013)]{locfit}
Loader, C. (2013), \emph{locfit: Local Regression, Likelihood and Density
  Estimation.}, R package version 1.5-9.1.
\bibitem[MacKinnon(2013)]{MacKinnon2013_BookChap}
MacKinnon, J.~G. (2013), \enquote{Thirty Years of Heteroskedasticity-Robust
  Inference,} in \emph{Recent Advances and Future Directions in Causality,
  Prediction, and Specification Analysis}, eds. X.~Chen and N.~R. Swanson,
  Springer, pp.\  437--461.
\bibitem[Marron and Wand(1992)]{Marron-Wand1992_AoS}
Marron, J.~S., and Wand, M.~P. (1992), \enquote{Exact Mean Integrated Squared
  Error,} \emph{The Annals of Statistics}, 20, 712--736.
\bibitem[Muller and Stadtmuller(1987)]{Muller-Stadtmuller1987_AoS}
Muller, H.-G., and Stadtmuller, U. (1987), \enquote{Estimation of
  Heteroscedasticity in Regression Analysis,} \emph{The Annals of Statistics},
  15, 610--625.
\bibitem[Mykland and Zhang(2015)]{Mykland-Zhang2015_WP}
Mykland, P., and Zhang, L. (2015), \enquote{Assessment of Uncertainty in High
  Frequency Data: The Observed Asymptotic Variance,} \emph{Econometrica,
  forthcoming}.
\bibitem[Neumann(1997)]{Neumann1997_Statistics}
Neumann, M.~H. (1997), \enquote{Pointwise confidence intervals in nonparametric
  regression with heteroscedastic error structure,} \emph{Statistics}, 29,
  1--36.
\bibitem[Ruppert {\normalfont et~al.}(1995)Ruppert, Sheather and
  Wand]{Ruppert-Sheather-Wand1995_JASA}
Ruppert, D., Sheather, S.~J., and Wand, M.~P. (1995), \enquote{An Effective
  Bandwidth Selector for Local Least Squares Regression,} \emph{Journal of the
  American Statistical Association}, 90, 1257--1270.
\bibitem[Ruppert and Wand(1994)]{Ruppert-Wand1994_AoS}
Ruppert, D., and Wand, M.~P. (1994), \enquote{Multivariate Locally Weighted
  Least Squares Regression,} \emph{The Annals of Statistics}, 22, 1346--1370.
\bibitem[Ruppert {\normalfont et~al.}(2009)Ruppert, Wand and
  Carroll]{Ruppert-Wand-Carroll2009_book}
Ruppert, D., Wand, M.~P., and Carroll, R. (2009), \emph{Semiparametric
  Regression}, New York: Cambridge University Press.
\bibitem[Schennach(2015)]{Schennach2015_bias}
Schennach, S.~M. (2015), \enquote{A bias bound approach to nonparametric
  inference,} \emph{cemmap working paper CWP71/15}.
\bibitem[Schucany and Sommers(1977)]{Schucany-Sommers1977_JASA}
Schucany, W., and Sommers, J.~P. (1977), \enquote{Improvement of Kernel Type
  Density Estimators,} \emph{Journal of the American Statistical Association},
  72, 420--423.
\bibitem[Silverman(1986)]{Silverman1986_book}
Silverman, B.~W. (1986), \emph{Density Estimation for Statistics and Data
  Analysis}, Chapman \& Hall.
\bibitem[Singh(1977)]{Singh1977_AoS}
Singh, R.~S. (1977), \enquote{Improvement on Some Known Nonparametric Uniformly
  Consistent Estimators of Derivatives of a Density,} \emph{The Annals of
  Statistics}, 5, 394--399.
\bibitem[Stone(1982)]{Stone1982_AoS}
Stone, C.~J. (1982), \enquote{Optimal Global Rates of Convergence for
  Nonparametric Regression,} \emph{The Annals of Statistics}, 10, 1040--1053.
\bibitem[Wand and Jones(1995)]{Wand-Jones1995_book}
Wand, M., and Jones, M. (1995), \emph{Kernel Smoothing}, Florida: Chapman \&
  Hall/CRC.
\end{thebibliography}
	\bibliographystyle{jasa}
	\endgroup

\newpage


\begin{table}[h]
	\renewcommand{\arraystretch}{1.12}
	\begin{center}
		\caption{Empirical Coverage and Average Interval Length of 95$\%$ Confidence Intervals\label{table:locpoly simuls}}
		\resizebox{\columnwidth}{!}{
\begin{tabular}{ccccccccccccc}
\hline\hline
\multicolumn{1}{c}{\bfseries Evaluation}& \multicolumn{1}{c}{\bfseries Average }&&\multicolumn{5}{c}{\bfseries Empirical Coverage}&\multicolumn{1}{c}{\bfseries }&\multicolumn{4}{c}{\bfseries Interval Length}\tabularnewline
\cline{4-8} \cline{10-13}
\multicolumn{1}{c}{{\bf Point} }&\multicolumn{1}{c}{ {\bf Bandwidth} }&\multicolumn{1}{c}{}&\multicolumn{1}{c}{US}&\multicolumn{1}{c}{Locfit}&\multicolumn{1}{c}{BC}&\multicolumn{1}{c}{HH}&\multicolumn{1}{c}{RBC}&\multicolumn{1}{c}{}&\multicolumn{1}{c}{US}&\multicolumn{1}{c}{Locfit}&\multicolumn{1}{c}{HH}&\multicolumn{1}{c}{RBC}\tabularnewline
\hline
-2/3&0.203&&95.4&95.3&83.4&93.7&95.0&&0.437&0.472&0.410&0.627\tabularnewline
-1/3&0.307&&44.1&66.3&82.1&31.2&94.1&&0.357&0.381&0.275&0.507\tabularnewline
0&0.320&&73.5&83.1&81.0&58.8&93.6&&0.348&0.376&0.267&0.498\tabularnewline
1/3&0.343&&93.3&93.7&82.0&83.1&94.5&&0.332&0.361&0.245&0.477\tabularnewline
2/3&0.262&&94.2&94.5&82.0&89.2&94.3&&0.386&0.418&0.329&0.554\tabularnewline
\hline
\end{tabular}
}\bigskip
	\end{center}
	\renewcommand{\arraystretch}{1}
	\vspace{-.2in}\footnotesize\textbf{Notes}: {\bf (i)} Column ``Average Bandwidth'' reports simulation average of estimated bandwidths $\h = \hat{h}_\PI \equiv \hat{h}^{\tt int}_\PI$. Simulation distributions for estimated bandwidths are reported in the supplement. {\bf (ii)} US = Undersmoothing, Locfit = \texttt{R} package \texttt{locfit} by \cite{locfit}, BC = Bias Corrected, HH = \cite{Hall-Horowitz2013_AoS}, RBC = Robust Bias Corrected. 
\end{table}

\vspace{1in}
\begin{figure}[h]
	\caption{True Regression Model and Evaluation Points}	\label{fig:dgp}
  \centering\includegraphics[scale=.8]{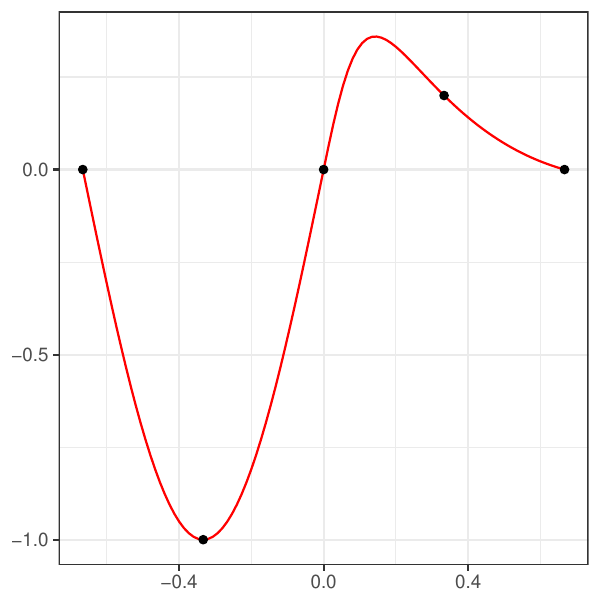}
\end{figure}

\begin{figure}[h]\centering
	\caption{Local Polynomial Simulation Results for $x=0$}\label{fig:results}
        \begin{subfigure}[t]{0.5\textwidth}
				  \includegraphics[scale=0.7]{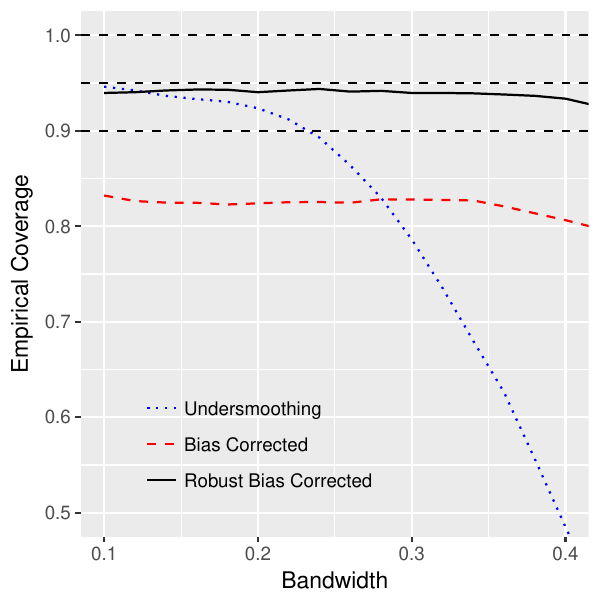}
	        \caption{Empirical Coverage}
        \end{subfigure}
        \begin{subfigure}[t]{0.5\textwidth}
						\includegraphics[scale=0.7]{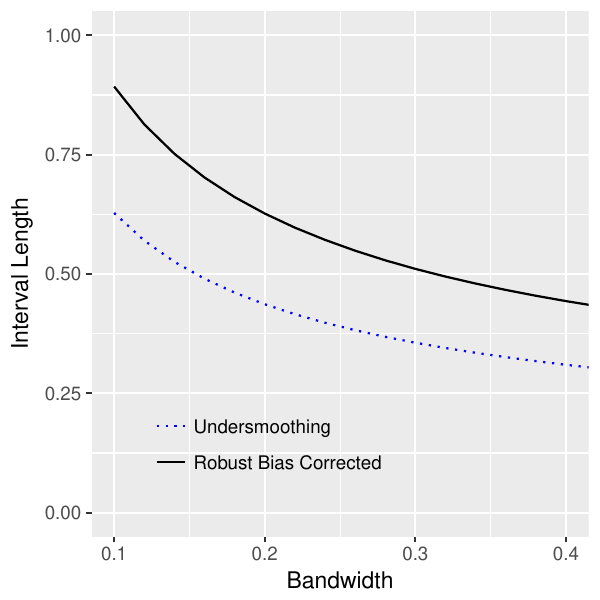}
	          \caption{Interval Length}
        \end{subfigure}\\
        \begin{subfigure}[t]{0.5\textwidth}
					\begin{center}	\includegraphics[scale=0.7]{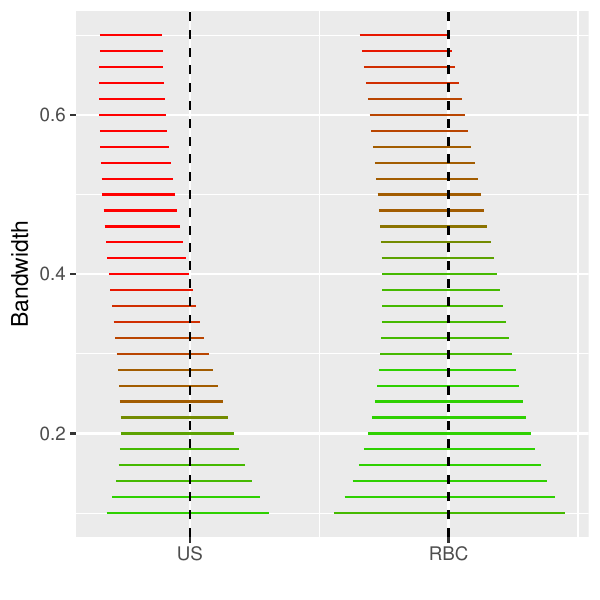}
		       \caption{Average Confidence Intervals and Coverage for Undersmoothing (US) and Robust Bias Correction (RBC).}
		       \end{center}
        \end{subfigure}%
       \hspace*{-9em}
    \begin{subfigure}[b]{0.5\textwidth}
    	\includegraphics[width=1.0\textwidth]{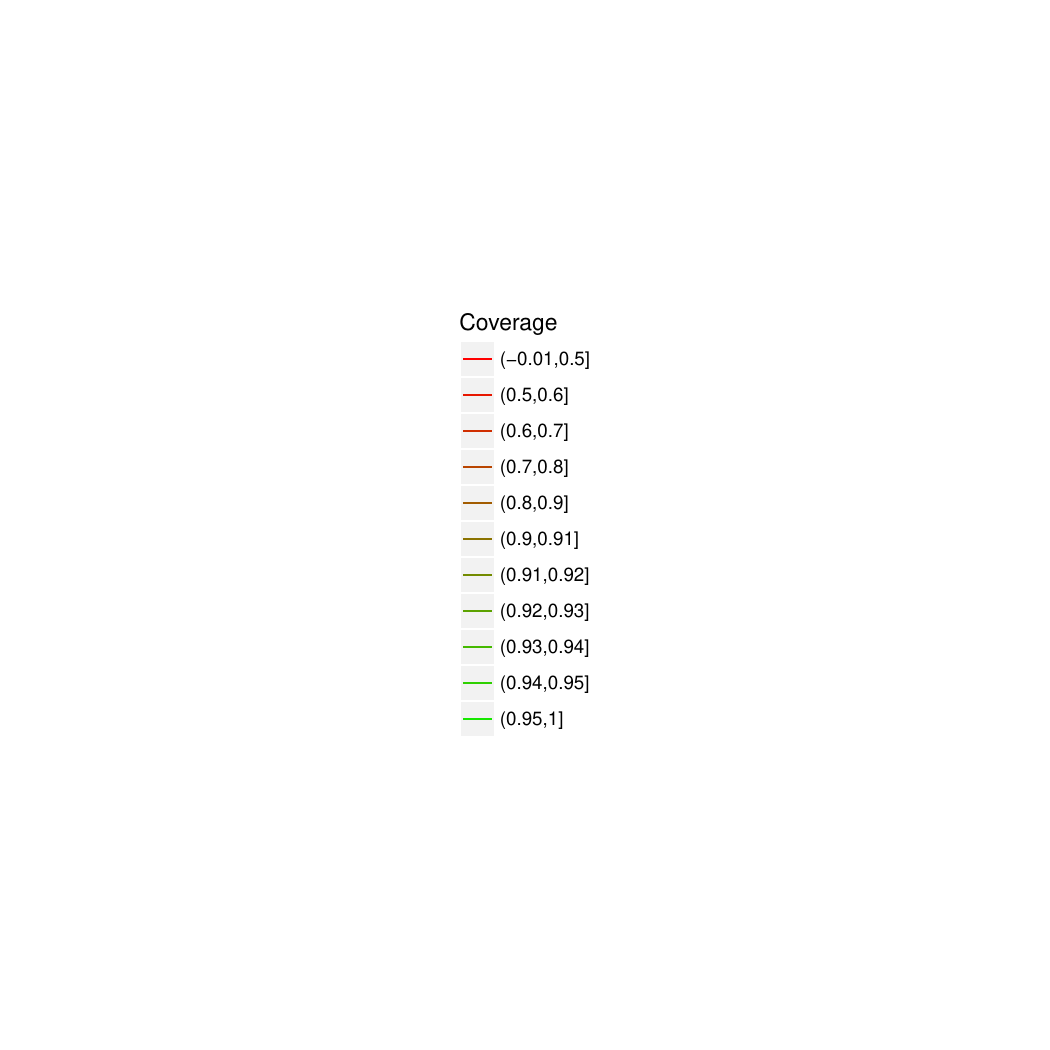}
    \end{subfigure}   
\end{figure}


\clearpage

	\newcommand{\sumj}{\sum_{j=1}^n}
	\newcommand{\sumk}{\sum_{k=1}^n}

	\newcommand{\defsym}{:=}

	\newcommand{\oeq}{\stackrel{o}{=}}

\numberwithin{section}{part}
\numberwithin{table}{part}
\numberwithin{figure}{part}

\renewcommand\thepart{S.\Roman{part}}
\renewcommand\thesection{\thepart.\arabic{section}}
\renewcommand\thetable{\thepart.\arabic{table}}
\renewcommand\thefigure{\thepart.\arabic{figure}}

\setenumerate[1]{label=\bf(\alph*)}
\setenumerate[2]{label=\bf(\roman*)}

\setlength{\cftbeforesecskip}{2pt}
\setlength{\cftbeforepartskip}{2em}

\setlength{\cftsecnumwidth}{3.5em}
\setlength{\cftsubsecnumwidth}{3.75em}
\setlength{\cftsubsubsecnumwidth}{4em}

\setlength{\cftsecindent}{2em}
\setlength{\cftsubsecindent}{3em}
\setlength{\cftsubsubsecindent}{4em}

\partfont{\centering}

\setcounter{page}{0}
\begin{center}
{\bf\Large
\vspace{-0.75in} Supplement to ``On the Effect of Bias Estimation on Coverage Accuracy in Nonparametric Inference''
}
\end{center}
\maketitle
\thispagestyle{empty}

\singlespacing

\bigskip

This supplement contains technical and notational details omitted from the main text, proofs of all results, further technical details and derivations, and additional simulations results and numerical analyses. The main results are Edgeworth expansions of the distribution functions of the $t$-statistics $\tus$, $\tbc$, and $\trbc$, for density estimation and local polynomial regression. Stating and proving these results is the central purpose of this supplement. The higher-order expansions of confidence interval coverage probabilities in the main paper follow immediately by evaluating the Edgeworth expansions at the interval endpoints.

Part \ref{supp:density} contains all material for density estimation at interior points, while Part \ref{supp:locpoly} treats local polynomial regression at both interior and boundary points, as in the main text. Roughly, these have the same generic outline:
\begin{itemize}
	\singlespacing
	\itemsep=1pt
	\item We first present all notation, both for the estimators themselves and the Edgeworth expansions, regardless of when the notation is used, as a collective reference;
	\item We then discuss optimal bandwidths and other practical matters, expanding on details of the main text;
	\item Assumptions for validity of the Edgeworth expansions are restated from the main text, and Cram\'er's condition is discussed;
	\item Bias properties are discussed in more detail than in the main text, and some things mentioned there are made precise;
	\item The main Edgeworth expansions are stated, some corollaries are given, and the proofs are given;
	\item Complete simulation results are presented.
\end{itemize}

\bigskip

All our methods are implemented in {\sf R} and {\tt STATA} via the {\tt nprobust} package, available from \url{http://sites.google.com/site/nppackages/nprobust} (see also \url{http://cran.r-project.org/package=nprobust}). See \citet{Calonico-Cattaneo-Farrell2017_nprobust} for a complete description.

\newpage
\setcounter{tocdepth}{2}
\singlespacing
\tableofcontents

\onehalfspacing

\clearpage
\part{Kernel Density Estimation and Inference}
	\label{supp:density}

\section{Notation}
	\label{supp:notation density}

Here we collect notation to be used throughout this section, even if it is restated later. Throughout this supplement, let $X_{\h,i} = (x - X_i)/\h$ and similarly for $X_{\b,i}$. The evaluation point is implicit here. In the course of proofs we will frequently write $s=\sqrt{n\h}$.

\subsection{Estimators, Variances, and Studentized Statistics}

To begin, recall that the original and bias-corrected density estimators are
	\[\hat{f}(x) = \frac{1}{n \h} \sumi K\left( X_{\h,i} \right)\] 
and 
\begin{equation}
	\label{suppeqn:kernel M}
	\hat{f} - \hat{B}_f = \frac{1}{n \h} \sumi M\left( X_{\h,i} \right),		\qquad\qquad		 M(u) \defsym K(u) - \rho^{1 + \k} L^{(\k)}(\rho u) \mu_{K,\k},
\end{equation}
for symmetric kernel functions $K(\cdot)$ and $L(\cdot)$ that integrate to one on their compact support, $\h$ and $\b$ are bandwidth sequences that vanish as $n \to \infty$, and where 
	\[\rho = \h /\b, 		\qquad  \qquad		\hat{B}_f = \h^\k \hat{f}^{(\k)}(x) \mu_{K,\k}, 		\qquad  \qquad		\hat{f}^{(\k)}(x) = \frac{1}{n \b^{1 + \k} } \sumi L^{(\k)}\left( X_{\b,i} \right),\]
and integrals of the kernel are denoted
\[\mu_{K,k} = \frac{(-1)^k}{k!}\int u^k K(u) du, 	 \quad\qquad\text{ and }\quad\qquad	  \vartheta_{K,k} = \int K(u)^k du.\]

The three statistics $\tus$, $\tbc$, and $\trbc$ share a common structure that is exploited to give a unified theorem statement and proof. For $v \in \{1,2\}$, define
\[\hat{f}_v = \frac{1}{n \h} \sumi N_v \left( X_{\h,i} \right),	 \quad\qquad \text{where} \quad\qquad	 N_1(u) = K(u) \text{ and } N_2(u) = M(u),\]
and $M$ is given in \Eqref{suppeqn:kernel M}. Thus, $\hat{f}_1 = \hat{f}$ and $\hat{f}_2 = \hat{f} - \hat{B}_f$. In exactly the same way, define 
\[\sigma^2_v \defsym n\h \V[\hat{f}_v] = \frac{1}{\h} \left\{ \E \left[ N_v \left( X_{\h,i} \right)^2 \right] - \E \left[ N_v \left( X_{\h,i} \right) \right]^2   \right\}\]
and the estimator 
\[\shat^2_v  = \frac{1}{\h} \left\{ \frac{1}{n}\sumi \left[ N_v \left( X_{\h,i} \right)^2 \right] - \left[ \frac{1}{n}\sumi N_v \left( X_{\h,i} \right) \right]^2 \right\}.\]

The statistic of interest for the generic Edgeworth expansion is, for $1 \leq w \leq v\leq 2$, 
\[T_{v,w} \defsym \frac{ \sqrt{n \h} (\hat{f}_v - f) }{ \shat_w }.\]
In this notation, 
\[\tus = T_{1,1}, 	\qquad	\tbc = T_{2,1}, 	\qquad \text{ and } \qquad 	\trbc = T_{2,2}.\]

\subsection{Edgeworth Expansion Terms}
	\label{supp:terms density}

The scaled bias is $\eta_v = \sqrt{n \h} (\E[\hat{f}_v] - f)$. The Standard Normal distribution and density functions are $\Phi(z)$ and $\phi(z)$, respectively.

The Edgeworth expansion for the distribution of $T_{v,w}$ will consist of polynomials with coefficients that depend on moments of the kernel(s). To this end, continuing with the generic notation, for nonnegative integers $j, k, p$, define 
	\[\gamma_{v,p} = \h^{-1} \E\left[N_v\left(X_{\h,i}\right)^p\right],		\qquad \quad  \qquad 	       \Delta_{v,j} = \frac{1}{s} \sumi \left\{ N_v\left(X_{\h,i}\right)^j - \E\left[N_v\left(X_{\h,i}\right)^j\right]\right\},\]
and
	\[\nu_{v,w}(j,k,p) = \frac{1}{\h}\E\left[ \left(N_v\left(X_{\h,i}\right) - \E\left[N_v\left(X_{\h,i}\right)\right] \right)^j \left(N_w\left(X_{\h,i}\right)^p - \E\left[N_w\left(X_{\h,i}\right)^p\right] \right)^k     \right].\]
We abbreviate $\nu_{v,w}(j,0,p) = \nu_v(j)$.

To expand the distribution function, additional polynomials are needed beyond those used in the main text for coverage error. These are
\begin{align*}
	p_{v,w}^{(1)}(z) & = \phi(z) \sigma_w^{-3} [\nu_{v,w}(1,1,2) z^2/2  - \nu_v(3) (z^2 - 1)/6], 			\\
	p_{v,w}^{(2)}(z) & =  - \phi(z) \sigma_w^{-3} \E[\hat{f}_w] \nu_{v,w}(1,1,1) z^2,		\qquad \text{ and } \qquad 	p_{v,w}^{(3)}(z)  = \phi(z)  \sigma_w^{-1}.
\end{align*}
Next, recall from the main text the polynomials used in \emph{coverage error} expansions, here with an explicit argument for a generic quantile $z$ rather than the specific $z_{\alpha/2}$:
\begin{align*}
    q_1(z;K) & = \vartheta_{K,2}^{-2} \vartheta_{K,4}(z^3 - 3z)/6 - \vartheta_{K,2}^{-3} \vartheta_{K,3}^2 [2z^3/3 + (z^5 - 10z^3 + 15z)/9],\\
    q_2(z;K) & = - \vartheta_{K,2}^{-1}(z),		\quad\qquad \text{ and } \quad\qquad 		q_3(z;K)  = \vartheta_{K,2}^{-2} \vartheta_{K,3}( 2 z^3/3).
\end{align*}
The corresponding polynomials for expansions of the \emph{distribution function} are
	\[q_{v,w}^{(k)}(z) = \frac{1}{2} \frac{\phi(z)}{f} q_k (z; N_w), \qquad k = 1,2,3.\]

Finally, the precise forms of $\Omega_1$ and $\Omega_2$ are:
	\[\Omega_1 = - 2 \frac{\mu_{K,\k}}{\nu_1(2)}  \left\{  \int f(x - u\h )K(u) L^{(\k)}(u \rho) du  - \b \int f(x - u\h) K(u) du \int f(x - u\b) L^{(\k)}(u) du  \right\}\]
and $\Omega_2 = \mu_{K,\k}^2 \vartheta_{K,2}^{-2} \vartheta_{L^{(\k)},2}$.	These only appear for $\tbc$, and so are not indexed by $\{v,w\}$.

All these are discussed in Section \ref{supp:Edgeworth density}.

\section{Details of practical implementation}
	\label{supp:practical density}

We maintain $\ell=2$ and recommend $\k=2$. For the kernels $K$ and $L$, we recommend either the second order minimum variance (to minimize interval length) or the MSE-optimal kernels; see Sections \ref{sec:rho} and \ref{supp:kernel order}. In the next two subsections we discuss choice of $\h$ and $\rho$.

As argued below in Section \ref{sec:rho}, we shall maintain $\rho=1$. In the main text we give a direct plug-in (DPI) rule to implement the coverage-error optimal bandwidth. Here we we give complete details for this procedure as well as document a second practical choice, based on a rule-of-thumb (ROT) strategy. Both choices yield the optimal coverage error decay rate of $n^{-(\k+2)/(1+(\k+2))}$.

All our methods are implemented in {\sf R} and {\tt STATA} via the {\tt nprobust} package, available from \url{http://sites.google.com/site/nppackages/nprobust} (see also \url{http://cran.r-project.org/package=nprobust}). See \citet{Calonico-Cattaneo-Farrell2017_nprobust} for a complete description.

\begin{remark}[Undercoverage of $\ius(\h^*_\MSE)$]
	\label{rem:undercover}
	It is possible not only to show that $\ius(\h^*_\MSE)$ asymptotically undercovers (see \citet{Hall-Horowitz2013_AoS} for discussion in the regression context) but also to quantify precisely the coverage. To do so, write $\tus = \sqrt{n\h} (\hat{f} - \E[\hat{f}]) / \shatus + \eus / \shatus$, where the first term will be asymptotically standard Normal and the second will be a nonrandom, nonvanishing bias when $\h^*_\MSE$ is used.

	To characterize this second term, first we define $\h^*_\MSE$ in in our notation. Recall from \Eqref{suppeqn:bias density} and Section \ref{supp:notation density} that the mean-square error of $\hat{f}$ can be written as $(n\h)^{-1} \sus^2  + (n\h)^{-1}\eus^2$. Define $\etus$ to be the leading constant of the bias, so that $\eus = \sqrt{n\h} \h^\k [ \etus + o(1)]$ and the MSE becomes $(n\h)^{-1} \sus^2  + \h^{2\k}  \etus^2$. Then optimizing the MSE yields, in this notation, 
	\[\h^*_\MSE = n^{-\tfrac{1}{2\k+1}} \left( \frac{\sus^2}{2\k \etus^2} \right)^{-\tfrac{1}{2\k+1}}. \]
Therefore, the second term of $\tus(\h^*_\MSE)$ will be
	\[\frac{\eus}{\shatus} = \frac{\sqrt{n\h^*_\MSE} (\h^*_\MSE)^\k [ \etus + o(1)]}{\shatus} = \left( n (\h^*_\MSE)^{2\k+1}\right)^{1/2}  \frac{\etus}{\shatus}  =  \left( \frac{\sus^2}{2\k \etus^2}\right)^{1/2}  \frac{\etus}{\shatus}  = \left(\frac{1}{2\k}\right)^{1/2}[1+o_p(1)], \]
	using consistency of $\shatus$ (or if a feasible $\h^*_\MSE$ is used, it is the bias estimate that must be consistent). Hence $\tus(\h^*_\MSE) \to_d \N\left( (2\k)^{-1/2},1\right)$.
	
	The most common empirical case would be $\k=2$ and $\alpha = 0.05$, and so $\tus \to_d \N(1/2,1)$ and $\P[f\in\ius(\h^*_\MSE)] \approx 0.92$.
\end{remark}

\subsection{Bandwidth Choice: Rule-of-Thumb (ROT)}

Motivated by the fact that estimating $\hat{H}_{\PI}$ might be difficult in practice, while data-driven MSE-optimal bandwidth selectors are readily-available, the ROT bandwidth choice is to simply rescale any feasible MSE-optimal bandwidth $\hat{\h}_\MSE$ to yield optimal coverage error decay rates (but sub-optimal constants): 
	\[\hat{h}_\ROT = \hat{h}_\MSE \; n^{-(\k-2)/((1+2\k)(\k+3))}.\]
When $\k=2$, $\hat{h}_\ROT=\hat{h}_\MSE$, which is optimal (in rates) as discussed previously.

\begin{remark}[Integrated Coverage Error]
	\label{rem:IMSE}
	A closer analogue of the \citet{Silverman1986_book} rule of thumb, which uses the integrated MSE, would be to integrate the coverage error over the point of evaluation $x$. For point estimation, this approach has some practical benefits. However, in the present setting note that $\int f^{(\k)}(x) dx = 0 $, removing the third term (of order $\h^\k$) entirely and thus, for any given point $x$, yields a lower quality approximation.
\end{remark}

\subsection{Bandwidth Choice: Direct Plug-In (DPI)}
	\label{supp:bandwidth density}

To detail the direct plug-in (DPI) rule from the main text, it is useful to first simplify the problem. Recall from the main text that the optimal choice is $\h^*_\RBC = H^*_\RBC (\rho) n^{-1/(\k + 3)}$, where 
	\begin{align*}
		H^*_\RBC (K, L, \bar{\rho}) &= \argmin_H \bigl\vert  H^{-1} q_1(M_{\bar{\rho}}) + H^{1+2(\k+2)} (f^{(\k+2)})^2 \left( \mu_{K,\k+2} - \bar{\rho}^{-2}  \mu_{K,\k} \mu_{L,2} \right)^2  q_2(M_{\bar{\rho}})     		\\
                           & \qquad\qquad\qquad + H^{\k+2} f^{(\k+2)} \left(\mu_{K,\k+2} - \bar{\rho}^{-2}\mu_{K,\k}\mu_{L,2}\right) q_3(M_{\bar{\rho}}) \bigr\vert  .
	\end{align*}
With $\ell=2$ and $\rho = 1$, and using the definitions of $q_k(M_1)$, $k=1,2,3$, from the main text or Section \ref{supp:terms density}, this simplifies to:
	\begin{align*}
		H^*_\RBC (K, L, 1)  = \argmin_H \; \Biggl\vert  &  H^{-1} \left\{ \vartheta_{M,4}\frac{ z^2 - 3}{6}  -  \vartheta_{M,3}^2 \frac{z^4 - 4z^2 + 15}{9} \right\}                		\\
				&    -      H^{1+2(\k+2)}  \left\{ (f^{(\k+2)})^2  \left( \mu_{K,\k+2} -  \mu_{K,\k} \mu_{L,2} \right)^2 \vartheta_{M,2}\right\}                		\\
				&    +     H^{\k+2}   \left\{   f^{(\k+2)} \left(\mu_{K,\k+2} - \mu_{K,\k}\mu_{L,2}\right)  \vartheta_{M,3} \frac{ 2 z^2}{3}  \right\} \Biggr\vert,
	\end{align*}
where $z = z_{\alpha/2}$ the appropriate upper quantile of the Normal distribution. However, $H^*_\RBC (\rho)$ still depends on the unknown density through $f^{(\k+2)}$.

Our recommendation is a DPI rule of order one, which uses a pilot bandwidth to estimate $f^{(\k+2)}$ consistently. A simple and easy to implement choice is the MSE-optimal bandwidth appropriate to estimating $f^{(\k+2)}$, say $\h^*_{\k+2,\MSE}$, which is different from $\h^*_\MSE$ for the level of the function; see e.g., \citet{Wand-Jones1995_book}. Let us denote a feasible MSE-optimal pilot bandwidth by $\hat{\h}_{\k+2,\MSE}$. Then we have:
	\begin{align*}
		\hat{H}_{\PI} (K, L, 1)  = \argmin_H \; \Biggl\vert  &  H^{-1} \left\{ \vartheta_{M,4}\frac{ z^2 - 3}{6}  -  \vartheta_{M,3}^2 \frac{z^4 - 4z^2 + 15}{9} \right\}                		\\
				&    -      H^{1+2(\k+2)}  \left\{ \hat{f}^{(\k+2)}(x; \hat{\h}_{\k+2,\MSE})^2  \left( \mu_{K,\k+2} -  \mu_{K,\k} \mu_{L,2} \right)^2 \vartheta_{M,2}\right\}                 		\\
				&    +     H^{\k+2}   \left\{   \hat{f}^{(\k+2)}(x; \hat{\h}_{\k+2,\MSE})  \left(\mu_{K,\k+2} - \mu_{K,\k}\mu_{L,2}\right)  \vartheta_{M,3} \frac{ 2 z^2}{3}  \right\} \Biggr\vert.
	\end{align*}
This is now easily solved numerically (see note below). Further, if $\k=2$, the most common case in practice, and $K$ and $L$ are either the respective second order minimum variance or MSE-optimal kernels (Sections \ref{sec:rho} and \ref{supp:kernel order}), then the above may be simplified to:
	\begin{align*}
		\hat{H}_{\PI} (M, 1)  = \argmin_H \; \Biggl\vert  &  H^{-1} \left\{ \vartheta_{M,4}\frac{ z^2 - 3}{6}  -  \vartheta_{M,3}^2 \frac{z^4 - 4z^2 + 15}{9} \right\}                 		\\
				&    -      H^9  \left\{ \hat{f}^{(4)}(x; \hat{\h}_{\k+2,\MSE})^2  \mu_{M,4}^2 \vartheta_{M,2}\right\}                 		\\
				&    +     H^4   \left\{   \hat{f}^{(4)}(x; \hat{\h}_{\k+2,\MSE})  \mu_{M,4}  \vartheta_{M,3} \frac{ 2 z^2}{3}  \right\} \Biggr\vert.
	\end{align*}

Continuing with $\k=2$, a second option is a DPI rule of order zero, which uses a reference model to build the rule of thumb, more akin to \citet{Silverman1986_book}. Using the Normal distribution, so that $f(x) = \phi(x)$ and derivatives have known form, we obtain:
	\begin{align*}
		\hat{H}_{\PI}(M, 1)  = \argmin_H \; \biggl\vert  &  H^{-1} \left\{ \vartheta_{M,4}\frac{ z^2 - 3}{6}  -  \vartheta_{M,3}^2 \frac{z^4 - 4z^2 + 15}{9} \right\}                 		\\
				&    -      H^9  \left\{ \left[ \left(\tilde{x}^4 - 6\tilde{x}^2 + 3 \right) \phi(\tilde{x}) \right]^2  \mu_{M,4}^2 \vartheta_{M,2}\right\}                 		\\
				&    +     H^4     \left\{  \left(\tilde{x}^4 - 6\tilde{x}^2 + 3 \right) \phi(\tilde{x}) \mu_{M,4}  \vartheta_{M,3} \frac{ 2 z^2}{3}  \right\} \biggr\vert
	\end{align*}
where $\tilde{x} = (x - \hat{\mu}) / \hat{\sigma}_X$ is the point of interest centered and scaled.

\begin{remark}[Notes on computation]
	When numerically solving the above minimization problems, computation will be greatly sped up by squaring the objective function.
\end{remark}

\subsection{Choice of $\rho$}
	\label{sec:rho}

First, we expand on the argument that $\rho$ should be bounded and positive. Intuitively, the standard errors $\shatrbc^2$ control variance up to order $(n \h)^{-1}$, while letting $\b \to 0$ faster removes more bias. If $\b$ vanishes too fast, the variance is no longer controlled. Setting $\bar{\rho} \in (0,\infty)$ balances these two. Let us simplify the discussion by taking $\ell=2$, reflecting the widespread use of symmetric kernels. This does not affect the conclusions in any conceptual way, but considerably simplifies the notation. With this choice, \Eqref{suppeqn:kernel M} yields the tidy expression
	\[ \ebc  =  \sqrt{n\h}  \h^{\k+2}  f^{(\k+2)} \left( \mu_{K,\k+2} - \rho^{-2} \mu_{K,\k} \mu_{L,2} \right) \; \{ 1 + o(1)\}. \]
Choice of $\ell$ and $\b$ (or $\rho$) cannot reduce the first term, which represents $\E[\hat{f}] - f - B_f$, and further, if $\bar{\rho} = \infty$, the bias rate is not improved, but the variance is inflated beyond order $(n \h)^{-1}$. On the other hand, if $\bar{\rho} = 0$, then not only is a delicate choice of $\b$ needed, but $\ell > 2$ is required, else the second term above dominates $\ebc$, and the full power of the variance correction is not exploited; that is, more bias may be removed without inflating the variance rate. \citet[p.\ 682]{Hall1992_AoS_density} remarked that if $\E[\hat{f}] - f - B_f$ is (part of) the leading bias term, then ``explicit bias correction [\ldots] is even less attractive relative to undersmoothing.'' We show that, on the contrary, when using our proposed Studentization, it is optimal that $\E[\hat{f}] - f - B_f$ is (part of) the dominant bias term. This reasoning is not an artifact of choosing $\k$ even and $\ell=2$, but in other cases $\rho \to 0$ can be optimal if the convergence is sufficiently slow to equalize the two bias terms.

The following result which makes the above intuition precise.
\begin{corollary}[Robust bias correction: $\rho \to 0$]
	\label{thm:rho to zero}
	Let the conditions of Theorem \ref{thm:Edgeworth density}(c) hold, with $\bar{\rho} = 0$, and fix $\ell=2$ and $\k \leq S-2$. Then 
	\begin{align*}
		\P[f \in \irbc]  = 1 - \alpha & + \biggl\{ \frac{1}{n \h}   q_1(K) 			
		                             + n\h^{1 + 2(\k+2)}   (f^{(\k+2)})^2  \left(\mu_{K,\k+2}^2 + \rho^{-4}\mu_{K,\k}^2 \mu_{L,2}^2 \right)  q_2(K)			\\
		                          & \qquad  + \h^{\k+2}   f^{(\k+2)}\left(\mu_{K,\k+2} - \rho^{-2}\mu_{K,\k}\mu_{L,2}\right) q_3(K)  \biggr\} \frac{\phi(z_{\frac{\alpha}{2}})}{f}	 \; \{1+o(1)\}
	\end{align*}
\end{corollary}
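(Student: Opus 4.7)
The plan is to apply Theorem \ref{thm:coverage density}(c) directly with $\bar{\rho}=0$ (permissible since the theorem requires only $\bar{\rho} < \infty$), and then make the generic quantities $\ebc$ and $q_k(M_\rho)$ explicit using the assumed smoothness $\k \leq S-2$ and the choice $\ell=2$. Two concrete tasks remain: derive the explicit form of the scaled bias $\ebc$, and show that $q_k(M_\rho) = q_k(K) + o(1)$ as $\rho \to 0$ so that the polynomials in the expansion can be written in terms of $K$ alone. A little bookkeeping of the cross terms in $\ebc^{2}$ then produces the stated expression.

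For the bias, decompose $\ebc = \sqrt{n\h}\bigl(\E[\hat f] - f - B_f\bigr) - \sqrt{n\h}\bigl(\E[\hat B_f] - B_f\bigr)$. Under $\k \leq S-2$, a second-order Taylor expansion of $\E[\hat f]$ gives $\E[\hat f] - f - B_f = \h^{\k+2}f^{(\k+2)}\mu_{K,\k+2}\{1+o(1)\}$; under $\ell=2$, an integration-by-parts computation of $\E[\hat f^{(\k)}]$ together with $\hat B_f = \h^{\k}\hat f^{(\k)}\mu_{K,\k}$ gives $\E[\hat B_f] - B_f = \h^{\k}\b^{2}f^{(\k+2)}\mu_{K,\k}\mu_{L,2}\{1+o(1)\}$, with the precise sign matching the convention of Corollary \ref{thm:RBC coverage}(a). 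Using $\rho = \h/\b$, collecting terms yields
\begin{equation*}
\ebc = \sqrt{n\h}\,\h^{\k+2}\,f^{(\k+2)}\bigl(\mu_{K,\k+2} + \rho^{-2}\mu_{K,\k}\mu_{L,2}\bigr)\{1+o(1)\}.
\end{equation*}

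For the polynomials, from $M(u) = K(u) - \rho^{1+\k}L^{(\k)}(\rho u)\mu_{K,\k}$ and the change of variables $v = \rho u$, for any fixed $j \geq 2$,
\begin{equation*}
\vartheta_{M_\rho,j} = \vartheta_{K,j} + O\bigl(\rho^{1+\k}\bigr) \quad \text{as } \rho \to 0,
\end{equation*}
since each mixed term in the binomial expansion of $M^{j}$ carries a factor $\rho^{1+\k}$ and each pure derivative term, after the substitution, carries $\rho^{j(1+\k)-1}$. As $q_1, q_2, q_3$ depend only on $\vartheta_{\cdot,2}, \vartheta_{\cdot,3}, \vartheta_{\cdot,4}$ with denominators bounded away from zero for $\rho$ small, one gets $q_k(M_\rho) = q_k(K)\{1+o(1)\}$. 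Substituting the explicit $\ebc$ and this limit into Theorem \ref{thm:coverage density}(c) produces the $(n\h)^{-1}$ and bias-linear terms immediately; for the quadratic term, expand
\begin{equation*}
\bigl(\mu_{K,\k+2} + \rho^{-2}\mu_{K,\k}\mu_{L,2}\bigr)^{2} = \mu_{K,\k+2}^{2} + 2\rho^{-2}\mu_{K,\k+2}\mu_{K,\k}\mu_{L,2} + \rho^{-4}\mu_{K,\k}^{2}\mu_{L,2}^{2},
\end{equation*}
and note that as $\rho \to 0$ the cross term of order $\rho^{-2}$ is dominated by the $\rho^{-4}$ term, hence absorbed into $\{1+o(1)\}$. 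This yields the stated $\mu_{K,\k+2}^{2} + \rho^{-4}\mu_{K,\k}^{2}\mu_{L,2}^{2}$.

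The main obstacle is not the algebra but the implicit uniformity in $\rho$: Theorem \ref{thm:coverage density}(c) is an Edgeworth expansion for a kernel $M_\rho$ that varies with $n$ when $\rho_n \to 0$, so its $\{1+o(1)\}$ remainder must hold uniformly as $\rho \to 0$. This requires the $n$-varying Cram\'er's condition from the supplement to be verified for $M_\rho(X_{\h,i})$ uniformly in $\rho$, together with uniform control of the relevant kernel moments---both of which follow from the moment computation above and from $K$ already satisfying the condition. Given this uniformity, the four steps sketched complete the proof.
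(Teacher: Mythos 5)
Your proposal is correct and follows essentially the same route as the paper: the corollary is obtained by specializing the generic Edgeworth expansion (Theorem \ref{thm:Edgeworth density}(c) / Theorem \ref{thm:coverage density}(c)) with the explicit bias from Lemma \ref{lem:bias density} under $\k \leq S-2$, $\ell=2$, and with the kernel-moment convergence $\vartheta_{M_\rho,j} = \vartheta_{K,j} + O(\rho^{1+\k})$ giving $q_k(M_\rho) = q_k(K)\{1+o(1)\}$, exactly as in the supplement's discussion of $q_j(z;M)$; your observations about the dominated cross term in $\ebc^2$ and about uniformity in $\rho$ (which is covered by the $n$-varying Cram\'er condition already assumed in the theorem) are both apt. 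The only caveat is the sign of the $\rho^{-2}\mu_{K,\k}\mu_{L,2}$ contribution, where your decomposition literally yields a minus but you report a plus to match Corollary \ref{thm:RBC coverage}(a) --- the paper itself is inconsistent on this point between \eqref{suppeqn:bias corrected} and the display in Section \ref{sec:rho}, and it does not affect the structure of the result.
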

By virtue of our new studentization, the leading variance remains order $(n \h)^{-1}$ and the problematic correlation terms are absent, however by forcing $\rho \to 0$, the $\rho^{-2}$ terms of $\ebc$ are dominant (the bias of $\hat{B}_f$), and in light of our results, unnecessarily inflated. This verifies that $\bar{\rho} = 0$ or $\infty$ will be suboptimal.

We thus restrict to bounded and positive, $\rho$. Therefore, $\rho$ impacts only the shape of the ``kernel'' $M_\rho(u) = K(u) - \rho^{1 + \k} L^{(\k)}(\rho u) \mu_{K,\k}$, and hence the choice of $\rho$ depends on what properties the user desires for the kernel. It happens that $\rho=1$ has good theoretical properties and performs very well numerically (see Section \ref{supp:simuls density}). As a result, from the practitioner's point of view, choice of $\rho$ (or $\b$) is completely automatic.

To see the optimality of $\rho=1$, consider two cogent and well-studied possibilities: finding the kernel shape to minimize (i) interval length and (ii) MSE. The following optimal shapes are derived by \citet{Gasser-Muller-Mammitzsch1985_JRSSB} and references therein. Given the above results, we set $\k=2$. Indeed, the optimality properties here do not extend to higher order kernels.

Minimizing interval length is (asymptotically) equivalent to finding the minimum variance fourth-order kernel, as $\srbc^2 \to f \vartheta_{M,2}$. Perhaps surprisingly, choosing $K$ and $L^{(2)}$ to be the second-order minimum variance kernels for estimating $f$ and $f^{(2)}$ respectively, yields an $M_1(u)$ that is exactly the minimum variance kernel. The fourth order minimum variance kernel for estimating $f$ is $K_{\MV}(u) = (3/8)(-5u^2 + 3)$, which is identical to $M_1(u)$ when $K$ is the uniform kernel and $L^{(2)} = (15/4) (3u^2 -1)$, the minimum variance kernels for $f$ and $f^{(2)}$ respectively.

The result is similar for minimizing MSE: choosing $K$ and $L^{(2)}$ to be the MSE-optimal kernels for their respective point estimation problems yields an MSE-optimal $M_1(u)$. The optimal fourth order kernel is $K_{\MSE}(u) = (15/32)(7u^4 - 10u^2 + 3)$, and the respective second-order MSE optimal kernels are $K(u) = (3/4)(1-u^2)$ and $L^{(2)}(u)  = (105/16)(6u^2 - 5u^4 - 1)$. A practitioner might use the MSE-optimal kernels (along with $\h^*_\MSE$) to obtain the best possible point estimate. Our results then give an accompanying measure of uncertainty that both has correct coverage and the attractive feature of using the same effective sample.

In Section \ref{supp:kernel order} we numerically compare several kernel shapes, focusing on: (i) interval length, measured by $\vartheta_{M,2}$, (ii) bias, given by $\tilde{\mu}_{M,4}$, and (iii) the associated MSE, given by $(\vartheta_{M,2}^8 \tilde{\mu}_{M,4}^2 )^{1/9}$. These results, and the discussion above, give the foundations for our recommendation of $\rho=1$, which delivers an easy-to-implement, fully automatic choice for implementing robust bias-correction that performs well numerically, as in Section \ref{supp:simuls density}.

\begin{remark}[Coverage Error Optimal Kernels]
	\label{rem:coverage kernel}
	Our results hint at a third notion of optimal kernel shape: minimizing coverage error. This kernel, for a fixed order $\k$, would minimize the constants in Corollary 1 of the main text. In that result, $\h$ is chosen to optimize the rate and the constant $H^*_\US$ gives the minimum for a fixed kernel $K$. A step further would be to view $H^*_\US$ as a function of $K$, and optimizing. To our knowledge, such a derivation has not been done and may be of interest.
\end{remark}

\section{Assumptions}
	\label{sec:assumptions density}

The following assumptions are sufficient for our results. The first two are copied directly from the main text (see discussion there) and the third is the appropriate Cram\'er's condition.

\begin{assumption}[Data-generating process] 
	\label{supp:dgp density} \,
	$\{X_1, \ldots, X_n\}$ is a random sample with an absolutely continuous distribution with Lebesgue density $f$. In a neighborhood of $x$, $f>0$, $f$ is $S$-times continuously differentiable with bounded derivatives $f^{(s)}$, $s=1,2,\cdots,S$, and $f^{(S)}$ is H\"older continuous with exponent $\varsigma$.
\end{assumption}

\begin{assumption}[Kernels] 
	\label{supp:kernel density} \, 
	The kernels $K$ and $L$ are bounded, even functions with support $[-1,1]$, and are of order $\k \geq 2$ and $\ell \geq 2$, respectively, where $\k$ and $\ell$ are even integers. That is, $\mu_{K,0} = 1$, $\mu_{K,k} = 0$ for $1 \leq k < \k$, and $\mu_{K,\k} \neq 0$ and bounded, and similarly for $\mu_{L,k}$ with $\ell$ in place of $\k$. Further, $L$ is $\k$-times continuously differentiable. For all integers $k$ and $l$ such that $k + l = \k-1$, $f^{(k)}(x_0) L^{(l)}((x_0 - x)/\b) = 0$ for $x_0$ in the boundary of the support.
\end{assumption}

It will cause no confusion (as the notations never occur in the same place), but in the course of proofs we will frequently write $s=\sqrt{n\h}$.

\begin{assumption}[Cram\'er's Condition] 
	\label{supp:Cramer density} 
	For each $\xi>0$ and all sufficiently small $\h$
	\[  \sup_{t \in \mathbb{R}^2,\ t_1^2 + t_2^2 > \xi} \left\vert \int \exp \{ i (t_1 M(u) + t_2 M(u)^2) \} f(x - u \h) du \right\vert \leq 1 - C(x, \xi) \h,\]
	where $C(x,\xi)>0$ is a fixed constant and $i=\sqrt{-1}$.
\end{assumption}

\begin{remark}[Sufficient Conditions for Cram\'er's Condition]
Assumption \ref{supp:Cramer density} is a high level condition, but one that is fairly mild. \citet{Hall1991_Statistics} provides a primitive condition for Assumption \ref{supp:Cramer density} and Lemma 4.1 in that paper verifies that Assumption \ref{supp:Cramer density} is implied. \citet{Hall1992_book} and \citet{Hall1992_AoS_density}  assume the same primitive condition. This condition is as follows. On their compact support, assumed here to be $[-1,1]$, there exists a partition $-1=a_0 < a_1 < \cdots < a_m = 1$, such that on each $(a_{j-1},a_j)$, $K$ and $M$ are differentiable, with bounded, strictly monotone derivatives.

	This condition is met for many kernels, with perhaps the only exception of practical importance being the uniform kernel. As \citet{Hall1991_Statistics} describes, it is possible to prove the Edgeworth expansion for the uniform kernel using different methods than we use in below. The uniform kernel is also ruled out for local polynomial regression, see Section \ref{sec:assumptions locpoly}.
\end{remark}

\section{Bias}

This section accomplishes three things. First, we first carefully derive the bias of the initial estimator and the bias correction. Second, we explicate the properties of the induced kernel $M_\rho$ in terms of bias reduction and how exactly this kernel is ``higher-order''. Finally, we examine two other methods of bias reduction: (i) estimating the derivatives without using derivatives of kernels \citep{Singh1977_AoS}, and (ii) the generalized jackknife approach \citep{Schucany-Sommers1977_JASA}. Further methods are discussed and compared by \citet{Jones-Signorini1997_JASA}. The message from both alternative methods echoes our main message: it is important to account for any bias correction when doing inference, i.e., to avoid the mismatch present in $\tbc$.

\subsection{Precise Bias Calculations}
	\label{supp:bias density}

Recall that the biases of the two estimators are as follows:
\begin{equation}
	\label{suppeqn:bias density}
	\E[\hat{f}] - f = \begin{cases}
					\h^\k f^{(\k)} \mu_{K,\k} + \h^{\k+2} f^{(\k+2)} \mu_{K,\k+2} + o(\h^{\k+2}) & \text{if } \k \leq S - 2			\\
					\h^\k f^{(\k)} \mu_{K,\k} + O(\h^{S + \varsigma})  & \text{if } \k \in \{S-1,S\}			\\
					0 + O(\h^{S + \varsigma}) & \text{if } \k > S
				\end{cases}
\end{equation}
and
\begin{equation}
	\label{suppeqn:bias corrected}
	\E[\hat{f} - \hat{B}_f] - f = \begin{cases}
					\h^{\k+2} f^{(\k+2)} \mu_{K,\k+2}  - \h^\k \b^\ell f^{(\k + \ell)} \mu_{K,\k} \mu_{L,\ell} + o(\h^{\k+2} + \h^\k \b^\ell) & \text{if } \k + \ell \leq S				\\
					\h^{\k+2} f^{(\k+2)} \mu_{K,\k+2}  + O(\h^\k \b^{S - \k + \varsigma})  + o(\h^{\k+2})  & \text{if } 2 \leq S - \k < \ell			\\
					O(\h^{S + \varsigma})  + O(\h^\k \b^{S - \k+\varsigma}) & \text{if } \k \in \{S-1,S\}			\\
					O(\h^{S + \varsigma})  + O(\h^\k \b^{S - \k})  & \text{if } \k > S.
				\end{cases}
\end{equation}

The following Lemma gives a rigorous proof of these statements.
\begin{lemma}
	\label{lem:bias density}
	Under Assumptions \ref{supp:dgp density} and \ref{supp:kernel density}, Equations \eqref{suppeqn:bias density} and \eqref{suppeqn:bias corrected} hold.
\end{lemma}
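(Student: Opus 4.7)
The strategy is to compute each expectation exactly via a change of variables and then Taylor expand $f$ (or $f^{(\k)}$), reading off the leading terms from the vanishing moments of $K$ and $L$ and controlling the remainder by the available smoothness. The first identity is standard; the only novelty in the second is that I have to transfer the $\k$ derivatives from $L^{(\k)}$ onto $f$ by integration by parts before I am in a position to Taylor expand.

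For $\E[\hat{f}] - f$, a direct change of variables gives $\E[\hat{f}(x)] = \int K(u)\,f(x-u\h)\,du$. When $\k \le S-2$, I Taylor expand $f(x-u\h)$ to order $\k+2$ with a pointwise Lagrange remainder and integrate term by term; Assumption \ref{supp:kernel density} forces $\mu_{K,j}=0$ for $1\le j<\k$, and symmetry kills the $j=\k+1$ term, leaving $\h^\k f^{(\k)}\mu_{K,\k} + \h^{\k+2}f^{(\k+2)}\mu_{K,\k+2} + o(\h^{\k+2})$. When $\k\in\{S-1,S\}$ I only expand to order $S$, using the H\"older continuity of $f^{(S)}$ to bound the remainder by $O(|u\h|^{S+\varsigma})$, which integrates to the stated $O(\h^{S+\varsigma})$. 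When $\k>S$ every moment $\mu_{K,j}$ with $1\le j\le S$ vanishes, so only the H\"older remainder survives and the leading polynomial part is exactly zero. This gives the three cases of \eqref{suppeqn:bias density}.

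For $\E[\hat{f}-\hat{B}_f]-f$ I compute $\E[\hat{f}^{(\k)}(x)] = \b^{-\k}\int L^{(\k)}(u) f(x-u\b)\,du$ (change of variables) and integrate by parts $\k$ times, moving derivatives from $L^{(\k)}$ onto $g(u) := f(x-u\b)$, which satisfies $g^{(j)}(u) = (-\b)^j f^{(j)}(x-u\b)$. The boundary condition in Assumption \ref{supp:kernel density}, together with the fact that $L$ is $\k$-times continuously differentiable with compact support (so $L^{(l)}(\pm 1)=0$ for all $l<\k$), kills every boundary term; in the regime $\k\le S$ this yields the clean formula $\E[\hat{f}^{(\k)}(x)] = \int L(u) f^{(\k)}(x - u\b)\,du$. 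I then Taylor expand $f^{(\k)}(x-u\b)$ around $x$, noting that $L$ is even and of order $\ell$: in the top case $\k+\ell\le S$ this gives $\E[\hat{f}^{(\k)}] = f^{(\k)} + \b^{\ell} f^{(\k+\ell)}\mu_{L,\ell} + o(\b^\ell)$. Multiplying by $\h^\k\mu_{K,\k}$, subtracting from $\E[\hat{f}]$, and cancelling the common $\h^\k f^{(\k)}\mu_{K,\k}$ terms produces the top line of \eqref{suppeqn:bias corrected}; the three other cases come from either the point estimator exhausting its smoothness budget first (Case 3 or 4, where only $O(\h^{S+\varsigma})$ is available for the leading bias), or the bias estimator doing so (Cases 2 and 3, where the expansion of $f^{(\k)}$ stops at order $S-\k$ with a H\"older remainder of size $O(\b^{S-\k+\varsigma})$), or both.

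The main obstacle is bookkeeping the integration by parts in the regime $\k>S$, where $f^{(\k)}$ does not exist and one cannot use the formula $\int L f^{(\k)}$ as stated. In that regime I perform only $S$ integrations by parts (the largest number allowed by Assumption \ref{supp:dgp density}), leaving
\[
\E[\hat{f}^{(\k)}(x)] \;=\; \b^{S-\k}\int L^{(\k-S)}(u)\,f^{(S)}(x-u\b)\,du,
\]
then add and subtract $f^{(S)}(x)$: the constant piece vanishes because $\int L^{(\k-S)}(u)\,du$ telescopes to a boundary difference killed by the same compact-support/differentiability argument, while the remainder is $O(\b^{\varsigma})$ by H\"older continuity of $f^{(S)}$, giving $\E[\hat{B}_f] = O(\h^\k \b^{S-\k+\varsigma})$ and producing the final case of \eqref{suppeqn:bias corrected}. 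The other fine point worth checking is the parity bookkeeping: since $\k$, $\ell$, and $\k+\ell$ are all even by Assumption \ref{supp:kernel density}, the would-be leading odd-order term in each Taylor expansion drops out automatically, which is what allows the two-term expansion of $\E[\hat{f}]$ and lets the $\b^\ell$ term dominate the next-order correction in $\E[\hat{f}^{(\k)}]$.
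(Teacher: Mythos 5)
Your proposal is correct and follows essentially the same route as the paper's proof: a change of variables plus Taylor expansion (exploiting the vanishing moments and parity of $K$) for \eqref{suppeqn:bias density}, and repeated integration by parts to move the $\k$ derivatives from $L^{(\k)}$ onto $f$ before Taylor expanding for \eqref{suppeqn:bias corrected}, with the boundary condition of Assumption \ref{supp:kernel density} killing the boundary terms. The only (harmless) divergence is in the $\k>S$ case, where you use $\int L^{(\k-S)}(u)\,du=0$ to sharpen the bound to $O(\h^{\k}\b^{S-\k+\varsigma})$, which implies the paper's stated $O(\h^{\k}\b^{S-\k})$.
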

\begin{proof}
To show \Eqref{suppeqn:bias density}, begin with the change of variables and the Taylor expansion
\begin{align*}
	\E[\hat{f}] & = \h^{-1} \int K\left(X_{\h,i}\right) f(X_i) dX_i = \int K(u) f(x - u \h) du		\\
	& = \sum_{k=0}^{S} \left\{ (-\h)^k f^{(k)}(x) \int u^k K(u) du / k!  \right\}   +  (-\h)^S \int u^S K(u) \left( f^{(S)}(\bar{x}) - f^{(S)}(x) \right)du.
\end{align*}
where $\bar{x} \in [x,x-u\h]$. By the H\"older condition of Assumption \ref{supp:dgp density}, the final term is $O(\h^{S + \varsigma})$. If $\k > S$, then all $\int u^k K(u) du = 0$, and only this remainder is left. In all other cases, $\h^\k f^{(\k)}(x) \mu_{K,\k}$ is the first nonzero term of the summation, and hence the leading bias term. Further, by virtue of $\k$ being even and $K$ symmetric, $\int u^{\k+1} K(u) du =0$, leaving only $O(\h^{S + \varsigma})$ when $\k = S-1$, and otherwise, when $\k \leq S-2$, leaving $\h^{\k+2} f^{(\k+2)}(x) \mu_{K,\k+2} + o(\h^{\k+2})$. This completes the proof of \Eqref{suppeqn:bias density}.

To establish \Eqref{suppeqn:bias corrected}, first write 
\[\E[\hat{f} - \hat{B}_f] - f = \E[\hat{f} - f - B_f] + \E[B_f - \hat{B}_f],\]
where $B_f$ follows the convention of being identically zero if $\k > S$. The first portion is characterized by rearranging \Eqref{suppeqn:bias density}, so it remains to examine the second term. Let $\tilde{\k} = \k \vee S$. By repeated integration by parts, using the boundary conditions of Assumption \ref{supp:kernel density}:
\begin{align*}
	\E[\hat{f}^{(\k)}] & = \frac{1}{\b^{1+\k} } \int L^{(\k)}\left(X_{\b,i}\right) f(X_i) dX_i 		\\
	& =  \left.  - \frac{1}{\b^{1 + (\k-1)}} L^{(\k-1)}\left(X_{\b,i}\right) f(X_i)  \right|_\mathcal{X}  +  \frac{1}{\b^{1 + (\k-1)}} \int L^{(\k-1)}\left(X_{\b,i}\right) f^{(1)}(X_i) dX_i 		\\
	& =  0  +  \frac{1}{\b^{1 + (\k-1)}} \int L^{(\k-1)}\left(X_{\b,i}\right) f^{(1)}(X_i) dX_i 		\\
	& = - \frac{1}{\b^{1 + (\k-2)}} L^{(\k-2)}\left(X_{\b,i}\right) f^{(1)}(X_i)   +  \frac{1}{\b^{1 + (\k-2)}} \int L^{(\k-2)}\left(X_{\b,i}\right) f^{(2)}(X_i) dX_i 		\\
	& \quad  \vdots  			\\
	& =  \frac{1}{\b^{1 + (\k-\tilde{\k})}} \int L^{(\k-\tilde{\k})}\left(X_{\b,i}\right) f^{(\tilde{\k})}(X_i) dX_i 		\\
	& =  \frac{1}{\b^{\k-\tilde{\k}}} \int L^{(\k-\tilde{\k})}(u) f^{(\tilde{\k})}(x - u \b) du,
\end{align*}
where the last line follows by a change of variables. We now proceed separately for each case delineated in \eqref{suppeqn:bias corrected}, from top to bottom. For $\k > S$, no reduction is possible, and the final line above is $O(\b^{S-\k})$, and with $B_f = 0$, we have $\E[B_f - \hat{B}_f] = 0 - \h^\k \mu_{K,\k} \E[\hat{f}^{(\k)}]  = O(\h^\k \b^{S-\k})$, as shown. For $\k \leq S$, by a Taylor expansion, the final line displayed above becomes
\begin{align*}
	\sum_{k=\k}^S \left\{ \b^{k-\k}  f^{(k)}(x) \mu_{L,k-\k} \right\} + \b^{S-\k} \int u^{S-\k} L(u) \left( f^{(S)}(\bar{x}) - f^{(S)}(x) \right)du. 
\end{align*}
The second term above is $O(\b^{S - \k + \varsigma})$ in all cases, and $\mu_{L,0} = 1$, which yields $\E[\hat{f}^{(\k)}] = f^{(\k)} + O(\b^{S - \k + \varsigma})$ for $\k \in \{S-1,S\}$, using $\mu_{L,1} = 0$ in the former case. Next, if $\k + \ell \leq S$, the above becomes $\E[\hat{f}^{(\k)}] =  f^{(\k)} + \b^\ell f^{(\k + \ell)} \mu_{L,\ell} + o(\b^\ell)$, as $\mu_{L,k} = 0$ for $1< k < \ell$, whereas if $\k + \ell > S$, the remainder terms can not be characterized, leaving $\E[\hat{f}^{(\k)}] =  f^{(\k)} + O(\b^{S - \k + \varsigma})$. Plugging any of these results into $\E[B_f - \hat{B}_f] = \h^\k \mu_{K,\k} (f^{(\k)} - \E[\hat{f}^{(\k)}])$ completes the demonstration of \Eqref{suppeqn:bias corrected}.
\end{proof}

\subsection{Properties of the kernel $M_\rho(\cdot)$}
	\label{supp:kernel order}

As made precise below, $M_\rho$ is a higher-order kernel. The choices of $K$, $L$, and $\rho$ determine the shape of $M_\rho$, which in turn effects the variance and bias constants. In standard kernel analyses, these constants are used to determine optimal kernel shapes for certain problems (see \citet{Gasser-Muller-Mammitzsch1985_JRSSB} and references therein). For several choices of $K$, $L$, and $\rho$, Table \ref{table:kernel shapes} shows numerical results for the various constants of the induced kernel $M_\rho$. The table includes (i) the variance, given by $\vartheta_{M,2}$ and relevant for interval length, (ii) a measure of bias given by $\tilde{\mu}_{M,4}$, and finally (iii) the resulting mean square error constant, $ [ \vartheta_{M,2}^8 \tilde{\mu}_{M,4}^2 ]^{1/9}$ ($\tilde{\mu}_{M,4}=(k!)(-1)^{k}\mu_{M,4}$). These specific constants are due to $M_\rho$ being a fourth order kernel, as discussed next, and would otherwise remain conceptually the same but rely on different moments. A more general, but more cumbersome procedure would be to choose $\rho$ numerically to minimize some notation of distance (e.g., $L_2$) between the resulting kernel $M_\rho$ and the optimal kernel shape already available in the literature. However, using $\rho=1$ as a simple rule-of-thumb exhibits very little lost performance, as shown in the Table and discussed in the paper.

\begin{sidewaystable}
	\begin{center}
		\begin{threeparttable}
			\caption{Numerical results for bias and variance constants of the induced higher-order kernel $M$ for several choices of $K$, $L$, and $\rho$}
			\label{table:kernel shapes}
\begin{tabular}{llcccccccccccc}
\hline\hline
\multicolumn{2}{c}{\bfseries }&\multicolumn{1}{c}{\bfseries }&\multicolumn{3}{c}{\bfseries $\rho=0.5$}&\multicolumn{1}{c}{\bfseries }&\multicolumn{3}{c}{\bfseries $\rho=1$}&\multicolumn{1}{c}{\bfseries }&\multicolumn{3}{c}{\bfseries $\rho=1.5$}\tabularnewline
\cline{1-14}
\multicolumn{1}{c}{Kernel $K$}&\multicolumn{1}{c}{Kernel $L^{(2)}$}&\multicolumn{1}{c}{}&\multicolumn{1}{c}{$\tilde{\mu}_{M,4}$}&\multicolumn{1}{c}{$\vartheta_{M,2}$}&\multicolumn{1}{c}{MSE}&\multicolumn{1}{c}{}&\multicolumn{1}{c}{$\tilde{\mu}_{M,4}$}&\multicolumn{1}{c}{$\vartheta_{M,2}$}&\multicolumn{1}{c}{MSE}&\multicolumn{1}{c}{}&\multicolumn{1}{c}{$\tilde{\mu}_{M,4}$}&\multicolumn{1}{c}{$\vartheta_{M,2}$}&\multicolumn{1}{c}{MSE}\tabularnewline
\hline
Epanechnikov&$(105/16)(6u^2-5u^4-1)$&&  0.0690&  0.6430&  0.3729&& -0.0476&  1.2500&  0.6199&& -0.3643&  5.5992&  3.6944\tabularnewline
Uniform&$(105/16)(6u^2-5u^4-1)$&&  0.1722&  0.5152&  0.3752&& -0.0222&  1.4722&  0.6052&& -0.5500& 11.5742&  7.7202\tabularnewline
Biweight&$(105/16)(6u^2-5u^4-1)$&&  0.0357&  0.7617&  0.3744&& -0.0476&  1.2500&  0.6199&& -0.2738&  3.9537&  2.5448\tabularnewline
Triweight&$(105/16)(6u^2-5u^4-1)$&&  0.0210&  0.8617&  0.3715&& -0.0438&  1.2774&  0.6202&& -0.2197&  3.2395&  2.0300\tabularnewline
Tricube&$(105/16)(6u^2-5u^4-1)$&&  0.0335&  0.7542&  0.3658&& -0.0506&  1.2332&  0.6207&& -0.2786&  3.9344&  2.5436\tabularnewline
Cosine&$(105/16)(6u^2-5u^4-1)$&&  0.0629&  0.6617&  0.3747&& -0.0476&  1.2503&  0.6199&& -0.3475&  5.2717&  3.4651\tabularnewline
Epanechnikov&$(15/4)(3u^2-1)$&&  0.0643&  0.6410&  0.3660&& -0.0857&  1.1250&  0.6432&& -0.4929&  4.1754&  3.0440\tabularnewline
Uniform&$(15/4)(3u^2-1)$&&  0.1643&  0.5098&  0.3678&& -0.0857&  1.1250&  0.6432&& -0.7643&  7.6191&  5.7276\tabularnewline
Biweight&$(15/4)(3u^2-1)$&&  0.0323&  0.7543&  0.3630&& -0.0748&  1.1352&  0.6291&& -0.3656&  3.0550&  2.1579\tabularnewline
Triweight&$(15/4)(3u^2-1)$&&  0.0184&  0.8517&  0.3568&& -0.0649&  1.1631&  0.6229&& -0.2911&  2.5444&  1.7435\tabularnewline
Tricube&$(15/4)(3u^2-1)$&&  0.0300&  0.7487&  0.3547&& -0.0780&  1.1319&  0.6333&& -0.3712&  3.0729&  2.1764\tabularnewline
Cosine&$(15/4)(3u^2-1)$&&  0.0584&  0.6583&  0.3669&& -0.0836&  1.1254&  0.6399&& -0.4693&  3.9510&  2.8668\tabularnewline
Biweight&Biweight$^{(2)}$&&  0.0323&  0.7543&  0.3630&& -0.0748&  1.1352&  0.6291&& -0.3656&  3.0550&  2.1579\tabularnewline
Tricube&Tricube$^{(2)}$&&  0.0299&  0.7516&  0.3556&& -0.0790&  1.1993&  0.6687&& -0.3746&  3.7063&  2.5762\tabularnewline
Gaussian&Gaussian$^{(2)}$&&  2.2500&  0.3006&  0.4113&& -3.0000&  0.4760&  0.6599&&-17.2500&  1.3606&  2.4758\tabularnewline
\hline
\end{tabular}

			\begin{tablenotes}
				\item[1] As discussed in Section \ref{supp:kernel order}, $M_\rho$ behaves as a fourth order kernel in terms of bias reduction, but does not strictly fit within the class of kernels used in derivation of optimal kernel shapes. This explains the super-optimal behavior exhibited by some choices of $K$, $L$, and $\rho$.
				\item[2] The constants $\tilde{\mu}_{M,4}$ and $\vartheta_{M,2}$ measure bias and variance, respectively (the latter also being relevant for interval length). The MSE is measured by $ [ \vartheta_{M,2}^8 \tilde{\mu}_{M,4}^2 ]^{1/9}$, owing to $M_\rho$ being a fourth-order kernel.
			\end{tablenotes}
		\end{threeparttable}
	\end{center}
\end{sidewaystable}

It is worthwhile to make precise the sense in which the $n$-varying ``kernel'' $M_\rho(\cdot)$ of \Eqref{suppeqn:kernel M} is a higher-order kernel. Comparing Equations \eqref{suppeqn:bias density} and \eqref{suppeqn:bias corrected} shows exactly what is meant by this statement: the bias rate attained agrees with a standard estimate using a kernel of order $\k+2$ (if $\bar{\rho} > 0$), as $\ell \geq 2$. For example, if $\k = \ell = 2$ and $\bar{\rho} > 0$, then $M_{\bar{\rho}}(\cdot)$ behaves as a fourth-order kernel in terms of bias reduction.

However, it is not true in general that $M(\cdot)$ is a higher-order kernel in the sense that its moments below $\k + 2$ are zero. That is, for any $k < \k$, by the change of variables $w = \rho u$, 
\begin{align*}
	\int_{-1}^1 u^k M(u) du & = \int_{-1}^1 u^k K(u) du - \rho^{1+\k} \mu_{K,\k} \int_{-1}^1 u^k L^{(\k)}(\rho u)du			\\
	& = 0  - \rho^{1+\k} \mu_{K,\k} \rho^{-1 - k} \int_{-\rho}^\rho w^k L^{(\k)}(w) du			\\
	& = 0  - \rho^{\k-k} \mu_{K,\k}\int_{-\rho}^\rho w^k L^{(\k)}(w) du.
\end{align*}
Now, $ L(u) = L(-u)$ implies that $L^{(k)}(u) = (-1)^k L^{(k)}(-u) $. Since $\k$ is even, $L^{(\k)}(w)$ is symmetric, therefore if $k$ is odd $0=\int_{-\rho}^\rho w^k L^{(\k)}(w) du$ for any $\rho$. But this fails for $k$ even, even for $\rho=1$, and hence $\int_{-1}^1 u^k M(u) du \neq 0$. For example, in the leading case of $\k=\ell=2$, $\int_{-1}^1 u^2 M(u) du \neq 0$ in general, and so $M(\cdot)$ is not a fourth-order kernel in the traditional sense. 

Instead, the bias reduction is achieved differently. The proof of Lemma \ref{lem:bias density} makes explicit use of the structure imposed by estimating $f^{(\k)}$ using the \emph{derivative} of the kernel $L(\cdot)$. From a technical standpoint, an integration by parts argument shows how the properties of the kernel $L(\cdot)$ (not the function $L^{(\k)}(\cdot)$) are used to reduce bias. This argument \emph{precedes} the Taylor expansion of $f$, and thus moments of $M$ are never encountered and there is no requirement that they be zero. This approach is simple, intuitive, and leads to natural restrictions on the kernel $L$, and for this reason it is commonly employed in the literature and in practice \citep{Hall1992_AoS_density}.

\subsection{Other Bias Reduction Methods}
	\label{supp:other}

We now examine two other methods of bias reduction: (i) estimating the derivatives without using derivatives of kernels \citep{Singh1977_AoS}, and (ii) the generalized jackknife approach \citep{Schucany-Sommers1977_JASA}. Further methods are discussed and compared by \citet{Jones-Signorini1997_JASA}. Both methods are shown to be tightly connected to our results. Further, a more general message is that it is important to account for any bias correction when doing inference, i.e., to avoid the mismatch present in $\tbc$.

The first method, which dates at least to \citet{Singh1977_AoS}, is to introduce a class of kernel functions directly for derivative estimation, more closely following the standard notion of a higher-order kernel rather than using the derivative of a kernel to estimate the density derivative and proving bias reduction via integration by parts. \citet{Jones1994_CSTM} expands on this method and gives further references. This class of kernels is used in the derivation of optimal kernel shapes (for derivative estimation) by \citet{Gasser-Muller-Mammitzsch1985_JRSSB}. It is worthwhile to show how this class of kernel achieves bias correction and how this approach fits into our Edgeworth expansions.

Consider estimating $f^{(\k)}$ with 
	\[\tilde{f}^{(\k)}(x) = \frac{1}{n \b^{1 + \k} } \sumi J \left( X_{\b,i} \right),\]
for some kernel function $J(\cdot)$. Note well that $J$ is generic, it need not itself be a derivative, but this is the only difference here. A direct Taylor expansion (i.e. without first integrating by parts) then gives
\[\E[\tilde{f}^{(\k)}] = \b^{-\k} \sum_{k=0}^S \b^k \mu_{J,k} f^{(k)} + O(\b^{S + \varsigma}).\]
Thus, if $J$ satisfies $\mu_{J,k} = 0$ for $k=0, 1, \ldots, \k-1, \k+1, \k+2, \ldots, \k+(\ell-1)$, $\mu_{J,\k} = 1$, and $\mu_{J,\k + \ell} \neq 0$, and $S$ is large enough then 
\[\E[\tilde{f}^{(\k)}] =  f^{(\k)} + \b^\ell f^{(\k + \ell)} \mu_{J,\k + \ell} + o(\b^\ell),\]
just as achieved by $\hat{f}^{(\k)}$ and exactly matching \Eqref{suppeqn:bias density}. Note that $\mu_{J,0} = 0$, that is, the kernel $J$ does not integrate to one. In the language of \citet{Gasser-Muller-Mammitzsch1985_JRSSB}, $J$ is a kernel of order $(\k,\k + \ell)$.

Given this result, bias correction can of course be performed using $\tilde{f}^{(\k)}(x)$ (based on $J$) rather than $\hat{f}^{(\k)}$ (based on $L^{(\k)}$). Much will be the same: the structure of \Eqref{suppeqn:kernel M} will hold with $J$ in place of $L^{(\k)}$ and the results in \Eqref{suppeqn:bias corrected} are achieved with modifications to the constants (e.g., in the first line, $\mu_{J,\k + \ell}$ appears in place of $\mu_{L,\ell}$). In either case, the same bias rates are attained. Our Edgeworth expansions will hold for this class under the obvious modifications to the notation and assumptions, and all the same conclusions are obtained.

When studying optimal kernel shapes, \citet{Gasser-Muller-Mammitzsch1985_JRSSB} actually further restrict the class, by placing a limit on the number of sign changes over the support of the kernel, which ensures that the MSE and variance minimization problems have well-defined solutions. Collectively, these differences in the kernel classes explain why it is possible to demonstrate ``super-optimal'' MSE and variance performance for certain choices of $K$, $L^{(\k)}$, and $\rho$, as in Table \ref{table:kernel shapes}.

A second alternative is the generalized jackknife method of \citet{Schucany-Sommers1977_JASA}, and expanded upon by \citet{Jones-Foster1993_JNPS}. To simplify the notation and ease exposition, we describe this approach for second order kernels ($\k=2$), but the method, and all the conclusions below, generalize fully. We thank an anonymous reviewer for encouraging us to include these details.

Begin with two estimators $\hat{f}_1$ and $\hat{f}_2$, with (possibly different) bandwidths and second-order kernels $\h_j$ and $K_j$, $j=1, 2$; thus \Eqref{suppeqn:bias density} gives 
	\[\E[\hat{f}_j] - f(x) = \h_j^2 f^{(2)} \mu_{K_j,2} + o(\h_j^2),  		\qquad \qquad  	  j=1, 2.\]
\citet{Schucany-Sommers1977_JASA} propose to estimate $f$ with $\hat{f}_{\GJ,R} := ( \hat{f}_1 - R \hat{f}_2 ) / (1 - R)$, the bias of which is
\[\E[ \hat{f}_{\GJ,R} - f ] = \frac{f^{(2)}}{1-R} \left( \h_1^2  \mu_{K_1,2}  -  R  \h_2^2  \mu_{K_2,2} \right) + o(\h_1^2 + \h_2^2).\]
Hence, setting $R = (\h_1^2  \mu_{K_1,2} )/(   \h_2^2  \mu_{K_2,2})$ renders the leading bias exactly zero. Moreover, if $S \geq 4$, $\hat{f}_{\GJ,R}$ has bias $O(\h_1^4 + \h_2^4)$; behaving as a single estimator with $\k=4$. To put this in context of our results, observe that with this choice of $R$, if we let $\tilde{\rho} = \h_1 / \h_2$, then
\[\hat{f}_{\GJ,R} = \frac{1}{n \h_1} \sumi \tilde{M}\left(\frac{X_i - x}{\h_1} \right) ,  		\quad   		 	 M(u) = K_1(u) - \tilde{\rho}^{1+2}  \left\{    \frac{K_2(\tilde{\rho}u) - \tilde{\rho}^{-1} K_1(u)  }{\mu_{K_2,2}(1-R)} \right\} \mu_{K_1, 2},\]
exactly matching \Eqref{suppeqn:kernel M}. Or equivalently, $\hat{f}_{\GJ,R} = \hat{f}_1  -  \h_1^2  \tilde{f}^{(2)} \mu_{K_1,2}$, for the derivative estimator 
\[\tilde{f}^{(2)} = \frac{1}{n\h_2^{1+2}}\sumi \tilde{L}\left( \frac{X_i - x}{\h_2} \right),  		\quad  		\tilde{L}(u) = \frac{ K_2(u) - \tilde{\rho}^{-1} K_1( \tilde{\rho}^{-1} u)  }{\mu_{K_2,2}(1-R)}.  \]
Therefore, we can view $\hat{f}_{\GJ,R}$ as a change in the kernel $M(\cdot)$ or an explicit bias estimation described directly above with a specific choice of $J(\cdot)$ (depending on $\tilde{\rho}$ in either case). Again, \Eqref{suppeqn:kernel M} holds exactly. Thus, our results cover the generalized jackknife method as well, and the same lessons apply. 

Finally, we note that these bias correction methods can be applied to nonparametric regression as well, and local polynomial regression in particular, and that the same conclusions are found. We will not repeat this discussion however.

\section{First Order Properties}
	\label{supp:clt density}

Here we briefly state the first-order properties of $\tus$, $\tbc$, and $\trbc$, using the common notation $T_{v,w}$ defined in Section \ref{supp:notation density}. Recall that $\eta_v = \sqrt{n \h} (\E[\hat{f}_v] - f)$ is the scaled bias in either case. With this notation, we have the following result.

\begin{lemma}
	\label{thm:clt density}
	Let Assumptions \ref{supp:dgp density} and \ref{supp:kernel density} hold. Then if $n \h \to \infty$, $\eta_v \to 0$, and if $v=2$, $\rho \to 0 + \bar{\rho} \mathbbm{1}\{v=w\} < \infty$, it holds that $T_{v,w} \to_d \N(0,1)$.
\end{lemma}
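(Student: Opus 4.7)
The plan is to decompose $T_{v,w}$ into a centered stochastic term plus a deterministic bias term, apply a Lyapunov central limit theorem for triangular arrays to the stochastic term, and use consistency of the variance estimator $\shat_w^2$ so that Slutsky's theorem delivers the standard Normal limit.

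First I would write
\[ T_{v,w} \;=\; \frac{\sqrt{n\h}\,(\hat{f}_v - \E[\hat{f}_v])}{\shat_w} \;+\; \frac{\eta_v}{\shat_w}, \]
so the second (deterministic) term is $o_P(1)$ as soon as $\shat_w$ is bounded away from zero in probability, because $\eta_v \to 0$ by hypothesis. Setting $Z_{i,v} = \h^{-1/2}\{N_v(X_{\h,i}) - \E N_v(X_{\h,i})\}$, the numerator of the first term equals $n^{-1/2}\sum_i Z_{i,v}$, a triangular-array iid sum with variance $\sigma_v^2$.

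Next I would verify Lyapunov's condition. In every admissible case $(v,w) \in \{(1,1),(2,1),(2,2)\}$ the hypothesis keeps $\rho$ bounded, so the kernel $N_v$ (either $K$ or $M$) is uniformly bounded with compact support by Assumption \ref{supp:kernel density}. A single change of variables then shows $\E|N_v(X_{\h,i})|^{2+\delta} = O(\h)$ for any $\delta > 0$, whence $\E|Z_{i,v}|^{2+\delta} = O(\h^{-\delta/2})$. Since $\sigma_v^2$ has a strictly positive limit (verified next), the Lyapunov ratio is $O((n\h)^{-\delta/2}) \to 0$, and therefore $n^{-1/2}\sum_i Z_{i,v} \dto \N(0,\sigma_{v,\infty}^2)$ with $\sigma_{v,\infty}^2 = \lim_n \sigma_v^2$.

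The third step is to identify these limits and prove consistency of $\shat_w^2$. Routine dominated-convergence calculations give $\sigma_1^2 \to f(x)\vartheta_{K,2}$ and, with $M_{\bar{\rho}}(u) = K(u) - \bar{\rho}^{1+\k} L^{(\k)}(\bar{\rho} u)\mu_{K,\k}$, also $\sigma_2^2 \to f(x)\vartheta_{M_{\bar{\rho}},2}$ whenever $\rho \to \bar{\rho} < \infty$. The hypothesis on $\rho$ is tuned so that $\sigma_{v,\infty}^2 = \sigma_{w,\infty}^2$ in each admissible case: trivially when $v=w$, and because $M_0 \equiv K$ when $(v,w)=(2,1)$ with $\rho \to 0$. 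Each limit is strictly positive since $f(x) > 0$ by Assumption \ref{supp:dgp density} and $\vartheta_{N,2} > 0$. Consistency $\shat_w^2 \pto \sigma_{w,\infty}^2$ follows by Chebyshev from $\E[\shat_w^2] = (1 - 1/n)\sigma_w^2$ and $\V[\shat_w^2] = O((n\h)^{-1})$, again using uniform boundedness of $N_w$ and $n\h \to \infty$. Slutsky's theorem then yields $T_{v,w} \dto \N(0,1)$.

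The only real obstacle is the cross case $(v,w)=(2,1)$, corresponding to traditional bias correction $\tbc$: the numerator kernel $M$ differs from the denominator kernel $K$, and the condition $\rho \to 0$ is precisely what is needed to collapse the numerator's limiting variance back to $\sigma_{w,\infty}^2$. Without it (as occurs for $\trbc$ when $\bar{\rho} > 0$), one genuinely needs the robust studentization $\shat_2$ in order to obtain the $\N(0,1)$ limit, a point that also foreshadows the higher-order Edgeworth analysis to follow.
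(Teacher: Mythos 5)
Your proof is correct. The paper actually states this lemma without proof (the analogous multivariate version in Section \ref{supp:multivariate} is explicitly labeled ``standard''), and your decomposition--Lyapunov--Slutsky argument is exactly the standard route the authors have in mind; the key structural point, that the case $(v,w)=(2,1)$ needs $\rho\to 0$ to reconcile the numerator's limiting variance $f\vartheta_{M_{\bar{\rho}},2}$ with the denominator's $f\vartheta_{K,2}$, is correctly identified. One small imprecision: when $\rho\to 0$ the kernel $M(u)=K(u)-\rho^{1+\k}L^{(\k)}(\rho u)\mu_{K,\k}$ does \emph{not} have uniformly compact support (its support is $[-1/\rho,1/\rho]$), so the bound $\E|N_v(X_{\h,i})|^{2+\delta}=O(\h)$ should instead be justified by noting that the change of variables costs only a factor $\rho^{-1}$ on the $L^{(\k)}$ piece, which is absorbed by its prefactor $\rho^{(1+\k)(2+\delta)}$; the conclusion is unaffected.
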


The conditions on $\h$ and $\b$ behind the generic assumption that the scaled bias vanishes can be read off of \eqref{suppeqn:bias density} and \eqref{suppeqn:bias corrected}: $\tus$ requires $\sqrt{n\h} \h^\k \to 0$ whereas $\tbc$ and $\trbc$ require only $\sqrt{n \h} \h^\k (\h^2 \vee \b^\ell) \to 0$, and thus accommodate $\sqrt{n \h} \h^\k \not\to 0$ or $\b \not\to 0$ (but not both). However, bias correction requires a choice of $\rho=\h/\b$. One easily finds that $\V[\sqrt{n\h} \hat{B}_f] = O(\rho^{1 + 2\k})$, whence $\rho \to 0$ is required for $\tbc$. But $\trbc$ does not suffer from this requirement because of our proposed, new Studentization. From a first-order point of view, traditional bias correction allows for a larger class of sequences $\h$, but requires a delicate choice of $\rho$ (or $\b$), and \citet{Hall1992_AoS_density} shows that this constraint prevents $\tbc$ from improving inference. Our novel standard errors remove these constraints, allowing for improvements in bias to carry over to improvements in inference. The fact that a wider range of bandwidths is allowed hints at the robustness to tuning parameter choice discussed above and formalized by our Edgeworth expansions.

\begin{remark}[$\rho \to \infty$]
	\label{rem:rho}
	$\trbc \to_d \N(0,1)$ will hold even for $\bar{\rho} =  \infty$, under the even weaker bias rate restriction that $\ebc = o(\rho^{1/2+\k})$, provided $n \b \to \infty$. In this case $\hat{B}_f$ dominates the first-order approximation, but $\srbc^2$ still accounts for the total variability. However there is no gain for inference: the bias properties can not be improved due to the second bias term ($\E[\hat{f}] - f - B_f$), while variance can only be inflated. Thus, we restrict to bounded $\bar{\rho}$. Section \ref{sec:rho} has more discussion on the choice of $\rho$.
\end{remark}

\section{Main Result: Edgeworth Expansion}
	\label{supp:Edgeworth density}

Recall the generic notation:
\[T_{v,w} \defsym \frac{\sqrt{n\h} (\hat{f}_v - f) }{ \shat_w },\]
for $1 \leq w \leq v\leq 2$. The Edgeworth expansion for the distribution of $T_{v,w}$ will consist of polynomials with coefficients that depend on moments of the kernel(s). Additional polynomials are needed beyond those used in the main text for coverage error. These are:
\begin{align*}
	p_{v,w}^{(1)}(z) & = \phi(z) \sigma_w^{-3} [\nu_{v,w}(1,1,2) z^2/2  - \nu_v(3) (z^2 - 1)/6], 			\\
	p_{v,w}^{(2)}(z) & =  - \phi(z) \sigma_w^{-3} \E[\hat{f}_w] \nu_{v,w}(1,1,1) z^2,		\qquad \text{ and } \qquad 	p_{v,w}^{(3)}(z)  = \phi(z)  \sigma_w^{-1}.
\end{align*}
The polynomials $p_{v,w}^{(k)}$ are even, and hence cancel out of coverage probability expansions, but are used in the expansion of the distribution function itself (or equivalently, the coverage of a one-sided confidence interval).

Next, recall from the main text the polynomials used in \emph{coverage error} expansions:
\begin{align*}
    q_1(z;K) & = \vartheta_{K,2}^{-2} \vartheta_{K,4}(z^3 - 3z)/6 - \vartheta_{K,2}^{-3} \vartheta_{K,3}^2 [2z^3/3 + (z^5 - 10z^3 + 15z)/9],\\
    q_2(z;K) & = - \vartheta_{K,2}^{-1}(z),		\qquad \text{ and } \qquad 		q_3(z;K)  = \vartheta_{K,2}^{-2} \vartheta_{K,3}( 2 z^3/3).
\end{align*}
The corresponding polynomials for expansions of the \emph{distribution function} are
	\[q_{v,w}^{(k)}(z) = \frac{1}{2} \frac{\phi(z)}{f} q_k (z; N_w), \qquad k = 1,2,3.\]
As before, the $q_{v,w}^{(k)}$ are odd and hence do not cancel when computing coverage: the $q_k (z; N_w)$ in the main text are doubled for just this reason.

Note that, despite the notation, $q_{v,w}^{(k)}(z)$ depends only on the ``denominator'' kernel $N_w$. The notation comes from the fact that when first computed, the terms which enter into the $q_{v,w}^{(k)}(z)$ depend on both kernels, but the simplifications in \Eqref{eqn:simplifying nu} reduce the dependence to $N_w$. This is because for undersmoothing and robust bias correction, $v=w$, and for traditional bias correction $N_2 = M = K + o(1) = N_1 + o(1)$, as $\rho \to 0$ is assumed. Thus, when computing $\vartheta_{M,q}$ the terms with the lowest powers of $\rho$ will be retained. These can be found by expanding
	\[\vartheta_{M,q} = \int \left(K(u) - \rho^{1 + \k}\mu_{K,\k} L^{(\k)}(u)\right)^q du	
					= \sum_{j=0}^q {q \choose j}\left(-\mu_{K,\k}\rho^{1+\k}\right)^{q-j}\int K(u)^j L^{(\k)}(\rho u)^{q-j}du,\]
and hence we can write $\vartheta_{M,q} = \vartheta_{K,q} - \rho^{1+\k}  q \mu_{K,\k} L^{(\k)}(0) \vartheta_{K,q-1} + O(\h + \rho^{2 + \k})$. We can thus write $q_j(z ; M) = q_j(z ; K) + o(1)$ in this case. If the expansions were carried out beyond terms of order $(n\h)^{-1} + (n\h)^{-1/2}\eta_v + \eta_v^2  + \mathbbm{1}\{v \!\neq\! w\} \rho^{1+2\k}$ this would not be the case.

Finally, for traditional bias correction, there are additional terms in the expansion (see discussion in the main text) representing the covariance of $\hat{f}$ and $\hat{B}_f$ (denoted by $\Omega_1$) and the variance of $\hat{B}_f$ ($\Omega_2$). We now state their precise forms. These arise from the mismatch between the variance of the numerator of $\tbc$ and the standardization used, $\sus^2$, that is $\srbc^2/\sus^2$ is given by
\[  \frac{ n\h \V[\hat{f} - \hat{B}_f] }{ n\h \V[\hat{f}] } = \frac{ n\h \V[\hat{f}]  - 2 n\h \mathbb{C}[\hat{f},\hat{B}_f]  + n\h \V[\hat{B}_f] }{ n\h \V[\hat{f}] }  = 1   -  2 \frac{ n\h \mathbb{C}[\hat{f},\hat{B}_f] }{ n\h \V[\hat{f}] }  + \frac{ n\h \V[\hat{B}_f] }{ n\h \V[\hat{f}] } .\]
This makes clear that $\Omega_1$ and $\Omega_2$ are the constant portions of the last two terms. We have
	\[ -  2 \frac{ n\h \mathbb{C}[\hat{f},\hat{B}_f] }{ n\h \V[\hat{f}] } = \rho^{1+\k} \Omega_1, \]
where
	\[\Omega_1 = - 2 \frac{\mu_{K,\k}}{\nu_1(2)}  \left\{  \int f(x - u\h )K(u) L^{(\k)}(u \rho) du  - \b \int f(x - u\h) K(u) du \int f(x - u\b) L^{(\k)}(u) du  \right\}.\]
Note $\nu_1(2) = \sus^2$. Turning to $\Omega_2$, using the calculations in Section \ref{supp:bias density} (recall $\tilde{\k} = \k \vee S$), we find that 
\[\frac{ n\h \V[\hat{B}_f] }{ n\h \V[\hat{f}] } = \rho^{1 + 2\k} \Omega_2 \quad \text{ where }\quad \Omega_2 = \frac{  \mu_{K,\k}^2}{ \nu_1(2) } \left\{ \int f(x - u\b) L^{(\k)}(u)^2 du   -   \b^{1 + 2\tilde{\k}} \left( \int L^{(\k-\tilde{\k})}(u) f^{(\tilde{\k})}(x - u \b) du \right)^2 \right\}.\]
Fully simplifying would yield
\[\Omega_2 = \mu_{K,\k}^2 \vartheta_{K,2}^{-2} \vartheta_{L^{(\k)},2},\]
which can be used in Theorem \ref{thm:Edgeworth density}.

As a last piece of notation, define the scaled bias as $\eta_v = \sqrt{n \h}(\E[\hat{f}_v] - f)$.

We can now state our generic Edgeworth expansion, from whence the coverage probability expansion results follow immediately.
\begin{theorem}
	\label{thm:Edgeworth density}
	Suppose Assumptions \ref{supp:dgp density}, \ref{supp:kernel density}, and \ref{supp:Cramer density} hold, $n \h/ \log(n) \to \infty$, $\eta_v \to 0$, and if $v=2$, $\rho \to 0 + \bar{\rho} \mathbbm{1}\{v=w\}$. Then for 
	\begin{align*}
		F_{v,w}(z) = \Phi(z)  & +  \frac{1}{\sqrt{n \h}} p^{(1)}_{v,w}(z)  +  \sqrt{\frac{\h}{n}} p^{(2)}_{v,w}(z)   +   \eta_v p^{(3)}_{v,w}(z)   +   \frac{1}{n \h} q^{(1)}_{v,w}(z)  +  \eta_v^2 q^{(2)}_{v,w}(z)  +  \frac{\eta_v}{\sqrt{n \h}} q^{(3)}_{v,w}(z)		\\
			&  -  \mathbbm{1}\{v \!\neq\! w\} \rho^{1+\k}(\Omega_1 + \rho^\k \Omega_2)\frac{\phi(z)}{2}z,
	\end{align*}
	we have
	\[\sup_{z\in\mathbb{R}}\left|\P[T_{v,w}<z]-F_{v,w}(z)\right| = o\left((n\h)^{-1} + (n\h)^{-1/2}\eta_v + \eta_v^2  + \mathbbm{1}\{v \!\neq\! w\} \rho^{1+2\k} \right).\]
\end{theorem}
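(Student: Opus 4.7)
The plan is to follow the classical Bhattacharya--Ghosh/Hall strategy adapted to this Studentization: express $T_{v,w}$ as a smooth function of a two-dimensional sample mean, Taylor expand to obtain a stochastic (Cornish--Fisher) expansion, then invoke the multivariate Bhattacharya--Rao Edgeworth expansion under Cram\'er's condition. Concretely, set
\[
U_{i,1} = \h^{-1/2}[N_v(X_{\h,i}) - \E N_v(X_{\h,i})], \qquad U_{i,2} = \h^{-1/2}[N_w(X_{\h,i})^2 - \E N_w(X_{\h,i})^2],
\]
with $\bar U_n = n^{-1}\sumi U_i$. Then $\sqrt{n\h}(\hat f_v - \E \hat f_v) = n^{1/2}\bar U_{n,1}$, and a short calculation that handles the $(n^{-1}\sumi N_w(X_{\h,i}))^2$ piece of $\shat_w^2$ yields $\shat_w^2 = \sigma_w^2 + (n\h)^{-1/2} n^{1/2}\bar U_{n,2} + O_p(n^{-1})$. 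Consequently $T_{v,w} = G(\bar U_n) + \eta_v/\shat_w$ for a smooth $G$ with $G(0)=0$, putting the problem in the standard framework of smooth functions of sample means.

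Next I would Taylor-expand $G$, using $(1+x)^{-1/2} = 1 - x/2 + 3x^2/8 + O(x^3)$ on the denominator, to obtain the stochastic expansion
\[
T_{v,w} = \frac{n^{1/2}\bar U_{n,1} + \eta_v}{\sigma_w} - \frac{1}{2\sigma_w^3\sqrt{n\h}}\,(n^{1/2}\bar U_{n,1})(n^{1/2}\bar U_{n,2}) + (\text{cubic in }n^{1/2}\bar U_n) + R_n,
\]
whose coefficients produce $p_{v,w}^{(1)},p_{v,w}^{(2)},p_{v,w}^{(3)},q_{v,w}^{(1)},q_{v,w}^{(2)},q_{v,w}^{(3)}$ after smoothing. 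For the mismatched case $v\neq w$ an additional deterministic factor must be isolated: $\sigma_v^2$, the true variance of $\sqrt{n\h}\hat f_v$, differs from $\sigma_w^2$, and the computation displayed just before the theorem gives $\sigma_v^2/\sigma_w^2 - 1 = \rho^{1+\k}\Omega_1 + \rho^{1+2\k}\Omega_2 + o(\cdot)$. Writing the Gaussian leading term as $\Phi(z\sigma_w/\sigma_v)$ and Taylor-expanding in $\sigma_v/\sigma_w - 1$ produces exactly the $-\mathbbm{1}\{v\neq w\}\rho^{1+\k}(\Omega_1 + \rho^\k\Omega_2)\phi(z)z/2$ contribution in the stated expansion; for $v=w$ this offset vanishes in finite samples, which is precisely the benefit of the fixed-$n$ Studentization.

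Under Assumptions~\ref{supp:dgp density}--\ref{supp:Cramer density}, Cram\'er's condition guarantees exponential decay of the joint characteristic function of $U_i$ away from the origin, so the bivariate Bhattacharya--Rao Edgeworth expansion (e.g., \citet{Bhattacharya-Rao1976_book}, Theorem~20.6) applies to $n^{1/2}\bar U_n$ uniformly in $z$ to order $(n\h)^{-1}$ with remainder $o((n\h)^{-1})$. Substituting this into the stochastic expansion and computing expectations of polynomials in $\bar U_n$ against Hermite-polynomial coefficients reproduces $p_{v,w}^{(k)}$ and $q_{v,w}^{(k)}$ as defined in Section~\ref{supp:terms density}. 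Evenness of $K$ and $L$ collapses cross-moments (odd-order moments of $N_v$ vanish); the reduction of $\vartheta_{M,q}$ to $\vartheta_{K,q}$ in the $q_k$'s for $v\neq w$ is automatic because $\rho\to 0$ is imposed in that case.

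The main obstacle is controlling the stochastic remainder $R_n$ uniformly in $z$ at the composite rate $o((n\h)^{-1} + (n\h)^{-1/2}\eta_v + \eta_v^2 + \mathbbm{1}\{v\neq w\}\rho^{1+2\k})$. Two pieces need care. First, the cubic Taylor remainder and the $O(x^3)$ truncation error in $(1+x)^{-1/2}$ must be shown negligible after smoothing against the Edgeworth density; this uses the standard smoothing lemma combined with moment bounds obtained from compact support and boundedness of $K,L$ and from the rate condition $n\h/\log(n\h)\to\infty$. Second, the $\rho$-mismatch must be tracked all the way through the Taylor expansion without absorbing either $\rho^{1+\k}\Omega_1$ (from the covariance $\mathbb{C}[\hat f,\hat B_f]$) or $\rho^{1+2\k}\Omega_2$ (from $\V[\hat B_f]$) into $R_n$: these are of different orders in $\rho$, but both can be first-order relevant when $\rho\not\to 0$, a sequence the theorem's remainder rate explicitly accommodates while the classical analysis of \citet{Hall1992_AoS_density} does not.
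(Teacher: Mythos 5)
Your proposal follows essentially the same route as the paper's proof: write $T_{v,w}$ as a smooth function of the bivariate sample mean of $(N_v(X_{\h,i}),N_w(X_{\h,i})^2)$ plus a bias shift, Taylor-expand the Studentization via $(1+x)^{-1/2}$, control the stochastic remainder so that a delta-method/smoothing argument applies, invoke the Edgeworth expansion for smooth functions of means under the $n$-varying Cram\'er condition, and extract the $\rho^{1+\k}\Omega_1+\rho^{1+2\k}\Omega_2$ mismatch from $\sigma_v^2/\sigma_w^2-1$ exactly as the paper does through the second cumulant. The only substantive difference is cosmetic: the paper moves the deterministic bias $\eta_v/\sigma_w$ into the quantile and Taylor-expands the Edgeworth distribution function $G$ around $z$, whereas you keep $\eta_v/\shat_w$ inside the statistic; these are equivalent.

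One concrete slip: you claim the squared-sample-mean piece of $\shat_w^2$ contributes only $O_p(n^{-1})$, so that $\shat_w^2=\sigma_w^2+(n\h)^{-1/2}n^{1/2}\bar U_{n,2}+O_p(n^{-1})$. In fact, expanding $\h^{-1}\bigl(n^{-1}\sumi N_w(X_{\h,i})\bigr)^2$ produces a cross term $2\h\,\gamma_{w,1}(n\h)^{-1/2}\Delta_{w,1}$ of order $\sqrt{\h/n}$, which dominates $n^{-1}$ whenever $n\h\to\infty$ and is precisely the source of the $\sqrt{\h/n}\,p^{(2)}_{v,w}(z)$ term in the statement (note $p^{(2)}_{v,w}$ involves $\E[\hat f_w]=\gamma_{w,1}$). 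Your subsequent claim that the expansion ``produces $p^{(2)}_{v,w}$'' is therefore inconsistent with this bookkeeping; you must retain the linear-in-$\Delta_{w,1}$ term in the variance expansion rather than absorb it into the remainder. This is a repairable accounting error, not a flaw in the architecture of the argument.
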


To use this result to find the expansion of the error in coverage probability of the Normal-based confidence interval, the function $F_{v,w}(z)$ is simply evaluated at the two endpoints of the interval. (Note: if the confidence interval were instead constructed with the bootstrap, a few additional steps are needed, but these do not alter any conclusions or results outside of constant terms.)

\subsection{Undersmoothing vs. Bias-Correction Exhausting all Smoothness}
	\label{supp:small S}

In general, we have assumed that the level of smoothness was large enough to be inconsequential in the analysis, and in particular this allowed for characterization of optimal bandwidth choices. In this section, in contrast, we take the level of smoothness to be binding, so that we can fully utilize the $S$ derivatives \emph{and} the H\"older condition to obtain the best possible rates of decay in coverage error for both undersmoothing and robust bias correction, but at the price of implementability: the leading bias constants can not be characterized, and hence feasible ``optimal'' bandwidths are not available.

For undersmoothing, the lowest bias is attained by setting $\k>S$ (see \Eqref{suppeqn:bias density}), in which case the bias is only known to satisfy $\E[\hat{f}] - f = O(\h^{S+\varsigma})$ (i.e., $B_f$ is identically zero) and bandwidth selection is not feasible. Note that this approach allows for $\sqrt{n \h} \h^S \not\to 0$, as $\eus = O(\sqrt{n \h} \h^{S+\varsigma})$. 

Robust bias correction has several interesting features here. If $\k \leq S-2$ (the top two cases in \Eqref{suppeqn:bias corrected}), then the bias from approximating $\E[\hat{f}] - f$ by $B_f$, that is not targeted by bias correction, dominates $\ebc$ and prevents robust bias correction from performing as well as the best possible infeasible (i.e., oracle) undersmoothing approach. That is, even bias correction requires a sufficiently large choice of $\k$ in order to ensure the fastest possible rate of decay in coverage error: if $\k \geq S-1$, robust bias correction can attain error decay rate as the best undersmoothing approach, and allow $\sqrt{n \h} \h^S \not\to 0$. 

Within $\k \geq S-1$, two cases emerge. On the one hand, if $\k =S-1$ or $S$, then $B_f$ is nonzero and $f^{(\k)}$ must be consistently estimated to attain the best rate. Indeed, more is required. From \Eqref{suppeqn:bias corrected}, we will need a bounded, positive $\rho$ to equalize the bias terms. This (again) highlights the advantage of robust bias correction, as the classical procedure would enforce $\rho \to 0$, and thus underperform. On the other hand, $\rho \to 0$ will be required if $\k>S$ because (from the final case of \eqref{suppeqn:bias corrected}) we require $\rho^{\k-S} = O(\h^\varsigma)$ to attain the same rate as undersmoothing. Note that we can accommodate $\b \not\to 0$ (but bounded). Interestingly, $B_f$ is identically zero and $\hat{B}_f$ merely adds noise to the problem, but this noise is fully accounted for by the robust standard errors, and hence does not affect the rates of coverage error (though the constants of course change). The $\hat{f}^{(\k)}$ in $\hat{B}_f$ is \emph{inconsistent} ($f^{(\k)}$ does not exist), but the nonvanishing bias of $\hat{f}^{(\k)}$ is dominated by $\h^\k$.

This discussion is summarized by the following result:
\begin{corollary}
	\label{thm:known S}
	Let the conditions of Theorem \ref{thm:Edgeworth density} hold.
	\begin{enumerate}[label=(\alph*)]

		\item If $\k > S$, then 
			\begin{align*}
				\P[f \in \ius] = 1 - \alpha   &   +   \frac{1}{n \h}  \frac{\phi(z_{\frac{\alpha}{2}})}{f} q_1(K) \;\{1+o(1)\}   +   O\left( n \h^{1+2S+2\varsigma} +  \h^{S+\varsigma} \right).
			\end{align*}
		\item If $\k \geq S-1$, then
			\begin{align*}
				\P[f \in \irbc]  = 1 - \alpha   &  +   \frac{1}{n \h}  \frac{\phi(z_{\frac{\alpha}{2}})}{f} q_1(M) \;\{1+o(1)\}    		\\
								&  +   O\left( n\h ( \h^{S + \varsigma} \vee \h^\k \b^{S - \k+ \varsigma \mathbbm{1}\{\k\leq S\} })^2 + ( \h^{S + \varsigma} \vee \h^\k \b^{S - \k+ \varsigma \mathbbm{1}\{\k\leq S\} }) \right).
			\end{align*}

	\end{enumerate}
\end{corollary}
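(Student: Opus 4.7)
The plan is to apply the generic Edgeworth expansion of Theorem~\ref{thm:Edgeworth density} at the two confidence-interval endpoints, read off the coverage error, and then substitute the bias bounds from Lemma~\ref{lem:bias density} corresponding to the two regimes stated in the corollary. The coverage probability satisfies
\[\P[f \in I_{v,w}] \;=\; F_{v,w}(z_{\alpha/2}) - F_{v,w}(-z_{\alpha/2}) \;+\; o(\,\cdot\,),\]
and since the polynomials $p_{v,w}^{(k)}$ are even in $z$ while the $q_{v,w}^{(k)}$ are odd, only the $q_{v,w}^{(k)}$ contributions survive (doubled). Both parts use $v=w$, so the mismatch term $\mathbbm{1}\{v\neq w\}\rho^{1+\k}(\Omega_1 + \rho^\k \Omega_2)$ vanishes, leaving
\[\frac{1}{n\h}\,\frac{\phi(z_{\alpha/2})}{f}\,q_1(z_{\alpha/2};N_v) \;+\; \eta_v^2\,\frac{\phi(z_{\alpha/2})}{f}\,q_2(z_{\alpha/2};N_v) \;+\; \frac{\eta_v}{\sqrt{n\h}}\,\frac{\phi(z_{\alpha/2})}{f}\,q_3(z_{\alpha/2};N_v),\]
plus the Edgeworth remainder of Theorem~\ref{thm:Edgeworth density}. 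It remains only to bound $\eta_v$ in each case.

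For part~(a), $v=1$ and $N_1 = K$. When $\k>S$ the first nonzero term in the Taylor expansion of $\E[\hat{f}]$ beyond $f$ is controlled only by the H\"older condition, so by the third case of Lemma~\ref{lem:bias density}, $\eus = \sqrt{n\h}\,O(\h^{S+\varsigma})$. Substituting, the three surviving terms become $(n\h)^{-1}q_1(K)\phi(z_{\alpha/2})/f$ (which is retained as the leading term with its $1+o(1)$ factor) together with the two big-$O$ contributions $O(n\h^{1+2S+2\varsigma})$ and $O(\h^{S+\varsigma})$. The side condition $\sqrt{n\h}\,\h^{S+\varsigma}\to 0$ forced by the hypothesis $\eus \to 0$ of Theorem~\ref{thm:Edgeworth density} makes the Edgeworth remainder $o((n\h)^{-1})$, yielding the stated expansion.

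For part~(b), $v=2$, $N_2 = M$, and the bias of $\hat{f} - \hat{B}_f$ must be read off the last two cases of Lemma~\ref{lem:bias density}: when $\k \in \{S-1, S\}$ we have $\E[\hat{f} - \hat{B}_f] - f = O(\h^{S+\varsigma}) + O(\h^{\k}\b^{S-\k+\varsigma})$, while when $\k > S$ the bias of the bias-estimator drops the H\"older factor to $O(\h^{\k}\b^{S-\k})$. Uniformly, the bound is $O(\h^{S+\varsigma}\vee \h^{\k}\b^{S-\k+\varsigma\mathbbm{1}\{\k\leq S\}})$, so $\ebc = \sqrt{n\h}\,O(\h^{S+\varsigma}\vee \h^{\k}\b^{S-\k+\varsigma\mathbbm{1}\{\k\leq S\}})$. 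Plugging into the three $q_{2,2}^{(k)}$ terms produces the leading term $(n\h)^{-1}q_1(M)\phi(z_{\alpha/2})/f\cdot(1+o(1))$ plus the bracketed big-$O$ remainders; the $\ebc \to 0$ hypothesis again ensures these dominate the Edgeworth error.

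The main obstacle is the case analysis in part~(b): the hypothesis $\k\geq S-1$ spans two different regimes of Lemma~\ref{lem:bias density}, and writing the bias of $\hat{f}-\hat{B}_f$ as the single expression $O(\h^{S+\varsigma}\vee\h^{\k}\b^{S-\k+\varsigma\mathbbm{1}\{\k\leq S\}})$ requires carefully tracking whether the H\"older exponent is absorbed into the derivative-estimation bias term. A minor but easy-to-miss point is that the leading $q_1$ should be evaluated at the effective kernel $M = N_2$ rather than $K$, precisely because our Studentization uses $\shatrbc$ rather than $\shatus$; this is automatic once one recalls that $q_{v,w}^{(1)}$ depends only on $N_w$ via the simplification noted in Section~\ref{supp:terms density}.
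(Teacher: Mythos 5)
Your proposal is correct and follows the paper's own route: the corollary is obtained exactly by evaluating the generic Edgeworth expansion of Theorem \ref{thm:Edgeworth density} at the two interval endpoints (so the even polynomials $p^{(k)}_{v,w}$ cancel and the odd $q^{(k)}_{v,w}$ double, with no mismatch term since $v=w$ in both parts), and then substituting the bias orders from the relevant cases of Lemma \ref{lem:bias density} — $O(\h^{S+\varsigma})$ for part (a) and $O(\h^{S+\varsigma}\vee\h^{\k}\b^{S-\k+\varsigma\mathbbm{1}\{\k\leq S\}})$ for part (b). Your handling of the two sub-regimes $\k\in\{S-1,S\}$ versus $\k>S$, and the observation that $q_1$ must be evaluated at $M$ rather than $K$ because of the $\shatrbc$ Studentization, match the paper's argument.
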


\subsection{Multivariate Densities and Derivative Estimation}
	\label{supp:multivariate}

We now briefly present state analogues of our results, both for distributional convergence and Edgeworth expansions, that cover multivariate data and derivative estimation. The conceptual discussion and implications are similar to those in the main text, once adjusted notationally to the present setting, and are hence omitted.

For a nonnegative integral $d$-vector $q$ we adopt the notation that: (i) $[q] = q_1 + \cdots + q_d$, (ii) $g^{(q)}(x) = \partial^{[q]} g(x)/(\partial^{q_1} x_1 \cdots \partial^{q_d} x_d)$, (iii) $k! = q_1!\cdots q_d!$, and (iv) $\sum_{[q] = Q}$ for some integer $Q \geq 0$ denotes the sum over all indexes in the set $\{q : [q] = Q\}$.

The parameter of interest is $f^{(q)}(x)$, for $x \in \mathbb{R}^d$ and $[q] \leq S$. The estimator is
\[\hat{f}^{(q)}(x) = \frac{1}{n \h^{d + [q]} } \sumi K^{(q)}\left( X_{\h,i} \right).\]
Note that here, and below for bias correction, we use a constant, diagonal bandwidth matrix, e.g. $h \times I_d$. This is for simplicity and comparability, and could be relaxed at notational expense.

The bias, for a given kernel of order $\k \leq S - [q]$ (we restrict attention to the case where $S$ is large enough), is
\[\h^\k  \sum_{k: [k + q] = \k} \mu_{K,k}  f^{(q+k)}(x) + o(\h^\k ),\]
exactly mirroring \Eqref{suppeqn:bias density}, where now $\mu_{K,k}$ represents a $d$-dimensional integral. Bias estimation is straightforward, relying on estimates $\hat{f}^{(q+k)}(x)$, for all $[k] = \k - [q]$. The form of $\hat{f}^{(q)}_2(x) = \hat{f}^{(q)}(x) - \hat{B}_{f^{(q)}}(x)$ is now given by
\[ \hat{f}^{(q)}_2(x)  = \frac{1}{n \h^{d + [q]}} \sumi M_{(q)}\left(  X_{\h,i}\right) \quad \text{where} \quad M_{(q)}(u)=  K^{(q)}(u)   -  \left(\rho \right)^{d + [q] + \k}   \sum_{[k] = \k } \mu_{K,k}  L^{(q+k)}(u), \]
exactly analogous to \Eqref{suppeqn:kernel M}.

With these changes in notation out of the way, we can (re-)define the generic framework for both estimators exactly as above. Dropping the point of evaluation $x$, for $v \in \{1,2\}$, define the estimator as
\[\hat{f}^{(q)}_v = \frac{1}{n \h^{d + [q]} } \sumi N_v \left( X_{\h,i} \right),	 \quad\qquad \text{where} \quad\qquad	 N_1(u) =  K^{(q)}(u) \text{ and } N_2(u) = M_{(q)}(u);\]
the variance
\[\sigma^2_v \defsym n \h^{d + [q]} \V[\hat{f}^{(q)}_v] = \frac{1}{\h^d} \left\{ \E \left[ N_v \left( X_{\h,i} \right)^2 \right] - \E \left[ N_v \left( X_{\h,i} \right) \right]^2   \right\}\]
and its estimator as
\[\shat^2_v  = \frac{1}{\h^d} \left\{ \frac{1}{n}\sumi \left[ N_v \left( X_{\h,i} \right)^2 \right] - \left[ \frac{1}{n}\sumi N_v \left( X_{\h,i} \right) \right]^2 \right\};\]
and the $t$-statistics, for $1 \leq w \leq v\leq 2$, as, 
\[T_{v,w} \defsym \frac{ \sqrt{n \h^{d + 2[q]} } \left(\hat{f}^{(q)}_v - f^{(q)} \right) }{ \shat_w }.\]
As before, $\tus = T_{1,1}$, $\tbc = T_{2,1}$, and $\trbc = T_{2,2}$.

The scaled bias $\eta_v$ has the same general definition as well: the bias of the numerator of the $T_{v,w}$. In this case, given by
\[\eta_v = \sqrt{n \h^{d + 2[q]} } \left( \E \left[\hat{f}^{(q)}_v\right] - f^{(q)}(x) \right). \]
The asymptotic order of $\eta_v$ for different settings can be obtained straightforwardly via the obvious multivariate extensions of Equation \eqref{suppeqn:bias corrected} and the corresponding conclusion of Lemma \ref{lem:bias density}.

First-order convergence is now given by the following result. the proof of which is standard.
\begin{lemma}
	Suppose appropriate multivariate versions of Assumptions \ref{supp:dgp density} and \ref{supp:kernel density} hold, $n \h^{d + 2[q]} \to \infty$, $\eta_v \to 0$, and if $v=2$, $\rho \to 0 + \bar{\rho} \mathbbm{1}\{v=w\}$. Then $T_{v,w}  \to_d \N(0,1)$.
\end{lemma}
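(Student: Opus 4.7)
The plan is a standard Slutsky-plus-CLT argument. Setting $s_n = \sqrt{nh^{d+2[q]}}$, I would decompose
\[T_{v,w} = \frac{s_n\bigl(\hat{f}^{(q)}_v - \E[\hat{f}^{(q)}_v]\bigr)}{\sigma_v}\cdot\frac{\sigma_v}{\shat_w} + \frac{\eta_v}{\shat_w},\]
reducing the problem to three items: (i) the centered standardized leading term converges weakly to $\N(0,1)$; (ii) $\sigma_v/\shat_w \pto 1$; and (iii) the bias ratio vanishes. Item (iii) is immediate from (ii) and the hypothesis $\eta_v \to 0$.

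For (i), I would apply Lyapunov's CLT to the triangular array $Y_{ni} = n^{-1/2} h^{-d/2}\{N_v(X_{\h,i}) - \E[N_v(X_{\h,i})]\}$, whose partial sum equals the numerator of the leading term. A single change of variables gives $\V\bigl[\sumi Y_{ni}\bigr] = h^{-d}\V[N_v(X_{\h,1})] = \sigma_v^2$, which is bounded and bounded away from zero because $f$ is continuous and positive in a neighborhood of $x$ while $\int N_v(u)^2 du$ is uniformly finite and positive in $n$ (using $\rho \to \bar\rho < \infty$ when $v=2$, so that $M_{(q)}$ remains bounded with compact support). For the Lyapunov condition, a second change of variables yields $\E|N_v(X_{\h,i})|^3 = O(h^d)$, hence $\sumi \E|Y_{ni}|^3 = O((nh^d)^{-1/2})$, which vanishes because $nh^{d+2[q]} \to \infty$ forces $nh^d \to \infty$.

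For (ii), I would first establish $\shat_w/\sigma_w \pto 1$ by applying the weak law of large numbers to the sample averages $n^{-1}\sumi N_w(X_{\h,i})^k$ for $k=1,2$, which converge in probability to the population analogues entering $\sigma_w^2$; continuous mapping then gives consistency of the ratio. The remaining task is $\sigma_v/\sigma_w \to 1$, which is trivial whenever $v=w$ (both $\tus$ and $\trbc$). The crux is the mismatch case $(v,w)=(2,1)$: the multivariate analogue of the variance computation in Section~\ref{supp:Edgeworth density} yields $\sigma_2^2/\sigma_1^2 = 1 + \rho^{d+[q]+\k}\tilde\Omega_1 + \rho^{d+[q]+2\k}\tilde\Omega_2 + o(1)$ for bounded constants $\tilde\Omega_1,\tilde\Omega_2$, and the stated condition $\rho \to 0$ drives this ratio to one. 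Slutsky's theorem then combines (i) with (ii) to deliver $T_{v,w} \dto \N(0,1)$.

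The main obstacle is precisely the ratio $\sigma_v/\sigma_w$ in the mismatch case $(v,w)=(2,1)$: this is the quantitative content of the hypothesis $\rho \to 0$ when $v\ne w$, and it is also what distinguishes the classical bias-corrected statistic $\tbc$ from its robust variant $\trbc$ at first order. For $\trbc$ (i.e.\ $v=w=2$), numerator and denominator are built from the same $n$-varying kernel $M_{(q)}$, so any bounded limit $\bar\rho$ is admissible and only the consistency $\shat_2/\sigma_2 \pto 1$ is needed. Everything else --- the Lyapunov CLT, the LLN for the plug-in variance estimator, and the negligibility of the scaled bias --- is routine under $nh^{d+2[q]}\to\infty$, $\eta_v\to 0$, and the multivariate versions of Assumptions~\ref{supp:dgp density} and~\ref{supp:kernel density}.
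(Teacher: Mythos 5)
Your proposal is correct and is precisely the ``standard'' argument the paper invokes without writing out: the paper states this lemma with no proof beyond declaring it routine, and your Slutsky decomposition, Lyapunov CLT for the triangular array, Chebyshev/LLN consistency of $\shat_w^2$, and the identification of $\sigma_2^2/\sigma_1^2 \to 1$ under $\rho \to 0$ as the only nontrivial step in the mismatch case $(v,w)=(2,1)$ all match what the paper's framework requires (cf.\ the univariate discussion around Lemma \ref{thm:clt density} and the $\Omega_1,\Omega_2$ computation). The only loose end is your parenthetical that $M_{(q)}$ has compact support when $\rho \to 0$ (the support of $L^{(q+k)}(\rho\,\cdot)$ grows like $1/\rho$), but the vanishing prefactor $\rho^{d+[q]+\k}$ still makes that term's variance contribution negligible after the change of variables, so the conclusion stands.
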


For the Edgeworth expansion, redefine
	\[\nu_{v,w}(j,k,p) = \frac{1}{\h^{d + [q]\mathbbm{1}\{j + pk=1\}} } \E\left[ \left(N_v(u_i) - \E[N_v(u_i)] \right)^j \left(N_w(u_i)^p - \E[N_v(u_i)^p] \right)^k     \right],\]
where $u_i = (x - X_i ) / \h$. The polynomials $p_{v,w}^{(k)}(z)$ and $q_{v,w}^{(k)}(z)$ are as given above, but using multivariate moments. The analogue of Theorem \ref{thm:Edgeworth density} is given by the following result, which can be proven following the same steps as in Section \ref{supp:proof density}. 
\begin{theorem}
	\label{thm:multivariate}
	Suppose appropriate multivariate versions of Assumptions \ref{supp:dgp density}, \ref{supp:kernel density}, and \ref{supp:Cramer density} hold, $n \h^{d + 2[q]}/\log(n) \to \infty$, $\eta_v \to 0$, and if $v=2$, $\rho \to 0 + \bar{\rho} \mathbbm{1}\{v=w\}$. Then for
	\begin{align*}
		\hspace{-2em}  F_{v,w}(z) & = \Phi(z)  +   \frac{1}{\sqrt{n \h^d}} p^{(1)}_{v,w}(z)   +   \sqrt{\frac{\h^{d + 2[q]}}{n}} p^{(2)}_{v,w}(z)   +   \eta_v p^{(3)}_{v,w}(z)  +   \frac{1}{n \h^d} q^{(1)}_{v,w}(z)  + \eta_v^2 q^{(2)}_{v,w}(z) + \frac{\eta_v}{\sqrt{n \h^d}} q^{(3)}_{v,w}(z)		\\
			& \qquad +  \mathbbm{1}\{v \!\neq\! w\} \rho^{d + \k + [q]}(\Omega_1 + \rho^{\k + [q]} \Omega_2)\frac{\phi(z)}{2}z ,
	\end{align*}
	we have
	\[ \sup_{z \in \mathbb{R}} \left| \P[T_{v,w} < z ] - F_{v,w}(z)  \right|  = o\left( ( (n \h^d)^{-1/2} + \eta_v)^2  +   \mathbbm{1}\{v \!\neq\! w\} \rho^{d + 2(\k+[q])} \right).\]
\end{theorem}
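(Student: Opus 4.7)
The plan is to mirror the proof of Theorem \ref{thm:Edgeworth density} almost verbatim, exploiting the fact that $T_{v,w}$ is a smooth function of i.i.d.\ sample averages. Concretely, I would set $u_i = (x - X_i)/\h$, collect the three scalar summands $A_{1,i} = N_v(u_i)$, $A_{2,i} = N_w(u_i)$, $A_{3,i} = N_w(u_i)^2$ (when $v=w$ the first two coincide), and write
\[
T_{v,w} \;=\; \frac{\sqrt{n\h^{d+2[q]}}\,\bigl(\hat f^{(q)}_v - \E[\hat f^{(q)}_v]\bigr)/\sigma_w \;+\; \eta_v/\sigma_w}{\shat_w/\sigma_w},
\]
which expresses $T_{v,w}$ as a smooth function $H(\bar A_{1\cdot},\bar A_{2\cdot},\bar A_{3\cdot})$ of means (plus a deterministic $\eta_v$-shift). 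The first step is then to apply the standard Bhattacharya--Rao/Hall Edgeworth expansion for smooth functions of sample means; this only needs the standardized vector $(n\h^d)^{1/2}(\bar A_\cdot - \E[\bar A_\cdot])$ to satisfy a Cram\'er-type condition, which is exactly the multivariate version of Assumption \ref{supp:Cramer density}. Higher-order cumulants of $N_v$ and $N_w$ generate the Edgeworth polynomials in the statement, with the factor $\h^{-d}$ in the normalization producing the rate $(n\h^d)^{-1/2}$ rather than $(n\h)^{-1/2}$.

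The second step is to Taylor expand the ratio $\sigma_w/\shat_w$ and the deterministic shift $\eta_v/\sigma_w$ in the numerator. The deterministic term contributes $\eta_v\, p^{(3)}_{v,w}(z)$ at first order, cross-multiplies with the leading Gaussian correction to yield $\eta_v (n\h^d)^{-1/2} q^{(3)}_{v,w}(z)$, and squares up to give $\eta_v^2 q^{(2)}_{v,w}(z)$. Taylor-expanding $\shat_w^2$ around $\sigma_w^2$ using the identity $\shat_w^2 - \sigma_w^2 = (n\h^d)^{-1/2}[\Delta_{w,2} - 2\E[\hat f_w]\Delta_{w,1}]/\h^{[q]} + O_P(1/n)$, together with the delta-method expansion $z(1+t)^{-1/2} = z - zt/2 + O(t^2)$ inside $\Phi(\cdot)$, produces the even polynomials $p^{(1)}_{v,w}$ and $p^{(2)}_{v,w}$ (which will cancel in the coverage-probability expansion) and the odd polynomial $q^{(1)}_{v,w}$ through the squared third-cumulant term. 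All of this is structurally identical to the scalar case, with scalar moments replaced by $d$-dimensional integrals against $K^{(q)}$ or $M_{(q)}$ and with $\h$ replaced by $\h^d$ wherever a one-dimensional Jacobian appears.

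The third step handles the mismatch term when $v \neq w$. Writing $\sigma_v^2/\sigma_w^2 = 1 + (\sigma_v^2 - \sigma_w^2)/\sigma_w^2$ and using $N_v - N_w = -\rho^{d+\k+[q]} \sum_{[k]=\k}\mu_{K,k} L^{(q+k)}$ (multivariate version of the identity $M - K = -\rho^{1+\k}\mu_{K,\k}L^{(\k)}$), one finds $\sigma_v^2/\sigma_w^2 - 1 = -2\rho^{d+\k+[q]}\Omega_1 + \rho^{d+2(\k+[q])}\Omega_2 + o(\text{same})$. Inserting this into the delta-method expansion above gives precisely the extra contribution $\rho^{d+\k+[q]}(\Omega_1 + \rho^{\k+[q]}\Omega_2)\phi(z)z/2$ appearing in $F_{v,w}$; the sign and the $z\phi(z)$ shape come from differentiating $\Phi(z(1+t)^{-1/2})$ in $t$ at $t=0$. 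All remainders beyond the quoted order are absorbed into $o((n\h^d)^{-1} + (n\h^d)^{-1/2}\eta_v + \eta_v^2 + \mathbbm{1}\{v\neq w\}\rho^{d+2(\k+[q])})$, using $n\h^{d+2[q]}/\log(n)\to\infty$ to ensure the Edgeworth remainder is $o((n\h^d)^{-1})$ uniformly in $z$.

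The main obstacle will be bookkeeping of the many powers of $\h$ and $\rho$ once $[q]>0$. The derivative weight $\h^{-[q]}$ inside $\hat f^{(q)}_v$ and inside $M_{(q)}$ changes the relationship between the variance-normalization exponent $d+2[q]$ and the bias-mismatch exponent $d+\k+[q]$, and one must carefully redo the change of variables $w = \rho u$ to verify that $\vartheta_{N_v - N_w, k}$ scales as $\rho^{k(d+\k+[q])}$ with the correct leading constants $\Omega_1, \Omega_2$. A subsidiary technical point is the multivariate Cram\'er condition: Hall's one-dimensional sufficient condition (monotone derivative on a partition) extends to products of such functions in the coordinates, and this is what the multivariate analogue of Assumption \ref{supp:Cramer density} should encode, but the verification for the derivative kernel $M_{(q)}$ must rule out degeneracy along any direction in $\mathbb{R}^d$. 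Beyond these two points, every line of the univariate proof transfers essentially unchanged.
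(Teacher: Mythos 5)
Your proposal follows essentially the same route as the paper: the supplement proves the multivariate theorem by asserting that the univariate argument of Section \ref{supp:proof density} (smooth-function-of-means representation, Bernstein-controlled delta method, Edgeworth expansion under the $n$-varying Cram\'er condition, Taylor expansion in the bias shift, and the separate treatment of the $\sigma_v^2/\sigma_w^2$ mismatch when $v\neq w$) transfers verbatim after replacing $\h$ by $\h^d$ in the effective-sample-size scalings and $1+\k$ by $d+\k+[q]$ in the $\rho$ exponents, which is exactly your plan. The only slip is in your displayed identity for $\shat_w^2-\sigma_w^2$: the $\Delta_{w,2}$ term carries no $\h^{[q]}$ factor and the $\E[\hat f_w]\Delta_{w,1}$ term should be multiplied by $\h^{d+[q]}$ rather than divided by $\h^{[q]}$ (consistent with the $\sqrt{\h^{d+2[q]}/n}$ scaling of $p^{(2)}_{v,w}$ in the statement), but this is precisely the bookkeeping you flag and it does not affect the structure or the orders of the final expansion.
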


The same conclusions reached in the main text continue to hold for multivariate and/or derivative estimation, both in terms of comparing undersmoothing, bias correction, and robust bias correction, as well as for inference-optimal bandwidth choices. In particular, it is straightforward that the MSE optimal bandwidth in general has the rate $n^{- 1 / (d + 2 \k + 2[q])}$, whereas the coverage error optimal choice is of order $n^{- 1 / (d +  \k + [q])}$. Note that these two fit the same patter as in the univariate, level case, with $\k + [q]$ in place of $\k$ and $d$ in place of one. One intuitive reason for the similarity is that the number of derivatives in question does not impact that variance or higher order moment terms of the expansion, \emph{once the scaling is accounted for}. That is, for all averages beyond the first, for example of the kernel squared, $\sqrt{n\h^d}$ can be thought of as the effective sample size, since that is the multiplier which stabilizes averages.

\section{Proof of Main Result}
	\label{supp:proof density}

Throughout $C$ shall be a generic constant that may take different values in different uses. If more than one constant is needed, $C_1$, $C_2$, \ldots, will be used. It will cause no confusion (as the notations never occur in the same place), but in the course of proofs we will frequently write $s=\sqrt{n\h}$, which overlaps with the order of the kernel $L$.

The first step is to write $T_{v,w}$ as a smooth function of sums of i.i.d.\! random variables plus a remainder term that is shown to be of higher order. In addition to the notation above, define
\[\gamma_{v,p} = \h^{-1} \E\left[N_v\left(X_{\h,i}\right)^p\right]		\qquad \text{ and } \qquad 	       \Delta_{v,j} = \frac{1}{s} \sumi \left\{ N_v\left(X_{\h,i}\right)^j - \E\left[N_v\left(X_{\h,i}\right)^j\right]\right\}.\]
With this notation $\hat{f}_v - \E[\hat{f}_v] = s^{-1} \Delta_{v,1}$, $\sigma^2_w = \E[\Delta_{w,1}^2] = \gamma_{w,2} - \h \gamma_{w,1}^2$ and 
\begin{equation}
	\label{eqn:variance difference}
	\shat^2_w - \sigma^2_w = s^{-1} \Delta_{w,2}  - \h 2 \gamma_{w,1} s^{-1} \Delta_{w,1} - \h s^{-2} \Delta_{w,1}^2.
\end{equation}

By a change of variables
\[\gamma_{v,p}  =   \h^{-1} \int   N_v\left(X_{\h,i}\right)^p f(X_i) d X_i  = \int   N_v(u)^{p} f(x - u \h) d u 	 =  O(1).\]
Further, by construction $\E[\Delta_{w,j}] = 0$ and 
\begin{align*}
	\V \left[\Delta_{w,j} \right]  & =\h^{-1} \E \left[N_v\left(X_{\h,i}\right)^{2j} \right]   - \h^{-1} \E\left[N_v\left(X_{\h,i}\right)^j\right]^2 		\\
	& \leq  \h^{-1} \E \left[N_v\left(X_{\h,i}\right)^{2j} \right]			\\
	& = \gamma_{v,2j}	 =  O(1).
\end{align*}

Returning to \Eqref{eqn:variance difference} and applying Markov's inequality, we find that $\h s^{-2} \Delta_{w,1}^2 = n^{-1} \Delta_{w,1}^2 = O_p(n^{-1})$ and $\shat^2_w - \sigma^2_w = s^{-1} O_p(1)  - \h  O(1) s^{-1} O_p(1) - \h s^{-2} O_p(1) = O_p(s^{-1})$, whence $\left| \shat^2_w - \sigma^2_w \right|^2 = O_p(s^{-2})$. Using these results preceded by a Taylor expansion, we have
\begin{align*}
	\left( \frac{\shat^2_w}{\sigma^2_w}  \right)^{-1/2} = \left( 1 + \frac{ \shat^2_w - \sigma^2_w}{\sigma^2_w} \right)^{-1/2} &  = 1 - \frac{1}{2} \frac{\shat^2_w - \sigma^2_w}{\sigma^2_w}  + \frac{3}{8} \frac{(\shat^2_w - \sigma^2_w)^2}{\sigma^4_w}  + o_p((\shat^2_w - \sigma^2_w)^2)			\\
	& = 1 - \frac{1}{2 \sigma^2_w} \left( s^{-1} \Delta_{w,2}  - \h 2 \gamma_{w,1} s^{-1} \Delta_{w,1} \right)  +  O_p(n^{-1} + s^{-2}).
\end{align*}

Combining this result with the fact that
\[T_{v,w} = \frac{\Delta_{v,1} + \eta_v}{\shat_w} = \frac{\Delta_{v,1}}{\shat_w}  +  \frac{\eta_v}{\sigma_w} \left( \frac{\shat^2_w}{\sigma^2_w}  \right)^{-1/2},\]
we have
\begin{equation}
	\label{supp:linearization}
	\P[T_{v,w} < z]  = \P\left[ \tilde{T}_{v,w}   -   R_{v,w}   < z  -  \frac{\eta_v}{\sigma_w}  \right],
\end{equation}
where 
\[\tilde{T}_{v,w}  =  \frac{\Delta_{v,1}}{\shat_w}   -    \frac{\eta_v}{2 \sigma^3_w} \left( s^{-1} \Delta_{w,2}  - \h 2 \gamma_{w,1} s^{-1} \Delta_{w,1} \right) \]
and is a smooth function of sums of i.i.d.\! random variables and the remainder term is
\[R_{v,w} = \frac{\eta_v}{\sigma_w} \left( \h s^{-2} \frac{\Delta_{w,1}^2}{2 \sigma^2_w}  +  \frac{3}{8}  \frac{(\shat^2_w - \sigma^2_w)^2}{\sigma^4_w}  + o_p((\shat^2_w - \sigma^2_w)^2)  \right). \]

Next we apply the delta method, see \citet[Chapter 2.7]{Hall1992_book} or \citet[Lemma 5(a)]{Andrews2002_Ecma}. It will be true that
\begin{equation}
	\label{supp:delta method}
	\P[T_{v,w} < z]   = \P\left[\tilde{T}_{v,w}  < z  -  \frac{\eta_v}{\sigma_w} \right]  + o(s^{-2})
\end{equation}
if it can be shown that $s^2 \P[|R_{v,w}| > \e^2  s^{-2} \log(s)^{-1} ] = o(1)$.\footnote{Here, $s^{-2} \log(s)^{-1}$ may be replaced with any sequence that is $o(s^{-2} + \eta_v^2 + s^{-1} \eta_v)$.} This can be demonstrated by applying Bernstein's inequality to each piece of $R_{v,w}$, as the kernels $K$ and $L$, and their derivatives, are bounded. 

To apply this inequality to the first term of $R_{v,w}$, note that $| N_w ((x - X_i)/\h) | \leq C_1$ and that $\V[N_w ((x - X_i)/\h)] \leq C_2 \h$, for different constants, and so for $\e > 0$ we have
\begin{align*}
	s^2 \P & \left[  \frac{\eta_v}{\sigma_w}  \h s^{-2} \frac{\Delta_{w,1}^2}{2 \sigma^2_w} > \e^2 s^{-2} \log(s)^{-1} \right] 		\\
	& = s^2 \P \left[   \left| \sumi \left\{ N_w\left(X_{\h,i}\right) - \E\left[N_w\left(X_{\h,i}\right) \right]\right\} \right| > \e s^{-1} \log(s)^{-1/2} \left( \frac{2 \sigma_w^3 n s^2}{\eta_v} \right)^{1/2} \right]  		\\
	& = s^2 \P \left[   \left| \sumi \left\{ N_w\left(X_{\h,i}\right) - \E\left[N_w\left(X_{\h,i}\right) \right]\right\} \right| > \e  \left( \frac{2 \sigma_w^3 n}{\eta_v \log(s)} \right)^{1/2} \right]  		\\
	& \leq 2 s^2  \exp \left\{ - \frac{1}{2}   \frac{ \e^2 2 \sigma_w^3 n \eta_v^{-1} \log(s)^{-1} }{ C_2 n \h + \frac{1}{3} \e C_1 \sqrt{2 \sigma_w^3 n / [ \eta_v \log(s) ] } }  \right\}   		\\
	& \leq s^2  \exp \left\{ -   C  \frac{ \e^2   \log(s)^{-1} }{ \eta \h +  \e \sqrt{ \eta_v  / [ n \log(s) ] } }  \right\}   		\\
	& \leq   \exp \left\{C_1 \log(s) \left[ 1  -   C_2  \frac{ \e^2  }{ \eta \h \log(s)^2 +  \e \sqrt{ \eta_v \log(s)^3  / n  ] } }  \right] \right\},
\end{align*}
which tends to zero because $\eta_v \to 0$ as $n \to \infty$ is assumed. To see why, note first that the second term of the denominator automatically vanishes, as $\eta_v \to 0$ and $\log(s)^3/n \to 0$. Second, suppose $\eta_v^2 \asymp n \h^\omega$ (for example, if $\eus \asymp s \h^\k$, then $\omega = 1 + 2\k$) and the first term diverges, it must be that $\h$ is at least as large (in order) as 
\[\left( \frac{1}{n \log(s)^4} \right)^{1/(2 + \omega)},\]
which makes the requirement that $\eta_v \to 0$ equivalent to 
\[\eta_v^2 \asymp n \h^\omega = n^{1 - \omega/(2 + \omega)} \log(s)^{ - 4 \omega/(2 + \omega)} \to 0,\]
which is impossible. The remaining terms of $R_{v,w}$, characterized using \Eqref{eqn:variance difference}, are handled in exactly the same way. This establishes \Eqref{supp:delta method}.

Next, the proofs of \cite[Chapters 4.4 and 5.5]{Hall1992_book} show that $\tilde{T}_{v,w}$ has an Edgeworth expansion valid through $o(s^{-2} + s^{-1}\eta_v + \eta_v^2)$. Thus, for a smooth function $G(z)$ we can write $\P[\tilde{T}_{v,w}  < z ] = G(z) + o(s^{-2} + s^{-1}\eta_v + \eta_v^2)$. Therefore 
\begin{equation}
	\label{supp:taylor}
	\P\left[\tilde{T}_{v,w}  < z  -  \frac{\eta_v}{\sigma_w} \right] =  \P\left[\tilde{T}_{v,w}  < z  \right]   -  \frac{\eta_v}{\sigma_w} G^{(1)}(z) +  o(s^{-2} + s^{-1}\eta_v + \eta_v^2).
\end{equation}

The final result now follows by combining Equations \eqref{supp:linearization}, \eqref{supp:delta method}, and \eqref{supp:taylor} with the terms of the expansion computed below.\qed

\subsection{Computing the Terms of the Expansion}

Identifying the terms of the expansion is a matter of straightforward, if tedious, calculation. The first four cumulants of $T_{v,w}$ must be calculated, which are functions of the first four moments. In what follows, we give a short summary. Note well that we always discard higher-order terms for brevity, and to save notation we will write $\oeq$ to stand in for ``equal up to $o((n\h)^{-1} + (n\h)^{-1/2}\eta_v + \eta_v^2  + \mathbbm{1}\{v \!\neq\! w\} \rho^{1+2\k} )$''.

Referring to the Taylor expansion above, for the purpose of computing moments and cumulants, we can use
\[T_{v,w} \approx \left( \frac{\Delta_{v,1}}{\sigma_w} +   \frac{\eta_v}{\sigma_w} \right) \left(1 - \frac{ s^{-1} \Delta_{w,2}}{2 \sigma_w}  + \frac{\h  \gamma_{w,1} s^{-1} \Delta_{w,1}}{\sigma_w} + \frac{3}{8} \frac{s^{-2} \Delta_{w,2}^2}{\sigma^2_w} \right).\]
Moments of the two sides agree up to the requisite order. Straightforward moment calculations then give
\begin{align*}
	\E[T_{v,w}] & \oeq \frac{s^{-1} \E[\Delta_{v,1} \Delta_{w,2}]}{2 \sigma_w^3}  +  \frac{\h s^{-1} \gamma_{w,1} \E[\Delta_{v,1} \Delta_{w,1}]}{ \sigma_w^3}  +  \frac{3s^{-2} \E[\Delta_{v,1} \Delta_{w,2}^2]}{8\sigma_w^5}   +  \frac{\eta_v}{\sigma_w}  +  \frac{3 s^{-2} \eta_v \E[\Delta_{w,2}^2]}{8 \sigma_w^5}		\\
	& \oeq - s^{-1} \frac{ \nu_{v,w}(1,1,2)}{2 \sigma_w^3}  +  \frac{\h s^{-1} \gamma_{w,1} \nu_{v,w}(1,1,1)}{ \sigma_w^3}   +  \frac{\eta_v}{\sigma_w},
\end{align*}
\begin{align*}
	\E[T_{v,w}^2] & \oeq  \frac{\E[\Delta_{v,1}^2]}{\sigma_w^2}  +  s^{-2} \frac{\E[\Delta_{v,1}^2\Delta_{w,2}^2]}{\sigma_w^6}  +  s^{-1} \frac{\E[\Delta_{v,1}^2\Delta_{w,2}]}{\sigma_w^4}    +  2 \h s^{-1}  \frac{\gamma_{w,1} \E[\Delta_{v,1}^2\Delta_{w,1}]}{\sigma_w^2} 		\\
	& \qquad  -  \eta_v s^{-1} \frac{2 \E[\Delta_{v,1} \Delta_{w,2}]}{\sigma_w^4}   +  \eta_v \h s^{-1}  \frac{ 4 \gamma_{w,1} \E[\Delta_{v,1}\Delta_{w,1}]}{\sigma_w^2}  +  \frac{\eta_v^2}{\sigma_w^2}		\\
	& \oeq \frac{\sigma_v^2}{\sigma_w^2}  +  s^{-2} \frac{\sigma_v^2 \nu_{v,w}(0,2,2)}{\sigma_w^6}   +  s^{-2} \frac{2 \nu_{v,w}(1,1,2)^2}{\sigma_w^6}  -  s^{-2} \frac{ \nu_{v,w}(2,1,2)^2}{\sigma_w^2}   -  \eta_v s^{-1} \frac{ 2 \nu_{v,w}(1,1,2)}{\sigma_w^2}     +  \frac{\eta_v^2}{\sigma_w^2},
\end{align*}
\begin{align*}
	\E[T_{v,w}^3] & \oeq  \frac{\E[\Delta_{v,1}^3]}{\sigma_w^3 }  -  3 s^{-1} \frac{\E[\Delta_{v,1}^3\Delta_{w,2}]}{2 \sigma_w^5}  +  3 \h s^{-1}  \frac{\gamma_{w,1} \E[\Delta_{v,1}^3\Delta_{w,1}]}{\sigma_w^5}  +  \eta_v \frac{3 \E[\Delta_{v,1}^2]}{\sigma_w^3}  -  \eta_v s^{-1} \frac{9 \E[\Delta_{v,1}^2 \Delta_{w,2}]}{2\sigma_w^5}		\\
	& \oeq s^{-1} \frac{\nu_{v}(3)}{\sigma_w^3}  -  s^{-1} \frac{ 9 \nu_{v,w}(1,1,2) \sigma_v^2}{2 \sigma_w^5}  +  \h s^{-1} \frac{ 9 \gamma_{w,1} \nu_{v,w}(1,1,1)}{\sigma_w^5}  +  \eta_v \frac{3 \sigma_v^2}{\sigma_w^3},
\end{align*}
and,
\begin{align*}
	\E[T_{v,w}^4] & \oeq \frac{\E[\Delta_{v,1}^4]}{\sigma_w^4}  -  s^{-1} \frac{2 \E[\Delta_{v,1}^4\Delta_{w,2}]}{\sigma_w^6}    +  4 \h s^{-1}  \frac{\gamma_{w,1} \E[\Delta_{v,1}^4 \Delta_{w,1}]}{\sigma_w^6}    +  s^{-2} \frac{3 \E[\Delta_{v,1}^4\Delta_{w,1}^2]}{\sigma_w^8}		\\
	& \qquad +    \eta_v \frac{4 \E[\Delta_{v,1}^3]}{\sigma_w^4}  -  \eta_v s^{-1} \frac{8 \E[\Delta_{v,1}^3 \Delta_{w,2}]}{\sigma_w^6}  +  \eta_v^2 \frac{6 \E[ \Delta_{v,1}^2]}{\sigma_w^4}		\\
	& \oeq s^{-2} \frac{\nu_v(4)}{\sigma_w^4}  +  3\frac{\sigma_v^4}{\sigma_w^4}  -  s^{-2} \frac{ 8 \nu_v(3) \nu_{v,w}(1,1,2) + 12 \sigma_v^2 \nu_{v,w}(2,1,2)}{\sigma_w^6}  +  s^{-2} \frac{ 9 \sigma_v^4 \nu_{v,w}(0,2,2) }{\sigma_w^8}		\\
	& \qquad     +  s^{-2} \frac{ 36 \sigma_v^2 \nu_{v,w}(1,1,2)^2 }{\sigma_w^8}  +  \eta_v s^{-1} \frac{ 4 \nu_v(3)}{\sigma_w^4}  -  \eta_v s^{-1} \frac{ 24 \sigma_v^2 \nu_{v,w}(1,1,2)}{\sigma_w^6}  +  \eta_v^2 \frac{ 6 \sigma_v^2}{\sigma_w^2}.
\end{align*}

The expansion now follows, formally, from the following steps. First, combining the above moments into cumulants. Second, these cumulants may be simplified using that
\[\frac{\sigma_v^2}{\sigma_w^2} = 1 + \mathbbm{1}(w\!\neq\! v) \left( \rho^{1+ \k} \Omega_1 + \rho^{1 + 2\k} \Omega_2\right) \]
and in all cases present
\begin{equation}
	\label{eqn:simplifying nu}
	\nu_{v,w}(i,j,p) = f \vartheta_{N_v,i + jp} + o(1).
\end{equation}
The second relation is readily proven for $v=w$, as $\nu_{v,v}(i,j,p) = \E[N_v(X_{\h,i})^{i + jp}] + O(\h)$, where the remainder represents products of expectations. In the case for $v \neq w$, we find $\nu_{2,1}(i,j,p) = f \vartheta_{N_1,i + jp}  +  O( \rho^{1+\k} + \h)$, and in this case $\rho \to 0$ is assumed. For any term of a cumulant with a rate of $(n\h)^{-1}$, $(n\h)^{-1/2}\eta_v$, $\eta_v^2$, or $\rho^{1+2\k}$ (i.e., the extent of the expansion), these simplifications may be inserted as the remainder will be negligible. Note that this is exactly why the polynomials $p_{v,w}^{(k)}$ do not simplify, while the $q_{v,w}^{(k)}$ do. Third, with the cumulants in hand, the terms of the expansion are determined as described by e.g., \citet[Chapter 2]{Hall1992_book}.

Finally, for traditional bias correction, there are additional terms in the expansion (see discussion in the main text) representing the covariance of $\hat{f}$ and $\hat{B}_f$ (denoted by $\Omega_1$) and the variance of $\hat{B}_f$ ($\Omega_2$). We now state their precise forms. These arise from the mismatch between the variance of the numerator of $\tbc$ and the standardization used, $\sus^2$, that is $\srbc^2/\sus^2$ is given by
\[  \frac{ n\h \V[\hat{f} - \hat{B}_f] }{ n\h \V[\hat{f}] } = \frac{ n\h \V[\hat{f}]  - 2 n\h \mathbb{C}[\hat{f},\hat{B}_f]  + n\h \V[\hat{B}_f] }{ n\h \V[\hat{f}] }  = 1   -  2 \frac{ n\h \mathbb{C}[\hat{f},\hat{B}_f] }{ n\h \V[\hat{f}] }  + \frac{ n\h \V[\hat{B}_f] }{ n\h \V[\hat{f}] } .\]
This makes clear that $\Omega_1$ and $\Omega_2$ are the constant portions of the last two terms. First, for $\Omega_1$,
\begin{align*}
	\mathbb{C}[\hat{f},\hat{B}_f] & = \E\left[ \left( \frac{1}{n\h} \sumi K\left( X_{\h,i} \right) \right) \left( \h^\k  \mu_{K,\k} \frac{1}{n \b^{1 + \k} } \sumi L^{(\k)}\left( X_{\b,i} \right) \right)\right]  		\\
	& =  \h^\k  \mu_{K,\k} \frac{1}{n \b^{1 + \k} }  \biggl\{  \E\left[ \h^{-1} K\left( X_{\h,i} \right) L^{(\k)}\left( X_{\b,i} \right)  \right]    		\\
	& \qquad \qquad \qquad\qquad - \b \E\left[ \h^{-1} K\left( X_{\h,i} \right) \right] \E \left[ \b^{-1} L^{(\k)}\left( X_{\b,i} \right) \right]  \biggr\}  		\\
	& =   \frac{\rho^\k  \mu_{K,\k}}{n \b }  \left\{  \int f(x - u\h )K(u) L^{(\k)}(u \rho) du  - \b \int f(x - u\h) K(u) du \int f(x - u\b) L^{(\k)}(u) du  \right\}.
\end{align*}
Therefore
	\[ -  2 \frac{ n\h \mathbb{C}[\hat{f},\hat{B}_f] }{ n\h \V[\hat{f}] } = \rho^{1+\k} \Omega_1, \]
where
	\[\Omega_1 = - 2 \frac{\mu_{K,\k}}{\nu_1(2)}  \left\{  \int f(x - u\h )K(u) L^{(\k)}(u \rho) du  - \b \int f(x - u\h) K(u) du \int f(x - u\b) L^{(\k)}(u) du  \right\}.\]
Note $\nu_1(2) = \sus^2$. If we did not include $\Omega_2$ in the Edgeworth expansion, i.e. we stopped at order $\rho^{1+\k}$, then we could capture only the leading terms of $\Omega_1$, as follows, using that kernel integrates to 1 and $\rho \to 0$,
\begin{align*}
	\Omega_1 & = - 2 \frac{\mu_{K,\k}}{\nu_1(2)}  \left\{  \int f(x - u\h )K(u) L^{(\k)}(u \rho) du  - \b \int f(x - u\h) K(u) du \int f(x - u\b) L^{(\k)}(u) du  \right\}  		\\
	& = - 2 \frac{\mu_{K,\k}}{f(x) \vartheta_{K,2}^2 + O(\h)}  \left\{  f(x) L^{(\k)}(0)[1 + O(\h + h\rho)] - \b f(x)^2 \int L^{(\k)}(u) du[1+O(\b + \h)]  \right\}  		\\
	& \to - 2 \mu_{K,\k} \vartheta_{K,2}^{-2}   L^{(\k)}(0).
\end{align*}
Note that this matches the term \citet{Hall1992_AoS_density} calls $w_2$. We do not do this, for completeness. There are no other terms of up to order $\rho^{1+2\k}$, so capturing the full contribution of $\sigma_2^2 / \sigma_1^2 - 1 = \srbc^2 / \sus^2 - 1$ is natural and informative.

Turning to $\Omega_2$, using the calculations in Section \ref{supp:bias density} (recall $\tilde{\k} = \k \vee S$), we find that 
\begin{align*}
	\V[\hat{B}_f] & = \frac{\h^{2\k}}{n } \mu_{K,\k}^2 \left\{ \frac{1}{\b^{1 + 2\k}} \E\left[\b^{-1} L^{(\k)} \left( X_{\b,i} \right)^2 \right]   -  \left( \frac{1}{\b^{1+\k}}  \E\left[ L^{(\k)} \left( X_{\b,i} \right) \right] \right)^2\right\}     		\\
	& = \frac{\rho^{2\k}  \mu_{K,\k}^2}{n \b } \left\{ \int f(x - u\b) L^{(\k)}(u)^2 du   -   \b^{1 + 2\tilde{\k}} \left( \int L^{(\k-\tilde{\k})}(u) f^{(\tilde{\k})}(x - u \b) du \right)^2 \right\},
\end{align*}
and hence
\[\frac{ n\h \V[\hat{B}_f] }{ n\h \V[\hat{f}] } = \rho^{1 + 2\k} \Omega_2 \quad \text{ where }\quad \Omega_2 = \frac{  \mu_{K,\k}^2}{ \nu_1(2) } \left\{ \int f(x - u\b) L^{(\k)}(u)^2 du   -   \b^{1 + 2\tilde{\k}} \left( \int L^{(\k-\tilde{\k})}(u) f^{(\tilde{\k})}(x - u \b) du \right)^2 \right\}.\]
The final piece will be $\b^{1 + 2S} f^{(\k)}(x)^2 [ 1+o(1)]$ if $\k \leq S$. Substituting this is permitted because $\rho^{1+2\k}$ is the limit of the expansion, though it is not necessary to do, because this term is always higher order. Fully simplifying would yield
\[\Omega_2 = \mu_{K,\k}^2 \vartheta_{K,2}^{-2} \vartheta_{L^{(\k)},2},\]
which can be used in Theorem \ref{thm:Edgeworth density}.

\section{Complete Simulation Results}
	\label{supp:simuls density}

To illustrate the gains from robust bias correction we conduct a Monte Carlo study to compare undersmoothing, traditional bias correction, and robust bias correction in terms coverage accuracy and interval length using several data-driven procedures to select the bandwidth.  We generate $n=500$ observations from a density $f$ given by:

\begin{itemize}
\item[] Model 1 (Gaussian Density): $x\backsim\N(0,1)$
\item[] Model 2 (Skewed Unimodal Density): $x\backsim \frac{1}{5} \N(0,1) + \frac{1}{5} \N\left(\frac{1}{2},\left(\frac{2}{3}\right)^2\right) + \frac{3}{5} \N\left(\frac{13}{12},\left(\frac{5}{9}\right)^2\right)$
\item[] Model 3 (Bimodal Density): $x\backsim \frac{1}{2} \N\left(-1,\left(\frac{2}{3}\right)^2\right) + \frac{1}{2} \N\left(1,\left(\frac{2}{3}\right)^2\right)$
\item[] Model 4 (Asymmetric Bimodal Density): $x\backsim \frac{3}{4} \N\left(0,1\right) + \frac{1}{4} \N\left(\frac{3}{2},\left(\frac{1}{3}\right)^2\right)$

\end{itemize}

We evaluate the density at $x=\{-2,-1,0,1,2\}$. These models were previously analyzed in \citet{Marron-Wand1992_AoS} and they are plotted in Figure \ref{fig:dgp_den}.
In this simulation study we compare the performance of the confidence intervals defined by $\tus$, $\tbc$, and $\trbc$.
For $\tus$, we take $K$ to be the Epanechnikov kernel, while bias correction uses the Epanechnikov and MSE-optimal kernels for $K$ and $L^{(2)}$, respectively. 
The bandwidth $\h$ is chosen in three different ways:
\begin{enumerate}[label=\it(\roman*)]
	\item population MSE-optimal choice $h_\MSE$;
	\item estimated ROT optimal coverage error rate $\hat{h}_\ROT$. 
	\item estimated DPI optimal coverage error rate $\hat{h}_\PI$. 
\end{enumerate}

Empirical coverage and length are reported in Tables \ref{table:dentable1}--\ref{table:dentable4} (Panel A) using our two proposed data-driven bandwidth selectors, as well as the infeasible $h_\MSE$. The most obvious finding is that robust bias correction has accurate coverage for all bandwidth choices in all models. The intervals are generally longer than for undersmoothing, but neither undersmoothing nor traditional bias correction yield correct coverage outside of a few special cases (e.g., undersmoothing at the infeasible MSE-optimal bandwidth in Model 4). The DPI bandwidth selector generally results in slightly smaller bandwidths (on average). Summary statistics for the two fully data-driven bandwidths are shown in Panel B. The fact that the DPI bandwidth is slightly smaller is born out. It is also, in general, more variable.

To illustrate the robustness to tuning parameter selection, Figures \ref{fig:dengrid_ec1}--\ref{fig:dengrid_il4} show coverage and length for all four models. The dotted vertical line shows the population MSE-optimal bandwidth for reference. These figures demonstrate the delicate balance required for undersmoothing to provide correct coverage, whereas for a wide range of bandwidths robust bias correction provides correct coverage. Further, interval length is not unduly inflated for bandwidths that provide correct coverage. Recall that robust bias correction can accommodate, and will optimally employ, a larger bandwidth, yielding higher precision. Further emphasizing the point of robustness, we depart from $\rho = 1$ in Figures \ref{fig:ec3d} and \ref{fig:il3d} to show coverage and length over a grid of $h$ and $\rho$.

The simulation results for local polynomial regression reported in Section \ref{supp:simuls locpoly} below bear out these same conclusions and study these issues in more detail, in particular interval length. 

All our methods are implemented in {\sf R} and {\tt STATA} via the {\tt nprobust} package, available from \url{http://sites.google.com/site/nppackages/nprobust} (see also \url{http://cran.r-project.org/package=nprobust}). See \citet{Calonico-Cattaneo-Farrell2017_nprobust} for a complete description.

\begin{figure}
        \centering
        \caption{Density Functions}\label{fig:dgp_den}
        \begin{subfigure}[b]{0.5\textwidth}
                \includegraphics[width=1.05\textwidth]{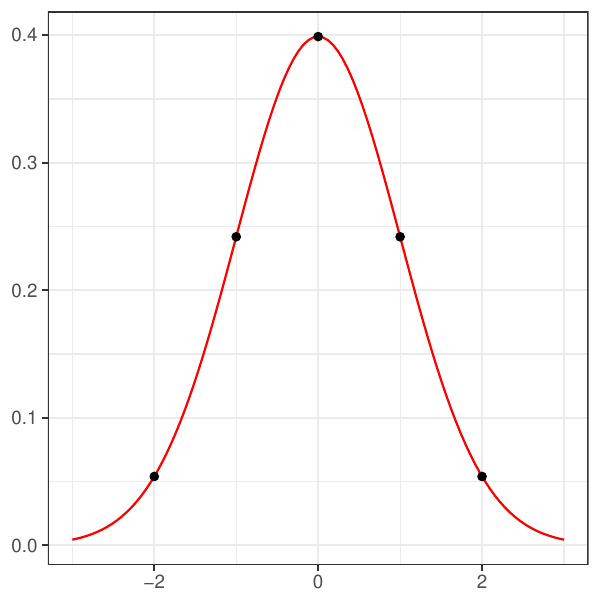}
                \caption{Model 1}
        \end{subfigure}%
        ~ 
        \begin{subfigure}[b]{0.5\textwidth}
                \includegraphics[width=1.05\textwidth]{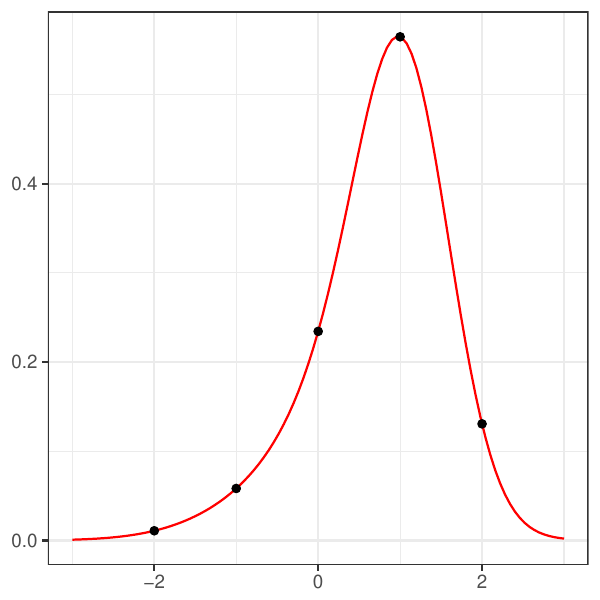}
                \caption{Model 2}
        \end{subfigure}
        
        \begin{subfigure}[b]{0.5\textwidth}
                        \includegraphics[width=1.05\textwidth]{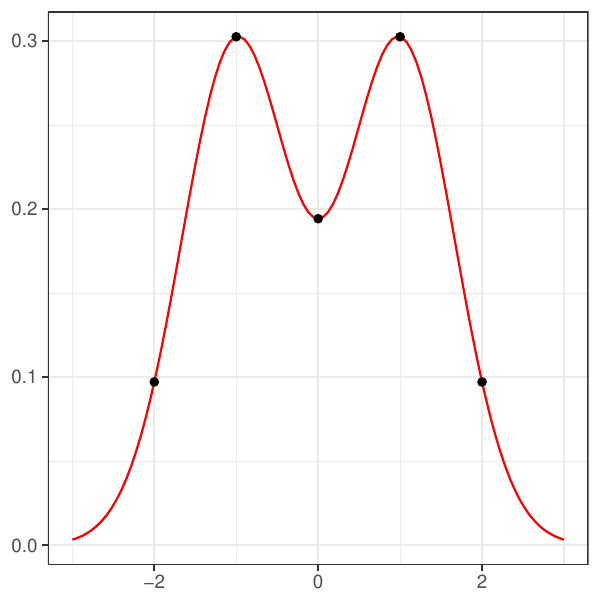}
                        \caption{Model 3}
        \end{subfigure}%
        \begin{subfigure}[b]{0.5\textwidth}
                             \includegraphics[width=1.05\textwidth]{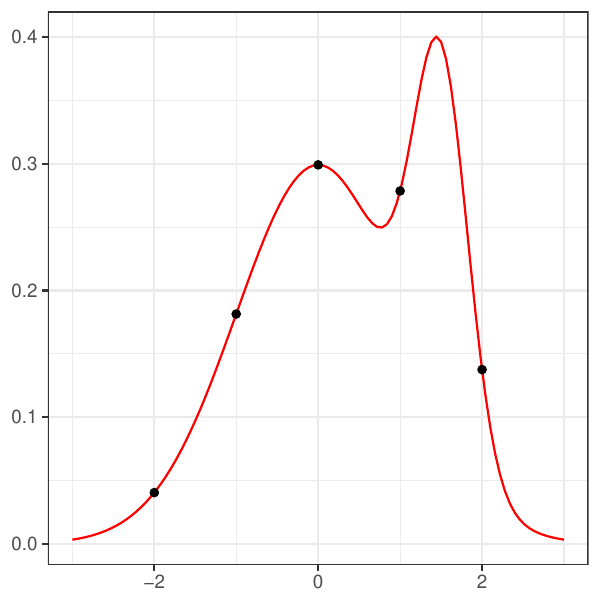}
                             \caption{Model 4}
        \end{subfigure}
\end{figure}

\newpage
\newcounter{model}
\forloop{model}{1}{\value{model} < 5}{
	\begin{table}[h]
		{\begin{center}
				\caption{Simulations Results for Model \themodel}\label{table:dentable\themodel}
				Panel A:  Empirical Coverage and Average Interval Length of 95$\%$ Confidence Intervals 
				{\input{kd_bws_m\themodel_n500.txt}}\bigskip
				
				Panel B: Summary Statistics for the Estimated Bandwidths 
				{\input{kd_bws_stats_m\themodel_n500.txt}}\bigskip
		\end{center}}
		\vspace{-.2in}\footnotesize\textbf{Notes}:\\
		\smallskip
		(i) US = Undersmoothing, BC = Bias Corrected, RBC = Robust Bias Corrected.\\
		\smallskip
		(ii) Columns under ``Bandwidth'' report the average estimated bandwidths choices, as appropriate, for bandwidth $h_n$.\\
	\end{table}
}

\newpage
\begin{landscape}

\newpage
\forloop{model}{1}{\value{model} < 5}{
\begin{figure}[h]
      \centering
       \caption{Empirical Coverage of 95$\%$ Confidence Intervals - Model \themodel }\label{fig:dengrid_ec\themodel}

             \begin{subfigure}[b]{0.5\textwidth}
          	\includegraphics[width=1.0\textwidth]{kd_grid_ec_m\themodel_c1_n500.pdf}
          	\caption{$x=-2$}
          \end{subfigure}%
          ~ 
          \begin{subfigure}[b]{0.5\textwidth}
          	\includegraphics[width=1.0\textwidth]{kd_grid_ec_m\themodel_c2_n500.pdf}
          	\caption{$x=-1$}
          \end{subfigure}%
           \begin{subfigure}[b]{0.5\textwidth}
          	\includegraphics[width=1.0\textwidth]{kd_grid_ec_m\themodel_c3_n500.pdf}
          	\caption{$x=0$}
          \end{subfigure}%
      
          \begin{subfigure}[b]{0.5\textwidth}
          	\includegraphics[width=1.0\textwidth]{kd_grid_ec_m\themodel_c4_n500.pdf}
          	\caption{$x=1$}
          \end{subfigure}%
          \begin{subfigure}[b]{0.5\textwidth}
          	\includegraphics[width=1.0\textwidth]{kd_grid_ec_m\themodel_c5_n500.pdf}
          	\caption{$x=2$}
          \end{subfigure}

\end{figure}
}

\newpage
\forloop{model}{1}{\value{model} < 5}{
\begin{figure}[h]
      \centering
       \caption{Average Interval Length of 95$\%$ Confidence Intervals - Model \themodel }\label{fig:dengrid_il\themodel}

           \begin{subfigure}[b]{0.5\textwidth}
          	\includegraphics[width=1.0\textwidth]{kd_grid_il_m\themodel_c1_n500.pdf}
          	\caption{$x=-2$}
          \end{subfigure}%
          ~ 
          \begin{subfigure}[b]{0.5\textwidth}
          	\includegraphics[width=1.0\textwidth]{kd_grid_il_m\themodel_c2_n500.pdf}
          	\caption{$x=-1$}
          \end{subfigure}%
          \begin{subfigure}[b]{0.5\textwidth}
          	\includegraphics[width=1.0\textwidth]{kd_grid_il_m\themodel_c3_n500.pdf}
          	\caption{$x=0$}
          \end{subfigure}%
          
          \begin{subfigure}[b]{0.5\textwidth}
          	\includegraphics[width=1.0\textwidth]{kd_grid_il_m\themodel_c4_n500.pdf}
          	\caption{$x=1$}
          \end{subfigure}%
          \begin{subfigure}[b]{0.5\textwidth}
          	\includegraphics[width=1.0\textwidth]{kd_grid_il_m\themodel_c5_n500.pdf}
          	\caption{$x=2$}
          \end{subfigure}

\end{figure}
}	

\end{landscape}

\newpage
\begin{figure}[h]
        \centering
        \caption{Empirical Coverage of 95$\%$ Confidence Intervals ($x=0$)}\label{fig:ec3d}
        \begin{subfigure}[b]{0.5\textwidth}
                \includegraphics[width=1.05\textwidth]{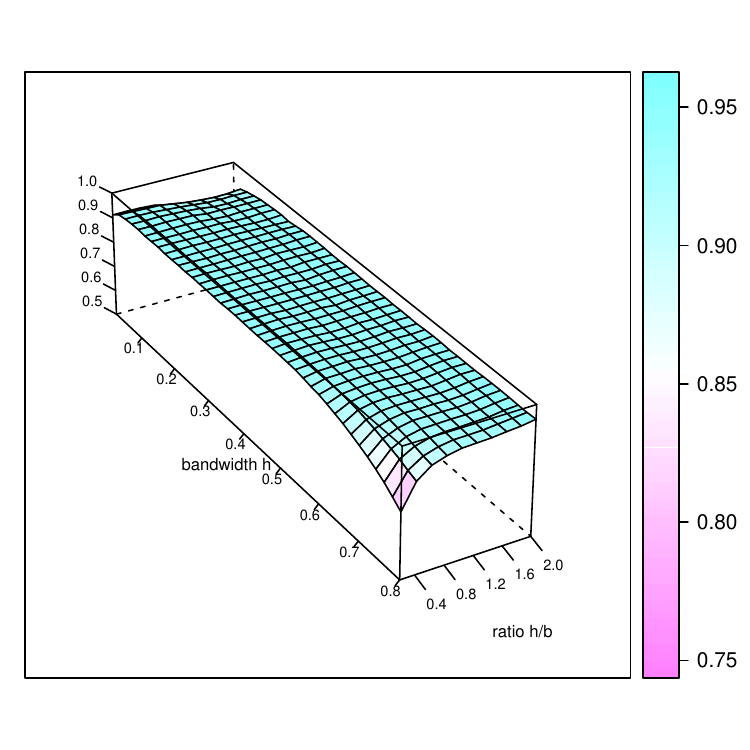}
                \caption{Model 1}
        \end{subfigure}%
        ~ 
        \begin{subfigure}[b]{0.5\textwidth}
                \includegraphics[width=1.05\textwidth]{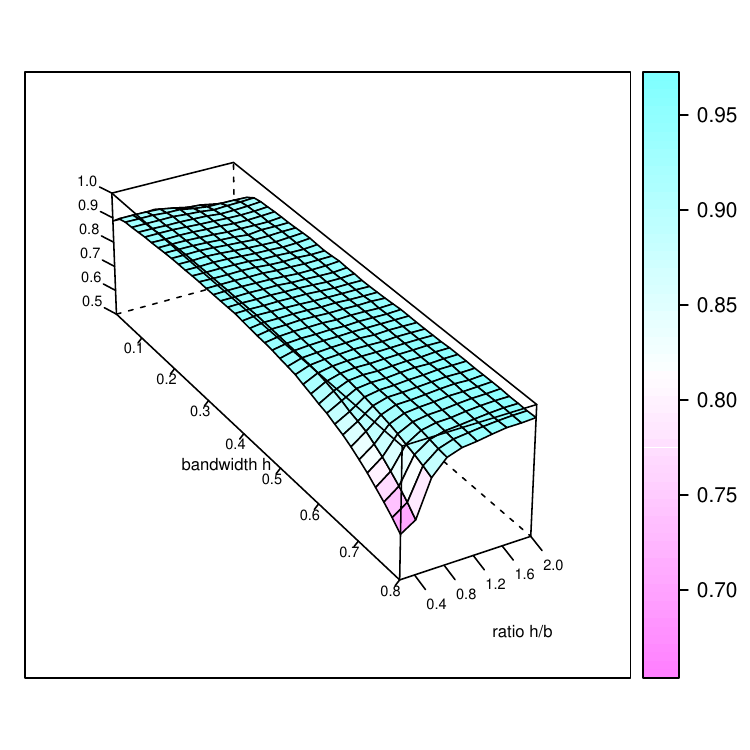}
                \caption{Model 2}
        \end{subfigure}
        
        \begin{subfigure}[b]{0.5\textwidth}
                        \includegraphics[width=1.05\textwidth]{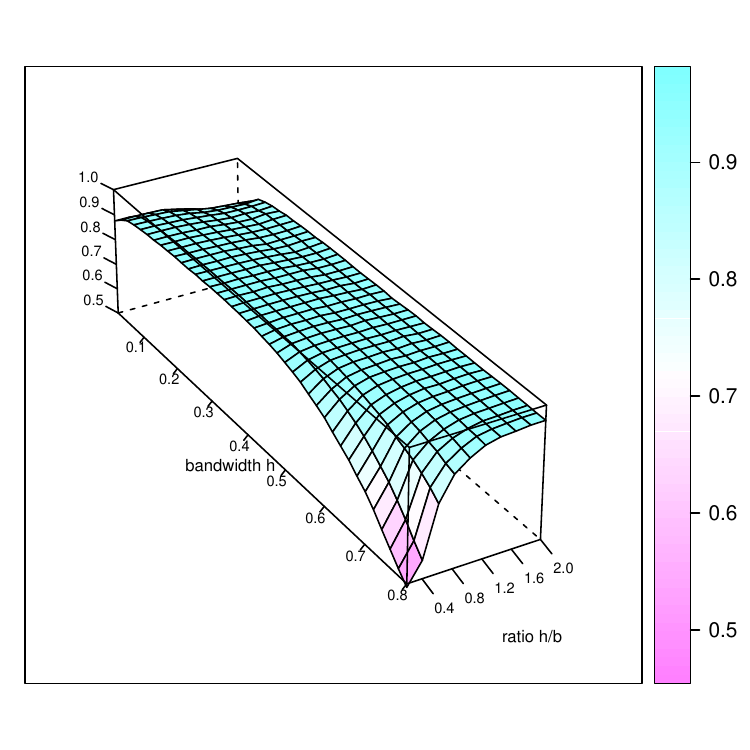}
                        \caption{Model 3}
        \end{subfigure}%
        \begin{subfigure}[b]{0.5\textwidth}
                             \includegraphics[width=1.05\textwidth]{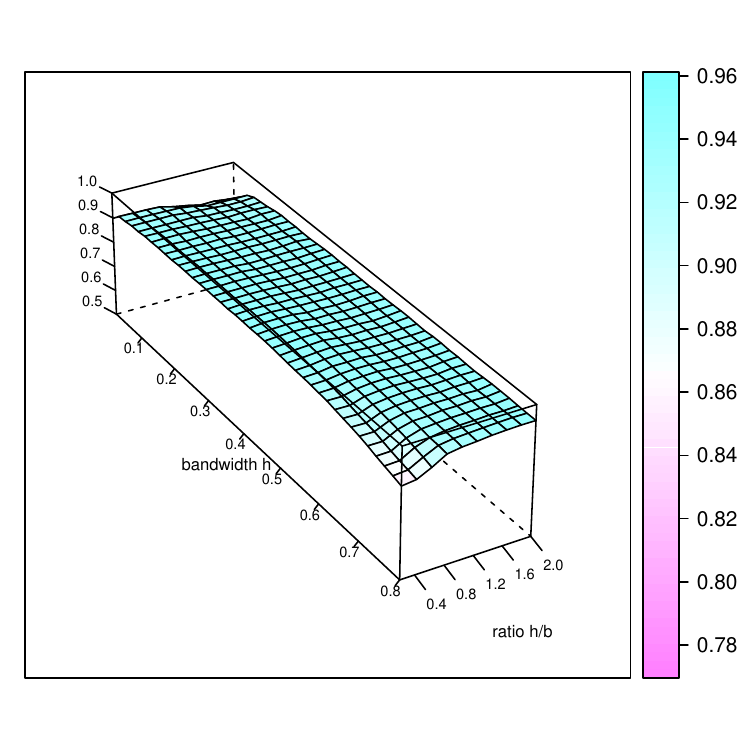}
                             \caption{Model 4}
        \end{subfigure}
\end{figure}

\newpage
\begin{figure}[h]
        \centering
        \caption{Average Interval Length of 95$\%$ Confidence Intervals ($x=0$)}\label{fig:il3d}
        \begin{subfigure}[b]{0.5\textwidth}
                \includegraphics[width=1.05\textwidth]{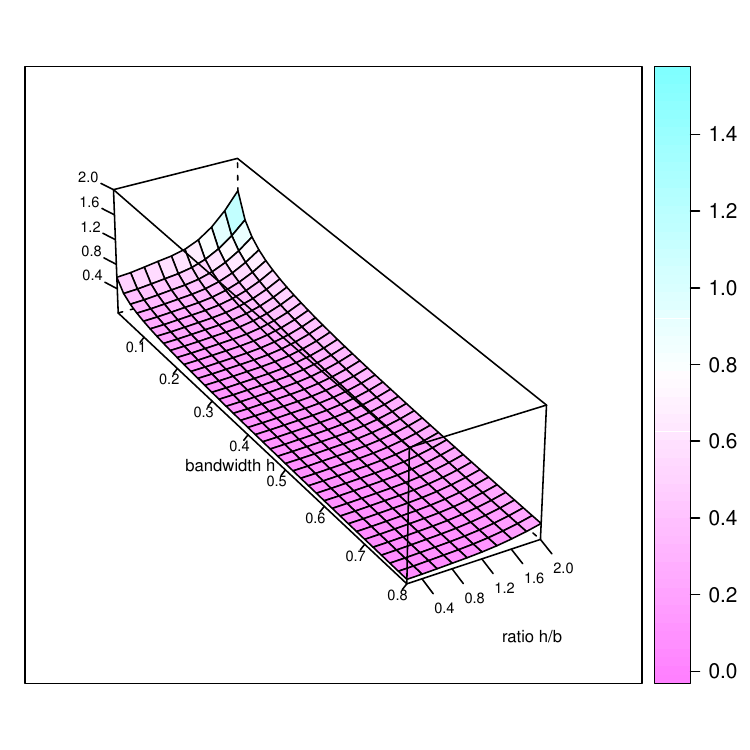}
                \caption{Model 1}
        \end{subfigure}%
        ~ 
        \begin{subfigure}[b]{0.5\textwidth}
                \includegraphics[width=1.05\textwidth]{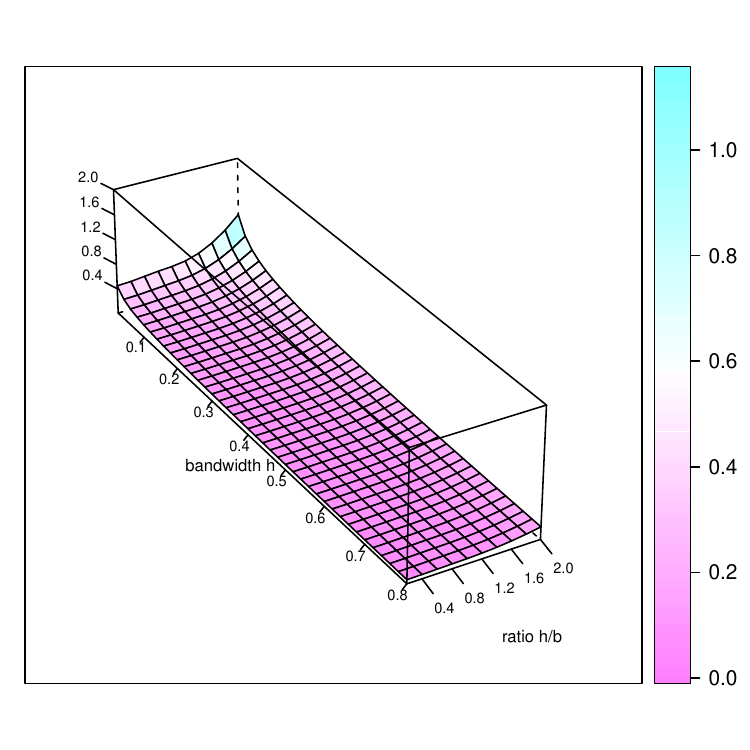}
                \caption{Model 2}
        \end{subfigure}
        
        \begin{subfigure}[b]{0.5\textwidth}
                        \includegraphics[width=1.05\textwidth]{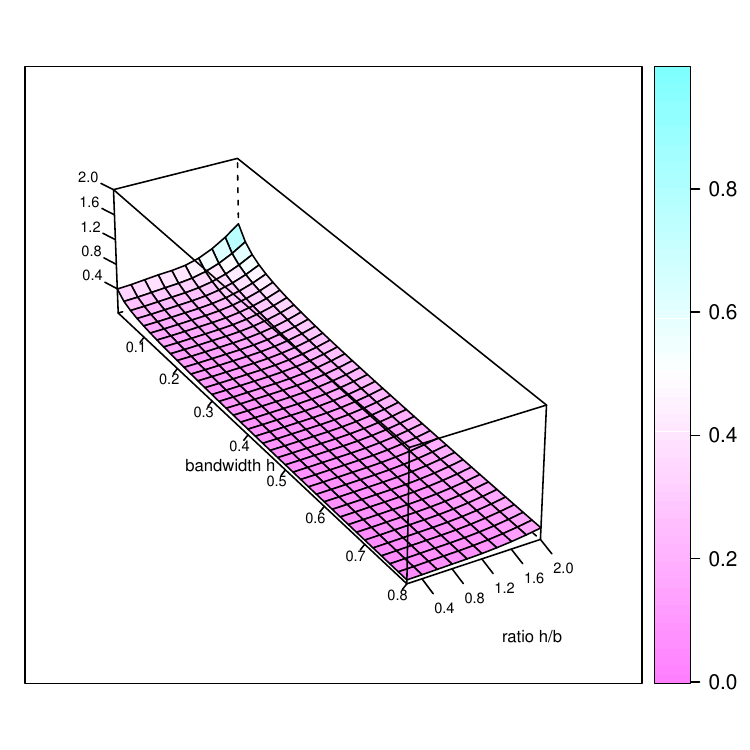}
                        \caption{Model 3}
        \end{subfigure}%
        \begin{subfigure}[b]{0.5\textwidth}
                             \includegraphics[width=1.05\textwidth]{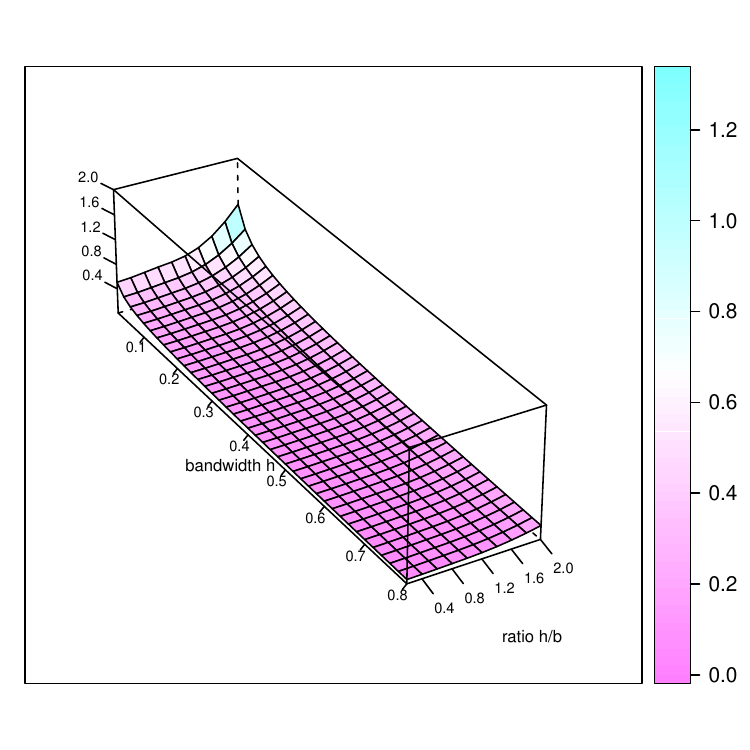}
                             \caption{Model 4}
        \end{subfigure}
\end{figure}

\clearpage
\part{Local Polynomial Estimation and Inference}
	\label{supp:locpoly}

\section{Notation}
	\label{supp:notation locpoly}

Local polynomial regression is notationally demanding, and the Edgeworth expansions will be substantially more so. For ease of reference, we collect all notation here regardless of where it is introduced and used. Much of the notation is fully restated later, when needed. As such, this subsection is designed more for reference, and is not easily readable.

Throughout, a subscript $p$ will generally refer to a quantity used to estimate $m(x)=\E[Y_i|X_i=x]$, while a subscript $q$ will refer to the bias correction portion (the vectors $e_0$ and $e_{p+1}$ below are notable exceptions to this rule). Recall that $p \geq 1$ is odd and $q>p$ may be even or odd.

Throughout this section let $X_{\h,i} = (X_i - x)/\h$ and similarly for $X_{\b,i}$. The evaluation point is implicit here.

To save notation, products of functions will be written together, with only one argument. For example
\[ (K r_p r_p')(X_{\h,i}) \defsym K(X_{\h,i}) r_p(X_{\h,i}) r_p(X_{\h,i})' = K\left(\frac{X_i - x}{\h}\right) r_p \left(\frac{X_i - x}{\h}\right) r_p \left(\frac{X_i - x}{\h}\right)' ,  \]
and similarly for $(K r_p)(X_{\h,i})$, $(L r_q) (X_{\b,i})$, etc.

All expectations are fixed-$n$ calculations. To give concrete examples of this notation ($\L_{p,k}$, $R_p$, and $W_p$ are redefined below):
\[\L_{p,k} = R_p' W_p [ ((X_1 - x)/\h)^{p+k}, \cdots, ((X_n - x)/\h)^{p+k}]'/n = \frac{1}{n\h} \sumi  (K r_p)(X_{\h,i}) X_{\h,i}^{p+k}\]
and 
\[\Lt_{p,k} = \E[\L_{p,k}] = \h^{-1} \E[(K r_p)(X_{\h,j}) X_{\h,i}^{p+k}] = \h^{-1} \int_{\supp\{X\}} K\left(\frac{X_i - x}{\h}\right) r_p \left(\frac{X_i - x}{\h}\right) \left(\frac{X_i - x}{\h}\right)^{p+k} f(X_i) dX_i.\]
Here the range of integration is explicit, but in general it will not be. This is important for boundary issues, where the notation is generally unchanged, and it is to be understood that moments and moments of the kernel be replaced by the appropriate truncated version. Continuing this example, if $\supp\{X\} = [0,\infty)$ and $x = 0$, then by a change of variables
\[\Lt_{p,k} = \h^{-1} \int_{\supp\{X\}}(K r_p)(X_{\h,j}) X_{\h,i}^{p+k} f(X_i) dX_i = \int_0^\infty (K r_p)(u) u^{p+k} f(-u\h)du,\]
whereas if $\supp\{X\} = (-\infty,0]$ and $x = 0$, then 
\[\Lt_{p,k} = \int_{-\infty}^0 (K r_p)(u) u^{p+k} f(-u\h)du.\]
For the remainder of this section, the notation is left generic.

For the proofs (Section \ref{supp:proof locpoly}) we will frequently abbreviate $s = \sqrt{n \h}$.

\subsection{Estimators, Variances, and Studentized Statistics}

To define the estimator $\hat{m}$ of $m$ and the bias correction, begin by defining:
\begin{align}
	\begin{split}
		\label{eqn:notation m hat}
		& r_p(u) = \left(1, u, u^2, \ldots, u^p\right)', 		\quad \qquad 		R_p = \left[ r_p( X_{\h,1}), \cdots, r_p( X_{\h,n} ) \right]', 		\\
		& W_p = \diag\left(\h^{-1} K(X_{\h,i}): i = 1, \ldots, n\right), 		 \qquad  		H_p = \diag\left(1, \h^{-1}, \h^{-2}, \ldots, \h^{-p} \right), 		\\	
		& \Gp = R_p' W_p R_p/n,		\qquad \text{ and } \qquad 		\L_{p,k} = R_p' W_p \left[ X_{\h,1}^{p+k}, \cdots, X_{\h,n}^{p+k} \right]'/n,
	\end{split}
\end{align}
where $\diag(a_i:i = 1, \ldots, n)$ denote the $n\times n$ diagonal matrix constructed using the elements $a_1, a_2, \cdots, a_n$. Note that in the main text $\L_{p,1}$ is denoted by $\L_p$.

Similarly, define
\begin{align}
	\begin{split}
		\label{eqn:notation bias hat}
		& r_q(u) = \left(1, u, u^2, \ldots, u^q \right)', 		\quad \qquad 		R_q = \left[ r_q( X_{\b,1}), \cdots, r_q( X_{\b,n}) \right]', 		\\
		& W_q = \diag\left(\b^{-1} L(X_{\b,i}): i = 1, \ldots, n \right), 		\quad \qquad  		H_q = \diag\left(1, \b^{-1}, \b^{-2}, \ldots, \b^{-q} \right), 		\\	
		& \Gq = R_q' W_q R_q/n,		\qquad \text{ and } \qquad 		\L_{q,k} = R_q' W_q \left[ X_{\b,1}^{q+k}, \cdots, X_{\b,n}^{w+k} \right]'/n,
	\end{split}
\end{align}
These are identical, but substituting $q$, $L$, and $\b$ in place of $p$, $K$, and $\h$, respectively. Note that some dimensions change but other do not: for example, $W_p$ and $W_q$ are both $n \times n$, but $\Gp$ is $(p+1)$ square whereas $\Gq$ is $(q+1)$.

Denote by $e_0$ the $(p+1)$-vector with a one in the first position and zeros in the remaining and $Y = (Y_1, \cdots, Y_n)'$. The local polynomial estimator of $m(x)=\E[Y_i|X_i=x]$ is
\[\hat{m} = e_0' \bhat_p = e_0' H_p \Gp^{-1} R_p' W_p Y / n,\]
where
\[\bhat_p = \argmin_{b \in \mathbb{R}^{p+1}} \frac{1}{n \h} \sumi ( Y_i - r_p(X_i - x)'b)^2  K \left( X_{\h,i} \right) =  H_p \Gp^{-1} R_p' W_p Y / n.  \]
If we define  $\check{R} = \left[ r_p( X_1 - x), \cdots, r_p( X_n - x ) \right]'$ and $M = [m(X_1), \ldots, m(X_n)]'$, then we can split $\hat{m} - m$ into the variance and bias terms
	\[\hat{m} - m = e_0' \Gp^{-1} R_p' W_p (Y-M) / n +  e_0' \Gp^{-1} R_p' W_p (M - \check{R}\beta_p) / n.\]
This will be useful in the course of the proofs.

The conditional bias is given by 
\begin{equation}
	\label{suppeqn:bias locpoly}
	\E[\hat{m} \vert X_1, \ldots, X_n] - m =  \h^{p+1} m^{(p+1)} \frac{1}{(p+1)!} e_0' \Gp^{-1} \L_{p,1} + o_p(\h^{p+1}).
\end{equation}
(Recall that in the main paper, $\L_{p,1}$ is denoted $\L_p$.) This result is valid for $p$ odd, our main focus, but also for $p$ even at boundary points.

Denote by $e_{p+1}$ the $(q+1)$-vector with one in the $p+2$ position, and zeros in the rest. Then we estimate the bias as
	\[\hat{B}_m = \h^{p+1} \hat{m}^{(p+1)} \frac{1}{(p+1)!} e_0' \Gp^{-1} \L_{p,1}, 		\qquad \text{ where } \qquad 		\hat{m}^{(p+1)} = [(p+1)!] e_{p+1}'H_q \Gq^{-1} R_q' W_q Y / n.\]
The bias corrected estimator can then be written
\begin{align*}
	\hat{m} - \hat{B}_m & =  e_0' H_p \Gp^{-1} R_p' W_p Y / n - \h^{p+1}  e_0' \Gp^{-1} \L_{p,1} e_{p+1}'H_q \Gq^{-1} R_q' W_q Y / n  		\\
	& = e_0' \Gp^{-1} \left(  R_p' W_p  - \rho^{p+1}   \L_{p,1} e_{p+1}' \Gq^{-1} R_q' W_q \right) Y / n,
\end{align*}
using the fact that $e_{p+1}'H_q = \b^{p+1} e_{p+1}'$.

The fixed-$n$ variances are
\begin{equation}
	\label{suppeqn:variance locpoly}
	\sus^2 \defsym (n \h) \V[\hat{m} \vert X_1, \cdots, X_n] = e_0' \Gp^{-1} \left( \h R_p' W_p \Sigma W_p R_p / n \right) \Gp^{-1} e_0
\end{equation}
and 
\begin{equation}
	\label{suppeqn:RBC variance locpoly}
	\begin{split}
		\srbc^2  \defsym & \ (n \h) V[\hat{m} - \hat{B}_m \vert X_1, \ldots, X_n]			\\
		 = & \ e_0' \Gp^{-1} \bigl( \h/n \bigr) \left( R_p' W_p  -  \rho^{p+1} \L_{p,1} e_{p+1}' \Gq^{-1} R_q' W_q\right) \Sigma \left( R_p' W_p   -  \rho^{p+1} \L_{p,1} e_{p+1}' \Gq^{-1} R_q' W_q  \right)' \Gp^{-1} e_0,
	\end{split}
\end{equation}
where
\[\Sigma = \diag(v(X_i): i = 1,\ldots, n),  		\qquad \text{ with } \qquad  		 v(x) = \V[Y \vert X = x].\]

These are the closest analogue to the density case, but are still random due to the conditioning on the covariates. Their respective estimators are 
\begin{equation*}
	\label{eqn:variance hat}
	\shatus^2 = e_0' \Gp^{-1} \left( \h R_p' W_p \Shat_p W_p R_p \Gp^{-1} / n \right)  e_0
\end{equation*}
and 
\begin{equation*}
	\label{eqn:RBC variance hat}
	\shatrbc^2 =  \ e_0' \Gp^{-1}  \bigl( \h/n \bigr)  \left( R_p' W_p  -  \rho^{p+1} \L_{p,1} e_{p+1}' \Gq^{-1} R_q' W_q\right) \Shat_q \left( R_p' W_p  -  \rho^{p+1} \L_{p,1} e_{p+1}' \Gq^{-1} R_q' W_q\right)' \Gp^{-1} e_0.
\end{equation*}
The conditional variance matrixes are estimated as 
\[\Shat_p = \diag(\hat{v}(X_i): i = 1,\ldots, n),  		\qquad \text{ with } \qquad  		 \hat{v}(X_i) = ( Y_i - r_p(X_i - x)'\bhat_p )^2,\]
and 
\[\Shat_q = \diag(\hat{v}(X_i): i = 1,\ldots, n),  		\qquad \text{ with } \qquad  		 \hat{v}(X_i) = ( Y_i - r_q(X_i - x)'\bhat_q )^2.\]

The Studentized statistics of interest are then:
	\[\tus = \frac{\sqrt{n\h}( \hat{m} - m)}{\shatus}, 			\qquad		 \tbc =  \frac{\sqrt{n\h}( \hat{m} - \hat{B}_m -  m)}{\shatus},		\qquad		 \trbc = \frac{\sqrt{n\h}( \hat{m} - \hat{B}_m -  m)}{\shatrbc}.\]
The main result of this section is an Edgeworth expansion of the distribution function of these statistics.

\subsection{Edgeworth Expansion Terms}
	\label{supp:terms}

The terms of the Edgeworth expansion require further notation and discussion. The expressions are not nearly as compact as in the density case (cf. Section \ref{supp:Edgeworth density}). 

Define the expectations of $\Gp$, $\Gq$, $\L_{p,k}$, and $\L_{q,k}$ as $\Gpt$, $\Gqt$, $\Lt_{p,k}$, and $\Lt_{q,k}$, such as
	\[\Gpt = \E \left[ \Gp \right] = \E \left[ \h^{-1} (K r_p r_p')(X_{\h,i}) \right]. \]
These will be used to define nonrandom biases and variances that appear in the expansions.

The biases are defined in \Eqref{suppeqn:eta locpoly}, and are given by
\begin{align*}
	\begin{split}
		\eus & = \sqrt{nh} \int e_0' \Gpt^{-1} K(u) r_p(u) \left( m(x - u \h) - r_p(u\h)'\beta_p \right) f(x - u\h)du,		\\
		\ebc & = \sqrt{nh} \int e_0' \Gpt^{-1} K(u) r_p(u) \left( m(x - u \h) - r_{p+1}(u\h)'\beta_{p+1} \right) f(x - u\h)du 		\\		
			& \quad 	-  \sqrt{nh} \rho^{p+1} \int e_0' \Gpt^{-1} \Lt_{p,1} e_{p+1}' \Gqt^{-1} L(u) r_q(u) \left( m(x - u \b) - r_q(u\b)'\beta_q \right) f(x - u\b)du.
	\end{split}
\end{align*}
Further discussion and leading terms are found in Section \ref{supp:bias locpoly}.

The fixed-$n$ variances are computed conditionally, and we must replace them with their nonrandom analogues (just as $\eus$ and $\ebc$ must be nonrandom). Recalling Equations \eqref{suppeqn:variance locpoly} and \eqref{suppeqn:RBC variance locpoly}, define 
\begin{equation*}
	\label{eqn:variance tilde}
	\stus^2 \defsym e_0' \Gpt^{-1} \Psit_p \Gpt^{-1} e_0,
\end{equation*}
where
	\[ \Psit_p = \E \left[  \Psic_p \right]   	\qquad \text{ and } \qquad 		  \Psic_p \defsym \h R_p' W_p \Sigma W_p R_p / n,  \]
and 
\begin{equation*}
	\label{eqn:RBC variance tilde}
		\strbc^2  \defsym  e_0' \Gpt^{-1} \Psit_q \Gpt^{-1} e_0
\end{equation*}
where
	\[ \Psit_q =  \E \left[ \Psic_q  \right]   	\quad \text{and} \quad 		  \Psic_q \defsym \h \left( R_p' W_p  -  \rho^{p+1} \Lt_{p,1} \Gqt^{-1} R_q' W_q\right) \Sigma \left( R_p' W_p  / n  -  \rho^{p+1} \Lt_{p,1} \Gqt^{-1} R_q' W_q  / n \right)' . \]
In the course of the proofs, we will also use $\Psihat_p =  \h R_p' W_p \Shat_p W_p R_p / n$ and the analogously-defined $\Psihat_q$.

We now give the precise forms of the polynomials in the Edgeworth expansion. As with the density, there will be both even and odd polynomials. These are not as compact or simple as the density case. Further, we will not attempt to simplify these functions by making use of limiting versions of moments. For example, we will \emph{not} replace $\Lt_{p,1}$ by $f(x) \int (K r_p)(u) u^{p+1}du$, and similarly for other pieces. The only simplification made will be the use of $q_{k,\US}(z)$ in the expansion for $\tbc$, which otherwise would require further notation than what is below (along the lines of $p_{1,\US}(z)$ below).

First, define the following functions, which depend on $n$, $p$, $q$, $\h$, $\b$, $K$ and $L$, but this is generally suppressed:
\begin{align*}
	\l^0_\US(X_i) & = e_0' \Gpt^{-1} (K r_p)(X_{\h,i}) ; 		\\
	\l^0_\BC(X_i) & = \l^0_\US(X_i)  -   \rho^{p+1} e_0' \Gpt^{-1} \Lt_{p,1} e_{p+1}' \Gqt^{-1}  (L r_q)(X_{\b,i}) ; 		\\
	\l^1_\US(X_i, X_j) & = e_0' \Gpt^{-1} \left( \E[(K r_p r_p')(X_{\h,j})] - (K r_p r_p')(X_{\h,j}) \right) \Gpt^{-1} (K r_p)(X_{\h,i}) ; 		\\
	\l^1_\BC(X_i, X_j) & = \l^1_\US(X_i, X_j)     -    \rho^{p+1}   e_0' \Gpt^{-1} \Bigl\{ \left( \E[(K r_p r_p')(X_{\h,j})] - (K r_p r_p')(X_{\h,j}) \right)  \Gpt^{-1}  \Lt_{p,1} e_{p+1}'   		\\
		& \qquad \qquad \qquad  \qquad    +  \left( (K r_p)(X_{\h,j}) X_{\h,i}^{p+1}   -  \E[(K r_p)(X_{\h,j}) X_{\h,i}^{p+1}]  \right) e_{p+1}'       		\\
		& \qquad \qquad \qquad  \qquad    +  \Lt_{p,1} e_{p+1}' \Gqt^{-1}  \left( \E[(L r_q r_q')(X_{\b,j})] - (L r_q r_q')(X_{\b,j}) \right)   \Bigr\} \Gqt^{-1}  (L r_q)(X_{\b,i}).
\end{align*}

With this notation, we can write
\begin{align*}
	\stus^2 & = \E[ \h^{-1} \l^0_\US(X)^2 v(X) ]  ,    		\\
	\strbc^2 & = \E[ \h^{-1} \l^0_\BC(X)^2 v(X) ]  ,  		\\
	\eus & = s \E \left[ \h^{-1} \l^0_\US(X_i) [m(X_i) - r_p(X_i - x)' \beta_p] \right],
\end{align*}
and
\begin{multline*}
	\ebc  = s \E \Bigl[ \h^{-1} \l^0_\US(X_i) [m(X_i) - r_{p+1}(X_i - x)' \beta_{p+1}]   		\\		  +   \h^{-1} \left( \l^0_\BC(X_i)  -   \l^0_\US(X_i) \right)  [m(X_i) - r_q(X_i - x)' \beta_q]   \Bigr]. 
\end{multline*} 
We will define the Edgeworth expansion polynomials first for the undersmoothing case. The standard Normal density is $\phi(z)$. First, the even polynomials are
	\[p_{1,\US}(z) =   \phi(z) \stus^{-3} \E \left[ \h^{-1} \l^0_\US(X_i)^3 \e_i^3 \right] \left\{  (2z^2 - 1)/6 \right\} \]
and
	\[p_{3,\US}(z) = -  \phi(z)  \stus^{-1}.\]
The absence of $p^{(2)}(z)$ is noteworthy: there is no version of this term for local polynomial estimation, because $\e_i$ is conditionally mean zero.

Next, the odd polynomials for undersmoothing are defined as follows:
\begin{align*}
	\hspace{-0.25in} q_{1,\US}(z) & =  \phi(z)  \stus^{-6} \E \left[ \h^{-1} \l^0_\US(X_i)^3 \e_i^3 \right]^2 \left\{  z^3/3 + 7 z /4 + \stus^2 z (z^2-3)/4 \right\}   		\\
	& \quad +  \phi(z)  \stus^{-2} \E \left[ \h^{-1} \l^0_\US(X_i) \l^1_\US(X_i, X_i) \e_i^2 \right] \left\{ - z (z^2 - 3) /2 \right\}   		\\
	& \quad +  \phi(z)  \stus^{-4} \E \left[ \h^{-1} \l^0_\US(X_i)^4 (\e_i^4 - v(X_i)^2) \right] \left\{ z(z^2-3)/8 \right\}   		\\
	& \quad -  \phi(z)  \stus^{-2} \E \left[ \h^{-1}\l^0_\US(X_i)^2 r_p(X_{\h,i})'\Gpt^{-1} (K r_p)(X_{\h,i}) \e_i^2 \right] \left\{ z(z^2 - 1)/2 \right\}   		\\
	& \quad -  \phi(z)  \stus^{-4} \E \left[ \h^{-1} \l^0_\US (X_i)^3  r_p(X_{\h,i})'\Gpt^{-1} \e_i^2 \right] \E \left[ \h^{-1} (K r_p)(X_{\h,i}) \l^0_\US (X_i) \e_i^2 \right] \left\{ z(z^2 - 1)  \right\}   		\\
	& \quad +  \phi(z)  \stus^{-2} \E \left[ \h^{-2} \l^0_\US(X_i)^2 (r_p(X_{\h,i})'\Gpt^{-1}(K r_p)(X_{\h,j}) )^2 \e_j^2 \right] \left\{ z(z^2 - 1)/4 \right\}   		\\
	& \quad +   \phi(z)   \stus^{-4} \E \left[ \h^{-3} \l^0_\US (X_j)^2 r_p(X_{\h,j})'\Gpt^{-1}(K r_p)(X_{\h,i}) \l^0_\US(X_i) r_p(X_{\h,j})'\Gpt^{-1}(K r_p)(X_{\h,k}) \l^0_\US(X_k)  \e_i^2 \e_k^2 \right]   		\\
	& \quad \qquad \qquad \qquad \qquad \qquad   \times \;   \left\{  z(z^2 - 1) /2 \right\}   		\\
	& \quad +  \phi(z)  \stus^{-4} \E \left[ \h^{-1} \l^0_\US(X_i)^4 \e_i^4 \right] \left\{ - z (z^2 - 3)/24 \right\}   		\\
	& \quad +  \phi(z)  \stus^{-4} \E \left[ \h^{-1} \left( \l^0_\US(X_i)^2 v(X_i) - \E[\l^0_\US(X_i)^2 v(X_i)] \right) \l^0_\US(X_i)^2 \e_i^2 \right] \left\{ z(z^2 - 1)/4 \right\}   		\\
	& \quad +  \phi(z)  \stus^{-4} \E \left[ \h^{-2} \l^1_\US(X_i, X_j) \l^0_\US(X_i)\l^0_\US(X_j)^2 \e_j^2 v(X_i) \right] \left\{  z (z^2 - 3) \right\}   		\\
	& \quad +  \phi(z)  \stus^{-4} \E \left[ \h^{-2} \l^1_\US(X_i, X_j)  \l^0_\US(X_i) \left( \l^0_\US(X_j)^2 v(X_j) - \E[\l^0_\US(X_j)^2 v(X_j)] \right) \e_i^2 \right] \left\{ - z \right\}   		\\
	& \quad +  \phi(z)  \stus^{-4} \E \left[ \h^{-1}  \left( \l^0_\US(X_i)^2 v(X_i) - \E[\l^0_\US(X_i)^2 v(X_i)] \right)^2 \right] \left\{ - z(z^2 + 1) /8 \right\} ;
\end{align*}
\[q_{2,\US}(z) = -  \phi(z)  \stus^{-2} z / 2 ; \]
\[q_{3,\US}(z) =  \phi(z)  \stus^{-4} \E [ \h^{-1} \l^0_\US(X_i)^3 \e_i^3 ]  ( z^3 / 3 ). \]
For robust bias correction, both the even polynomials, $p_{1,\RBC}(z)$ and $p_{3,\RBC}(z)$, and the odd polynomials, $q_{1,\RBC}(z)$, $q_{2,\RBC}(z)$, and $q_{3,\RBC}(z)$ are defined in the exact same way, but changing the $\stus$ to $\strbc$, $\l^k_\US(\cdot)$ to $\l^k_\BC(\cdot)$, $K$ to $L$, and $p$ to $q$, and so forth. For $q_{1,\US}(z)$ and $q_{1,\RBC}(z)$, the seventh term can be rewritten by rearranging the terms and factoring the expectation, as follows:
\begin{equation}
	\begin{split}
		\label{suppeqn:triple term}
		\E &  \left[ \h^{-3} \l^0_\US (X_j)^2 r_p(X_{\h,j})'\Gpt^{-1}(K r_p)(X_{\h,i}) \l^0_\US(X_i) r_p(X_{\h,j})'\Gpt^{-1}(K r_p)(X_{\h,k}) \l^0_\US(X_k)  \e_i^2 \e_k^2 \right]    			\\
		& \quad    = \E \left[ \h^{-1} \l^0_\US(X_i)  \e_i^2 (K r_p')(X_{\h,i}) \Gpt^{-1}  \right]     			\E \left[ \h^{-1}  \l^0_\US (X_j)^2 r_p(X_{\h,j}) r_p(X_{\h,j})'\Gpt^{-1} \right]      			\\
		& \qquad\qquad\qquad\qquad       \times \;     \E \left[ \h^{-1} (K r_p)(X_{\h,k}) \l^0_\US(X_k)  \e_k^2 \right] 
	\end{split}
\end{equation}

The polynomials defined here are for \emph{distribution function} expansions, and are different from those used for \emph{coverage error}. The polynomials $q_{1,\US}$, $q_{2,\US}$, and $q_{3,\US}$ and $q_{1,\RBC}$, $q_{2,\RBC}$, and $q_{3,\RBC}$, which do \emph{not} have an argument, used for \emph{coverage error} in the main text and in Corollary \ref{appx:US locpoly} below, are defined in terms of those given above, which \emph{do} have an argument. Specifically, the polynomials above should be doubled, divided by the standard Normal density, and evaluated at the Normal quantile $z_{\alpha/2}$, that is,
\[ q_{k,\boldsymbol{\bullet}} \defsym \left. \frac{2}{\phi(z)} q_{k,\boldsymbol{\bullet}}(z)  \right|_{z = z_{\alpha/2} },   		  \qquad\qquad k=1,2,3, \quad \boldsymbol{\bullet} = \US, \RBC  \]

For traditional bias correction, $q_{1,\US}(z)$, $q_{2,\US}(z)$, and $q_{3,\US}(z)$ are used, but such simplification can not be done for $p_{1,\BC}(z)$ and $p_{3,\BC}(z)$, which must be defined as
\begin{align*}
	p_{1,\BC}(z) & =   \phi(z)   \stus^{-3} \Bigl( \E \left[ \h^{-1} \l^0_\US(X_i)^3 \e_i^3 \right] \left\{  - (z^2 - 1)/6 \right\}   	 +     \E \left[ \h^{-1} \l^0_\US(X_i)^2 \l^0_\BC(X_i)  \e_i^3 \right] \left\{  - (z^2 - 3)/4 \right\}  \Bigr)   		\\
	& \quad +  \phi(z)  \stus^2 \strbc^{-5}  \E \left[ \h^{-1} \l^0_\US(X_i)^2 \l^0_\BC(X_i)  \e_i^3 \right] \left\{ 3 (z^2 - 1)/4 \right\}   	
\end{align*}
and
	\[p_{3,\BC}(z) = -  \phi(z)   \stus^{-1}.\]

Lastly, traditional bias correction also exhibits additional terms in the expansion (see discussion in the main text) representing the covariance of $\hat{m}$ and $\hat{B}_m$ (denoted by $\Omega_{1,\BC}$) and the variance of $\hat{B}_m$ ($\Omega_{2,\BC}$). We now state their precise forms. These arise from the mismatch between the variance of the numerator of $\tbc$ and the standardization used, $\sus^2$, but these are random, and so $\Omega_{1\BC}$ and $\Omega_{2,\BC}$ must be derived from the nonrandom versions, $\strbc^2$ and $\stus^2$ (cf. Section \ref{supp:Edgeworth density}; for the same reason $\eus$ and $\ebc$ must be nonrandom). Recalling the definitions above,
\begin{align*}
	\frac{\strbc^2}{\stus^2} & = \frac{\E[ \h^{-1} \l^0_\BC(X)^2 v(X) ]}{\E[ \h^{-1} \l^0_\US(X)^2 v(X) ]}  		\\
	& = \frac{\E[ \h^{-1} \{\l^0_\US(X) + (\l^0_\BC(X) - \l^0_\US(X))\}^2 v(X) ]}{\E[ \h^{-1} \l^0_\US(X)^2 v(X) ]}  		\\
	& = 1 - 2 \stus^{-2} \E[ \h^{-1} \{\l^0_\US(X) (\l^0_\BC(X) - \l^0_\US(X))\} v(X) ]  + \stus^{-2} \E[ \h^{-1} \{(\l^0_\BC(X) - \l^0_\US(X))\}^2 v(X) ]  		\\
	& = 1 - 2 \rho^{1 + (p+1)}\stus^{-2} \E[ \h^{-1} \{\rho^{-p-2} \l^0_\US(X) (\l^0_\BC(X) - \l^0_\US(X))\} v(X) ]   		\\
	& \quad\quad  +   \rho^{1 + 2(p+1)} \stus^{-2} \E[ \b^{-1} \{\rho^{-p-2}(\l^0_\BC(X) - \l^0_\US(X))\}^2 v(X) ]  		\\
\end{align*}
Therefore
\[\Omega_{1,\BC} = -2  \stus^{-2} \E[ \h^{-1} \{\rho^{-p-2} \l^0_\US(X) (\l^0_\BC(X) - \l^0_\US(X))\} v(X) ] \]
and
\[\Omega_{2,\BC} =   \stus^{-2} \E[ \b^{-1} \{\rho^{-p-2}(\l^0_\BC(X) - \l^0_\US(X))\}^2 v(X) ] . \]

\begin{remark}[Simplifications]
It is possible for the above-defined polynomials to simplify in special cases. A leading example is in the homoskedastic Gaussian regression model:
	\[Y_i = m(X_i) + \e_i,   	\qquad \text{ where } \qquad  		\e_i \sim \N(0,v).\]
This model is a common theoretical baseline to study, though over-simplified from an empirical point of view. In this special case, $\E[\e_i^3]=0$ and thus $q_{3,\US}(z) \equiv 0$, entirely removing this term from the Edgeworth expansions. This has little bearing on the conceptual conclusions however, and in particular the comparison of undersmoothing and robust bias correction.
\end{remark}

\section{Details of Practical Implementation}
	\label{supp:practical locpoly}

In the main text we give a direct plug-in (DPI) rule to implement the coverage-error optimal bandwidth. Here we we give complete details for this procedure as well as document a second practical choice, based on a rule-of-thumb (ROT) strategy. Both choices yield the optimal coverage error decay rate at interior and boundary points. 

All our methods are implemented in {\sf R} and {\tt STATA} via the {\tt nprobust} package, available from \url{http://sites.google.com/site/nppackages/nprobust} (see also \url{http://cran.r-project.org/package=nprobust}). See \citet{Calonico-Cattaneo-Farrell2017_nprobust} for a complete description.

As in the density case, the MSE-optimal bandwidth undercovers when used in the undersmoothing confidence interval; that is, Remark \ref{rem:undercover} applies directly. See also \citet{Hall-Horowitz2013_AoS}.

\subsection{Bandwidth Choice: Rule-of-Thumb (ROT)}

As with the density case, a simple rule-of-thumb based on rescaling the MSE-optimal bandwidth is:
\[\hat{h}^{\tt int}_\ROT = \hat{h}^{\tt int}_\MSE \; n^{-(p-1)/((2p+3)(p+4))} \qquad \text{ and } \qquad
  \hat{h}^{\tt bnd}_\ROT = \hat{h}^{\tt bnd}_\MSE \; n^{-p/((2p+3)(p+3))}.\]
where $\hat{h}^{\tt int}_\MSE$ and $\hat{h}^{\tt bnd}_\MSE$ denote readily-available implementations of the MSE-optimal bandwidth for interior and boundary points, respectively. See, e.g., \citet{Fan-Gijbels1996_book}. Again, when $p=1$ in the interior, no scaling is needed ($\hat{h}^{\tt int}_\ROT = \hat{h}^{\tt int}_\MSE$), but for $p>1$ any data-driven MSE-optimal bandwidth should always be shrunk to improve inference at the boundary (i.e., reduce coverage errors of the robust bias-corrected confidence intervals). 

The ROT selector may be especially attractive for simplicity, if estimating the constants described below in the DPI case is prohibitive.

Remark \ref{rem:IMSE} applies to this case as well, though less transparently and without consequences that are as dramatic.

\subsection{Bandwidth Choice: Direct Plug-In (DPI)}
	\label{supp:bandwidth locpoly}

We now detail the required steps to implement the plug-in bandwidth $\hat{\h}_{\PI}$ for interior and boundary points. We always set $K=L$, $\rho=1$, and $q=p+1$. The steps are:
\begin{enumerate}[label=(\arabic*)]

	\item As a pilot bandwidth, use $\hat{h}_\MSE$: any data-driven version of $\h^*_\MSE$.

	\item Using this bandwidth, estimate the regression function $m(X_i)$ as $\hat{m}(X_i ; \hat{h}_\MSE) = r_p(X_i - x )'\bhat_p(\hat{h}_\MSE)$, where $\bhat_p(\hat{h}_\MSE)$ is the local polynomial coefficient estimate of order $p$ exactly as defined in the main text, using the bandwidth $\hat{h}_\MSE$.
	
		Form $\hat{\e}_i = Y_i - \hat{m}(X_i ; \hat{h}_\MSE)$.

	\item Following \citet[\S 4.2]{Fan-Gijbels1996_book} we estimate derivatives $m^{(k)}$ using a global least squares polynomial fit of order $k+2$. That is, estimate $\hat{m}^{(p+3)}(x)$ as
			\[\hat{m}^{(p+3)}(x)  =  \left[\hat{\gamma}\right]_{p+4} (p+3)!   +   \left[\hat{\gamma}\right]_{p+5} (p+4)!  \; x   +    \left[\hat{\gamma}\right]_{p+6} \frac{(p+5)!}{2} \; x^2, \]
		where $\left[\hat{\gamma}\right]_k$ is the $k$-th element of the vector $\hat{\gamma}$ that is estimated as
			\[ \hat{\gamma} = \argmin_{\gamma \in \mathbb{R}^{p+6}}\sumi \left( Y_i -  r_{p+5}(X_i)'\gamma \right)^2. \]
		The estimate for $\hat{m}^{(p+2)}(x)$ is similar, with all indexes incremented down once.

		For interior points, both are needed, while only $\hat{m}^{(p+2)}(x)$ is required for the boundary.

	\item The estimated polynomials $\hat{q}_{k,\RBC}$, $k=1,2,3$ and the bias constants $\ethbc^{\tt int}$ and $\ethbc^{\tt bnd}$ are defined as follows. The polynomials $q_{1,\RBC}$, $q_{2,\RBC}$, and $q_{3,\RBC}$, which do \emph{not} have an argument, are defined in terms of those given in Section \ref{supp:terms}, which \emph{do} have an argument. Specifically, the polynomials in Section \ref{supp:terms} should be doubled, divided by the standard Normal density, and evaluated at the Normal quantile $z_{\alpha/2}$, that is, $q_{k,\RBC} =  \phi(z_{\alpha/2})^{-1} q_{k,\RBC}(z_{\alpha/2})$. For $q_{1,\RBC}$, the form given in \Eqref{suppeqn:triple term} should be used.

		Note that with the recommended choice of $K=L$, $\rho=1$, and $q=p+1$, the polynomials $\hat{q}_{k,\RBC}$, $k=1,2,3$ can be read off the expressions for the undersmoothing versions, $\hat{q}_{k,\US}$, $k=1,2,3$, with $p$ replaced by $p+1$.

		The bias terms, for the interior and boundary, are given as follows (dropping remainder terms). With $q=p+1$, and hence even, and $\rho=1$, the expressions of Section \ref{supp:bias locpoly} simplify. For the interior: $\ebc^{\tt int}   =   \sqrt{n \h}  \h^{p+3} \etbc^{\tt int}$, with
	\begin{align*}
		\etbc^{\tt int}   &  =   \h^{-1}  \frac{ m^{(p+2)} } { (p+2)! } \left\{e_0' \Gpt^{-1}  \left(  \Lt_{p,2}  -  \Lt_{p,1} e_{p+1}' \Gqt^{-1}\Lt_{q,1} \right) \right\}       		\\
			 		&  \qquad +   \frac{ m^{(p+3)} } { (p+3)! }   \left\{  e_0' \Gpt^{-1}  \left(   \Lt_{p,3}  -  \Lt_{p,1} e_{p+1}' \Gqt^{-1}\Lt_{q,2} \right) \right\}  ;
	\end{align*}
At the boundary: $\ebc^{\tt bnd}   =   \sqrt{n \h}  \h^{p+2} \etbc^{\tt bnd}$, with 
	\[ \etbc^{\tt bnd}   =    \frac{ m^{(p+2)} } { (p+2)! } \; \left\{ e_0'  \Gpt^{-1}  \left( \Lt_{p,2}    -      \Lt_{p,1} e_{p+1}' \Gqt^{-1} \Lt_{q,1} \right)\right\}.\]

	The estimates of these, $\hat{q}_{k,\RBC}$, $k=1,2,3$ and $\ethbc^{\tt int}$ and $\ethbc^{\tt bnd}$, are defined by replacing:
		\begin{enumerate}[label=(\roman*)]
			\item $\h$ with $\hat{h}_\MSE$,
			\item population expectations with sample averages (see note below),
			\item residuals $\e_i$ with $\hat{\e}_i$,
			\item derivatives $m^{(p+2)}$ and $m^{(p+3)}$ with their estimators from above,
			\item limiting matrices $\Gpt$, $\Lt_{p,2}$, etc, with the corresponding sample versions using the bandwidth $\hat{h}_\MSE$, e.g., $\Gpt$ is replaced with $\Gp(\hat{h}_\MSE) = R_p' W_p(\hat{h}_\MSE) R_p / n$, where $W_p(\hat{h}_\MSE) = \diag\left(\hat{h}_\MSE^{-1} K\left( (X_i - x)/\hat{h}_\MSE \right) \right)$. 
		\end{enumerate}

	\item Finally $\hat{h}^{\tt int}_\PI =\hat{H}^{\tt int}_{\PI}(\hat{h}_\MSE) n^{-1/(p+4)}$ and $\hat{h}^{\tt bnd}_\PI =\hat{H}^{\tt bnd}_{\PI}(\hat{h}_\MSE) n^{-1/(p+3)}$, where 
	\[\hat{H}^{\tt int}_{\PI}(\hat{h}_\MSE) = \argmin_H \bigl\vert  H^{-1} \hat{q}_{1,\RBC}   +    H^{1+2(p+3)} (\ethbc^{\tt int})^2 \hat{q}_{2,\RBC}   +    H^{p+3} (\ethbc^{\tt int} ) \hat{q}_{3,\RBC}  \bigr\vert,\]
while at (or near) the boundary the optimal bandwidth is $\h^*_\RBC = H^*_\RBC (\rho) n^{-1/(p+3)}$, where 
	\[\hat{H}^{\tt bnd}_{\PI}(\hat{h}_\MSE) = \argmin_H \bigl\vert  H^{-1} \hat{q}_{1,\RBC}   +    H^{1+2(p+2)} (\ethbc^{\tt bnd})^2 \hat{q}_{2,\RBC}   +    H^{p+2} (\ethbc^{\tt bnd} ) \hat{q}_{3,\RBC} \bigr\vert.\]
		
		These numerical minimizations are easily solved; see note below. Code available from the authors' websites performs all the above steps.

\end{enumerate}

\begin{remark}[Notes on computation] \ 

\begin{itemize}
	\item When numerically solving the above minimization problems, computation will be greatly sped up by squaring the objective function.
	
	\item For step 4 above, in estimating $q_{1,\RBC}$, the form given in \Eqref{suppeqn:triple term} should be used. The original form requires evaluating a triple sum, or third order $U$-statistic, which will be far slower than the right hand side of \Eqref{suppeqn:triple term}.

	\item For step 4(ii) above, in estimating $\hat{q}_{1,\RBC}$, and specifically when replacing population expectations with sample averages, we use the appropriate $U$-statistic forms to reduce bias. There are several terms which are expectations over two or three observations, and for these the second or third order $U$-statistic forms are preferred. For example, when estimating terms such as
	\[ \E \left[ \h^{-2} \l^0_\US(X_i)^2 (r_p(X_{\h,i})'\Gpt^{-1}(K r_p)(X_{\h,j}) )^2 \e_j^2 \right] \]
	we use
	\[ \frac{1}{n (n-1)} \sum_{i=1}^n \sum_{j \neq i} \left[ \hat{h}_\MSE^{-2} \hat{\l}^0_\RBC(X_i)^2 (r_p(X_{\hat{h}_\MSE,i})'\Gp^{-1}(K r_p)(X_{\hat{h}_\MSE,j}) )^2 \hat{\e}_j^2 \right], \]
	where $\hat{\l}^0_\RBC(X_i)$ is made feasible as in step 4(v).
\end{itemize}
\vspace{-2em}  
\end{remark}
\vspace{1em}

\subsection{Alternative Standard Errors}
	\label{supp:standard errors}

As argued in the main text, using variance forms other than \eqref{suppeqn:variance locpoly} and \eqref{suppeqn:RBC variance locpoly} can be detrimental to coverage. Within these forms however, two alternative estimates of $\Sigma$ are natural. First, motivated by the fact that the least-squares residuals are on average too small, the well-known HC$k$ class of heteroskedasticity consistent estimators can be used; see \citet{MacKinnon2013_BookChap} for details and a recent review. In our notation, these are defined as follows. First, $\shatus^2$-HC0 is the estimator above. Then,  for $k=1, 2, 3$, the $\shatus^2$-HC$k$ estimator is obtained by dividing $\hat{\e}_i^2$ by, respectively, $(n-2\tr(Q_p)+\tr(Q_p'Q_p))/n$, $(1-Q_{p,ii})$, and $(1-Q_{p,ii})^2$, where $Q_{p,ii}$ is the $i$-th diagonal element of the projection matrix $Q_p:=R_p '\Gp^{-1} R_p' W_p/n$. The corresponding estimators $\shatrbc^2$-HC$k$ are the same way, with $q$ in place of $p$. As is well-known in the literature, these estimators perform better for small sample sizes, a fact we confirm in our simulation study below.

A second option is to use a nearest-neighbor-based variance estimators with a fixed number of neighbors, following the ideas of \citet{Muller-Stadtmuller1987_AoS,Abadie-Imbens2008_AdES}. To define these, let $J$ be a fixed number and $j(i)$ be the $j$-th closest observation to $X_i$, $j=1, \ldots, J$, and set $ \hat{v}(X_i) = \frac{J}{J+1} ( Y_i  -  \sum_{j=1}^J Y_{j(i)} / J )^2$. This ``estimate'' is unbiased (but inconsistent) for $v(X_i)$.

Both types of residual estimators could be handled in our results. The constants will change, but the rates will not. This is because, in all cases, the errors in estimating $v(X_i)$ are no greater than in the original $\hat{m}(x)$. Inspection of the proof shows that simple modifications allow for the HC$k$ estimators: only the terms of \Eqref{supp:A1 terms} will change, and indeed, we conjecture that the HC$k$ estimators will result in fewer terms and a reduced coverage error. This is consistent with the improved finite-sample behavior of these estimators and the fact that they are asymptotically equivalent. Accommodating the nearest-neighbor estimates require slightly more work and a modified version of Assumption \ref{supp:Cramer locpoly}.

One crucial property of our method, in the context of Edgeworth expansions, is that the bias in estimation of $\Sigma$ is of the same order as the original $\hat{m}(x)$.  Using other methods may result in additional terms, with possibly distinct rates, appearing in the Edgeworth expansions. Some examples that may have this issue are (i) using $\hat{v}(X_i) = ( Y_i - \hat{m}(x) )^2$; (ii) using local or assuming global heteroskedasticity; (iii) using other nonparametric estimators for $v(X_i)$, relying on new tuning parameters.

\section{Assumptions}
	\label{sec:assumptions locpoly}

The following assumptions are sufficient for our results. The first two are copied directly from the main text (see discussion there) and the third is the appropriate Cram\'er's condition.

\begin{assumption}[Data-generating process]
	\label{supp:dgp locpoly} 
	$\{(Y_1, X_1), \ldots, (Y_n, X_n)\}$ is a random sample, where $X_i$ has the absolutely continuous distribution with Lebesgue density $f$, $\E[Y^{8+\delta} \vert X] < \infty$ for some $\delta>0$, and in a neighborhood of $x$, $f$ and $v$ are continuous and bounded away from zero, $m$ is $S > q+2$ times continuously differentiable with bounded derivatives, and $m^{(S)}$ is H\"older continuous with exponent $\varsigma$.
\end{assumption}

\begin{assumption}[Kernels]
	\label{supp:kernel locpoly}
	The kernels $K$ and $L$ are positive, bounded, even functions, and with compact support.
\end{assumption}

\begin{assumption}[Cram\'er's Condition]
	\label{supp:Cramer locpoly} 
	For each $\delta>0$ and all sufficiently small $\h$, the random variables $Z_\US(u)$ and $Z_\RBC(u)$ defined below obey
	\[  \sup_{t \in \mathbb{R}^{\dim\{Z(u)\}}, \| t \| > \delta} \left\vert \int \exp \{ i t'Z(u) \} f(x - u \h) du \right\vert \leq 1 - C(x, \delta) \h,\]
	where $C(x,\delta)>0$ is a fixed constant, $\| t \|^2 = \sum_{d=1}^{\dim\{Z(u)\}} t_d^2$, and $i=\sqrt{-1}$.
\end{assumption}

The random variables of Assumption \ref{supp:Cramer locpoly} are defined follows. For two kernels $K_1$ and $K_2$, two polynomial orders (i.e. positive integers) $p_1$ and $p_2$, a bandwidth $\b$, and a scalar $\rho$, let
\begin{multline*}
	\hspace{-1em} Z_m(u; K_1, p_1, p_2, \b, \rho) : = \Bigl( K_1(u) r_{p_1}(u)'\e , \     K_1(u) r_{p_1}(u)'(m(x - u \b) - r_{p_2}(u\b)'\beta_{p_2}) , \    \vech(K_1(u) r_{p_1}(u) r_{p_1}(u)')' \Bigr)'  . 
\end{multline*}
and
\begin{align*}
	Z_\sigma(u; K_1, K_2, p_1, p_2, \b, \rho) & := \Bigl(  \vech(K_1(u) K_2(u \rho) r_{p_1}(u) r_{p_2}(u \rho)' \e^2)'  , \         		\\
	& \qquad  \vech(K_1(u) K_2(u \rho) r_{p_1}(u) r_{p_2}(u \rho)' v(x - u\b))'  , \         		\\
	& \qquad  \vech(K_1(u) K_2(u \rho) r_{p_1}(u) r_{p_2}(u \rho)' \e(m(x - u \b) - r_{p_2}(u\b)'\beta_{p_2}))'  , \         		\\
	& \qquad  \vech(K_2(u)^2 r_{p_2}(u) r_{p_2}(u)'r_{p_2}(u)' )' , \         		\\
	& \qquad  \vech(K_1(u) K_2(u \rho) r_{p_1}(u) r_{p_2}(u \rho)' r_{p_2}(u)' \e )'  , \         		\\
	& \qquad  \vech(K_1(u) K_2(u \rho) r_{p_1}(u) r_{p_2}(u \rho)' r_{p_2}(u\rho)' \e (m(x - u \b) - r_{p_2}(u\b)'\beta_{p_2}))'  \ \Bigr)'.
\end{align*}
The subscripts are intended to make clear that $Z_m(\cdot)$ collects quantities from the numerator of the Studentized statistic, while $Z_\sigma(\cdot)$ gathers additional variables required for the variance estimation. With this notation, we define 
	\[ Z_\US(u)  = \bigl( Z_m(u; K, p, p, \h, 1)' , \   Z_\sigma(u; K, K, p, p, \h, 1)' \bigr)', \]
	\[ Z_\BC(u)  = \bigl( Z_m(u; K, p, p+1, \h, 1)'   , \  Z_m(u; L, q, q, \b, \rho)'  , \  \vech(K(u) r_p(u) u^{p+1})' , \ Z_\sigma(u; K, K, p, p, \h, 1)'  \bigr)', \]
and
\begin{align*}
	Z_\RBC(u)  &  = \bigl( Z_m(u; K, p, p+1, \h, 1)'   , \  Z_m(u; L, q, q, \b, \rho)'  , \  \vech(K(u) r_p(u) u^{p+1})' , \   		\\
	& \qquad \quad Z_\sigma(u; K, K, p, q, \b, \rho)' , \ Z_\sigma(u; L, L, q, q, \b, 1)' , \ Z_\sigma(u; K, L, p, q, \b, \rho)' \bigr)'.
\end{align*}

\noindent {\bf Discussion.} 
This notation is quite compact, and while it emphasizes the simplicity of Cram\'er's condition and the fact that it puts mild restrictions on the kernels, it does obscure the full notational breadth, particularly for $Z_\RBC$. I is also mostly repetitive: what holds for the kernel $K$ and order $p$ fit must also hold for $L$ and $q$, and for their squares and cross products. To make this clear, we can expand all the $Z_m$ and $Z_\sigma$, to write out the full random variables as
\begin{align*}
	Z_\US(u) & = \Bigl( K(u) r_p(u)'\e , \     K(u) r_p(u)'(m(x - u \h) - r_p(u\h)'\beta_p) , \    \vech(K(u) r_p(u) r_p(u)')'  , \    		\\
	& \qquad  \vech(K(u)^2 r_p(u) r_p(u)'\e^2)'  , \    \vech(K(u)^2 r_p(u) r_p(u)'v(x - u\h))'  , \         		\\
	& \qquad  \vech(K(u)^2 r_p(u) r_p(u)'\e(m(x - u \h) - r_p(u\h)'\beta_p))'  , \    \vech(K(u)^2 r_p(u) r_p(u)'r_p(u)' )' , \         		\\
	& \qquad    \vech(K(u)^2 r_p(u) r_p(u)'r_p(u)' \e )' , \  \vech(K(u)^2 r_p(u) r_p(u)'r_p(u)' \e (m(x - u \h) - r_p(u\h)'\beta_p))'   \Bigr)',
\end{align*}
\begin{align*}
	Z_\BC(u) & = \Bigl( K(u) r_p(u)'\e , \    \vech(K(u) r_p(u) r_p(u)')'  , \    		\\
	& \qquad  \vech(K(u)^2 r_p(u) r_p(u)'\e^2)'  , \    \vech(K(u)^2 r_p(u) r_p(u)'v(x - u\h))'  , \         		\\
	& \qquad  \vech(K(u)^2 r_p(u) r_p(u)'\e(m(x - u \h) - r_p(u\h)'\beta_p))'  , \    \vech(K(u)^2 r_p(u) r_p(u)'r_p(u)' )' , \         		\\
	& \qquad    \vech(K(u)^2 r_p(u) r_p(u)'r_p(u)' \e )' , \  \vech(K(u)^2 r_p(u) r_p(u)'r_p(u)' \e (m(x - u \h) - r_p(u\h)'\beta_p))' , \  		\\
	& \qquad     K(u) r_p(u)'(m(x - u \h) - r_{p+1}(u\h)'\beta_{p+1})  , \    L(u\rho) r_q(u\rho)'\e  , \    \vech(L(u\rho) r_q(u\rho) r_q(u\rho)')'  , \    		\\
	& \qquad  \vech(K(u) r_p(u) u^{p+1})' , \     L(u\rho) r_q(u\rho)'(m(x - u \h) - r_q(u\h)'\beta_q)  \Bigr)',
\end{align*}
and
\begin{align*}
	Z_\RBC(u) & = \Big( Z_\BC(u)'  , \  \vech(K(u)^2 r_p(u) r_p(u)'\e^2)'  , \    \vech(K(u)^2 r_p(u) r_p(u)'v(x - u\b))'  , \         		\\
	& \qquad  \vech(K(u)^2 r_p(u) r_p(u)'\e(m(x - u \b) - r_q(u\b)'\beta_q))'  , \    \vech(K(u)^2 r_p(u) r_p(u)'r_q(u\rho)' )' , \         		\\
	& \qquad    \vech(K(u)^2 r_p(u) r_p(u)'r_q(u\rho)' \e )' , \  \vech(K(u)^2 r_p(u) r_p(u)'r_q(u\rho)' \e (m(x - u \b) - r_q(u\b)'\beta_q))'  , \         		\\
	& \qquad  \vech(L(u)^2 r_q(u) r_q(u)'\e^2)'  , \    \vech(L(u)^2 r_q(u) r_q(u)'v(x - u\b))'  , \         		\\
	& \qquad  \vech(L(u)^2 r_q(u) r_q(u)'\e(m(x - u \b) - r_q(u\b)'\beta_q))'  , \    \vech(L(u)^2 r_q(u) r_q(u)'r_q(u)' )' , \         		\\
	& \qquad    \vech(L(u)^2 r_q(u) r_q(u)'r_q(u)' \e )' , \  \vech(L(u)^2 r_q(u) r_q(u)'r_q(u)' \e (m(x - u \b) - r_q(u\b)'\beta_q))'  , \         		\\
	& \qquad  \vech(K(u) L(u \rho) r_p(u) r_q(u \rho)' \e^2)'  , \    \vech(K(u) L(u \rho) r_p(u) r_q(u \rho)' v(x - u\b))'  , \         		\\
	& \qquad  \vech(K(u) L(u \rho) r_p(u) r_q(u \rho)' \e(m(x - u \b) - r_q(u\b)'\beta_q))'  , \    \vech(L(u)^2 r_q(u) r_q(u)'r_q(u)' )' , \         		\\
	& \qquad    \vech(K(u) L(u \rho) r_p(u) r_q(u \rho)' r_q(u)' \e )' , \    \\
	& \qquad   \vech(K(u) L(u \rho) r_p(u) r_q(u \rho)' r_q(u\rho)' \e (m(x - u \b) - r_q(u\b)'\beta_q))'  \ \Bigr)'.
\end{align*}

Finally, the precise random variables $Z_\US(u)$, $Z_\BC(u)$, and $Z_\RBC(u)$ used can be replaced with slightly different constructions without altering the conclusions of Theorem \ref{thm:Edgeworth locpoly}: there are other potential functions $\Ttilde$ that satisfy \Eqref{eqn:delta method 1} in the proof. Such changes necessarily involve asymptotically negligible terms, and do not materially alter the severity of the restrictions imposed.

\begin{remark}[Sufficient Conditions for Cram\'er's Condition]
Assumption \ref{supp:Cramer locpoly} is a high level condition, but one that is fairly mild. It is essentially a continuity requirement, and is discussed at length by (among others) \citet{Bhattacharya-Rao1976_book}, \citet{Bhattacharya-Ghosh1978_AoS}, and \citet{Hall1992_book}. For a recent work in econometrics, the present condition can be compared to that employed by \cite{Kline-Santos2012_JoE} for parametric regression (the role of the covariates is here played by $r_p(X_{\h,i})$): ours is more complex due to the nonparametric smoothing bias and the fact that the expansion is carried out to higher order.

	It is straightforward to provide sufficient conditions for Assumption \ref{supp:Cramer locpoly}, given that Assumptions \ref{supp:dgp locpoly} and \ref{supp:kernel locpoly} hold. In particular, if we additionally assume that $(1, \ \vech(K(u) r_p(u) r_p(u)')')'$ comprises a linearly independent set of functions on $[-1,1]$, then it holds $Z_\US(u)$ has components that are nondegenerate and absolutely continuous, and this will imply that Assumption \ref{supp:Cramer locpoly} holds for $Z_\US(u)$, by arguing as in \citet[Lemma 2.2]{Bhattacharya-Ghosh1978_AoS} and \citet[p. 65]{Hall1992_book}. This is precisely the approach taken by \citet{Chen-Qin2002_SJS}, when studying undersmoothed local linear regression. If the linear independence continues to hold when the set of functions is augmented with $\vech(L(u) r_q(u) r_q(u)')$, then $Z_\BC(u)$ and $Z_\RBC(u)$ satisfy Assumption \ref{supp:Cramer locpoly} as well. 

At heart, these are requirements on the kernel functions, just as in Assumption \ref{supp:Cramer density} in the density case. The uniform kernel is again ruled out. See Section \ref{sec:assumptions density}. Further, note that if these sets of functions are not linearly independent, there will exist a there exists a smaller set of functions which are linearly independent and can replace the original set while leaving the value of the statistic unchanged (see \citet[p. 442]{Bhattacharya-Ghosh1978_AoS}). 
\end{remark}

\section{Bias}
	\label{supp:bias locpoly}

We will not present a detailed discussion of bias issues, along the lines of Section \ref{supp:bias density}, for brevity; we focus only on the case of nonbinding smoothness.

The biases $\eus$ and $\ebc$ are not as conceptually simple as in the density case. The closest parallel to the density case would be (for example) $\eus = \sqrt{nh} (\E[\hat{m}] - m)$, but this can not be used due to the presence of $\Gp^{-1}$ inside the expectation, and the next natural choice, the conditional bias $\sqrt{nh} (\E[\hat{m} \vert X_1, \ldots X_n] - m)$, is still random. Instead, $\eus$ and $\ebc$ are biases computed after replacing $\Gp$, $\Gq$, and $\L_{p,1}$ with their expectations, denoted $\Gpt$, $\Gqt$, and $\Lt_{p,1}$. We thus define
\begin{align}
	\begin{split}
		\label{suppeqn:eta locpoly}
		\eus & = \sqrt{nh} \int e_0' \Gpt^{-1} K(u) r_p(u) \left( m(x - u \h) - r_p(u\h)'\beta_p \right) f(x - u\h)du,		\\
		\ebc & = \sqrt{nh} \int e_0' \Gpt^{-1} K(u) r_p(u) \left( m(x - u \h) - r_{p+1}(u\h)'\beta_{p+1} \right) f(x - u\h)du 		\\		
			& \quad 	-  \sqrt{nh} \rho^{p+1} \int e_0' \Gpt^{-1} \Lt_{p,1} e_{p+1}' \Gqt^{-1} L(u) r_q(u) \left( m(x - u \b) - r_q(u\b)'\beta_q \right) f(x - u\b)du.
	\end{split}
\end{align}

For the generic results of coverage error or the generic Edgeworth expansions of Theorem \ref{thm:Edgeworth locpoly} below, the above definitions of $\eus$ and $\ebc$ are suitable. For the Corollaries detailing specific cases, and to understand the behavior at different points, it is useful to make the leading terms precise, that is, analogues of Equations \eqref{suppeqn:bias density} and \eqref{suppeqn:bias corrected}. We must consider interior and boundary point estimation, and even and odd $q$. We depart slightly from other terms of the expansion in that we do retain only the leading term for some pieces. This is done in order to capture the rate of convergence explicitly and to give practicable results. These results are derived by \citet[Section 3.7]{Fan-Gijbels1996_book} and similar calculations (though our expressions differ slightly as fixed-$n$ expectations are retained as much as possible).

Since $p$ is odd, both at boundary and interior points we have
	\[\eus  = \sqrt{nh} \h^{p+1} \frac{ m^{(p+1)} } { (p+1)! } e_0' \Gpt^{-1} \Lt_{p,1} \left[1 + o(1) \right].\]

Moving to $\ebc$, consider the first term, which in the present notation is: $ \sqrt{n \h} \E[ \h^{-1} \l^0_\US(X) (m(X) - r_{p+1}(X - x)' \beta_{p+1})]$. With $p+1$ even, we find that in the interior the leading terms are
	\[ \sqrt{n \h}  \h^{p+3}  e_0' \Gpt^{-1}\left(  \frac{ m^{(p+2)} } { (p+2)! } \Lt_{p,2}  \h^{-1}  + \frac{ m^{(p+3)} } { (p+3)! } \Lt_{p,3}  \right)  \left[1 + o(1) \right],\]
due to the well-known symmetry properties of local polynomials that result in the cancellation of the leading terms of $\Gpt^{-1}$ and $\Lt_{p,2}$. The rate of $\h^{p+3}$ accounts for this. At the boundary, no such cancellation occurs and we have only 
	\[ \sqrt{n \h}  \h^{p+2}   \frac{ m^{(p+2)} } { (p+2)! } e_0'  \Gpt^{-1}  \Lt_{p,2} \left[1 + o(1) \right].\]
Next, turn to the bias of the bias estimate: 
	\[\sqrt{nh} \rho^{p+1} e_0' \Gpt^{-1} \Lt_{p,1} e_{p+1}' \Gqt^{-1} \int  L(u) r_q(u) \left( m(x - u \b) - r_q(u\b)'\beta_q \right) f(x - u\b)du.\]
If $q$ is odd (so that $q-(p+1)$ is also odd), then at the interior or boundary the leading term will be
	\[\sqrt{nh} \b^{q+1} \rho^{p+1} \frac{ m^{(q+1)} } { (q+1)! } e_0' \Gpt^{-1} \Lt_{p,1} e_{p+1}' \Gqt^{-1} \Lt_{q,1} \left[1 + o(1) \right] \asymp \sqrt{nh}\h^{p+1} \b^{q-p}.\]
The same expression applies at the boundary for $q$ even. However, for the interior, if $q$ is even, which it is in the leading case of $q=p+1$, then we again have cancellation of certain leading terms, resulting in the bias of the bias estimate being
	\[\sqrt{nh} \b^{q+2} \rho^{p+1} e_0' \Gpt^{-1} \Lt_{p,1} e_{p+1}' \Gqt^{-1} \left(  \frac{ m^{(q+1)} } { (q+1)! } \Lt_{q,1} \b^{-1}   + \frac{ m^{(q+2)} } { (q+2)! } \Lt_{q,2}  \right)  \left[1 + o(1) \right]    \asymp \sqrt{nh}\h^{p+1} \b^{q+1-p}.\]
Combining all these results, we find the following. For an interior point $\ebc^{\tt int}   =   \sqrt{n \h}  \h^{p+3} \left[ \etbc^{\tt int}  + o(1)\right]$, where, if $q$ is even 
	\begin{multline*}
		\etbc^{\tt int}   =   e_0' \Gpt^{-1}  \biggl( \frac{ m^{(p+2)} } { (p+2)! } \Lt_{p,2}  \h^{-1}  + \frac{ m^{(p+3)} } { (p+3)! } \Lt_{p,3}  \biggr)      		\\
			 -      \rho^{-2} \b^{q-(p+1)}   e_0' \Gpt^{-1} \Lt_{p,1} e_{p+1}' \Gqt^{-1} \biggl(  \frac{ m^{(q+1)} } { (q+1)! } \Lt_{q,1}  \b^{-1}  + \frac{ m^{(q+2)} } { (q+2)! } \Lt_{q,2}  \biggr) ,
	\end{multline*}
while if $q$ is odd,
	\[\etbc^{\tt int}   =     e_0' \Gpt^{-1}\biggl(   \frac{ m^{(p+2)} } { (p+2)! } \Lt_{p,2}  \h^{-1}  + \frac{ m^{(p+3)} } { (p+3)! } \Lt_{p,3}  \biggr)     -      \rho^{-2} \b^{q-(p+2)}   \frac{ m^{(q+1)} } { (q+1)! } e_0' \Gpt^{-1} \Lt_{p,1} e_{p+1}' \Gqt^{-1} \Lt_{q,1} .\]
At the boundary, for any $q$, $\ebc^{\tt bnd}   =   \sqrt{n \h}  \h^{p+2} \left[ \etbc^{\tt bnd}  + o(1)\right]$, with
	\[ \etbc^{\tt bnd}   =     \frac{ m^{(p+2)} } { (p+2)! } e_0'  \Gpt^{-1}  \Lt_{p,2}    -    \rho^{-1}\b^{q-(p+1)}      \frac{ m^{(q+1)} } { (q+1)! } e_0' \Gpt^{-1} \Lt_{p,1} e_{p+1}' \Gqt^{-1} \Lt_{q,1}  .\]

\section{Main Result: Edgeworth Expansion}
	\label{supp:Edgeworth locpoly}

We now state our generic Edgeworth expansion, from whence the coverage probability expansion results follow immediately. We have opted to state separate results for undersmoothing, bias correction, and robust bias correction, rather than the unified statement of Theorem \ref{thm:Edgeworth density}, for clarity. The unified structure is still present, and will be used in the proof of the result below, but is too cumbersome to use here. The Standard Normal distribution and density functions are $\Phi(z)$ and $\phi(z)$, respectively.

\begin{theorem}
	\label{thm:Edgeworth locpoly}
	Let Assumptions \ref{supp:dgp locpoly}, \ref{supp:kernel locpoly}, and \ref{supp:Cramer locpoly} hold, and assume $n \h/ \log(n) \to \infty$.
	\begin{enumerate}

		\item If $\eus \log(n\h) \to 0$, then for 
			\begin{align*}
				F_\US(z) = \Phi(z)  & +  \frac{1}{\sqrt{n \h}} p_{1,\US}(z)    +    \eus  p_{3,\US}(z)   +   \frac{1}{n \h} q_{1,\US}(z)    +  \eus^2 q_{2,\US}(z)  +    \frac{\eus}{\sqrt{n \h}} q_{3,\US}(z),
			\end{align*}
			we have
			\[\sup_{z\in\mathbb{R}}\left|\P[\tus<z]-F_\US(z)\right| = o\left((n\h)^{-1} + (n\h)^{-1/2}\eus + \eus^2 \right).\]

		\item If $\ebc \log(n\h) \to 0$ and $\rho \to 0$, then for 
			\begin{align*}
				F_\BC(z) = \Phi(z)  & +  \frac{1}{\sqrt{n \h}} p_{1,\BC}(z)    +    \ebc  p_{3,\BC}(z)   +   \frac{1}{n \h} q_{1,\US}(z)    +  \ebc^2 q_{2,\BC}(z)  +    \frac{\ebc}{\sqrt{n \h}} q_{3,\BC}(z)    		\\
				& -  \rho^{p+2} (\Omega_1 + \rho^{p+1} \Omega_2) \frac{  \phi(z) }{2} z ,
			\end{align*}
			we have
			\[\sup_{z\in\mathbb{R}}\left|\P[\tbc<z] - F_\BC(z)\right| = o\left((n\h)^{-1} + (n\h)^{-1/2}\ebc + \ebc^2  +  \rho^{1 + 2(p+1)} \right).\]

		\item If $\ebc \log(n\h) \to 0$ and $\rho \to \bar{\rho} < \infty$, then for 
			\begin{align*}
				F_\RBC(z) = \Phi(z)  & +  \frac{1}{\sqrt{n \h}} p_{1,\RBC}(z)    +    \ebc  p_{3,\RBC}(z)   +   \frac{1}{n \h} q_{1,\RBC}(z)    +  \ebc^2 q_{2,\RBC}(z)  +    \frac{\ebc}{\sqrt{n \h}} q_{3,\RBC}(z),
			\end{align*}
			we have
			\[\sup_{z\in\mathbb{R}}\left|\P[\trbc<z]-F_\RBC(z)\right| = o\left((n\h)^{-1} + (n\h)^{-1/2}\ebc + \ebc^2 \right).\]

	\end{enumerate}

\end{theorem}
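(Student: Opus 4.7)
The plan is to mirror the strategy used for density estimation (Theorem \ref{thm:Edgeworth density} and Section \ref{supp:proof density}), treating the three statistics within a unified framework and reducing each to a smooth function of i.i.d.\ sample averages whose Edgeworth expansion follows from the classical theory of \citet{Bhattacharya-Ghosh1978_AoS}. The extra bookkeeping relative to the density case arises because the random matrices $\Gp^{-1}$, $\Gq^{-1}$, and $\L_{p,1}$ appear \emph{inside} the Studentized statistics, and because the variance estimators $\shatus^2$ and $\shatrbc^2$ involve residuals built from $\bhat_p$ or $\bhat_q$, so the i.i.d.\ vector whose sample mean drives the expansion is the tall vector collected in $Z_{\bullet}(u)$ of Assumption \ref{supp:Cramer locpoly}.

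First, I would write each of $\hat m - m$, $\hat B_m$, $\shatus^2$, and $\shatrbc^2$ as functions of sample averages of this vector, replacing every occurrence of $\Gp^{-1}$, $\Gq^{-1}$, and $\L_{p,1}$ by their population analogues $\Gpt^{-1}$, $\Gqt^{-1}$, $\Lt_{p,1}$ via the Neumann expansion
\[
\Gp^{-1} = \Gpt^{-1} - \Gpt^{-1}(\Gp - \Gpt)\Gpt^{-1} + \Gpt^{-1}(\Gp - \Gpt)\Gpt^{-1}(\Gp - \Gpt)\Gpt^{-1} - \cdots,
\]
truncated at the order required by the remainder rate, together with a Taylor expansion of $(\shat_w^2/\st_w^2)^{-1/2}$ around one exactly as in \eqref{supp:linearization}. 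Grouping terms, each $T_{v,w}$ takes the form $\tilde T_{v,w}(\bar Z) - R_{v,w} + \eta_v/\st_w$, where $\tilde T_{v,w}$ is a smooth function. The functions $\l^0_{\bullet}$ and $\l^1_{\bullet}$ in Section \ref{supp:terms} collect precisely the linear and quadratic terms of this expansion, so the cumulant computations can be organized around them.

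Second, I would invoke the delta-method step of \citet[Chapter 2.7]{Hall1992_book} and \citet[Lemma 5]{Andrews2002_Ecma}: it suffices to show $n\h\cdot\P[|R_{v,w}|>\epsilon(n\h)^{-1}\log(n\h)^{-1}] = o(1)$, which follows from Bernstein's inequality applied to each bounded summand (after the standard truncation of $Y$ at a slowly growing level, using the $(8+\delta)$-th moment condition in Assumption \ref{supp:dgp locpoly}). The Cram\'er condition of Assumption \ref{supp:Cramer locpoly}, stated precisely for the vectors $Z_\US$, $Z_\BC$, $Z_\RBC$, is then used to apply the smooth-function-of-means Edgeworth expansion of \citet{Bhattacharya-Ghosh1978_AoS} to $\tilde T_{v,w}(\bar Z)$. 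The first four cumulants of $\tilde T_{v,w}$ yield the polynomials $p_{1,\bullet}$, $p_{3,\bullet}$, $q_{1,\bullet}$, $q_{2,\bullet}$, $q_{3,\bullet}$ by a calculation of the same structural type as the one summarized after \eqref{supp:taylor} in the density case, using the conditional moment properties $\E[\e_i|X_i]=0$ (which kills the analogue of $p_{2,v,w}$) and the definitions of $\st^2_{\US}$, $\st^2_{\RBC}$ from Section \ref{supp:terms}. For part (b), the extra term $\rho^{p+2}(\Omega_{1,\BC}+\rho^{p+1}\Omega_{2,\BC})\phi(z)z/2$ appears because the ratio $\strbc^2/\stus^2-1$ is \emph{not} absorbed into the denominator of $\tbc$; expanding $(1+x)^{-1/2}$ contributes this odd term, with $\Omega_{1,\BC}$ and $\Omega_{2,\BC}$ identifying respectively the covariance of $\hat m$ with $\hat B_m$ and the variance of $\hat B_m$, exactly as in the density calculation of Section \ref{supp:proof density}. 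Parts (a) and (c) are free of this mismatch because $\shat_w^2$ estimates the variance of the full numerator.

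The main obstacle is the linearization/bookkeeping step, not the classical Edgeworth machinery. In particular, the number of distinct quadratic and cubic moment products that survive to order $o((n\h)^{-1}+(n\h)^{-1/2}\eta_v+\eta_v^2)$ grows rapidly once $\Gp^{-1}\Lt_{p,1}e_{p+1}'\Gqt^{-1}$ couples the $p$-th and $q$-th order fits, and one must carefully verify that all cross terms between the numerator's $\Gp^{-1}$-expansion and the denominator's Taylor expansion either cancel (by the symmetry $\E[(K r_p)(X_{\h,i})\e_i]=0$) or are absorbed in the polynomials $q_{k,\bullet}$ in Section \ref{supp:terms}. Once the linearization is organized through the $\l^0_{\bullet}$ and $\l^1_{\bullet}$ functions and the Bhattacharya--Ghosh expansion is in hand, the boundary-versus-interior and undersmoothing-versus-robust-bias-correction corollaries follow immediately by substituting the bias rates computed in Section \ref{supp:bias locpoly}.
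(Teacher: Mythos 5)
Your proposal follows essentially the same route as the paper's proof: linearize each Studentized statistic into a smooth function of i.i.d.\ sample averages (organized through the $\l^0_{\bullet}$, $\l^1_{\bullet}$ functions), dispose of the remainder via a delta-method step using Bernstein's inequality with truncation of $Y$, invoke the Bhattacharya--Ghosh expansion under the Cram\'er condition, and compute the first four cumulants, with the $\rho^{p+2}(\Omega_{1,\BC}+\rho^{p+1}\Omega_{2,\BC})$ term in part (b) arising from the Studentization mismatch exactly as you describe. The only cosmetic difference is that the paper works with the exact identity $\Gp^{-1}-\Gpt^{-1}=\Gpt^{-1}(\Gpt-\Gp)\Gp^{-1}$ and shifts the bias-squared contributions of the variance estimator ($\tilde A_{1,6}$--$\tilde A_{1,8}$) into the evaluation point $\tilde z$ rather than truncating a Neumann series, but this is the same argument in substance.
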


\subsection{Coverage Error for Undersmoothing}

For undersmoothing estimators, we have the following result, which is valid for both interior and boundary points, with moments appropriately truncated if necessary. This result is the analogue of the robust bias correction corollary in the main text, and follows directly from the generic theorem there or Theorem \ref{thm:Edgeworth locpoly} above. Exponents such as $1 + 2(p+1)$ are intentionally not simplified to ease comparison to other results, particularly the density case.

The polynomials $q_{1,\US}$, $q_{2,\US}$, and $q_{3,\US}$, which do \emph{not} have an argument, are defined in terms of those given in Section \ref{supp:terms} and used in Theorem \ref{thm:Edgeworth locpoly}, which \emph{do} have an argument. Specifically, the polynomials in Section \ref{supp:terms} and Theorem \ref{thm:Edgeworth locpoly} should be doubled, divided by the standard Normal density, and evaluated at the Normal quantile $z_{\alpha/2}$, that is,
\[ q_{k,\US} \defsym \left. \frac{2}{\phi(z)} q_{k,\US}(z)  \right|_{z = z_{\alpha/2} },   		  \qquad\qquad k=1,2,3. \]

\begin{corollary}[Undersmoothing]
	\label{appx:US locpoly}
	Let the conditions of Theorem \ref{thm:Edgeworth locpoly}(a) hold. Then
			\begin{align*}
				\P[m \in \ius]   = 1 - \alpha  &  +   \biggl\{ \frac{1}{n \h} q_{1,\US}  +  n\h^{1 + 2(p+1)} \left( m^{(p+1)}\right)^2 \left(  e_0' \Gpt^{-1} \Lt_{p,1}  / (p+1)! \right)^2 q_{2,\US}     		\\
									& \qquad  +   \h^{p+1} \left( m^{(p+1)}\right) \left(  e_0' \Gpt^{-1} \Lt_{p,1}  / (p+1)! \right) q_{3,\US} \biggr\}   \phi(z_{\frac{\alpha}{2}})\;\{1+o(1)\}.
			\end{align*}
	In particular, if $\h^*_\US = H^*_\US n^{-1/(1+(p+1))}$, then $\P[m \in \ius] = 1 - \alpha + O(n^{-(p+1)/(1+(p+1))})$, where 
	\begin{multline*}
		H^*_\US = \argmin_H \biggl\vert H^{-1}  q_{1,\US}  +  H^{1+2(p+1)} \left( m^{(p+1)}\right)^2 \left(  e_0' \Gpt^{-1} \Lt_{p,1}  / (p+1)! \right)^2 q_{2,\US}     	\\
		    	+   H^{p+1}   \left( m^{(p+1)}\right) \left(  e_0' \Gpt^{-1} \Lt_{p,1}  / (p+1)! \right) q_{3,\US} \biggr\vert.
	\end{multline*}
\end{corollary}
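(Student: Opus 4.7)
The plan is to derive Corollary \ref{appx:US locpoly} directly from the generic Edgeworth expansion in Theorem \ref{thm:Edgeworth locpoly}(a) by (i) converting the expansion of the distribution function into an expansion of the two-sided coverage probability, and (ii) inserting the leading-order expression for $\etus$ from the bias calculation in Section \ref{supp:bias locpoly}.

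First, I would write
\[
\P[m \in \ius] = \P\bigl[ -z_{\alpha/2} < \tus \le z_{\alpha/2}\bigr] = F_\US(z_{\alpha/2}) - F_\US(-z_{\alpha/2}) + o(\cdot),
\]
where the remainder is uniform of the order stated in Theorem \ref{thm:Edgeworth locpoly}(a). Because $\phi(z)$ is even while $z$, $z^3$, $z^5$, etc.\ are odd, the polynomials $p_{1,\US}(\cdot)$ and $p_{3,\US}(\cdot)$ defined in Section \ref{supp:terms} are even functions of $z$ and cancel in the difference, while $q_{1,\US}(\cdot)$, $q_{2,\US}(\cdot)$, $q_{3,\US}(\cdot)$ are odd and contribute twice. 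Dividing by $\phi(z_{\alpha/2})$ and using the convention $q_{k,\US} = 2 q_{k,\US}(z_{\alpha/2})/\phi(z_{\alpha/2})$ from Section \ref{supp:terms}, this yields
\[
\P[m \in \ius] - (1-\alpha) = \biggl\{\frac{1}{n\h}\, q_{1,\US} + \etus^2\, q_{2,\US} + \frac{\etus}{\sqrt{n\h}}\, q_{3,\US}\biggr\}\phi(z_{\alpha/2})\,\{1+o(1)\}.
\]

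Next, I would substitute the leading-order form of $\etus$. By the derivation in Section \ref{supp:bias locpoly}, since $p$ is odd,
\[
\etus = \sqrt{n\h}\,\h^{p+1}\,\frac{m^{(p+1)}}{(p+1)!}\, e_0'\Gpt^{-1}\Lt_{p,1}\,[1+o(1)],
\]
valid in the interior or at the boundary (with moments suitably truncated). Plugging into the three terms and absorbing the $\{1+o(1)\}$ factors into the overall remainder gives the displayed coverage error expansion. The fact that the remainder from the generic Edgeworth expansion is $o((n\h)^{-1} + (n\h)^{-1/2}\etus + \etus^2)$ means the substitution preserves the stated order and nothing sharper is claimed than the three terms already make explicit.

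For the optimal bandwidth, I would treat the three terms as functions of $\h$: they scale as $\h^{-1}$, $\h^{1+2(p+1)}$, and $\h^{p+1}$ (up to $n$-factors absorbed by the $n^{-1/(p+2)}$ rescaling). Writing $\h = H\,n^{-1/(p+2)}$ rescales the overall expression to $n^{-(p+1)/(p+2)}$ multiplied by a function of $H$ alone, which is precisely the $H^*_\US$ objective in the statement. Minimizing the absolute value over $H>0$ yields a finite positive minimizer under the assumed sign and boundedness of $q_{1,\US}$, $q_{2,\US}$, $q_{3,\US}$, and the order of the coverage error follows. The only delicate point, which I expect to be the main obstacle, is verifying that the $o(\cdot)$ remainder inside $\etus$ does not blow up when we raise it to the second power and multiply by $n\h$; this requires checking that the next-order bias contribution (which depends on $m^{(p+2)}$ at the boundary or $m^{(p+3)}$ in the interior) is indeed of smaller order than $\h^{p+1}$ under the smoothness $S>q+2$ of Assumption \ref{supp:dgp locpoly}, so that no additional terms intrude at the rate of the stated expansion.
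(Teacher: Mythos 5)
Your proposal is correct and follows essentially the same route as the paper, which states that the coverage corollaries "follow immediately by evaluating the Edgeworth expansions at the interval endpoints": the even polynomials $p_{1,\US}$, $p_{3,\US}$ cancel in $F_\US(z_{\alpha/2})-F_\US(-z_{\alpha/2})$, the odd $q_{k,\US}(\cdot)$ double (matching the convention $q_{k,\US}=2q_{k,\US}(z_{\alpha/2})/\phi(z_{\alpha/2})$), and the leading term of $\etus$ from Section \ref{supp:bias locpoly} is substituted, with the rescaling $\h=Hn^{-1/(p+2)}$ giving the optimal rate. Your closing concern about the $o(1)$ inside $\etus$ is handled exactly as you suggest, by the smoothness $S>q+2$ guaranteeing the next bias term is of strictly smaller order.
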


\section{Proof of Main Result}
	\label{supp:proof locpoly}

We will first prove Theorem \ref{thm:Edgeworth locpoly}(a), as it is notationally simplest. From a technical and conceptual point of view, proving the remainder of Theorem \ref{thm:Edgeworth locpoly} is identical, simply more involved notationally due to the additional complexity of the bias correction. Outlines of these proofs are found below.

\subsection{Proof of Theorem \ref{thm:Edgeworth locpoly}(a)}

Let $s = \sqrt{n \h}$. 

Throughout this proof, we will generally omit the subscripts $\US$ and $p$ when this causes no confusion. This entire proof focuses on the undersmoothing statistic,  $\tus = \shatus^{-1} s(\hat{m} - m)$, and since bias correction is not involved at all, the associated constructions such as $\Gq$, $W_q$, etc, do not appear, and hence there is no need to carry the additional notation to distinguish $W_p$ from $W_q$, or $\shatus$ from $\shatrbc$, for example, and we will simply write $\G$ for $\Gp$, $W$ for $W_p$, $\shat$ for $\shatus$, etc.

Our goal is to expand $\P[\tus<z]$, where $\tus = \shat^{-1} s(\hat{m} - m)$. The proof proceeds by identifying a smooth function $\Ttilde = \Ttilde(z)$ such that, for the random variable $Z_\US := Z_\US(u)$ that obeys Cram\'er's condition (Assumption \ref{supp:Cramer locpoly}), $\Ttilde(\E[Z_\US]) = 0$ and
\begin{equation}
	\label{eqn:delta method 1}
	\P \bigl[ \tus < z \bigr] = \P \bigl[ \Ttilde(\bar{Z}_\US) < \tilde{z} \bigr] + o(s^{-2} + s^{-1}\eta + \eta^2),
\end{equation}
where $\bar{Z} = \sumi Z_i/n$ and $\tilde{z}$ is a known, nonrandom quantity that depends on the original quantile $z$ and the remainder $\tus - \Ttilde$. An Edgeworth expansion for $\Ttilde$ holds under Assumption \ref{supp:Cramer locpoly}, and a Taylor expansion of this function around $\tilde{z}$ yields the final result. As in the density case, $\tilde{z}$ will capture the bias terms of $\tus$: in that case $\tilde{z} = z - \eta/\st$, but here bias is present in both the numerator and the Studentization.

To begin, define the notation $\check{R} = \left[ r_p( X_1 - x), \cdots, r_p( X_n - x ) \right]'$ and $M = [m(X_1), \ldots, m(X_n)]'$, and use this to split $T$ into variance and bias terms, as follows:
	\[T = \shat^{-1} s e_0' \G^{-1} R'W(Y-M)/n  +   \shat^{-1} s e_0' \G^{-1} R'W(M - \check{R}\beta)/n.\]
We use this decomposition to rewrite $\P[ \tus < z ]$ as
\begin{align}
	\P\left[ \tus < z \right] & = \P\left[ \tus - \st^{-1} \eta < z  - \st^{-1} \eta \right]  		\nonumber \\
	  & = \P\left[ \left\{  \shat^{-1} s e_0' \G^{-1} R'W(Y-M)/n  +   \shat^{-1} s e_0' \G^{-1} R'W(M - \check{R}\beta)/n - \st^{-1} \eta \right\} < z  - \st^{-1} \eta \right]  		\nonumber \\
	\begin{split}
		\label{eqn:Ttilde}
	  & = \P \biggl[ \Bigl\{ \st^{-1} s e_0' \G^{-1} R'W(Y-M)/n   		\\
		& \qquad \quad + \st^{-1} s e_0' \Gt^{-1} R'W(M - \check{R}\beta)/n - \st^{-1} \eta   		\\
		& \qquad \quad + \st^{-1} s e_0' \left( \G^{-1} - \Gt^{-1} \right) R'W(M - \check{R}\beta)/n     		\\
		& \qquad \quad + \left( \shat^{-1} - \st^{-1} \right) s e_0' \G^{-1} R'W(Y-M)/n     		\\
		& \qquad \quad + \left( \shat^{-1} - \st^{-1} \right) s e_0' \G^{-1} R'W(M - \check{R}\beta)/n  \Bigr\}      < z  - \st^{-1} \eta \biggr].
	\end{split}
\end{align}
The first three lines in the last equality obey the desired properties of $\Ttilde$ by the orthogonality of $\e_i$, the definition of $\eus$ in \Eqref{suppeqn:eta locpoly} as $\E\left[ s e_0' \Gt^{-1} R'W(M - \check{R}\beta)/n \right]$, and the fact that $\G^{-1} - \Gt^{-1} = \Gt^{-1} \left( \Gt - \G \right) \G^{-1}$. For the final two (which are $\tus - \st^{-1} s (\hat{m} - m) = \shat^{-1} - \st^{-1} s (\hat{m} - m) $), we must expand the difference $\shat^{-1} - \st^{-1}$. Accounting for the resulting terms will constitute the bulk of the remainder of the proof, as well as complete the construction of $\tilde{z}$ and the remainder terms of \Eqref{eqn:delta method 1}.\footnote{Technically, to obtain a $\Ttilde$ with the desired properties, one need not expand $\shat^{-1} - \st^{-1}$ for the variance term: that is, in \Eqref{eqn:Ttilde}, $\st^{-1} s e_0' \G^{-1} R'W(Y-M)/n$ and $\left( \shat^{-1} - \st^{-1} \right) s e_0' \G^{-1} R'W(Y-M)/n$ may be collapsed. This requires strengthening Cram\'er's condition (see Section \ref{sec:assumptions locpoly}), and since $\shat^{-1} - \st^{-1}$ must be accounted for in the final bias term, $\left( \shat^{-1} - \st^{-1} \right) s e_0' \G^{-1} R'W(M - \check{R}\beta)/n$, there is little reason not to do both terms.}

To begin, with $\st^2 = e_0 ' \Gt^{-1} \Psit \Gt^{-1} e_0$ defined in Section \ref{supp:terms},
\begin{equation*}
	\frac{1}{\shat} = \frac{1}{\st} \left( \frac{\shat^2}{\st^2} \right)^{-1/2} = \frac{1}{\st} \left( 1 + \frac{ \shat^2 - \st^2}{\st^2} \right)^{-1/2},
\end{equation*}
and hence a Taylor expansion gives
	\[\frac{1}{\shat} = \frac{1}{\st} \left[ 1 - \frac{1}{2} \frac{ \shat^2 - \st^2}{\st^2}  + \frac{3}{8} \left( \frac{ \shat^2 - \st^2}{\st^2} \right)^2   - \frac{1}{3!} \frac{15}{8} \left( \frac{ \shat^2 - \st^2}{\st^2} \right)^3 \frac{\st^7}{\smvt^7}     \right],\]
for a point $\smvt^2 \in [\st^2, \shat^2]$, and so
\begin{equation}
	\label{eqn:variance taylor}
	\shat^{-1} - \st^{-1} =  - \frac{1}{2} \frac{ \shat^2 - \st^2}{\st^3}  + \frac{3}{8} \frac{  \left( \shat^2 - \st^2\right)^2}{\st^5}    -  \frac{5}{16}  \frac{ \left( \shat^2 - \st^2 \right)^3 }{\smvt^7}    .
\end{equation}
We thus focus on $\shat^2 - \st^2$. Recall the definition of $\Psic = h R' W \Sigma W R/n$. Then define the two terms $A_1$ and $A_2$ through the following:
\begin{equation}
	\label{eqn:variance terms}
	\shat^2 - \st^2 = e_0' \G^{-1} \left( \Psihat - \Psic \right) \G^{-1} e_0  +  \left( e_0' \G^{-1} \Psic \G^{-1} e_0 -   e_0' \Gt^{-1} \Psit \Gt^{-1} e_0 \right)  =: A_1 + A_2.
\end{equation}

For $A_1$, recall that $\hat{\e}_i = y_i - r_p(X_i - x)'\bhat_p$ and so
\begin{align}
	\Psihat - \Psic & = \frac{1}{n \h} \sumi (K^2 r_p r_p ')(X_{\h,i}) \left\{ \hat{\e}_i^2 - v(X_i) \right\}  		\nonumber \\
	& = \frac{1}{n \h} \sumi (K^2 r_p r_p ')(X_{\h,i}) \left\{ \left(  y_i - r_p(X_i - x)'\bhat_p \right)^2 - v(X_i) \right\}  		\nonumber \\
	& = \frac{1}{n \h} \sumi (K^2 r_p r_p ')(X_{\h,i}) \left\{ \left( \e_i  + [m(X_i) -  r_p(X_i - x)'\beta_p]   +  r_p(X_i - x)'\left[ \beta_p - \bhat_p\right] \right)^2 - v(X_i) \right\}  		\nonumber \\
	& =: A_{1,1}  +  A_{1,2}  +  A_{1,3}  +  A_{1,4}  +  A_{1,5}  +  A_{1,6}  +  A_{1,7}  +  A_{1,8},  		\label{supp:A1 terms}
\end{align}
where
\begin{equation*}
	\label{supp:true residuals}
	A_{1,1}  = \frac{1}{n \h} \sumi (K^2 r_p r_p ')(X_{\h,i}) \left\{  \e_i^2 - v(X_i) \right\},  		 
\end{equation*}
is due to the approximation of the (average over the) conditional variance by the squared residuals (i.e.\ $A_{1,1}$ is the sole remainder that would arise if the true residuals were known and used in place of $\hat{\e}_i^2$), and, using $r_p(X_i - x)' \bhat = r_p(X_i - x)' H_p \G^{-1} R'W Y/n  =  r_p(X_{\h,i})'  \G^{-1} R'W Y/n$, the terms $A_{1,k}$, $k=2, 3, \ldots, 8$ are:
\begin{align*}
	A_{1,2} & = \frac{1}{n \h} \sumi (K^2 r_p r_p ')(X_{\h,i}) \left\{ 2 \e_i [m(X_i) -  r_p(X_i - x)'\beta_p] \right\},  		\\  
	A_{1,3} & = \frac{1}{n \h} \sumi (K^2 r_p r_p ')(X_{\h,i}) \left\{ -2 \e_i r_p(X_{\h,i})' \right\} \G^{-1} R' W(Y - \check{R}\beta)/n,  		\\  
	A_{1,4} & = \frac{1}{n \h} \sumi (K^2 r_p r_p ')(X_{\h,i}) \left\{ -2 [m(X_i) -  r_p(X_i - x)'\beta_p] r_p(X_{\h,i})' \right\} \G^{-1} R' W (Y - M)/n,  		\\  
	A_{1,5} & = \frac{1}{n \h} \sumi (K^2 r_p r_p ' r_p')(X_{\h,i})  \G^{-1} R'W(Y-M)/n \left[ (Y-M)'/n + 2 (M- \check{R}\beta)/n \right] W R \G^{-1} r_p(X_{\h,i}),  		\\  
	A_{1,6} & = \frac{1}{n \h} \sumi (K^2 r_p r_p ')(X_{\h,i})  [m(X_i) -  r_p(X_i - x)'\beta_p]^2 ,  		\\  
	A_{1,7} & = \frac{1}{n \h} \sumi (K^2 r_p r_p ' r_p')(X_{\h,i}) \left\{ -2 [m(X_i) -  r_p(X_i - x)'\beta_p]  \right\} \G^{-1} R' W (M - \check{R} \beta)/n,  		  
\intertext{and}
	A_{1,8} & = \frac{1}{n \h} \sumi (K^2 r_p r_p ' r_p')(X_{\h,i})  \G^{-1} [R'W (M - \check{R}\beta)/n] [(M - \check{R}\beta)'/n W R] \G^{-1} r_p(X_{\h,i}).       		
\end{align*}
With this notation, we can write $A_1 = e_0' \G^{-1} \left( \Psihat - \Psic \right) \G^{-1} e_0 = e_0' \G^{-1} \left( \sum_{k=1}^8 A_{1,k} \right) \G^{-1} e_0$. The terms $A_{1,1}$ to $A_{1,5}$ will be incorporated into $\Ttilde$: notice that these terms obey $A_{1,k} = A_{1,k}(\bar{Z}_\US)$ and $A_{1,k}(\E[Z_\US]) = 0$, and hence these properties will be inherited in the final two lines of \Eqref{eqn:Ttilde}. However, $A_{1,6}$, $A_{1,7}$, and $A_{1,8}$ do not have these properties, and will thus be incorporated into $\tilde{z}$ and the remainder. Details are below.

Turning to $A_2$ in \Eqref{eqn:variance terms}, using the identity $\G^{-1} - \Gt^{-1} = \Gt^{-1} \left( \Gt - \G \right) \G^{-1}$ and that $\G$ and $\Psi$ are symmetric, we find that
\begin{align*}
	A_2 & = e_0' \G^{-1} \Psic \G^{-1} e_0 -   e_0' \Gt^{-1} \Psit \Gt^{-1} e_0  		\nonumber \\
	& = e_0' \G^{-1} \left( \Psic - \Psit \right) \G^{-1} e_0  +   e_0' \left( \G^{-1} - \Gt^{-1} \right) \Psit \G^{-1} e_0  +   e_0' \left( \G^{-1} - \Gt^{-1} \right) \Psit \Gt^{-1} e_0  		\nonumber \\
	& = e_0' \G^{-1} \left( \Psic - \Psit \right) \G^{-1} e_0  -   e_0' \Gt^{-1} \left( \G - \Gt \right) \G^{-1} \Psit \left( \G^{-1} + \Gt^{-1} \right) e_0. 		\label{eqn:A2} 
\end{align*}
All of these terms obey the required properties of $\Ttilde$.

We now collect the terms from expanding $\shat^{-1} - \st^{-1}$ and return to \Eqref{eqn:Ttilde}. Plugging the terms $A_{1,1}$--$A_{1,8}$ and $A_2$ into the Taylor expansion in \Eqref{eqn:variance taylor}, by way of \Eqref{eqn:variance terms}, and collecting terms appropriately (i.e. those that belong in $\Ttilde$ as described above), we have the following, which picks up from \Eqref{eqn:Ttilde} and is a precursor to \Eqref{eqn:delta method 1}:
\begin{equation}
	\label{eqn:delta method 2}
	\P[\tus < z ]  = \P \left[ \Ttilde(\bar{Z}_\US) + U < \tilde{z} \right].
\end{equation}
In this statement, we have made the following constructions:
\begin{align*}
	\Ttilde & =  \st^{-1} s e_0' \G^{-1} R'W(Y-M)/n   		\\
		& \quad  + \st^{-1} s e_0' \Gt^{-1} R'W(M - \check{R}\beta)/n - \st^{-1} \eta   		\\
		& \quad  + \st^{-1} s e_0' \left( \G^{-1} - \Gt^{-1} \right) R'W(M - \check{R}\beta)/n     		\\
		& \quad  + \left\{ - \frac{1}{2 \st^3} \left[ e_0' \G^{-1} \left( \sum\nolimits_{k=1}^5 A_{1,k} \right) \G^{-1} e_0 + A_2  \right] + \frac{3}{8 \st^5} \left[ e_0' \G^{-1} A_{1,1} \G^{-1} e_0 + A_2 \right]^2 \right\}     		\\
		& \qquad \qquad \times \Bigl\{  s e_0' \G^{-1} R'W(Y-M)/n  +  s e_0' \G^{-1} R'W(M - \check{R}\beta)/n \Bigr\},
\end{align*}
\begin{align*}
	U & = \left\{ - \frac{1}{2 \st^3}  e_0' \G^{-1} \left( A_{1,6} + A_{1,7} + A_{1,8} \right) \G^{-1} e_0   + \frac{3}{8 \st^5} \left[ e_0' \G^{-1} \left( \sum\nolimits_{k=2}^8 A_{1,k} \right) \G^{-1} e_0  \right]^2  -  \frac{5}{16}  \frac{ \left( \shat^2 - \st^2 \right)^3 }{\smvt^7} \right\}     		\\
		& \qquad \qquad \times \Bigl\{  s e_0' \G^{-1} R'W(Y-M)/n  +  s e_0' \G^{-1} R'W(M - \check{R}\beta)/n \Bigr\}   		\\
	& \quad - \left\{ - \frac{1}{2 \st^3}  e_0' \Gt^{-1} \left( \tilde{A}_{1,6} + \tilde{A}_{1,7} + \tilde{A}_{1,8} \right) \Gt^{-1} e_0     \right\} \,  \eta,
\end{align*}
and
\begin{align*}
	\tilde{z} & = z  - \left\{\st^{-1}  - \frac{1}{2 \st^3}  e_0' \Gt^{-1} \left( \tilde{A}_{1,6} + \tilde{A}_{1,7} + \tilde{A}_{1,8} \right) \Gt^{-1} e_0     \right\} \,  \eta.
\end{align*}
In $U$ and $\tilde{z}$, each $\tilde{A}_{1,k} $ is $A_{1,k}$ where all elements have been replaced by their respective fixed-$n$ expected values, that is,
	\begin{align*}
		\tilde{A}_{1,6} & = \E[A_{1,6}] =  \E\left[ \h^{-1} (K^2 r_p r_p ')(X_{\h,i})  \left[m(X_i) -  r_p(X_i - x)'\beta_p \right]^2  \right],  		\\
		\tilde{A}_{1,7} & = -2 \E \Bigl[ \h^{-1} (K^2 r_p r_p ' r_p')(X_{\h,i})  \left[m(X_i) -  r_p(X_i - x)'\beta_p \right]   \Bigr]    		\\
		   		& \qquad \qquad \qquad \times \Gt^{-1} \E \Bigl[ \h^{-1} (K r_p)(X_{\h,j}) \left[ m(X_j) -  r_p(X_j - x)'\beta_p \right] \Bigr],
	\end{align*}
and
	\[\tilde{A}_{1,8} = \E\left[ \h^{-1}  (K^2 r_p r_p ' )(X_{\h,i})  \E\left[ \left.  \h^{-1} r_p(X_{\h,i}) '  \Gt^{-1} (K r_p)(X_{\h,j}) \left[m(X_j) -  r_p(X_j - x)'\beta_p \right] \right\vert X_i \right]^2 \right].\]

The next step in the proof is to show that, for $r_* = \max\{s^{-2}, \eta^2, \h^{p+1} \}$ (i.e., the slowest decaying), it holds that 
\begin{equation}
	\label{eqn:delta method 3}
	\frac{1}{r_*} \P [|U| > r_n] \to 0 , \qquad \qquad \text{ for some } r_n = o(r_*).
\end{equation}
This result is established by Lemma \ref{lem:undersmoothing} in Section \ref{supp:lemmas locpoly} below. This, together with \Eqref{eqn:delta method 2}, implies \Eqref{eqn:delta method 1}. 
	
Under Assumption \ref{supp:Cramer locpoly}, an Edgeworth expansion holds for $\Ttilde$ up to $o(s^{-2} + s^{-1}\eta + \eta^2)$. Thus, for a smooth function $G(z)$, we have $\P[\Ttilde < z] = G(z) + o(s^{-2} + s^{-1}\eta + \eta^2)$. Therefore, a Taylor expansion gives
	\[ \P[\Ttilde < \tilde{z}] = G(z) -  G^{(1)}(z) \left\{\st^{-1}  - \frac{1}{2 \st^3}  e_0' \G^{-1} \left( \tilde{A}_{1,6} + \tilde{A}_{1,7} + \tilde{A}_{1,8} \right) \G^{-1} e_0     \right\} + o(s^{-2} + s^{-1}\eta + \eta^2),\]
which together with \Eqref{eqn:delta method 1} establishes the validity of the Edgeworth expansion. The terms of the expansion are computed in Section \ref{supp:moments locpoly} below.\qed

\subsection{Proof of Theorem \ref{thm:Edgeworth locpoly}(b) \& (c)}

To prove parts (b) and (c) of Theorem \ref{thm:Edgeworth locpoly} the same steps are required, and so we will not pursue all the details here. Indeed, the same expansions are performed and the same bounds computed on objects which are conceptually similar, only taking into account the bias correction (in the numerator for (b), and also in the denominator for (c)). The bias correction will result in essentially two changes: first, many more terms like $\G - \Gt$ appear, and second, the bias expressions and rates change. To illustrate, we will list several key points where these changes manifest. This list is not exhaustive, but it will show that the same methods used above still apply.

First, for the numerator of $\tbc$ and $\trbc$, recall that the estimator $\hat{m}$ is
	\[\hat{m}  = \Bigl\{ e_0' \Gp^{-1}  R_p' W_p   \Bigr\}Y / n, \]
while the bias corrected estimator is
	\[\hat{m} - \hat{B}_m  = \Bigl\{ e_0' \Gp^{-1} \left(  R_p' W_p  - \rho^{p+1}   \L_{p,1} e_{p+1}' \Gq^{-1} R_q' W_q \right) \Bigr\}Y / n. \]
Comparing these two expressions, it can be seen that the terms in the proof above that involve $\Gp - \Gpt$ will now additionally involve $\Gq - \Gqt$ and $\L_{p,1} - \Lt_{p,1}$, whereas those that with $e_0' \Gpt^{-1} R_p'W_p$ will now have $e_0' \Gpt^{-1} \left(  R_p' W_p  - \rho^{p+1}   \Lt_{p,1} e_{p+1}' \Gqt^{-1} R_q' W_q \right)$ instead. To give a concrete example, consider the third line of \Eqref{eqn:Ttilde},
	\[\stus^{-1} s e_0' \left( \Gp^{-1} - \Gpt^{-1} \right) R_p'W_p(M - \check{R}_p\beta_p)/n,\]
which becomes a piece of the function $\Ttilde$. For part (b) Theorem \ref{thm:Edgeworth locpoly}, treating $\tbc$, this will become
	\begin{multline*}
		\stus^{-1} s e_0'   \left( \Gp^{-1} - \Gpt^{-1} \right)   R_p'W_p(M - \check{R}_{p+1}\beta_{p+1})/n    		\\		  	- s e_0' \rho^{p+1} \left( \Gp^{-1} \L_{p,1} e_{p+1}' \Gq^{-1} - \Gpt^{-1} \Lt_{p,1} e_{p+1}' \Gqt^{-1} \right) R_q' W_q (M - \check{R}_q \beta_q)/n,
	\end{multline*}
and part (c) will have the same but with $\strbc^{-1}$. Then, since
	\begin{multline*}
		\Gp^{-1} \L_{p,1} e_{p+1}' \Gq^{-1} - \Gpt^{-1} \Lt_{p,1} e_{p+1}' \Gqt^{-1} = \left( \Gp^{-1} - \Gpt^{-1} \right) \L_{p,1} e_{p+1}' \Gq^{-1}      		\\		  	 +  \Gpt^{-1} \left(\L_{p,1} - \Lt_{p,1} \right) e_{p+1}' \Gq^{-1}  +  \Gpt^{-1} \Lt_{p,1} e_{p+1}' \left( \Gq^{-1} - \Gqt^{-1} \right),
	\end{multline*}
this term is handled identically, since the appropriate Cram\'er's condition is assumed.

Consider now the denominator of the Studentized statistics. For part (b), there is no change as $\shatus^2$ is still used, and so the terms involving $A_{1,k}$ and $A_2$ will be identical. However, for $\trbc$, we must account for changes of the above form, but also that the residuals are estimated with the degree $q$ fit: $\hat{\e}_ i = y_i - r_q(X_i - x)'\bhat_q$ instead of degree $p$. With these changes in mind, the analogue of \Eqref{eqn:variance terms} will be
\begin{equation}
	\label{eqn:RBC variance terms}
	\shatrbc^2 - \strbc^2 = e_0' \Gp^{-1} \left( \Psihat_q - \Psic_q \right) \Gp^{-1} e_0  +   \left( e_0' \Gp^{-1} \Psic_q \Gp^{-1} e_0 -   e_0' \Gpt^{-1} \Psit_q \Gpt^{-1} e_0 \right).
\end{equation}
The second term will proceed as above, though $\Psic_p - \Psit_p$ will be replaced by
	\[\Psic_q - \Psit_q =  \frac{1}{n \h} \sumi \left\{\tilde{\ell}^0_\BC(X_i) \tilde{\ell}^0_\BC(X_i)' v(X_i)  - \E \left[\tilde{\ell}^0_\BC(X_i) \tilde{\ell}^0_\BC(X_i)' v(X_i)\right]\right\},  \]
where $\tilde{\ell}^0_\BC(X_i) =  (K r_p)(X_{\h,i}) - \rho^{p+1} \Lt_{p,1} \Gqt^{-1} (L r_p)(\rho X_{\h,i})$ (cf. Section \ref{supp:terms}, the function $\ell^0_\BC$ therein is $\ell^0_\BC(X_i) = e_0'\Gpt^{-1}  \tilde{\ell}^0_\BC(X_i)$). To use similar notation, 
	\[\Psic_p - \Psit_p = \frac{1}{n \h} \sumi \left\{\tilde{\ell}^0_\US(X_i) \tilde{\ell}^0_\US(X_i)' v(X_i)  - \E \left[\tilde{\ell}^0_\US(X_i) \tilde{\ell}^0_\US(X_i)' v(X_i)\right]\right\}.\]
Then, expanding $\tilde{\ell}^0_\BC(X_i)$ shows that $\Psic_q - \Psit_q$ is equal to
\begin{align*}
	&  \left(\Psic_p - \Psit_p\right) +  \rho^{2(p+1)+1}  \Lt_{p,1} \Gqt^{-1} \frac{1}{n \b} \sumi \left\{ (L^2 r_q r_q')(X_{\b,i}) v(X_i) - \E\left[ (L^2 r_q r_q')(X_{\b,i}) v(X_i) \right] \right\}  \Gqt^{-1}  \Lt_{p,1}   		\\
	&  \quad  - \rho^{(p+1) + 1} 2  \frac{1}{n \h} \sumi \left\{ (K r_p) (X_{\h,i}) (L r_q')(\rho X_{\h,i}) v(X_i) - \E\left[ (K r_p) (X_{\h,i}) (L r_q')(\rho X_{\h,i}) v(X_i) \right] \right\} \Gqt^{-1}  \Lt_{p,1},
\end{align*}
and since all these terms still obey the appropriate Cram\'er's condition, the same steps apply. (The extra factor of $\rho$ in $\rho^{2(p+1)+1}$ and $\rho^{(p+1) + 1}$ accounts for the fact that $\shatrbc^2$ is scaled by $(n\h)$ instead of $(n\b)$, but the $W_q$ matrixes contribute a $\b^{-1}$.)

The first term of \Eqref{eqn:RBC variance terms} will also follow by the same method as in the prior proof, but more care must be taken as many more terms will be present because $\Psihat_q - \Psic_q$ consists of the following three terms, representing the variance of $\hat{m}$, the variance of $\hat{B}_m$, and their covariance, respectively:
\begin{align*}
	\hspace{-0.35in} \Psihat_q - \Psic_q &= \h R_p' W_p \left( \Shat_q - \Sigma \right) W_p R_p / n  		\\
	& \quad + \h \rho^{2(p+1)} \L_{p,1} \Gq^{-1} \left( R_q' W_q \Shat_q W_q R_q \right) \Gq^{-1} \L_{p,1}' / n -  \h \rho^{2(p+1)} \Lt_{p,1} \Gqt^{-1} \left( R_q' W_q \Sigma W_q R_q \right) \Gqt^{-1} \Lt_{p,1}' / n   		\\
	& \quad - 2\h \rho^{p+1}  R_p' W_p \left( \Shat_q W_q R_q \Gp^{-1} \L_{p,1}' \G - \Sigma W_q R_q \Gpt^{-1} \Lt_{p,1}' \right)  / n. 
\end{align*}
The first of these three is as in the prior proof, and yields the same $A_{1,1}$--$A_{1,8}$, only with the bias of a $q$-degree fit: $m(X_i) - r_q(X_i - x)'\beta_q$. If we define
	\[\check{\Psic}_q :=  \frac{1}{n\b} \sumi (L^2 r_q r_q')(X_{\b,i}) v(X_i)\]
then the second term of $\Psihat_q - \Psic_q$ is equal to
\begin{align*}
	& \rho^{1 + 2(p+1)} \L_{p,1} \Gq^{-1} \left\{  \frac{1}{n\b} \sumi (L^2 r_q r_q')(X_{\b,i}) \left\{ \hat{\e}_i^2 -  v(X_i) \right\} \right\} \Gq^{-1} \L_{p,1}  		\\
	& \quad + \rho^{1 + 2(p+1)} \left( \L_{p,1} - \Lt_{p,1} \right) \Gq^{-1} \check{\Psic}_q  \Gq^{-1} \L_{p,1}  		\\
	& \quad + \rho^{1 + 2(p+1)} \Lt_{p,1} \left( \Gq^{-1} - \Gqt^{-1} \right) \check{\Psic}_q  \Gq^{-1} \L_{p,1}  		\\
	& \quad + \rho^{1 + 2(p+1)} \Lt_{p,1} \Gqt^{-1} \check{\Psic}_q  \left( \Gq^{-1} - \Gqt^{-1} \right) \L_{p,1}  		\\
	& \quad + \rho^{1 + 2(p+1)} \Lt_{p,1} \Gqt^{-1} \check{\Psic}_q  \Gqt^{-1} \left( \L_{p,1} - \Lt_{p,1} \right).
\end{align*}
The first of these terms will also give rise to versions of $A_{1,1}$--$A_{1,8}$, only with the bias of a $q$-degree fit and changing $K$ to $L$, $p$ to $q$, $\h$ to $\b$, etc, and will thus be treated exactly as above. The rest of these are incorporated into $\Ttilde_\RBC$, similar to how $A_2$ is treated, because Cram\'er's condition is satisfied. The third and final piece of $\Psihat_q - \Psic_q$ is equal to
\begin{align*}
	& -2 \rho^{1 + (p+1)} \left\{ \frac{1}{n\h} \sumi (K r_p)(X_{\h,i}) (L r_q')(X_{\h,i} \rho) \left\{ \hat{\e}_i^2 - v(X_i) \right\} \right\} \Gq^{-1} \L_{p,1}'  		\\
	& \quad - 2 \rho^{1 + (p+1)} \check{\Psic}_q  \left( \Gq^{-1} - \Gqt^{-1} \right) \L_{p,1}'  		\\
	& \quad - 2 \rho^{1 + (p+1)} \check{\Psic}_q  \Gqt^{-1} \left( \L_{p,1} - \Lt_{p,1} \right),
\end{align*}
and thus is entirely analogous, with yet another version of $A_{1,1}$--$A_{1,8}$ defined for the remainder in the first line, and the second two easily incorporated into $\Ttilde_\RBC$.

From these arguments, it is clear that the analogue of Lemma \ref{lem:undersmoothing} will hold for these cases as well: the same fundamental pieces are involved, and thus the same arguments will apply, just as above.

\subsection{Lemmas}
	\label{supp:lemmas locpoly}

Our proof of Theorem \ref{thm:Edgeworth locpoly} relies on the following lemmas. The first gives generic results used to derive rate bounds on the probability of deviations of the necessary terms. Some such results are collected in Lemma \ref{lem:results}. Lemma \ref{lem:undersmoothing} shows how to use the previous results to establish negligibility of the remainder terms required for \Eqref{eqn:delta method 3}. 

As above, we will generally omit the details required for Theorem \ref{thm:Edgeworth locpoly} parts (b) and (c), to save space. These are entirely analogous, as can be seen from the steps in Lemma \ref{lem:results}. Indeed, the first results are stated in terms of the kernel $K$ and bandwidth $\h$, but continue to hold for $L$ and $\b$ under the obvious substitutions and appropriate assumptions.

Throughout proofs $C$ shall be a generic conformable constant that may take different values in different places. If more than one constant is needed, $C_1$, $C_2$, \ldots, will be used.

\begin{lemma}
	\label{lem:first}
	Let the conditions of Theorem \ref{thm:Edgeworth locpoly} hold and let $g(\cdot)$ and $t(\cdot)$ be continuous scalar functions.
	\begin{enumerate}[ref=\ref{lem:first}(\alph{*})]
	
		\item   \label{lem:bounded}  For some $\delta > 0$, 
			\[ s^2 \P\left[ \left\vert s^{-2} \sumi \left\{(Kt)(X_{\h,i}) g(X_i) - \E[(Kt)(X_{\h,i}) g(X_i)] \right\}\right\vert > \delta s^{-1} \log(s)^{1/2} \right] \to 0.\]

		\item  \label{lem:truncation}  For some $\delta > 0$, 
			\[  s^2 \P\left[ \left\vert s^{-1} \sumi \left\{(Kt)(X_{\h,i}) g(X_i) \e_i \right\} \right\vert > \delta \log(s)^{1/2} \right] \to 0.\] 
			The same holds with $\e_i^2 - v(X_i)$ in place of $\e_i$, since it is conditionally mean zero and has more than four moments.

		\item \label{lem:bias 1} For any $\delta >0$, an integer $k$, and any $\gamma>0$, 
\[\frac{1}{\h^{p+1}} \P\left[ \left\vert s^{-2} \sumi (Kt)(X_{\h,i}) g(X_i)  \left[m(X_i) - r_p(X_i - x)'\beta_p\right]^k  \right\vert > \delta \h^{(k-1)(p+1)} \log(s)^\gamma \right] \to 0.\]

		\item  \label{lem:bias 2} For any $\delta >0$ and any $\gamma>0$, 
\[s^2 \P\left[ \left\vert s^{-2} \sumi (Kt)(X_{\h,i}) g(X_i)  \e_i \left[m(X_i) - r_p(X_i - x)'\beta_p\right] \right\vert > \delta \h^{p+1} \log(s)^\gamma \right] \to 0.\]

		\item  \label{lem:bias 3} For any $\delta >0$, an integer $k$, and any $\gamma>0$, 
			\begin{align*}
				& s^2 \P\biggl[ \biggl\vert s^{-2} \sumi  \Bigl\{ (Kt)(X_{\h,i}) g(X_i) (m(X_i) - r_p(X_i - x)'\beta_p)^k     		\\
				& \quad\qquad \qquad - \E\left[(Kt)(X_{\h,i}) g(X_i) (m(X_i) - r_p(X_i - x)'\beta_p)^k\right] \Bigr\}\biggr\vert > \delta \h^{k(p+1)} \log(s)^\gamma \biggr] \to 0.
			\end{align*}
		
	\end{enumerate}

\end{lemma}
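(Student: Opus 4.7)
The proof of all five parts follows a common recipe: identify the pointwise bound $M_n$ and variance bound $V_n$ of each summand, truncate $\e_i$ at a suitable level $T_n$ in parts (b) and (d) to handle its unboundedness, and invoke Bernstein's inequality. Two elementary observations from Assumptions \ref{supp:dgp locpoly} and \ref{supp:kernel locpoly} are used throughout: (i) by the change of variables $u=(X_i-x)/\h$, any moment $\E[|(Kt)(X_{\h,i}) g(X_i)|^j]$ is $O(\h)$, since $X_i$ lies in the effective support of $K$ with probability $O(\h)$; and (ii) by Taylor's theorem (using that $m$ is at least $p+3$ times continuously differentiable with bounded derivatives), $|m(X_i)-r_p(X_i-x)'\beta_p|\leq C\h^{p+1}$ on the support of $K(X_{\h,i})$.

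For part (a), $Z_i = (Kt)(X_{\h,i}) g(X_i)$ satisfies $M_n = C$ and $V_n = C\h$. Bernstein applied to $\sumi(Z_i - \E Z_i)$ at deviation $\delta s \log(s)^{1/2}$ gives a tail bound of order $\exp(-c\delta^2\log(s)\{1+o(1)\}) = s^{-c\delta^2\{1+o(1)\}}$, which beats the $s^2$ prefactor once $\delta$ is large enough. Part (b) adds a truncation at $T_n = s^{1/2}$: the untruncated tail is controlled by Markov's inequality using the $(8+\delta)$-moment of $Y$ in Assumption \ref{supp:dgp locpoly}, while the truncated summands satisfy $M_n = CT_n$ and $V_n = C\h$ (as $v(X_i)$ is bounded), and Bernstein again delivers sub-polynomial decay; the $\e_i^2 - v(X_i)$ variant is identical, with $\e_i^2 - v(X_i)$ playing the role of a conditionally mean-zero variable having more than four moments.

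For the bias-involving terms, observation (ii) contributes a factor of $\h^{k(p+1)}$ to the pointwise bound and $\h^{2k(p+1)+1}$ to the variance bound. Part (c) follows from Markov alone, since $\E[|s^{-2}\sumi Z_i|] = O(\h^{k(p+1)})$ gives $\P[\cdot]\leq C\h^{p+1}/(\delta\log(s)^\gamma)$ and the prefactor is only $\h^{-(p+1)}$. Part (e) is the Bernstein analogue: with $M_n = C\h^{k(p+1)}$ and $V_n = C\h^{2k(p+1)+1}$, deviation $t = \delta s^2 \h^{k(p+1)} \log(s)^\gamma$ produces $t^2/(nV_n + M_n t/3) \asymp s^2\log(s)^\gamma$, so the resulting bound is exponentially small in $s^2\log(s)^\gamma$ and trivially absorbs the $s^2$ prefactor. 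Part (d) combines the two: after truncating $\e_i$ at $T_n = s^{1/2}$, $|Z_i|\leq CT_n\h^{p+1}$ and $\V[Z_i] = O(\h^{2(p+1)+1})$ (using the conditional mean-zero and bounded-variance properties of $\e_i$), and Bernstein at deviation $\delta s^2 \h^{p+1} \log(s)^\gamma$ again gives super-polynomial decay.

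The main obstacle is purely bookkeeping: in each Bernstein application, the powers of $\h$ contributed by the change of variables, by the Taylor remainder raised to the appropriate power, and by the target rate on the right-hand side must align so that the Bernstein exponent dominates the prefactor ($s^2$ or $\h^{-(p+1)}$). The truncation in parts (b) and (d) is routine under the $(8+\delta)$-moment hypothesis provided the bandwidth decays at most polynomially (as maintained throughout), since any choice $T_n = s^a$ with $0<a<1/2$ renders both the truncation remainder and the sub-Gaussian correction $M_n t/3$ subleading relative to $nV_n$ at the deviation scale of interest.
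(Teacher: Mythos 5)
Your proposal is correct and, for parts (a)--(c), follows essentially the same route as the paper: Bernstein's inequality with the pointwise bound $O(1)$ and variance bound $O(\h)$ obtained by change of variables for (a); truncation of $Y_i$ at a polynomial level (the paper uses $r_n=s^{\gamma}$ with $2/\xi\le\gamma<1$, your $s^{1/2}$ is admissible given the $8+\delta$ moments) combined with Bernstein on the truncated part and a second-moment Markov bound on the tail for (b); and a one-line first-moment Markov bound using $\E[\h^{-1}|(Kt)(X_{\h,i})g(X_i)||m(X_i)-r_p(X_i-x)'\beta_p|^k]\asymp\h^{k(p+1)}$ for (c). The only place you diverge is in (d) and (e), where you deploy Bernstein (with an additional truncation of $\e_i$ in (d)); the paper instead dispatches both with a plain second-moment Chebyshev bound, exploiting that the summand in (d) is conditionally mean-zero with $\E[\h^{-1}(\cdot)^2]\asymp\h^{2(p+1)}$, so that $s^2\P[\cdot]\lesssim\log(s)^{-2\gamma}\to0$ without any truncation or exponential inequality. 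Your version buys a much stronger (super-polynomial) tail bound than is needed, at the cost of extra bookkeeping for the truncation remainder; the paper's version is shorter because only a $o(s^{-2})$ rate is required and the second moment already delivers it. Your rate arithmetic in each Bernstein exponent checks out, so both arguments are valid.
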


\begin{proof}[Proof of Lemma \ref{lem:bounded}]
Because the kernel function has compact support and $t$ and $g$ are continuous, we have
	\[\left\vert (Kt)(X_{\h,i}) g(X_i) - \E[(Kt)(X_{\h,i}) g(X_i)] \right\vert < C_1.\]
Further, by a change of variables and using the assumptions on $f$, $g$ and $t$:
	\begin{align*}
		\V[(Kt)(X_{\h,i}) g(X_i)] \leq \E \left[ (Kt)(X_{\h,i})^2 g(X_i)^2 \right]  &  = \int f(X_i) (Kt)(X_{\h,i})^2 g(X_i)^2 dX_i 		\\
			&= \h \int f(x - u \h) g(x - u \h) (Kt)(u)^2 du \leq C_2 \h.
	\end{align*}
Therefore, by Bernstein's inequality
\begin{align*}
	s^2 \P & \left[ \left\vert \frac{1}{s^2} \sumi \left\{(Kt)(X_{\h,i}) g(X_i) - \E[(Kt)(X_{\h,i}) g(X_i)] \right\} \right\vert > \delta s^{-1} \log(s)^{1/2} \right]  		\\
	& \leq 2 s^2 \exp \left\{ - \frac{ (s^4) (\delta s^{-1} \log(s)^{1/2})^2 / 2}{ C_2 s^2 + C_1 s^2 \delta s^{-1} \log(s)^{1/2} / 3} \right\} 		\\
	& = 2 \exp\{2\log(s)\} \exp \left\{ - \frac{ \delta^2 \log(s) / 2}{ C_2 + C_1 \delta s^{-1} \log(s)^{1/2} / 3} \right\} 		\\
	& = 2 \exp \left\{ \log(s) \left[ 2 - \frac{ \delta^2 / 2}{ C_2 + C_1 \delta s^{-1} \log(s)^{1/2} / 3} \right] \right\},
\end{align*}
which vanishes for any $\delta$ large enough, as $s^{-1} \log(s)^{1/2} \to 0$.
\end{proof}

\begin{proof}[Proof of Lemma \ref{lem:truncation}]
For a sequence $r_n \to \infty$ to be given later, define
\[H_i = s^{-1} (Kt)(X_{\h,i}) g(X_i) \left(Y_i \mathbbm{1}\{Y_i \leq r_n\} - \E[Y_i \mathbbm{1}\{Y_i \leq r_n\} \mid X_i]\right)  \]
and
\[T_i = s^{-1}(Kt)(X_{\h,i}) g(X_i) \left(Y_i \mathbbm{1}\{Y_i > r_n\} - \E[Y_i \mathbbm{1}\{Y_i > r_n\} \mid X_i]\right).  \]
By the conditions on $g(\cdot)$ and $t(\cdot)$ and the kernel function, 
	\[\left\vert H_i \right\vert <  C_1 s^{-1} r_n\]
and
\begin{align*}
	\V[H_i] = s^{-2} \V[(Kt)(X_{\h,i}) g(X_i) Y_i \mathbbm{1}\{Y_i \leq r_n\}] & \leq s^{-2} \E\left[(Kt)(X_{\h,i})^2 g(X_i)^2 Y_i^2 \mathbbm{1}\{Y_i \leq r_n\} \right]		\\
	& \leq s^{-2} \E\left[(Kt)(X_{\h,i})^2 g(X_i)^2 Y_i^2 \right]  		\\
	& = s^{-2} \int (Kt)(X_{\h,i})^2 g(X_i)^2 v(X_i) f(X_i) d X_i    		\\
	& = s^{-2} \h \int (Kt)(u)^2 (gvf)(x - u\h) d u   		\\
	& \leq C_2 /n.
\end{align*}
Therefore, by Bernstein's inequality
\begin{align*}
	s^2 \P  \left[ \left\vert  \sumi H_i \right\vert > \delta \log(s)^{1/2} \right]  	& \leq 2 s^2 \exp \left\{ - \frac{ \delta^2 \log(s)/ 2}{ C_2 + C_1 s^{-1} r_n \delta \log(s)^{1/2} / 3} \right\} 		\\
	& \leq 2  \exp\{2\log(s)\}  \exp \left\{ - \frac{ \delta^2 \log(s)/ 2}{ C_2 + C_1 s^{-1} r_n \delta \log(s)^{1/2} / 3} \right\} 		\\
	& \leq 2   \exp \left\{\log(s) \left[ 2  - \frac{ \delta^2 / 2}{ C_2 +  C_1 s^{-1} r_n \delta \log(s)^{1/2} / 3} \right] \right\},
\end{align*}
which vanishes for $\delta$ large enough as long as $s^{-1} r_n \log(s)^{1/2}$ does not diverge.

Next, by Markov's inequality and the moment condition on $Y$ of Assumption \ref{supp:dgp locpoly} 
\begin{align*}
	s^2 \P \left[ \left\vert  \sumi T_i \right\vert > \delta \log(s)^{1/2} \right]  & \leq  s^{2} \frac{1}{\delta^2 \log(s)} \E\left[ \left\vert  \sumi T_i \right\vert^2 \right]		\\
	& \leq  s^{2} \frac{1}{\delta^2 \log(s)} n \E \left[ T_i^2 \right]		\\
	& \leq  s^{2} \frac{1}{\delta^2 \log(s)} n \V \left[ s^{-1} (Kt)(X_{\h,i}) g(X_i) Y_i \mathbbm{1}\{Y_i > r_n\} \right]		\\
	& \leq  s^{2} \frac{1}{\delta^2 \log(s)} n s^{-2} \E \left[  (Kt)(X_{\h,i})^2 g(X_i)^2 Y_i^2 \mathbbm{1}\{Y_i > r_n\} \right]		\\
	& \leq  s^{2} \frac{1}{\delta^2 \log(s)} n s^{-2} \E \left[   (Kt)(X_{\h,i})^2 g(X_i)^2 \vert Y_i \vert^{2+\xi} r_n^{-\eta} \right]		\\
	& \leq  s^{2} \frac{1}{\delta^2 \log(s)} n s^{-2} (C \h r_n^{-\xi}) 		\\
	& \leq  \frac{C}{\delta^2} \frac{s^2}{ \log(s) r_n^{\xi}},
\end{align*}
which vanishes if $s^2 \log(s)^{-1} r_n^{-\xi}\to 0$. 

It thus remains to choose $r_n$ such that $s^{-1} r_n \log(s)^{1/2}$ does not diverge and $s^2 \log(s)^{-1} r_n^{-\xi}\to 0$. This can be accomplished by setting $r_n = s^{\gamma}$ for any $2/\xi \leq \gamma <1$, which is possible as $\xi > 2$. 
\end{proof}

\begin{proof}[Proof of Lemma \ref{lem:bias 1}]
By Markov's inequality
\begin{align*}
	\frac{1}{\h^{p+1}} \P  & \left[ \left\vert s^{-2} \sumi (Kt)(X_{\h,i}) g(X_i)  \left[m(X_i) - r_p(X_i - x)'\beta_p\right]^k  \right\vert > \delta \h^{(k-1)(p+1)} \log(s)^\gamma \right]   		\\
	& \leq \frac{1}{\h^{p+1}} \frac{1}{\delta \h^{(k-1)(p+1)} \log(s)^\gamma} \E\left[ \h^{-1} (Kt)(X_{\h,i}) g(X_i)  \left[m(X_i) - r_p(X_i - x)'\beta_p\right]^k\right]   		\\
	& \leq  \frac{1}{\delta \h^{k(p+1)}\log(s)^\gamma} \h^{k(p+1)}  \E\left[ \h^{-1} (Kt)(X_{\h,i}) g(X_i)  \left[\h^{-p-1}(m(X_i) - r_p(X_i - x)'\beta_p)\right]^k\right]   		\\
	& = O(\log(s)^{-\gamma}) = o(1). 
\end{align*}
This relies on the following calculation, which uses the conditions placed on $m(\cdot)$:
\begin{align*}
	\E & \left[ \h^{-1} \left( (Kt)(X_{\h,i}) g(X_i)  \e_i \right) \left[m(X_i) - r_p(X_i - x)'\beta_p\right]^k \right]   		\\
	& =\h^{-1}  \int (gfv)(X_i) (Kt)(X_{\h,i})   \left[m(X_i) - r_p(X_i - x)'\beta_p\right]^k dX_i 			\\
	& = \h^{-1} \int (gfv)(X_i) (Kt)(X_{\h,i})   \left( \frac{m^{(p+1)}(\bar{x})}{(p+1)!}  (X_i - x)^{p+1}  \right)^k dX_i 		\\
	& = \h^{k(p+1)}\h^{-1}  \int (gfv)(X_i) (Kt)(X_{\h,i})   \left( \frac{m^{(p+1)}(\bar{x})}{(p+1)!}  X_{\h,i}^{p+1}  \right)^k dX_i 		\\
	& = C \h^{k(p+1)} \h^{-1} \int (gfv)(X_i) (Kt)(X_{\h,i})  X_{\h,i}^{k(p+1)} dX_i 		\\
	& = C \h^{k(p+1) } \int (gfv)(x - u\h) (Kt)(u)  u^{k(p+1)} du 		\\
	& \asymp \h^{k(p+1) }. \qedhere
\end{align*}
\end{proof}

\begin{proof}[Proof of Lemma \ref{lem:bias 2}]
By Markov's inequality, since $\e_i$ is conditionally mean zero, we have
\begin{align*}
	s^2 \P & \left[  \left\vert s^{-2} \sumi (Kt)(X_{\h,i}) g(X_i)  \e_i \left[m(X_i) - r_p(X_i - x)'\beta_p\right] \right\vert > \delta \h^{p+1} \log(s)^\gamma \right]   		\\
	& \leq s^2 \frac{1}{\delta \h^{2(p+1)} \log(s)^{2\gamma} } \frac{1}{s^2} \E \left[ \h^{-1} \left( (Kt)(X_{\h,i}) g(X_i)  \e_i \right)^2 \left[m(X_i) - r_p(X_i - x)'\beta_p\right]^2 \right]   		\\
	& \leq \frac{s^2 \h^{2(p+1)} }{\delta s^2 \h^{2(p+1)} \log(s)^\gamma } \E \left[ \h^{-1} \left( (Kt)(X_{\h,i}) g(X_i)  \e_i \right)^2 \left[\h^{-p-1}(m(X_i) - r_p(X_i - x)'\beta_p)\right]^2 \right]  		\\
	& \asymp \log(s)^{-2\gamma} \to 0,
\end{align*}
where we rely on the same argument as above to compute the bias rate.
\end{proof}

\begin{proof}[Proof of Lemma \ref{lem:bias 3}]
Follows from identical steps to \ref{lem:bias 2}.
\end{proof}

To illustrate how the above Lemma is used for the objects under study, we present the following collection of results. This is not meant to be an exhaustive list of all such results needed to prove all parts of Theorem \ref{thm:Edgeworth locpoly}, but any and all omitted terms follow by identical reasoning. 
\begin{lemma}
	\label{lem:results}
	Let the conditions of Theorem \ref{thm:Edgeworth locpoly} hold.
	\begin{enumerate}[ref=\ref{lem:results}(\alph{*})]
		\item \label{lem:gamma} For some $\delta > 0$,  $r_*^{-1} \P[ |\Gp - \Gpt| > s^{-1} \log(s)^{1/2} ] \to 0$. Consequently, there exists a constant $C_\Gamma< \infty$ such that $\P[ \Gp^{-1} > 2C_\Gamma ] = o(s^{-2})$ and so the prior rate result holds for $|\Gp^{-1} - \Gpt^{-1}|$ as well. Finally, these same results hold for $\Gq$ as well.
		\item \label{lem:lambda} For some $\delta > 0$,  $r_*^{-1} \P[ |\L_{p,1} - \Lt_{p,1}| > s^{-1} \log(s)^{1/2} ] \to 0$.
		\item \label{lem:E_1} For some $\delta > 0$, \[s^2\P\left[ \left\vert s^{-1} \sumi \left\{(Kr_p)(X_{\h,i})  \e_i \right\} \right\vert > \delta \log(s)^{1/2} \right] \to 0.\]
		\item \label{lem:B_1} For any $\delta>0$ and $\gamma > 0$, \[\frac{1}{\h^{p+1}} \P \left[ \left\vert s^{-2} \sumi \left\{(Kr_p)(X_{\h,i})  \left[m(X_i) - r_p(X_i - x)'\beta_p\right] \right\} \right\vert > \delta \log(s)^\gamma \right] \to 0.\]
		\item \label{lem:Psi} There is some constant $C_\Psi$ such that $\P[ \Psic_p > 2C_\Psi ] = o(s^{-2})$.
	\end{enumerate}

\end{lemma}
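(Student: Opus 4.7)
All five assertions are direct applications of the generic deviation inequalities in Lemma \ref{lem:first}, combined with standard matrix-perturbation arguments. The common strategy is to rewrite each matrix or vector quantity entry-by-entry in the form $s^{-2}\sumi\{(Kt)(X_{\h,i})g(X_i) - \E[\cdot]\}$ for an appropriate continuous $t$ and $g$, invoke the corresponding part of Lemma \ref{lem:first}, and then pass from entrywise bounds to operator-norm bounds (since all matrices have fixed finite dimension). Note that $r_*^{-1}\le s^2$, so any bound of the form $s^2 \P[\cdot]\to 0$ automatically delivers $r_*^{-1}\P[\cdot]\to 0$.

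For part (a), the $(j,k)$ entry of $\Gp-\Gpt$ equals $s^{-2}\sumi\{(Kt_{jk})(X_{\h,i}) - \E[\cdot]\}$ with $t_{jk}(u)=u^{j+k-2}$ and $g\equiv 1$, so Lemma \ref{lem:bounded} (applied entrywise and then combined via a union bound over the fixed number of entries) gives the first claim. For the invertibility statement, observe that $\Gpt\to f(x)\int K(u)r_p(u)r_p(u)'du$, which is positive definite under Assumptions \ref{supp:dgp locpoly}--\ref{supp:kernel locpoly}, so $\|\Gpt^{-1}\|\le C_1$ for some $C_1<\infty$ and all large $n$. On the event $\{\|\Gp-\Gpt\|\le 1/(2\|\Gpt^{-1}\|)\}$ a Neumann-series argument yields $\|\Gp^{-1}\|\le 2\|\Gpt^{-1}\|=:2C_\G$; the complementary event has probability $o(s^{-2})$ by the preceding display. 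The identity $\Gp^{-1}-\Gpt^{-1}=\Gpt^{-1}(\Gpt-\Gp)\Gp^{-1}$ then transfers the rate to $\Gp^{-1}-\Gpt^{-1}$. The statement for $\Gq$ follows by exactly the same argument with $(K,\h,p)$ replaced by $(L,\b,q)$, using $n\b/\log(n\b)\to\infty$, which is implied by $\rho$ being bounded away from $\infty$.

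Parts (b)--(d) are almost immediate. For (b), the $j$-th entry of $\L_{p,1}-\Lt_{p,1}$ fits the form of Lemma \ref{lem:bounded} with $t_j(u)=u^{j+p}$ and $g\equiv 1$. For (c), the $j$-th entry of the vector is $s^{-1}\sumi(Kt_j)(X_{\h,i})g(X_i)\e_i$ with $t_j(u)=u^{j-1}$, $g\equiv 1$, so Lemma \ref{lem:truncation} applies directly. For (d), each entry matches Lemma \ref{lem:bias 1} with $k=1$, $t_j(u)=u^{j-1}$, and $g\equiv 1$. In each case the finite-dimensionality of the vector/matrix lets us pass from componentwise to norm-level bounds.

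For part (e), write $\Psic_p = s^{-2}\sumi (\tilde K t_{jk})(X_{\h,i})v(X_i)$ entrywise with $\tilde K(u):=K(u)^2$ and $t_{jk}(u)=u^{j+k-2}$. The proof of Lemma \ref{lem:bounded} only uses that the kernel is bounded with compact support and that $g,t$ are continuous, so the same inequality applies with $\tilde K$ in place of $K$; equivalently one may absorb one factor of $K$ into $t$. This yields $\|\Psic_p-\Psit_p\|=O_P(s^{-1}\log(s)^{1/2})$ with exceptional probability $o(s^{-2})$. Since $\Psit_p\to f(x)v(x)\int K(u)^2 r_p(u)r_p(u)'du$, which is bounded, setting $C_\Psi$ larger than this limit plus one gives $\P[\|\Psic_p\|>2C_\Psi]=o(s^{-2})$, completing the proof. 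The main (modest) obstacle throughout is the bookkeeping needed to translate each quantity into the canonical form required by Lemma \ref{lem:first}; no new probabilistic ideas are needed beyond those already used there.
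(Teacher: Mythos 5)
Your proposal is correct and follows essentially the same route as the paper: entrywise reduction of each matrix or vector to the canonical form of Lemma \ref{lem:first}, finite-dimensionality to pass to norm bounds, boundedness of $\Gpt^{-1}$ (resp.\ $\Psit_p$) for large $n$, and the identity $\Gp^{-1}-\Gpt^{-1}=\Gpt^{-1}(\Gpt-\Gp)\Gp^{-1}$. Your Neumann-series step for controlling $\Gp^{-1}$ on the high-probability event is a slightly more explicit rendering of what the paper leaves terse, but it is the same argument in substance.
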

\begin{proof}[Proof of Lemma \ref{lem:gamma}]
A typical element of $\Gp - \Gpt$ is, for some integer $k \leq 2p$, 
	\[\frac{1}{n\h}\sumi \left\{  K(X_{\h,i}) \X_{\h,i}^{k} - \E\left[ K(X_{\h,i}) \X_{\h,i}^{k} \right] \right\}.\]
Therefore, the result follows by applying Lemma \ref{lem:bounded} to each element. Next, note that under the maintained assumptions
\begin{align*}
	\Gpt = \E\left[ \h^{-1} (K r_p r_p')(X_{\h,i}) \right] = \h^{-1} \int (K r_p r_p')(X_{\h,i}) f(X_i) d X_i = \int (K r_p r_p')(u) f(x - u \h) d u
\end{align*}
is bounded away from zero and infinity for $n$ large enough. Therefore, there is a $C_\Gamma< \infty$ such that $| \Gpt^{-1}| < C_\Gamma$ and then
\begin{align*}
	\P\left[ \Gp^{-1} > 2C_\Gamma \right] & = \P\left[ \left( \Gp^{-1} - \Gpt^{-1} \right) + \Gpt^{-1} > 2C_\Gamma \right]   		\\
	& \leq \P\left[ \Gp^{-1} - \Gpt^{-1} >  s^{-1} \log(s)^{1/2} \right] + \P \left[ \Gpt^{-1} > 2 C_\Gamma -  s^{-1} \log(s)^{1/2} \right]  		\\
	& = o(s^{-2}).
\end{align*}
The third result follows from these two and the identity $\Gp^{-1} - \Gpt^{-1} = \Gpt^{-1} (\Gpt - \Gp) \Gp^{-1}$. 

Finally, for $\Gq$, the identical steps apply with $L$, $q$, and $\b$ in place of $K$, $p$, and $\h$.
\end{proof}
\begin{proof}[Proof of Lemma \ref{lem:lambda}]
Follows from identical steps to the previous result.
\end{proof}
\begin{proof}[Proof of Lemma \ref{lem:E_1}]
Follows from identical steps, but using Lemma \ref{lem:truncation} in place of Lemma \ref{lem:bounded}.
\end{proof}
\begin{proof}[Proof of Lemma \ref{lem:B_1}]
Follows from identical steps, but using Lemma \ref{lem:bias 1} in place of Lemma \ref{lem:bounded}.
\end{proof}
\begin{proof}[Proof of Lemma \ref{lem:Psi}]
A typical element of $\Psic_p$ is 
	\[\frac{1}{n\h} \sumi (K^2 r_p r_p') (X_{\h,i}) v(X_i),\]
and hence under the maintained assumptions the result follows just as the comparable result on $\Gp$.
\end{proof}

We next state, without proof, the following fact about the rates appearing in all these Lemmas, which follows from elementary inequalities.
\begin{lemma}
	\label{lem:rates}
	If $r_1 = O(r_1')$ and $r_2 = O(r_2')$, for sequences of positive numbers $r_1$, $r_1'$, $r_2$, and $r_2'$ and if a sequence of nonnegative random variables obeys $(r_1)^{-1} \P [ U_n > r_2 ] \to 0$ it also holds that $ (r_1')^{-1} \P [ U_n > r_2' ] \to 0$.
	
	In particular, since $r_* = \max\{s^{-2}, \eta^2, s^{-1} \eta \}$ is defined as the slowest vanishing of the rates, then $r_1^{-1} \P [|U'| > r_n] = o(1)$ implies $r_*^{-1} \P [|U'| > r_n] = o(1)$, for $r_1$ equal to any of $s^{-2}$, $\eta^2$, or $s^{-1} \eta$. Similarly, $r_n$ may be chosen as any sequence that obeys $r_n = o(r_*)$. Thus, for different pieces of $U$ defined in \Eqref{eqn:delta method 3}, we may make different choices for these two sequences, as convenient.
\end{lemma}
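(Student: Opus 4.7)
The plan is to prove Lemma \ref{lem:rates} by unpacking the two big-$O$ hypotheses into explicit constant multiples and then chaining the resulting estimates with the monotonicity of probability. The statement is elementary; the only care needed is to ensure the inequalities point in the consistent direction.

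First, from $r_1 = O(r_1')$ I extract constants $C_1 > 0$ and $N_1$ with $r_1 \leq C_1 r_1'$ for $n \geq N_1$, which rearranges to $(r_1')^{-1} \leq C_1 (r_1)^{-1}$ and handles the prefactor. Second, from $r_2 = O(r_2')$, read in the direction used throughout the paper where $r_2'$ is the \emph{weaker} (larger) threshold, I get $r_2 \leq r_2'$ eventually, hence $\{U_n > r_2'\} \subseteq \{U_n > r_2\}$ and $\P[U_n > r_2'] \leq \P[U_n > r_2]$ by monotonicity. Combining,
\[
(r_1')^{-1} \P[U_n > r_2'] \;\leq\; C_1\, (r_1)^{-1} \P[U_n > r_2] \;\to\; 0,
\]
which is the asserted convergence.

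The ``In particular'' clause is an immediate specialization. Because $r_* = \max\{s^{-2},\eta^2,s^{-1}\eta\}$ dominates each of $s^{-2}$, $\eta^2$, $s^{-1}\eta$ by construction, each such choice of $r_1$ satisfies $r_1 \leq r_*$, so $r_*^{-1} \leq r_1^{-1}$ and therefore $r_*^{-1}\P[|U'|>r_n] \leq r_1^{-1}\P[|U'|>r_n] \to 0$. The freedom to replace a threshold $\tilde r_n = o(r_*)$ used in a concentration bound by any other $r_n = o(r_*)$ with $r_n \geq \tilde r_n$ eventually follows from the same monotonicity step applied to the probability.

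There is no real obstacle here: the lemma is a bookkeeping device that lets the authors recycle the Bernstein-/Markov-type concentration estimates of Lemmas \ref{lem:first} and \ref{lem:results} across different rate pairings in the proof of \Eqref{eqn:delta method 3} without redoing the calculation each time. The only subtlety worth flagging is that both big-$O$ hypotheses must be applied in the same ``direction'' (prefactor shrinking, threshold growing); if either were reversed the probability inequality would flip and the bookkeeping would fail.
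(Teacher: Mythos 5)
Your proof is correct and is precisely the elementary argument the paper has in mind: the authors state Lemma \ref{lem:rates} explicitly without proof, remarking only that it ``follows from elementary inequalities,'' and your chaining of the constant from $r_1 = O(r_1')$ with the monotonicity of $\P[U_n > \cdot]$ is that argument. Your closing caveat about the direction in which the two $O(\cdot)$ hypotheses must be read (prefactor shrinking, threshold growing) is the right one to flag --- with a literal multiplicative constant exceeding one in $r_2 = O(r_2')$ the event inclusion would not follow --- but it is harmless in the paper's applications because the threshold is only ever needed up to constants within the class $o(r_*)$, and the conclusion of \Eqref{eqn:delta method 3} only requires the existence of some such sequence.
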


The next Lemma proves \Eqref{eqn:delta method 3}, a crucial step in the proof of Theorem \ref{thm:Edgeworth locpoly}(a). Because this result only involves undersmoothing, we will omit the subscript $p$ as above.
\begin{lemma}
	\label{lem:undersmoothing}
	Let the conditions of Theorem \ref{thm:Edgeworth locpoly}(a) hold. Then \Eqref{eqn:delta method 3} holds, namely, for some $r_n = o(r_*)$
		\[\frac{1}{r_*} \P [|U| > r_n] \to 0.\]
\end{lemma}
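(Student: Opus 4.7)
I will show that on a favorable event $\mathcal{E}$ of probability $1 - o(r_*)$, the random variable $|U|$ is deterministically bounded by some sequence $r_n = o(r_*)$; the union bound then yields $\P[|U| > r_n] \leq \P[\mathcal{E}^c] + \P[\{|U|>r_n\}\cap \mathcal{E}] = o(r_*)$. The event $\mathcal{E}$ is the intersection of the concentration events supplied by Lemmas \ref{lem:first} and \ref{lem:results}: on $\mathcal{E}$ the matrix $\G^{-1}$ and $\Psic$ have operator norms bounded by fixed constants (Lemmas \ref{lem:gamma}, \ref{lem:Psi}); the deviations $|\G - \Gt|$, $|\G^{-1} - \Gt^{-1}|$, $|\L_{p,1} - \Lt_{p,1}|$, and $|\Psic - \Psit|$ are all $O(s^{-1}\log(s)^{1/2})$ (Lemmas \ref{lem:gamma}, \ref{lem:lambda}); the mean-zero variance summand $V_n := s e_0'\G^{-1}R'W(Y-M)/n$ is $O(\log(s)^{1/2})$ (Lemma \ref{lem:E_1}); and each $|A_{1,k} - \tilde{A}_{1,k}|$ ($k=1,\dots,8$, with $\tilde{A}_{1,k}:=0$ for $k\leq 5$ since those terms are mean zero) is controlled at the appropriate rate dictated by Lemmas \ref{lem:bias 1}-\ref{lem:bias 3}. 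A finite union bound over these events gives $\P[\mathcal{E}^c] = o(r_*)$.

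\textbf{Multiplier, cubic, and quadratic pieces.} On $\mathcal{E}$, combining Lemma \ref{lem:B_1} with the bound on $|\G^{-1} - \Gt^{-1}|$ and the definition of $\eta$ gives $B_n := s e_0'\G^{-1}R'W(M - \check{R}\beta)/n = \eta + O(h^{p+1}\log(s)^{\gamma})$, so the multiplier in $U$ satisfies $V_n + B_n = O(\log(s)^{1/2} + \eta)$. The Taylor cubic $(\shat^2-\st^2)^3/\smvt^7$ obeys $|\shat^2 - \st^2| \leq \sum_{k}|e_0'\G^{-1}A_{1,k}\G^{-1}e_0| + |A_2| = O(s^{-1}\log(s)^{1/2})$ on $\mathcal{E}$, so this piece is $O(s^{-3}\log(s)^{3/2})$, and after multiplying by the multiplier is $o(s^{-2}) = o(r_*)$. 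The quadratic piece $[e_0'\G^{-1}(\sum_{k=2}^{8} A_{1,k})\G^{-1}e_0]^2$ is $O(h^{2(p+1)}\log(s)^{2\gamma})$, since each $A_{1,k}$ with $k\geq 2$ carries at least one bias factor $m(X_i) - r_p(X_i-x)'\beta_p$ (Lemma \ref{lem:bias 2} controls $A_{1,2}$, and Lemmas \ref{lem:bias 1} and \ref{lem:bias 3} control the rest through the bound on $R'W(Y-M)/n$ and $R'W(M-\check{R}\beta)/n$); multiplying by the multiplier this contribution is $O(h^{2(p+1)}(\log(s)^{1/2}+\eta)\log(s)^{2\gamma}) = o(r_*)$, using $h^{2(p+1)} = \eta^2/s^2 \leq \min\{\eta^2,s^{-2}\} = o(r_*)$.

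\textbf{Linear piece and the main obstacle.} The delicate contribution is the linear piece $-\tfrac{1}{2\st^3} e_0'\G^{-1}(A_{1,6}+A_{1,7}+A_{1,8})\G^{-1}e_0 \cdot (V_n + B_n) + \tfrac{\eta}{2\st^3} e_0'\Gt^{-1}(\tilde{A}_{1,6}+\tilde{A}_{1,7}+\tilde{A}_{1,8})\Gt^{-1}e_0$, where the explicit $\eta\tilde{A}$ correction is indispensable. The bias computation underlying Lemma \ref{lem:bias 1} with $k=2$ shows $\tilde{A}_{1,k} = O(h^{2(p+1)})$ for $k=6,7,8$, while Lemma \ref{lem:bias 3} gives $A_{1,k}-\tilde{A}_{1,k} = O(h^{2(p+1)}\log(s)^{-\gamma})$ for arbitrary $\gamma>0$. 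Decomposing $\G^{-1} = \Gt^{-1} + (\G^{-1}-\Gt^{-1})$, $A_{1,k} = \tilde{A}_{1,k} + (A_{1,k}-\tilde{A}_{1,k})$, and $V_n+B_n = \eta + V_n + (B_n-\eta)$, the $\eta \cdot e_0'\Gt^{-1}\tilde{A}_{1,k}\Gt^{-1}e_0$ cross term cancels precisely the explicit subtraction, while each remaining cross term contains either a factor of $V_n = O(\log(s)^{1/2})$, or $B_n-\eta = O(h^{p+1}\log(s)^{\gamma})$, or $A_{1,k}-\tilde{A}_{1,k}$, or $\G^{-1}-\Gt^{-1} = O(s^{-1}\log(s)^{1/2})$, paired with a bound of order $h^{2(p+1)} = o(r_*/\log(s)^{1/2})$; all such products are $o(r_*)$. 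Taking $r_n$ to be the slowest of the rates produced above gives $|U| \leq r_n = o(r_*)$ on $\mathcal{E}$. The main technical obstacle is not any single estimate but rather the bookkeeping of the numerous cross terms produced by simultaneously expanding $\shat^2 - \st^2$ to second Taylor order and replacing $\G^{-1}$ by $\Gt^{-1}$: the argument must verify that the explicit subtraction in $U$ is precisely the one required to eliminate the one cross term, namely $\eta\cdot\tilde{A}_{1,k}/\st^3$, whose magnitude is exactly of order $r_*$.
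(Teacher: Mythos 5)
Your proof is correct and follows essentially the same route as the paper's: the same telescoping of each product into terms carrying exactly one deviation factor, the same reliance on Lemmas \ref{lem:first} and \ref{lem:results}, and the same observation that the explicit $\eta\,\tilde{A}_{1,k}$ subtraction is precisely what cancels the unique contribution of exact order $r_*$; packaging the Bernstein/Markov bounds into a single good event $\mathcal{E}$ with $\P[\mathcal{E}^c]=o(r_*)$ rather than applying the union bound inline term by term (which is what the paper does, justified by Lemma \ref{lem:rates}) is only an organizational difference. A couple of your stated orders are slightly off --- Lemma \ref{lem:bias 3} bounds $A_{1,k}-\tilde{A}_{1,k}$ by $O(\h^{2(p+1)}\log(s)^{+\gamma})$ rather than $\log(s)^{-\gamma}$, and the centered bias sum makes $B_n-\eta$ of order $\eta\log(s)^{\gamma}$ rather than $\h^{p+1}\log(s)^{\gamma}$ --- but neither slip affects any conclusion, since the resulting products remain $o(r_*)$ under the assumption $\eta\log(n\h)\to 0$.
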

\begin{proof}
Recall the definition:
\begin{align*}
	U & = \left\{ - \frac{1}{2 \st^3}  e_0' \G^{-1} \left( A_{1,6} + A_{1,7} + A_{1,8} \right) \G^{-1} e_0   + \frac{3}{8 \st^5} \left[ e_0' \G^{-1} \left( \sum\nolimits_{k=2}^8 A_{1,k} \right) \G^{-1} e_0  \right]^2  -  \frac{5}{16}  \frac{ \left( \shat^2 - \st^2 \right)^3 }{\smvt^7} \right\}     		\\
		& \qquad \qquad \times \Bigl\{  s e_0' \G^{-1} R'W(Y-M)/n  +  s e_0' \G^{-1} R'W(M - \check{R}\beta)/n \Bigr\}   		\\
	& \quad - \left\{ - \frac{1}{2 \st^3}  e_0' \Gt^{-1} \left( \tilde{A}_{1,6} + \tilde{A}_{1,7} + \tilde{A}_{1,8} \right) \Gt^{-1} e_0     \right\} \,  \eta.
\end{align*}
To fully prove the claim of the lemma, we must fully expand $U$ and bound each piece. First, we present complete details on two terms. The remainder are entirely analogous, as discussed below. Consider the pieces involving $A_{1,6}$, namely:
\[ e_0' \G^{-1}  A_{1,6} \G^{-1} e_0 \Bigl\{  s e_0' \G^{-1} R'W(Y-M)/n  +  s e_0' \G^{-1} R'W(M - \check{R}\beta)/n \Bigr\}   -    e_0' \Gt^{-1}  \tilde{A}_{1,6} \Gt^{-1} e_0      \,  \eta. \]
The first of these is
\begin{align*}
	e_0'  \G^{-1}  A_{1,6}\G^{-1} e_0 s e_0' \G^{-1} R'W(Y-M)/n  		
	& = e_0'  \G^{-1} \left(A_{1,6} -  \tilde{A}_{1,6}\right)  \G^{-1} e_0 s e_0' \G^{-1} R'W(Y-M)/n  		\\
	& \quad + e_0' \left( \G^{-1} - \Gt^{-1} \right)\tilde{A}_{1,6}  \G^{-1} e_0 s e_0' \G^{-1} R'W(Y-M)/n  		\\
	& \quad + e_0' \Gt^{-1} \tilde{A}_{1,6}   \left( \G^{-1} - \Gt^{-1} \right) e_0 s e_0' \G^{-1} R'W(Y-M)/n  		\\
	& \quad + e_0' \Gt^{-1} \tilde{A}_{1,6}   \Gt^{-1} e_0 s e_0' \left( \G^{-1} - \Gt^{-1} \right)  R'W(Y-M)/n  		\\
	& \quad + e_0' \Gt^{-1} \tilde{A}_{1,6}   \Gt^{-1} e_0 s e_0' \Gt^{-1} R'W(Y-M)/n.   		\\
	& =: U_{1,1} + U_{1,2} + U_{1,3} + U_{1,4} + U_{1,5}
\end{align*}
We now bound each remainder in turn. First, for $r_n = \h^{p+1} \log(s)^{-1/2}$, we have
\begin{align*}
	s^2 \P\left[ | U_{1,1}| > r_n \right] & = s^2 \P  \left[\left\vert  e_0'  \G^{-1} \left(A_{1,6} -  \tilde{A}_{1,6}\right)  \G^{-1} e_0 s e_0' \G^{-1} R'W(Y-M)/n \right\vert  > r_n \right]    		\\
	& \leq    s^2 \P  \left[ 8 C_\Gamma^3  \left\vert  A_{1,6} -  \tilde{A}_{1,6} \right\vert  > \log(s)^{-1/2} r_n \right]    		\\
		& \qquad \qquad +  s^2 \P \left[  \left\vert s^{-1} \sumi \left\{(Kr_p)(X_{\h,i})  \e_i \right\} \right\vert > \log(s)^{1/2} \right]   +  s^2  3 \P\left[ \Gp^{-1} > 2C_\Gamma \right]  		\\
	& =    s^2 \P  \left[ 8 C_\Gamma^3  \left\vert  A_{1,6} -  \tilde{A}_{1,6}  \right\vert  > \h^{2(p+1)} \log(s)^\gamma \frac{r_n}{\h^{2(p+1)}\log(s)^{1/2+\gamma}}  \right]  +  o(1)  		\\
	& = o(1),
\end{align*}
because $\h^{-2(p+1)} r_n \log(s)^{-1/2-\gamma} = \h^{-(p+1)} \log(s)^{-1-\gamma} \to \infty$.

Next, since $\tilde{A}_{1,6}  \asymp \h^{2(p+1)}$, for $r_n = \h^{p+1} \log(s)^{-1/2}$. 
\begin{align*}
	s^2 \P\left[ | U_{1,2}| > r_n \right] & = s^2 \P  \left[\left\vert  e_0' \left( \G^{-1} - \Gt^{-1} \right)\tilde{A}_{1,6}  \G^{-1} e_0 s e_0' \G^{-1} R'W(Y-M)/n \right\vert  > r_n \right]    		\\
	& \leq    s^2 \P  \left[ 4 C_\Gamma^2  \left| \tilde{A}_{1,6} \right|  \left\vert s^{-1} \sumi \left\{(Kr_p)(X_{\h,i}) \e_i \right\} \right\vert  > s \log(s)^{-1/2} r_n \right]       		\\
		& \qquad  +  s^2   \P\left[ \left| \G^{-1} - \Gt^{-1} \right| > s^{-1} \log(s)^{1/2} \right]    +  s^2  2 \P\left[ \Gp^{-1} > 2C_\Gamma \right]      		\\
	& =    s^2 \P  \left[ 4 C_\Gamma^2   \left\vert s^{-1} \sumi \left\{(Kr_p)(X_{\h,i}) \e_i \right\} \right\vert  > \log(s)^{1/2} \frac{s r_n}{ \h^{2(p+1)} \log(s) } \right]  +  o(1)       		\\  
	& = o(1),
\end{align*}
because $s r_n \h^{-2(p+1)} \log(s)^{-1} = s \h^{-(p+1)} \log(s)^{-3/2} \to \infty$. Terms $U_{1,3}$ and $U_{1,4}$ are nearly identically treated.

Let $r_n = \h^{p+1} \log(s)^{-1/2}$. Then since $\tilde{A}_{1,6} \asymp \h^{2(p+1)}$,
\begin{align*}
	s^2 \P\left[ | U_{1,5}| > r_n \right] & = s^2 \P  \left[\left\vert  e_0' \Gt^{-1} \tilde{A}_{1,6}   \Gt^{-1} e_0 s e_0' \Gt^{-1} R'W(Y-M)/n   \right\vert  > r_n \right]    		\\
	& \leq  s^2 \P \left[  C_\Gamma^3 \left| \tilde{A}_{1,6} \right|  \left\vert s^{-1} \sumi \left\{(Kr_p)(X_{\h,i}) \e_i \right\} \right\vert > r_n \right]    		\\
	& \leq  s^2 \P \left[  C_\Gamma^3  \left\vert s^{-1} \sumi \left\{(Kt)(X_{\h,i}) g(X_i) \e_i \right\} \right\vert > \log(s)^{1/2} \frac{ \log(s)^{-1/2} r_n}{\h^{2(p+1)} } \right]    		\\
	& = o(1),
\end{align*}
because $\h^{-2(p+1)} r_n \log(s)^{-1/2} = \h^{-(p+1)} \log(s)^{-1} \to \infty$.

Thus, since $\st^{-1}$ is bounded away from zero, we find that
\[ s^2 \P\left[ \left\vert  \frac{1}{2 \st^3} e_0'  \G^{-1}  A_{1,6}\G^{-1} e_0 s e_0' \G^{-1} R'W(Y-M)/n \right\vert > r_n \right] \to 0.\]

Turning our attention to the second term, we have 
\begin{align*}
	e_0'  \G^{-1} &  A_{1,6}\G^{-1} e_0 s e_0' \G^{-1} R'W(M-\check{R}\beta)/n  - e_0' \Gt^{-1} \tilde{A}_{1,6} \Gt^{-1} e_0 \eta 	 		\\
	& =	e_0'  \G^{-1} \left(A_{1,6} -  \tilde{A}_{1,6}\right)  \G^{-1} e_0 s e_0' \G^{-1} R'W(M-\check{R}\beta)/n  		\\
	& \quad +	e_0'  \G^{-1}  \tilde{A}_{1,6}  \G^{-1} e_0 s e_0' \G^{-1} \left( R'W(M-\check{R}\beta)/n - \E\left[ R'W(M-\check{R}\beta)/n \right] \right)		\\
	& \quad +	e_0'  \left( \G^{-1} - \Gt^{-1} \right)  \tilde{A}_{1,6}  \G^{-1} e_0 s e_0' \G^{-1}  \E\left[ R'W(M-\check{R}\beta)/n \right] 		\\
	& \quad +	e_0'  \Gt^{-1}  \tilde{A}_{1,6} \left( \G^{-1} - \Gt^{-1} \right)  e_0 s e_0' \G^{-1}  \E\left[ R'W(M-\check{R}\beta)/n \right] 		\\
	& \quad +	e_0'  \Gt^{-1}  \tilde{A}_{1,6} \Gt^{-1}  e_0 s e_0' \left( \G^{-1} - \Gt^{-1} \right)  \E\left[ R'W(M-\check{R}\beta)/n \right]   		\\
	& =: U_{2,1} + U_{2,2} + U_{2,3} + U_{2,4} + U_{2,5}.
\end{align*}

For $r_n = \h^{p+1} \log(s)^{-1}$, we have
\begin{align*}
	r_*^{-1} \P\left[ | U_{2,1}| > r_n \right] & = r_*^{-1} \P\left[ e_0'  \G^{-1} \left(A_{1,6} -  \tilde{A}_{1,6}\right)  \G^{-1} e_0 s e_0' \G^{-1} R'W(M-\check{R}\beta)/n  > r_n \right]  		\\
	& \leq   r_*^{-1} \P \left[ 8 C_\Gamma^3  s \left\vert  A_{1,6} -  \tilde{A}_{1,6} \right\vert  > s \h^{2(p+1)} \log(s)^\gamma \frac{r_n}{s \h^{2(p+1)} \log(s)^{2\gamma} }\right]     		\\
	& \quad  +   r_*^{-1} \P \left[ \left|\frac{1}{n\h} \sumi \left\{(Kr_p)(X_{\h,i})  \left[m(X_i) - r_p(X_i - x)'\beta_p\right] \right\}  \right| > \log(s)^\gamma \right]   		\\
	& \quad  +   r_*^{-1}  3 \P\left[ \Gp^{-1} > 2C_\Gamma \right]  		\\
	& \leq   s^2 \P \left[ 8 C_\Gamma^3  s \left\vert  A_{1,6} -  \tilde{A}_{1,6} \right\vert  > s \h^{2(p+1)} \log(s)^\gamma \frac{r_n}{s \h^{2(p+1)} \log(s)^{2\gamma} }\right]     		\\
	& \quad  +   \h^{-(p+1)} \P \left[ \left|\frac{1}{n\h} \sumi \left\{(Kr_p)(X_{\h,i})  \left[m(X_i) - r_p(X_i - x)'\beta_p\right] \right\}  \right| > \log(s)^\gamma \right]   		\\
	& \quad  +  s^2  3 \P\left[ \Gp^{-1} > 2C_\Gamma \right]  		\\
	& = o(1),
\end{align*}
because $s \h^{2(p+1)} r_n^{-1} \log(s)^{2\gamma} = s \h^{p+1} \log(s)^{1+2\gamma} \to 0$ by the conditions on $\eta$ placed in the theorem.

Next, with $r_n = \h^{p+1}\log(s)^{-1}$ and using $\tilde{A}_{1,6} \asymp \h^{2(p+1)}$, we have
\begin{align*}
	r_*^{-1} \P\left[ | U_{2,2}| > r_n \right] & =  r_*^{-1} \P\left[  \left\vert e_0'  \G^{-1}  \tilde{A}_{1,6}  \G^{-1} e_0 s e_0' \G^{-1} \left( R'W(M-\check{R}\beta)/n - \E\left[ R'W(M-\check{R}\beta)/n \right] \right) \right\vert  > r_n \right]   		\\
	& \leq  r_*^{-1} \P \biggl[ 8 C_\Gamma^3  \left| \tilde{A}_{1,6} \right| \biggl\vert s^{-1} \sumi  \Bigl\{ (Kr_p)(X_{\h,i})  \left[m(X_i) - r_p(X_i - x)'\beta_p\right]   		\\
	& \qquad \qquad \qquad \qquad  - \E\left[(Kr_p)(X_{\h,i})  \left[m(X_i) - r_p(X_i - x)'\beta_p\right] \right] \Bigr\} \biggr\vert > r_n  \biggr]  		\\
	& \quad  +  r_*^{-1}  3 \P\left[ \Gp^{-1} > 2C_\Gamma \right]  		\\
	& \leq s^2 \P \biggl[ 8 C_\Gamma^3  \biggl\vert s^{-2} \sumi  \Bigl\{ (Kr_p)(X_{\h,i})  \left[m(X_i) - r_p(X_i - x)'\beta_p\right]   		\\
	& \qquad \quad  - \E\left[(Kr_p)(X_{\h,i})  \left[m(X_i) - r_p(X_i - x)'\beta_p\right] \right] \Bigr\} \biggr\vert >   \h^{p+1} \log(s)^\gamma \frac{ r_n } {\h^{3(p+1)}  \log(s)^\gamma }  \biggr]  		\\
	& \quad  +   s^2  3 \P\left[ \Gp^{-1} > 2C_\Gamma \right]  		\\
	& = o(1),
\end{align*}
because $r_n \h^{-3(p+1)} \log(s)^{-\gamma} = \h^{-2(p+1)} \log(s)^{-1-\gamma} \to \infty$. 

Third, as $\tilde{A}_{1,6} \asymp \h^{2(p+1)}$ and $\E\left[ R'W(M-\check{R}\beta)/n \right] \asymp \h^{p+1}$, if we choose $r_n = \h^{p+1}\log(s)^{-1}$, 
\begin{align*}
	r_*^{-1} \P\left[ | U_{2,3}| > r_n \right] & \leq  r_*^{-1} \P\left[  4 C_\Gamma^2 s \left| \G^{-1} - \Gt^{-1} \right| > s^{-1} \log(s)^{1/2}  \frac{s r_n }{\h^{3(p+1)}\log(s)^{1/2}}  \right]  		\\
	& \quad  +   r_*^{-1}  2 \P\left[ \Gp^{-1} > 2C_\Gamma \right]    		\\	
	& \leq  s^2 \P\left[  4 C_\Gamma^2 \left| \G^{-1} - \Gt^{-1} \right| > s^{-1} \log(s)^{1/2}  \frac{ r_n  }{\h^{3(p+1)}\log(s)^{1/2}}  \right]  		\\
	& \quad  +  s^2  2 \P\left[ \Gp^{-1} > 2C_\Gamma \right]   		\\
	& = o(1),
\end{align*}
because $r_n \h^{-3(p+1)} \log(s)^{-1/2} = \h^{-2(p+1)} \log(s)^{-1-1/2} \to \infty$. The terms $ U_{2,3}$ and $U_{2,5}$ are handled identically.

Thus, since $\st^{-1}$ is bounded away from zero, we find that
\[ s^2 \P\left[ \left\vert  \frac{1}{2 \st^3} e_0'  \G^{-1}  A_{1,6}\G^{-1} e_0 s e_0' \G^{-1} R'W(M-\check{R}\beta)/n  - e_0' \Gt^{-1} \tilde{A}_{1,6} \Gt^{-1} e_0 \eta \right\vert > r_n \right] \to 0.\]

The same type of arguments, though notationally more challenging, will show that the remainder of $U$ obeys the same bounds. Note that the rest of the terms are even higher order, involving either $A_{1,7}$ and $A_{1,8}$, or the square or cube of the other errors. It is for this reason that only the ``leading'' three terms need be centered, that is, why only 
\[- \left\{ - \frac{1}{2 \st^3}  e_0' \Gt^{-1} \left( \tilde{A}_{1,6} + \tilde{A}_{1,7} + \tilde{A}_{1,8} \right) \Gt^{-1} e_0     \right\} \,  \eta\]
appears in $\tilde{z}$.
\end{proof}

\subsection{Computing the Terms of the Expansion}
	\label{supp:moments locpoly}

Identifying the terms of the expansion is a matter of straightforward, if tedious, calculation. The first four cumulants of the Studentized statistics must be calculated (due to \citet{James-Mayne1962_Sankhya}), which are functions of the first four moments. In what follows, we give a short summary. Note well that we always discard higher-order terms for brevity, and to save notation we will write $\oeq$ to stand in for ``equal up to $o((n\h)^{-1} + (n\h)^{-1/2}\eta + \eta^2)$'', and including $o(\rho^{1+2(p+1)})$ for $\tbc$.

The computations will be aided by putting all three estimators into a common structure. In close parallel to the density case, let us define $\hat{m}_1 := \hat{m}$ and $\hat{m}_2 = \hat{m} - \hat{m}_m$, $\sigma_1^2 := \sus^2$, and $\sigma_2^2 := \srbc^2$, so that subscripts $1$ and $2$ generically stand in for undersmoothing and bias correction, respectively. With this in mind, we write
\[\tus = T_{1,1}, 	\qquad	\tbc = T_{2,1}, 	\qquad  \text{ and } \qquad 	\trbc = T_{2,2},\]
again paralleling the density case, so that the first subscript refers to the numerator and the second to the denominator. In the same vein, with some abuse of notation, we will also use\footnote{Throughout Section \ref{supp:locpoly}, we use only generic polynomial orders $p$ and $q$, and so this notation will not conflict with the local linear or local quadratic fits, which would also be denoted $r_1(u)$ and $r_2(u)$, respectively.} $r_1(u) = r_p(u)$, $r_2(u) = r_q(u)$, $K_1(u) = K(u)$, $K_2(u) = L(u)$, $\h_1 = \h$, and $\h_2 = \b$, as well as 
\begin{align*}
	\l^0_1(X_i) & \equiv  \l^0_\US(X_i)  ,  		\\
	\l^1_1(X_i, X_j) & \equiv  \l^1_\US(X_i, X_j)  ,  		\\
	\l^0_2(X_i) & \equiv  \l^0_\BC(X_i)  ,  		\\
	\l^1_2(X_i, X_j) & \equiv  \l^1_\BC(X_i, X_j).
\end{align*}

For the purpose of computing the expansion terms (i.e. moments of the two sides agree up to the requisite order), recalling the Taylor series expansion above, we will use
\begin{multline*}
	T_{v,w} \approx \left\{ 1 - \frac{1}{2 \st_w^2}\left( W_{w,1}  +  V_{w,1}  + V_{w,2}\right) + \frac{3}{8 \st_w^4} \left( W_{w,1}  +  V_{w,1}  + V_{w,2}\right)^2  \right\}  		\\ 		 \st_w^{-1} \left\{   E_{v,1} + E_{v,2} + E_{v,3} + B_{v,1} \right\},
\end{multline*}
where we define, for $v \in \{1,2\}$,
\begin{align*}
	E_{v,1} & = s  \frac{1}{n\h} \sumi \l^0_v(X_i) \e_i  		\\
	E_{v,2} & = s  \frac{1}{(n\h)^2} \sumi \sumj \l^1_v(X_i, X_j) \e_i,  		\\
	E_{v,3} & =: s  \frac{1}{(n\h)^3} \sumi \sumj \sumk \l^2_v(X_i, X_j, X_k) \e_i,
\end{align*}
where the final line defines $\l^2_\US(X_i, X_j, X_k)$ in the obvious way following $\l^1_\US$. To concretize the notation, for undersmoothing we are defining
\begin{align*}
	E_{1,1} & = s e_0' \Gpt^{-1} R_p'W_p (Y - M)/n,   		\\
	E_{1,2} & = s e_0' \Gpt^{-1}(\Gpt - \Gp) \Gpt^{-1} R_p'W_p (Y - M)/n, 		\\
	E_{1,3} & = s e_0' \Gpt^{-1}(\Gpt - \Gp) \Gpt^{-1} (\Gpt - \Gp) \Gpt^{-1} R_p'W_p (Y - M)/n.
\end{align*}
In a similar way, 
\begin{align*}
	W_{v,1} & =  \frac{1}{n\h} \sumi \left\{\l^0_v(X_i)^2  \left( \e_i^2 - v(X_i) \right) \right\}  -  2 \frac{1}{n^2 \h^2} \sumi \sumj \left\{ \l^0_v(X_i)^2 r_v(X_{\h_v,i})'  \Gt^{-1}_v (K_v r_v)(X_{\h_v,i}) \e_i \e_j \right\}    		\\
		& \qquad \qquad \qquad \qquad +  \frac{1}{n^3 \h^3} \sumi \sumj \sumk \left\{ \l^0_v(X_i)^2 r_v(X_{\h_v,i})'\Gt^{-1}_v (K_v r_v)(X_{\h_v,i}) \e_j \e_k  \right\} ,   		\\
	V_{v,1} & =  \frac{1}{n\h}\sumi \left\{  \l^0_v(X_i)^2  v(X_i)^2  - \E[ \l^0_v(X_i)^2  v(X_i)^2] \right\}  +  2 \frac{1}{n^2 \h^2} \sumi\sumj \l_v^2(X_i, X_j) \l_v^0(X_i) v(X_i),   		\\
	V_{v,2} & = \frac{1}{n^3 \h^3} \sumi \sumj \sumk \l_v^1(X_i, X_j) \l_v^1(X_i,X_k) v(X_i)   +   2 \frac{1}{n^3 \h^3} \sumi \sumj \sumk \l_v^2(X_i, X_j, X_k) \l_v^0(X_i) v(X_i),
\end{align*}
and specifically for undersmoothing and bias correction, let
\[B_{1,1} = s \frac{1}{n\h} \sumi \l_1^0(X_i) [m(X_i) - r_p(X_i - x)'\beta_p]\]
and
\begin{multline*}
	B_{2,1} = s \frac{1}{n\h} \sumi  \Bigl\{\h^{-1} \l^0_\US(X_i) [m(X_i) - r_{p+1}(X_i - x)' \beta_{p+1}]    		\\		  -   \h^{-1} \left( \l^0_\BC(X_i)  -   \l^0_\US(X_i) \right)  [m(X_i) - r_q(X_i - x)' \beta_q]   \Bigr\}.
\end{multline*}
Note that $\eus = \E[B_{1,1}]$ and $\ebc = \E[B_{2,1}]$.

Straightforward moment calculations yield
\begin{align*}
	\E[T_{v,w}] & \oeq \st_w^{-1} \E\left[B_{v,1}\right]  - \frac{1}{2 \st_w^2} \E\left[ W_{w,1} E_{v,1}\right],
\end{align*}
\begin{align*}
	\E[T_{v,w}^2] & \oeq  \frac{1}{\st_w^2} \E\left[ E_{v,1}^2  +  E_{v,2}^2 + 2 E_{v,1} E_{v,2}  + 2 E_{v,1} E_{v,3}  \right]   		\\
		& \quad - \frac{1}{\st_w^4} \E \left[ W_{w,1} E_{v,1}^2  +  V_{w,1} E_{v,1}^2  +   V_{w,2} E_{v,1}^2   + 2 V_{w,1} E_{v,1} E_{v,2}   \right]  		\\
		& \quad + \frac{1}{\st_w^6} \E\left[ W_{w,1}^2 E_{v,1}^2  +  V_{w,1}^2 E_{v,1}^2 \right]  +  \frac{1}{\st_w^2} \E\left[ B_{v,1}^2 \right] - \frac{1}{\st_w^4} \E\left[ W_{w,1} E_{v,1} B_{v,1} \right],
\end{align*}
\begin{align*}
	\E[T_{v,w}^3] & \oeq  \frac{1}{ \st_w^3} \E\left[ E_{v,1}^3 \right]    -   \frac{3}{2\st_w^5} \E \left[ W_{w,1} E_{v,1}^3  \right]  	+   \frac{3}{\st_w^3} \E\left[  E_{v,1}^2 B_{v,1} \right],
\end{align*}
and
\begin{align*}
	\E[T_{v,w}^4] & \oeq  \frac{1}{\st_w^4} \E\left[ E_{v,1}^4  +  4 E_{v,1}^3 E_{v,2}  +  4 E_{v,1}^3 E_{v,3}  + 6 E_{v,1}^2 E_{v,3}^2  \right]   		\\
		& \quad - \frac{2}{\st_w^6} \E \left[ W_{w,1} E_{v,1}^4  +  V_{w,1} E_{v,1}^4  +   4 V_{w,1} E_{v,1}^3 E_{v,2}  + V_{w,2} E_{v,1}  \right]  		\\
		& \quad +  \frac{3}{\st_w^8} \E \left[ W_{w,1}^2 E_{v,1}^4  +  V_{w,1}^2 E_{v,1}^4   \right]  		\\
		& \quad + \frac{4}{\st_w^4} \E\left[ E_{v,1}^3  B_{v,1} \right]   -   \frac{8}{\st_w^6} \E\left[ W_{w,1} E_{v,1}^3 B_{v,1} \right]  +  \frac{6}{\st_w^4} \E\left[ E_{v,1}^2 B_{v,1}^2 \right].
\end{align*}

Computing each term in turn, we have
\begin{align*}
	\E\left[B_{v,1}\right] & = \eta_v,  		\\
	\E\left[ W_{w,1} E_{v,1}\right] & \oeq s^{-1} \E\left[ \h^{-1} \l_w^0(X_i)^2 \l_v^0(X_i) \e_i^3 \right],   		\\
	\E\left[  E_{v,1}^2 \right] & \oeq \st_v^2,   		\\
	\E\left[  E_{v,1}E_{v,2} \right] & \oeq s^{-2}  \E\left[ \h^{-1} \l_v^1(X_i,X_i) \l_v^0(X_i) \e_i^2 \right],   		\\ 
	\E\left[ E_{v,2}^2 \right] & \oeq s^{-1} \E\left[ \h^{-2} \l_v^1(X_i, X_j)^2 \e_i^2 \right],   		\\
	\E\left[  E_{v,2}E_{v,3} \right] & \oeq s^{-2}  \E\left[ \h^{-2} \l_v^2(X_i, X_j, X_j) \l_v^0(X_i) \e_i^2 \right],   		\\ 
	\E\left[  W_{w,1}E_{v,1}^2 \right] & \oeq s^{-2} \Biggl\{  \E\left[ \h^{-1} \l_w^0(X_i)^2 \l_v^0(X_i)^2 \left( \e_i^4 - v(X_i)^2\right) \right]   		\\
		& \quad  -  2 \st_v^2 \E\left[ \h^{-1} \l_w^0(X_i)^2 r_w(X_{\h_w,i})' \Gt_w^{-1} (K_w r_w)(X_{\h_w,i}) \e_i^2 \right]   		\\ 
		& \quad  -  4  \E\left[ \h^{-1} \l_w^0(X_i)^2 \l_v^0(X_i)^2 r_w(X_{\h_w,i})' \Gt_w^{-1} \e_i^2  \right] \E\left[ \h^{-1} (K_w r_w)(X_{\h_w,i}) \l_v^0(X_i) \e_i^2 \right]   		\\ 
		& \quad  + \st_v^2 \E\left[ \h^{-2} \l_w^0(X_i)^2 \left( r_w(X_{\h_w,i})' \Gt_w^{-1} (K_w r_w)(X_{\h_w,j}) \right)^2 \e_j^2 \right]   		\\ 
		& \quad +  \E \left[ \h^{-1} \l^0_\US (X_j)^2 \left( \E \left[ \h^{-1} r_p(X_{\h,j})'\Gpt^{-1}(K r_p)(X_{\h,i}) \l^0_\US(X_i)  \e_i^2 \vert X_j \right] \right)^2 \right]    \Biggr\},   		\\
	\E\left[  V_{w,1}E_{v,1}^2 \right] & \oeq s^{-2} \Bigl\{ \E \left[ \h^{-1} \left( \l^0_w(X_i)^2 v(X_i) - \E[\l^0_w(X_i)^2 v(X_i)] \right) \l^0_v(X_i)^2 \e_i^2 \right]     		\\
		& \quad + 2 \st_v^2 \E \left[ \h^{-1} \l^1_w(X_i, X_i) \l^0_w(X_i) v(X_i) \right]  \Bigr\},   		\\
	\E\left[  V_{w,1}E_{v,1}E_{v,2} \right] & \oeq s^{-2} \Bigl\{ \E \left[ \h^{-2} \left( \l^0_w(X_j)^2 v(X_j) - \E[\l^0_w(X_j)^2 v(X_j)] \right)  \l^1_v(X_i, X_j) \l^0_v(X_i) \e_i^2 \right]     		\\
		& \quad + 2  \E \left[ \h^{-3} \l^1_w(X_i, X_j)  \l^1_v(X_k, X_j) \l^0_w(X_i) \l^0_v(X_k) v(X_i) \e_k^2 \right]  \Bigr\},   		\\
	\E\left[  V_{w,2}E_{v,1}^2 \right] & \oeq s^{-2} \Bigl\{ \st_v^2 \E \left[ \h^{-2} \left( \l^1_w(X_i, X_j)^2  + 2\l^2_w(X_i, X_j, X_j) \right) v(X_i)  \right]  \Bigr\},   		\\
	\E\left[  W_{w,1}^2 E_{v,1}^2 \right] & \oeq s^{-2} \Bigl\{ \st_v^2 \E \left[ \h^{-1}  \l^0_w(X_i)^4 \left( \e_i^4 - v(X_i)^2 \right) \right]  + 2 \E\left[ \h^{-1} \l^0_v(X_i) \l^0_w(X_i)^2   \e_i^3  \right]^2  \Bigr\},   		\\
	\E\left[  V_{w,1}^2 E_{v,1}^2 \right] & \oeq s^{-2} \st_v^2 \Bigl\{ \E \left[ \h^{-1}  \left( \l^0_w(X_i)^2 v(X_i)  -  \E[\l^0_w(X_i)^2 v(X_i)] \right)^2 \right]  		\\
		& \quad  + 4 \E\left[ \h^{-2}  \left( \l^0_w(X_i)^2 v(X_i)  -  \E[\l^0_w(X_i)^2 v(X_i)] \right) \l^1_w(X_j, X_i) \l^0_w(X_j) v(X_j) \right]   		\\
		& \quad  + 4 \E\left[ \h^{-3} \l^1_w(X_i, X_j) \l^0_w(X_i) v(X_i) \l^1_w(X_k, X_j) \l^0_w(X_k) v(X_k) \right] \Bigr\},   		\\
	\E\left[  W_{w,1}E_{v,1}B_{v,1}\right]  & \oeq  \E\left[  W_{w,1}E_{v,1} \right] \E\left[B_{v,1}\right],   		\\
	\E\left[  E_{v,1}^3 \right] & \oeq s^{-1} \E\left[ \h^{-1} \l_v^0(X_i)^3 \e_i^3 \right],   		\\
	\E\left[  W_{w,1} E_{v,1}^3 \right] & \oeq \E\left[  E_{v,1}^2 \right]  \E\left[  W_{w,1} E_{v,1} \right],   		\\
	\E\left[  E_{v,1}^4 \right] & \oeq 3 \st_v^4  +  s^{-2} \E\left[ \h^{-1} \l_v^0(X_i)^4 \e_i^3 \right],   		\\
	\E\left[  E_{v,1}^3 E_{v,2} \right] & \oeq  s^{-2} 6 \st_v^2 \E\left[ \h^{-1} \l^1_v(X_i, X_i) \l_v^0(X_i) \e_i^2 \right],   		\\
	\E\left[  E_{v,1}^3 E_{v,3} \right] & \oeq  s^{-2} 3 \st_v^2 \E\left[ \h^{-2} \l^2_v(X_i, X_j, X_j) \l_v^0(X_i) \e_i^2 \right],   		\\
	\E\left[  E_{v,1}^2 E_{v,2}^2 \right] & \oeq  s^{-2} \Bigl\{ \st_v^2 \E\left[ \h^{-2} \l^1_v(X_i, X_j)^2 \e_i^2 \right]   +   2  \E \left[ \h^{-3} \l^1_v(X_i, X_j)  \l^1_v(X_k, X_j) \l^0_v(X_i) \l^0_v(X_k) \e_i^2 \e_k^2 \right]  \Bigr\},   		\\
	\E\left[  W_{w,1} E_{v,1}^4 \right] & \oeq s^{-2} \Bigl\{ \E\left[ \h^{-1} \l_w^0(X_i)^2 \l_v^0(X_i) \e_i^3 \right] \E\left[ \h^{-1} \l_v^0(X_i)^3 \e_i^3 \right]  +  6 \E\left[  E_{v,1}^2 \right]  \E\left[  W_{w,1} E_{v,1}^2 \right]  \Bigr\},   		\\
	\E\left[  V_{w,1} E_{v,1}^4 \right] & \oeq s^{-2}\st_v^2  6 \Bigl\{ \E \left[ \h^{-1} \left( \l^0_w(X_i)^2 v(X_i) - \E[\l^0_w(X_i)^2 v(X_i)] \right) \l^0_v(X_i)^2 \e_i^2 \right]     		\\
		& \quad + 2  \E \left[ \h^{-2} \l^1_w(X_i, X_j) \l^0_w(X_i) \l^0_v(X_j)^2 \e_j^2 v(X_i) \right]   +  \E\left[ \h^{-1} \l^1_w(X_i, X_i) \l^0_w(X_i) v(X_i) \right] \Bigr\},   		\\
	\E\left[  V_{w,1} E_{v,1}^3 E_{v,2} \right] & \oeq 3 \E\left[   E_{v,1}^2 \right] \E\left[  V_{w,1} E_{v,1} E_{v,2} \right] ,  		\\
	\E\left[  V_{w,2} E_{v,1}^4 \right] & \oeq 3 \E\left[   E_{v,1}^2 \right] \E\left[  V_{w,2} E_{v,1}^2 \right] ,  		\\
	\E\left[  W_{w,1}^2 E_{v,1}^4 \right] & \oeq 3 \E\left[   E_{v,1}^2 \right] \E\left[  W_{w,1}^2 E_{v,1}^2 \right] ,  		\\
	\E\left[  V_{w,1}^2 E_{v,1}^4 \right] & \oeq 3 \E\left[   E_{v,1}^2 \right] \E\left[  V_{w,1}^2 E_{v,1}^2 \right] .
\end{align*}

The expansion now follows, formally, from the following steps. First, combining the above moments into cumulants. Second, these cumulants may be simplified using that
\[\frac{\sigma_v^2}{\sigma_w^2} = 1 + \mathbbm{1}(w\!\neq\! v) \left( \rho^{1+ (p+1)} \Omega_{1,\BC} + \rho^{1 + 2(p+1)} \Omega_{2,\BC} \right) \]
and that in all cases present products such as $\l_w^0(X_i)^{k_1} \l_v^0(X_i)^{k_2}$ and $\l_w^1(X_i, X_j)^{k_1} \l_v^1(X_i, X_j)^{k_2}$ may be replaced with $\l_v^0(X_i)^{k_1 + k_2}$ and $\l_v^1(X_i, X_j)^{k_1 + k_2}$, respectively, provided the arguments match. This is immediate for $v=w$, and for $v \neq w$, follows because $\rho \to 0$ is assumed. This is the analogous step to \Eqref{eqn:simplifying nu} in the density case. For any term of a cumulant with a rate of $(n\h)^{-1}$, $(n\h)^{-1/2}\eta_v$, $\eta_v^2$, or $\rho^{1+2(p+1)}$ (i.e., the extent of the expansion), these simplifications may be inserted as the remainder will be negligible. Third, with the cumulants in hand, the terms of the expansion are determined as described by e.g., \cite[Chapter 2]{Hall1992_book}.

\section{Complete Simulation Results}
	\label{supp:simuls locpoly}

In this section we present the results of a simulation study addressing the finite-sample performance of the methods described in the main paper. As with the density estimator, we report empirical coverage probabilities and average interval length of nominal 95\% confidence interval for different estimators of a regression functions $m(x)$ evaluated at values $x=\{-2/3,-1/3,0,1/3,2/3\}$.
For each replication, the data is generated as i.i.d. draws, $i=1,2,...,n$, $n=500$ as follows:
\begin{equation*}
Y=m(x)+\varepsilon\text{,\qquad }x\sim\mathcal{U}[-1,1] \text{,\qquad }\varepsilon\sim\N(0,1)
\end{equation*}%

\begin{itemize}
\item[] Model 1:  $m(x) = \sin(4x) + 2\exp\{-64x^2\}$
\item[] Model 2:  $m(x) = 2x + 2\exp\{-64x^2\}$
\item[] Model 3:  $m(x) = 0.3\exp\{-4(2x+1)^2\} + 0.7\exp\{-16(2x-1)^2\}$
\item[] Model 4:  $m(x) = x + 5\phi(10x)$
\item[] Model 5:  $\displaystyle m(x) = \frac{\sin(3\pi x/2)}{1+18x^2[sgn(x)+1]}$
\item[] Model 6:  $\displaystyle m(x) = \frac{\sin(\pi x/2)}{1+2x^2[sgn(x)+1]}$
\end{itemize}
Models 1 to 3 were used  by \citet{Fan-Gijbels1996_book} and \citet{Cattaneo-Farrell2013_JoE}, while Models 4 to 6 are from \citet{Hall-Horowitz2013_AoS}, with some originally studied by \citet{Berry-Carroll-Ruppert2002_JASA}. The regression functions are plotted in Figure \ref{fig:regfuns} together with the evaluation points used.

We compute confidence intervals for $m(x)$ using five alternative approaches:
\begin{enumerate}[label=\it(\roman*)]
	\item[\bf US:] local-linear estimator using a conventional approach based on undersmoothing ($I_\US$).
	\item[\bf Locfit:] local lineal estimator computed using default options in the \texttt{R} package \texttt{locfit} (see \cite{locfit} for implementation details).
	\item[\bf BC:] traditional bias corrected estimator using a local-linear estimator with local-quadratic bias-correction, and $\rho=1$ ($I_\BC$).
	\item[\bf HH:] local linear estimator using the bootstrapped confidence bands introduced in \cite{Hall-Horowitz2013_AoS} (see Remark \ref{supprem:Hall-Horowitz} below for additional implementation details).
	\item[\bf RBC:] our proposed local-linear estimator with local-quadratic bias-correction and $\rho=1$ using robust standard errors ($I_\RBC$). 
\end{enumerate}
In all cases the Epanechnikov kernel is used. The bandwidth $\h$ is chosen in three different ways:
\begin{enumerate}[label=\it(\roman*)]
	\item population MSE-optimal choice $h_\MSE$;
	\item estimated ROT optimal coverage error rate $\hat{h}_\ROT$. 
	\item estimated DPI optimal coverage error rate $\hat{h}_\PI$. 
\end{enumerate}
For the construction of the variance estimators $\shatus^2$ and $\shatrbc^2$ we consider HC$3$ plug-in residuals when forming the $\Sigma$ matrix. In Table \ref{table:reg_comp_vce} we report empirical coverage and average interval length of RBC 95\% Confidence Intervals (only for Model 5) using $\hat{h}_\MSE$ for different variance estimators. The results reflect the robustness of the findings to this choice.

The results are presented in detail in the tables and figures below to give a complete picture of the performance of robust bias correction. First, Tables \ref{table:regtables1}-\ref{table:regtables6} show, for each regression model, respectively, the performance of the five methods above, in terms of empirical coverage and interval length, for all evaluation points and bandwidth choices (recall that $\ius$ and $\ibc$ have the same length). Panel A of each shows the coverage and length, while Panel B gives summary statistics for the two fully data-driven bandwidths. Note that in some cases, the population MSE-optimal bandwidth is not defined or is not computable numerically; usually because the bias is too small or other values are too extreme.

The broad conclusion from these tables is that robust bias correction provides excellent coverage and that the data-driven bandwidths perform well and are numerically stable. In almost all cases robust bias correction provides correct coverage, whereas the other methods often, but not always, fail to do so. In cases where there is little to no bias all the methods give good coverage. This can be seen in results for Models 2 and 4, at $|x|=2/3$, far enough away from the ``hump'' in the center of each, where the true regression function is (nearly) linear. But despite the encouraging results away from the center, only robust bias correction yields good coverage closer to the center ($|x| = 1/3$), when there is more bias. Going further, considering $x=0$, the center of the sharp peak in these models, we see that even robust bias correction fails to provide accurate coverage for $\hat{h}_\ROT$, although $\hat{h}_\PI$ performs slightly better. At this point, for these models, the bias is too extreme even for robust bias correction to overcome. The results for the other models yield  similar lessons.

It is somewhat more difficult to compare interval length using these tables. The comparison is invited for a fixed bandwidth, in which case, by construction, undersmoothing will have a shorter length. However, this ignores the fact that robust bias correction can accommodate a larger range of bandwidths, and in particular will optimally use a larger bandwidth. For example, robust bias correction has excellent coverage in many cases for $\hat{h}_\ROT$, which is in this case a data-driven MSE-optimal choice (i.e. they coincide). This bandwidth is generally larger than $\hat{h}_\PI$, and hence undersmoothing generally covers better with the latter. However, if you compare the length of $\ius(\hat{h}_\ROT)$ to the length of $\ius(\hat{h}_\PI)$, we see that robust bias correction compares favorably in terms of length.

Both to better make this point and to illustrate the robustness of $\irbc$ to tuning parameter selection, Figures \ref{table:reggrid_ec1}--\ref{table:reggrid_il6} show empirical coverage and length for all six models, and all evaluation points, across a range of bandwidths. The dotted vertical line shows the population MSE-optimal bandwidth (whenever available) for reference. The coverage figures highlight the delicate balance required for undersmoothing to provide correct coverage, and the generally poor performance of traditional bias correction, but show that for a wide range of bandwidths robust bias correction provides correct coverage. Further, interval length is not unduly inflated for bandwidths that provide correct coverage. Again, by construction, undersmoothing will yield shorter intervals for a fixed bandwidth, and this is clear from Figures \ref{table:reggrid_il1}--\ref{table:reggrid_il6}, but it is also clear that robust bias correction can use much larger bandwidths while still maintaining correct coverage.

To further illustrate this idea, in Tables \ref{table:reg_comp}--\ref{table:reg_comp2} we compare average interval length of US and RBC 95\% confidence intervals but at different bandwidths. First, in Table \ref{table:reg_comp} we compute average interval length at the largest bandwidth that provides close to correct coverage for each method separately. Note that in all cases these bandwidths are not feasible: these are ex-post findings. Next, in Table \ref{table:reg_comp2} we evaluate the performance of US and RBC confidence intervals at certain alternative bandwidths likely to be chosen in practice. First, we evaluate the performance of US confidence intervals at $h=\lambda\hat{h}_\MSE$ for $\lambda=\{0.5;0.7\}$. We then compare the performance with RBC confidence intervals computed using the optimal, fully data-driven choices $\hat{h}_\ROT$ and $\hat{h}_\PI$. Both tables reflect that, once we control for coverage, intervals lengths do not differ systematically between both approaches. 

Figures \ref{fig:CI1}-\ref{fig:CI6} make this same point in a different way. For a range of bandwidths, as in the previous figures, we show the ``average position'' of $\ius$ and $\irbc$, where the center of the bar is placed at the average bias and the length of each bar is the average interval length across the simulations. The bars are then color-coded by coverage (green bars having good coverage, fading to red showing undercoverage). These make visually clear that although undersmoothing provides shorter intervals in general, that this comes at the expense of coverage, while robust bias correction provides good coverage for a range of bandwidths, many of which are ``large'' enough to yield narrow intervals.

All our methods are implemented in {\sf R} and {\tt STATA} via the {\tt nprobust} package, available from \url{http://sites.google.com/site/nppackages/nprobust} (see also \url{http://cran.r-project.org/package=nprobust}). See \citet{Calonico-Cattaneo-Farrell2017_nprobust} for a complete description.

\begin{remark}[Implementation of \cite{Hall-Horowitz2013_AoS}]
	\label{supprem:Hall-Horowitz}
The column $HH$ computes the bootstrapped confidence bands introduced in \cite{Hall-Horowitz2013_AoS}, following as close as possible their implementation choices. First, we estimate $m(x)$ using a local linear estimator using the Epanechnikov kernel for our previously discussed bandwidth choices. Standard errors are calculated using their proposed variance estimator $\hat{\sigma}^2_{HH}=\kappa\hat{\sigma}^2/\hat{f}_X(x)$
where $\kappa=\int K^2$ and $\hat{f}_X(x)$ is a standard kernel density estimator using a data-driven bandwidth choice $h_1$. Then, we use the same estimator for the error variance $\hat{\sigma}^2=\sum_{i=1}^n\hat{\varepsilon}_i^2/n$
and $\hat{\varepsilon}_i=\tilde{\varepsilon}_i-\bar{\varepsilon}$, $\tilde{\varepsilon}_i=Y_i-\hat{m}(X_i)$, $\bar{\varepsilon}=n^{-1}\sum_{i=1}^n\tilde{\varepsilon}_i$.
Next, we take generate $B=500$ bootstrap samples $\mathcal{Z}^*=\{(X_i,Y^*_i)\}, 1\leq i\leq n$, where $Y^*_i=\hat{m}(X_i)+\varepsilon^*_i$, with $\varepsilon^*_i$ obtained by sampling with replacement from the $\{\hat{\varepsilon}_i\}, 1\leq i\leq n$.
With these bootstrap samples we can construct the final confidence bands using the adjusted critical values that approximates the estimated coverage error with the selected one. Following their recommendation, the final critical values are taken to be the $\xi$-level quantile (for $\xi=0.1$) obtained by repeating this exercise over a grid of evaluation points, which we choose to be the sequence $\{x_1,...,x_N\}=\{-0.9,-0.8,...,0,...,0.8,0.9\}$. 
\end{remark}

\newpage
\begin{landscape}
\begin{figure}[h]
        \centering
        \caption{Regression Functions \vspace{-0.5in}}\label{fig:regfuns}
        \begin{subfigure}[b]{0.5\textwidth}
                \includegraphics[width=1.05\textwidth]{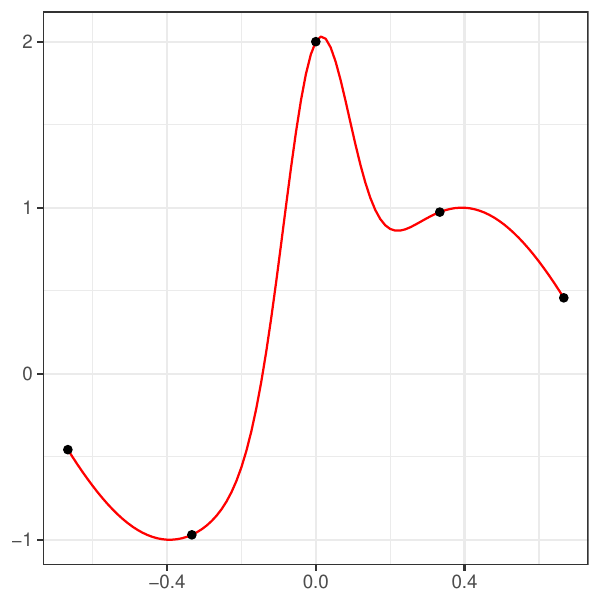}
                \vspace{-0.25in}
                \caption{Model 1}
        \end{subfigure}%
        ~ 
        \begin{subfigure}[b]{0.5\textwidth}
                \includegraphics[width=1.05\textwidth]{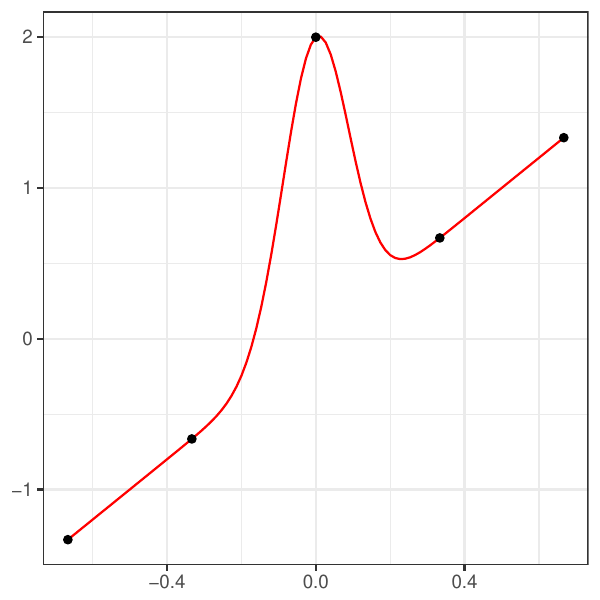}
                \vspace{-0.25in}
                \caption{Model 2}
        \end{subfigure}%
        \begin{subfigure}[b]{0.5\textwidth}
                \includegraphics[width=1.05\textwidth]{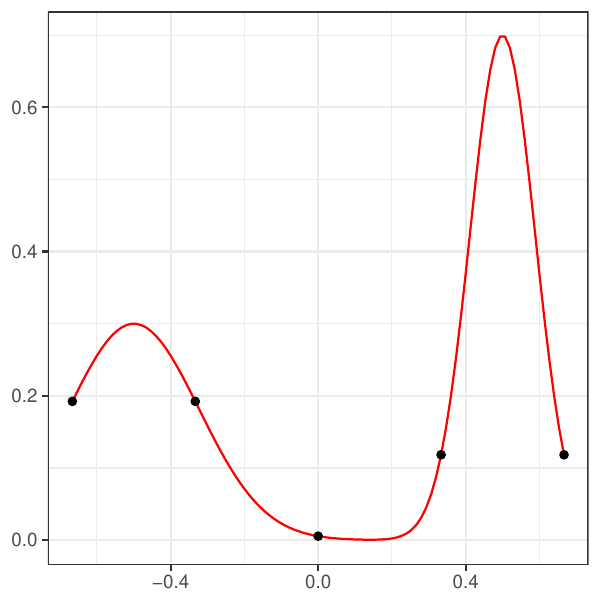}
                \vspace{-0.25in}
                \caption{Model 3}
        \end{subfigure}%
        
        \vspace{-0.25in}
        
        \begin{subfigure}[b]{0.5\textwidth}
                \includegraphics[width=1.05\textwidth]{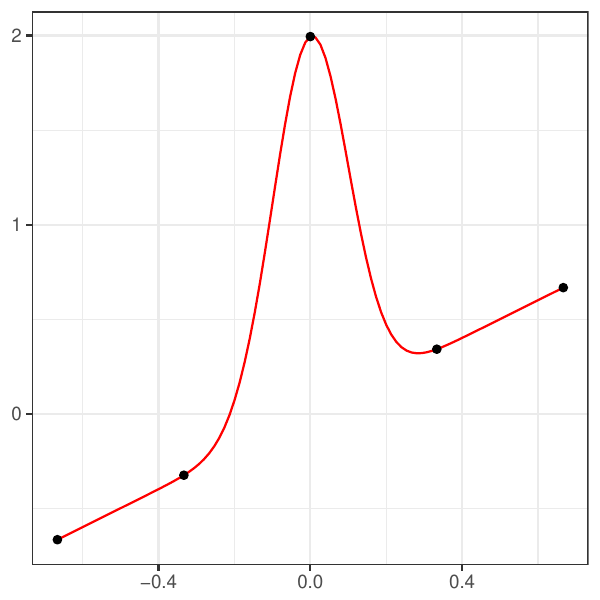}
                \caption{Model 4}
        \end{subfigure}%
        \begin{subfigure}[b]{0.5\textwidth}
        		\includegraphics[width=1.05\textwidth]{model_lp5.pdf}
                \caption{Model 5}
        \end{subfigure}%
        \begin{subfigure}[b]{0.5\textwidth}
                \includegraphics[width=1.05\textwidth]{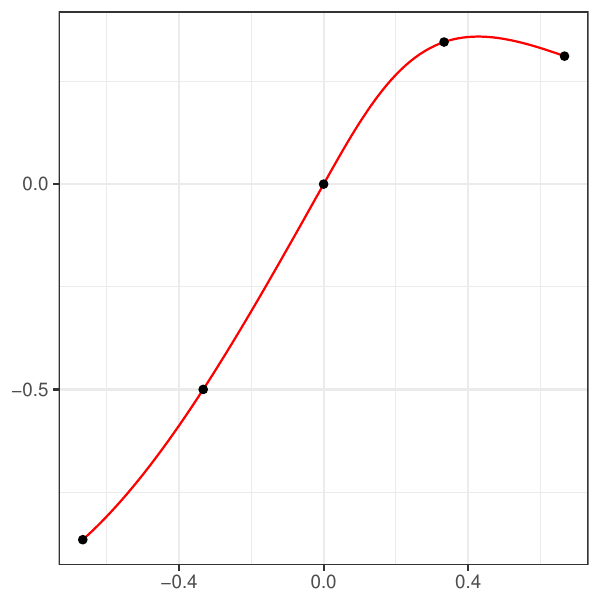}
                \caption{Model 6}
        \end{subfigure}
\end{figure}
\end{landscape}

\newpage
\forloop{model}{1}{\value{model} < 7}{
\begin{table}[h]
{\begin{center}
\caption{Simulations Results for Model \themodel}\label{table:regtables\themodel}
Panel A:  Empirical Coverage and Average Interval Length of 95$\%$ Confidence Intervals 
{\input{lp_bws_m\themodel_n500.txt}}\bigskip

Panel B: Summary Statistics for the Estimated Bandwidths 
{\input{lp_bws_stats_m\themodel_n500.txt}}\bigskip
	\end{center}}
\vspace{-.2in}\footnotesize\textbf{Notes}:\\
\smallskip
(i) US = Undersmoothing, Locfit = \texttt{R} package \texttt{locfit} by \cite{locfit}, BC = Bias Corrected, HH = \cite{Hall-Horowitz2013_AoS}, RBC = Robust Bias Corrected.\\
\smallskip
(ii) ``Bandwidth'' column report the population and average estimated bandwidths choices, as appropriate, for bandwidth $h_n$.\\
\smallskip
(iii) The population MSE-optimal choice $h_\MSE$ coincides with the population ROT optimal coverage error rate $h^{\tt rot}_\RBC$. 
\smallskip
(iv) For some evaluation points, $h_\MSE$ is not well defined so it was left missing. 
\end{table}
}

\newpage
\begin{table}[h]
{\begin{center}
\caption{Empirical Coverage and Average Interval Length of 95$\%$ Confidence Intervals}\label{table:reg_comp}
{
\begin{tabular}{lccccccc}
\hline\hline
\multicolumn{1}{l}{\bfseries }&\multicolumn{3}{c}{\bfseries US}&\multicolumn{1}{c}{\bfseries }&\multicolumn{3}{c}{\bfseries RBC}\tabularnewline
\cline{2-4} \cline{6-8}
\multicolumn{1}{l}{}&\multicolumn{1}{c}{$h$}&\multicolumn{1}{c}{EC}&\multicolumn{1}{c}{IL}&\multicolumn{1}{c}{}&\multicolumn{1}{c}{$h$}&\multicolumn{1}{c}{EC}&\multicolumn{1}{c}{IL}\tabularnewline
\hline
{\bfseries Model 1}&&&&&&&\tabularnewline
~~$x=-2/3$&0.140&94.8&0.523&&0.420&94.8&0.442\tabularnewline
~~$x=-1/3$&0.100&94.7&0.625&&0.420&94.8&0.434\tabularnewline
~~$x=0$&0.100&71.3&0.640&&0.100&93.7&0.893\tabularnewline
~~$x=1/3$&0.300&94.6&0.355&&0.440&94.3&0.425\tabularnewline
~~$x=2/3$&0.100&95.0&0.624&&0.260&94.9&0.546\tabularnewline
\hline
{\bfseries Model 2}&&&&&&&\tabularnewline
~~$x=-2/3$&0.180&94.9&0.459&&0.540&94.9&0.399\tabularnewline
~~$x=-1/3$&0.140&94.8&0.524&&0.440&94.9&0.424\tabularnewline
~~$x=0$&0.100&71.3&0.640&&0.100&93.7&0.893\tabularnewline
~~$x=1/3$&0.140&94.5&0.522&&0.440&94.2&0.424\tabularnewline
~~$x=2/3$&0.260&94.9&0.380&&0.280&94.9&0.525\tabularnewline
\hline
{\bfseries Model 3}&&&&&&&\tabularnewline
~~$x=-2/3$&0.140&94.9&0.523&&0.420&94.9&0.442\tabularnewline
~~$x=-1/3$&0.200&94.9&0.435&&0.400&94.9&0.440\tabularnewline
~~$x=0$&0.100&94.7&0.628&&0.680&94.7&0.337\tabularnewline
~~$x=1/3$&0.100&93.9&0.623&&0.100&94.0&0.887\tabularnewline
~~$x=2/3$&0.100&94.6&0.624&&0.180&94.9&0.658\tabularnewline
\hline
{\bfseries Model 4}&&&&&&&\tabularnewline
~~$x=-2/3$&0.180&94.9&0.459&&0.520&94.8&0.406\tabularnewline
~~$x=-1/3$&0.100&94.8&0.625&&0.400&94.8&0.444\tabularnewline
~~$x=0$&0.100&79.3&0.636&&0.100&93.9&0.893\tabularnewline
~~$x=1/3$&0.100&94.4&0.623&&0.400&94.2&0.443\tabularnewline
~~$x=2/3$&0.320&94.9&0.342&&0.280&94.9&0.525\tabularnewline
\hline
{\bfseries Model 5}&&&&&&&\tabularnewline
~~$x=-2/3$&0.180&94.9&0.459&&0.200&94.8&0.624\tabularnewline
~~$x=-1/3$&0.100&94.7&0.625&&0.180&94.6&0.658\tabularnewline
~~$x=0$&0.100&94.6&0.628&&0.240&94.4&0.572\tabularnewline
~~$x=1/3$&0.140&94.6&0.522&&0.260&94.3&0.545\tabularnewline
~~$x=2/3$&0.200&94.8&0.434&&0.280&94.9&0.525\tabularnewline
\hline
{\bfseries Model 6}&&&&&&&\tabularnewline
~~$x=-2/3$&0.140&94.9&0.523&&0.600&94.9&0.379\tabularnewline
~~$x=-1/3$&0.140&94.8&0.524&&0.420&94.9&0.429\tabularnewline
~~$x=0$&0.100&94.8&0.628&&0.600&94.9&0.359\tabularnewline
~~$x=1/3$&0.140&94.5&0.522&&0.480&94.4&0.401\tabularnewline
~~$x=2/3$&0.260&94.8&0.380&&0.420&94.9&0.442\tabularnewline
\hline
\end{tabular}
}\bigskip
	\end{center}}
\vspace{-.2in}\footnotesize\textbf{Notes}: Bandwidths are selected ex post as the largest bandwidths yielding good coverage, and as can not be made feasible\\
\smallskip
\end{table}

\newpage
\begin{table}[h]
{\begin{center}
\caption{Empirical Coverage and Average Interval Length of 95$\%$ Confidence Intervals}\label{table:reg_comp2}
{
\begin{tabular}{lccccccccccc}
\hline\hline
\multicolumn{1}{l}{\bfseries }&\multicolumn{2}{c}{\bfseries US ($\lambda=0.5$)}&\multicolumn{1}{c}{\bfseries }&\multicolumn{2}{c}{\bfseries US ($\lambda=0.7$)}&\multicolumn{1}{c}{\bfseries }&\multicolumn{2}{c}{\bfseries RBC ($\hat{h}^{\tt rot}_{\tt rbc}$)}&\multicolumn{1}{c}{\bfseries }&\multicolumn{2}{c}{\bfseries RBC ($\hat{h}^{\tt dpi}_{\tt rbc}$)}\tabularnewline
\cline{2-3} \cline{5-6} \cline{8-9} \cline{11-12}
\multicolumn{1}{l}{}&\multicolumn{1}{c}{EC}&\multicolumn{1}{c}{IL}&\multicolumn{1}{c}{}&\multicolumn{1}{c}{EC}&\multicolumn{1}{c}{IL}&\multicolumn{1}{c}{}&\multicolumn{1}{c}{EC}&\multicolumn{1}{c}{IL}&\multicolumn{1}{c}{}&\multicolumn{1}{c}{EC}&\multicolumn{1}{c}{IL}\tabularnewline
\hline
{\bfseries Model 1}&&&&&&&&&&&\tabularnewline
~~$x=-2/3$&94.4&0.630&&94.7&0.528&&94.3&0.630&&94.7&0.669\tabularnewline
~~$x=-1/3$&56.5&0.410&&21.1&0.362&&63.3&0.417&&91.9&0.504\tabularnewline
~~$x=0$&0.0&0.466&&0.0&0.414&&0.0&0.463&&55.9&0.591\tabularnewline
~~$x=1/3$&93.5&0.479&&94.1&0.404&&92.4&0.486&&91.9&0.504\tabularnewline
~~$x=2/3$&95.0&0.519&&93.3&0.436&&94.9&0.522&&94.4&0.606\tabularnewline
\hline
{\bfseries Model 2}&&&&&&&&&&&\tabularnewline
~~$x=-2/3$&94.9&0.495&&95.2&0.416&&95.1&0.503&&95.3&0.622\tabularnewline
~~$x=-1/3$&92.7&0.408&&57.9&0.350&&94.4&0.417&&84.2&0.432\tabularnewline
~~$x=0$&0.0&0.455&&0.0&0.403&&0.0&0.451&&55.3&0.591\tabularnewline
~~$x=1/3$&92.4&0.407&&58.0&0.350&&93.9&0.417&&83.9&0.430\tabularnewline
~~$x=2/3$&95.3&0.496&&95.0&0.417&&94.9&0.503&&93.9&0.623\tabularnewline
\hline
{\bfseries Model 3}&&&&&&&&&&&\tabularnewline
~~$x=-2/3$&94.4&0.384&&93.9&0.329&&94.9&0.405&&95.5&0.546\tabularnewline
~~$x=-1/3$&93.9&0.328&&91.4&0.277&&94.1&0.336&&94.8&0.400\tabularnewline
~~$x=0$&94.5&0.329&&87.5&0.277&&95.8&0.334&&95.5&0.470\tabularnewline
~~$x=1/3$&71.2&0.331&&77.5&0.281&&73.0&0.343&&71.4&0.406\tabularnewline
~~$x=2/3$&81.4&0.399&&74.7&0.343&&68.9&0.423&&91.7&0.547\tabularnewline
\hline
{\bfseries Model 4}&&&&&&&&&&&\tabularnewline
~~$x=-2/3$&94.9&0.507&&95.1&0.426&&95.0&0.513&&95.3&0.630\tabularnewline
~~$x=-1/3$&90.2&0.418&&51.8&0.358&&93.9&0.425&&82.5&0.435\tabularnewline
~~$x=0$&0.0&0.451&&0.0&0.403&&0.0&0.448&&72.0&0.592\tabularnewline
~~$x=1/3$&90.3&0.417&&52.3&0.357&&93.5&0.424&&82.5&0.433\tabularnewline
~~$x=2/3$&95.4&0.508&&95.0&0.427&&94.9&0.514&&94.1&0.631\tabularnewline
\hline
{\bfseries Model 5}&&&&&&&&&&&\tabularnewline
~~$x=-2/3$&94.6&0.560&&95.0&0.470&&94.4&0.562&&95.0&0.627\tabularnewline
~~$x=-1/3$&85.1&0.437&&55.0&0.370&&93.1&0.440&&94.1&0.507\tabularnewline
~~$x=0$&90.8&0.402&&73.5&0.340&&92.0&0.410&&93.6&0.498\tabularnewline
~~$x=1/3$&94.4&0.378&&94.1&0.319&&92.2&0.385&&94.5&0.477\tabularnewline
~~$x=2/3$&95.2&0.442&&94.7&0.373&&95.0&0.454&&94.3&0.554\tabularnewline
\hline
{\bfseries Model 6}&&&&&&&&&&&\tabularnewline
~~$x=-2/3$&94.3&0.368&&93.2&0.317&&94.9&0.392&&95.2&0.479\tabularnewline
~~$x=-1/3$&94.9&0.362&&94.4&0.305&&94.5&0.366&&94.8&0.408\tabularnewline
~~$x=0$&94.1&0.367&&93.0&0.309&&94.9&0.372&&95.2&0.420\tabularnewline
~~$x=1/3$&92.6&0.372&&86.8&0.313&&93.6&0.377&&93.9&0.414\tabularnewline
~~$x=2/3$&94.8&0.407&&94.5&0.344&&94.7&0.427&&94.4&0.487\tabularnewline
\hline
\end{tabular}
}\bigskip
	\end{center}}
\vspace{-.2in}\footnotesize\textbf{Notes}: Undersmoothing is implemented using bandwidths $h=\lambda\hat{h}_\MSE$ for $\lambda=\{0.5;0.7\}$, in the columns labeled as such. \\
\smallskip
\end{table}

\newpage
\begin{table}[h]
{\begin{center}
\caption{Empirical Coverage and Average Interval Length of RBC 95\% Confidence Intervals for Model 5, for Different Variance Estimators}\label{table:reg_comp_vce}
{
\begin{tabular}{lccc}
\hline\hline
\multicolumn{1}{l}{}&\multicolumn{1}{c}{$h$}&\multicolumn{1}{c}{EC}&\multicolumn{1}{c}{IL}\tabularnewline
\hline
{\bfseries $x=-2/3$}&&&\tabularnewline
~~$HC_0$&0.248&94.2&0.555\tabularnewline
~~$HC_1$&0.249&94.4&0.562\tabularnewline
~~$HC_2$&0.249&94.4&0.559\tabularnewline
~~$HC_3$&0.250&94.4&0.562\tabularnewline
~~$NN$&0.249&93.9&0.560\tabularnewline
\hline
{\bfseries $x=-1/3$}&&&\tabularnewline
~~$HC_0$&0.402&92.9&0.437\tabularnewline
~~$HC_1$&0.403&93.1&0.440\tabularnewline
~~$HC_2$&0.403&92.9&0.439\tabularnewline
~~$HC_3$&0.404&93.1&0.440\tabularnewline
~~$NN$&0.399&92.7&0.441\tabularnewline
\hline
{\bfseries $x=0$}&&&\tabularnewline
~~$HC_0$&0.473&91.9&0.408\tabularnewline
~~$HC_1$&0.474&92.0&0.410\tabularnewline
~~$HC_2$&0.474&91.9&0.409\tabularnewline
~~$HC_3$&0.474&92.0&0.410\tabularnewline
~~$NN$&0.474&91.7&0.404\tabularnewline
\hline
{\bfseries $x=1/3$}&&&\tabularnewline
~~$HC_0$&0.534&92.0&0.383\tabularnewline
~~$HC_1$&0.535&92.2&0.385\tabularnewline
~~$HC_2$&0.535&92.1&0.384\tabularnewline
~~$HC_3$&0.536&92.2&0.385\tabularnewline
~~$NN$&0.541&91.9&0.380\tabularnewline
\hline
{\bfseries $x=2/3$}&&&\tabularnewline
~~$HC_0$&0.396&94.8&0.450\tabularnewline
~~$HC_1$&0.398&95.0&0.454\tabularnewline
~~$HC_2$&0.397&94.9&0.452\tabularnewline
~~$HC_3$&0.398&95.0&0.454\tabularnewline
~~$NN$&0.400&94.7&0.452\tabularnewline
\hline
\end{tabular}
}\bigskip
	\end{center}}
\vspace{-.2in}\footnotesize\textbf{Notes}:\\
\smallskip
(i) The $h$ column reports the average estimated bandwidths $\hat{h}_\ROT$.
\end{table}

\newpage
\begin{landscape}

\newpage
\forloop{model}{1}{\value{model} < 7}{
\begin{figure}[h]
      \centering
       \caption{Empirical Coverage of 95$\%$ Confidence Intervals - Model \themodel }\label{table:reggrid_ec\themodel}

                       \begin{subfigure}[b]{0.5\textwidth}
          	\includegraphics[width=1.0\textwidth]{lp_grid_ec_m\themodel_c1_n500.pdf}
          	\caption{$x=-2/3$}
          \end{subfigure}%
          ~ 
          \begin{subfigure}[b]{0.5\textwidth}
          	\includegraphics[width=1.0\textwidth]{lp_grid_ec_m\themodel_c2_n500.pdf}
          	\caption{$x=-1/3$}
          \end{subfigure}%
          \begin{subfigure}[b]{0.5\textwidth}
          	\includegraphics[width=1.0\textwidth]{lp_grid_ec_m\themodel_c3_n500.pdf}
          	\caption{$x=0$}
          \end{subfigure}%
          
          \begin{subfigure}[b]{0.5\textwidth}
          	\includegraphics[width=1.0\textwidth]{lp_grid_ec_m\themodel_c4_n500.pdf}
          	\caption{$x=1/3$}
          \end{subfigure}%
          \begin{subfigure}[b]{0.5\textwidth}
          	\includegraphics[width=1.0\textwidth]{lp_grid_ec_m\themodel_c5_n500.pdf}
          	\caption{$x=2/3$}
          \end{subfigure}

\end{figure}
}

\newpage
\forloop{model}{1}{\value{model} < 7}{
\begin{figure}[h]
      \centering
       \caption{Average Interval Length of 95$\%$ Confidence Intervals - Model \themodel }\label{table:reggrid_il\themodel}

                     \begin{subfigure}[b]{0.5\textwidth}
        	\includegraphics[width=1.0\textwidth]{lp_grid_il_m\themodel_c1_n500.pdf}
        	\caption{$x=-2/3$}
        \end{subfigure}%
        ~ 
        \begin{subfigure}[b]{0.5\textwidth}
        	\includegraphics[width=1.0\textwidth]{lp_grid_il_m\themodel_c2_n500.pdf}
        	\caption{$x=-1/3$}
        \end{subfigure}%
        \begin{subfigure}[b]{0.5\textwidth}
        	\includegraphics[width=1.0\textwidth]{lp_grid_il_m\themodel_c3_n500.pdf}
        	\caption{$x=0$}
        \end{subfigure}%
        
        \begin{subfigure}[b]{0.5\textwidth}
        	\includegraphics[width=1.0\textwidth]{lp_grid_il_m\themodel_c4_n500.pdf}
        	\caption{$x=1/3$}
        \end{subfigure}%
        \begin{subfigure}[b]{0.5\textwidth}
        	\includegraphics[width=1.0\textwidth]{lp_grid_il_m\themodel_c5_n500.pdf}
        	\caption{$x=2/3$}
        \end{subfigure}

\end{figure}
}	
	
	\newpage
	\forloop{model}{1}{\value{model} < 7}{
\begin{figure}[h]
        \centering
        \caption{Empirical Coverage and Average Interval Length of 95$\%$ Confidence Intervals - Model \themodel }\label{fig:CI\themodel}

       \begin{subfigure}[b]{0.5\textwidth}
       	\includegraphics[width=1.0\textwidth]{lp_seg_m\themodel_c1_n500.pdf}
       	\caption{$x=-2/3$}
       \end{subfigure}%
       ~ 
       \begin{subfigure}[b]{0.5\textwidth}
       	\includegraphics[width=1.0\textwidth]{lp_seg_m\themodel_c2_n500.pdf}
       	\caption{$x=-1/3$}
       \end{subfigure}%
       \begin{subfigure}[b]{0.5\textwidth}
       	\includegraphics[width=1.0\textwidth]{lp_seg_m\themodel_c3_n500.pdf}
       	\caption{$x=0$}
       \end{subfigure}%
       
       \begin{subfigure}[b]{0.5\textwidth}
       	\includegraphics[width=1.0\textwidth]{lp_seg_m\themodel_c4_n500.pdf}
       	\caption{$x=1/3$}
       \end{subfigure}%
       \begin{subfigure}[b]{0.5\textwidth}
       	\includegraphics[width=1.0\textwidth]{lp_seg_m\themodel_c5_n500.pdf}
       	\caption{$x=2/3$}
       \end{subfigure}%
   \hspace*{-8em}
          \begin{subfigure}[b]{0.5\textwidth}
   		\includegraphics[width=1.0\textwidth]{lp_seg_leg.pdf}
   		 \end{subfigure}

\end{figure}
}

\end{landscape}

 \clearpage

\end{document}